%%      ----------------------------------------------------------------------------------
%%      ------------------------------- PUBLISHER'S AREA ------------------------------
%%      ----------------------------------------------------------------------------------
%%      Authors, please do not alter the following section.

\documentclass[12pt,twoside]{amsart}
\usepackage[margin=3cm]{geometry}
\usepackage[colorlinks=false]{hyperref}
\usepackage[english]{babel}
\usepackage{graphicx,titling}
\usepackage{float}
\usepackage{amsmath,amsfonts,amssymb,amsthm}
\usepackage{lipsum}
\usepackage[T1]{fontenc}
\usepackage{fourier}
\usepackage{color}
\usepackage[latin1]{inputenc}
\usepackage{esint}
\usepackage{caption}
\usepackage{ccicons}

\makeatletter
\def\blfootnote{\xdef\@thefnmark{}\@footnotetext}
\makeatother

\newcommand\ccnote{
    \blfootnote{\ccLogo\, \ccAttribution\,\, Licensed under a Creative Commons Attribution License (CC-BY).}
}

\usepackage[export]{adjustbox}
\numberwithin{equation}{section}
\usepackage{setspace}\setstretch{1.05}
\renewcommand{\le}{\leqslant}

\renewcommand{\ge}{\geqslant}

\renewcommand{\mathbb}{\varmathbb}
\usepackage{fancyhdr}
\pagestyle{fancy}
\fancyhf{}

\newtheorem{theorem}{Theorem}[section]
\newtheorem{lemma}[theorem]{Lemma}
\newtheorem{corollary}[theorem]{Corollary}
\newtheorem{proposition}[theorem]{Proposition}
\newtheorem{definition}[theorem]{Definition}
\newtheorem{remark}[theorem]{Remark}
\fancyhead[LE,RO]{\thepage}

%%      ----------------------- END OF PUBLISHER'S AREA ---------------------

%%      PLEASE PUT HERE THE INITIALS AND SURNAMES OF ALL THE AUTHORS
%%     AND THE SHORT TITLE OF THE ARTICLE
\fancyhead[RE]{O. Savin \& H. Yu}
\fancyhead[LO]{Triple membrane problem}

%%      ---------------------------------------------------------------------
%%      ------------------------- AUTHOR'S PACKAGES -------------------------
%%      ---------------------------------------------------------------------
%% Insert TeX and LaTeX packages that you would like to use here;
%% Please notice that the main packages have been already loaded;
%% Add here also your macros;
%% Please notice that Theorems/Lemmata/Corollaries/Propositions/Definitions/Remarks have been already defined
%%%%%%%%%%%%%%%%%%%%%%%%%%%%%%%%%%%%%%%%

\newtheorem{eg}{Example}[section]
\newtheorem{notation}{Notation}[section]

\def\R {\mathbb{R}}
\def\eps{\varepsilon}

\def\Sph{\mathbb{S}^{d-1}}

\def\Reg{\mathbf{Reg}}
\def\SOne{\mathbf{Sing}_1}
\def\STwo{\mathbf{Sing}_2}

\def\Trip{(u_1,u_2,u_3)}

\def\Subsol{\underline{\mathcal{A}}}
\def\Supsol{\overline{\mathcal{A}}}
\def\Sol{\mathcal{A}}

\def\Pabab{P(\alpha;a)}
\def\Qabab{Q(\beta;b)}
\def\Pab{P(\alpha)}
\def\Qab{Q(\beta)}
\def\PQabab{(P,Q)(\alpha,\beta;a,b)}

\def\PababTwo{P(\alpha,\beta;a,b)}
\def\QababTwo{Q(\alpha,\beta;a,b)}

\def\Sabab{\mathcal{S}(\alpha,\beta;a,b;\eps)}
\def\Sab{\mathcal{S}(\alpha,\beta;\eps)}

\def\PP{(\Phi,\Psi)}

\def\TopHalfTwo{\frac{1}{2}\min\{x\cdot\alpha-a,0\}^2+\frac{1}{4}\max\{x\cdot\beta-b,0\}^2}

\def\hu{\hat{u}}

%%% PLEASE INSERT HERE YOUR FULL NAMES, AFFILIATIONS AND EMAIL ADDRESSES
\address{Ovidiu Savin, Department of Mathematics, Columbia University, 2990 Broadway, New York, NY}
\email{savin@math.columbia.edu}
\medskip
\address{Hui Yu, Department of Mathematics, Columbia University, 2990 Broadway, New York, NY} 
\email{huiyu@math.columbia.edu}

%%      -------------------------------------------------------------------------------
%%      -------------------------- DOCUMENT ----------------------------
%%      -------------------------------------------------------------------------------
%% Authors, please do not alter the following section

\begin{document}

\thispagestyle{empty}

\begin{minipage}{0.28\textwidth}
\begin{figure}[H]
%\centering
\includegraphics[width=2.5cm,height=2.5cm,left]{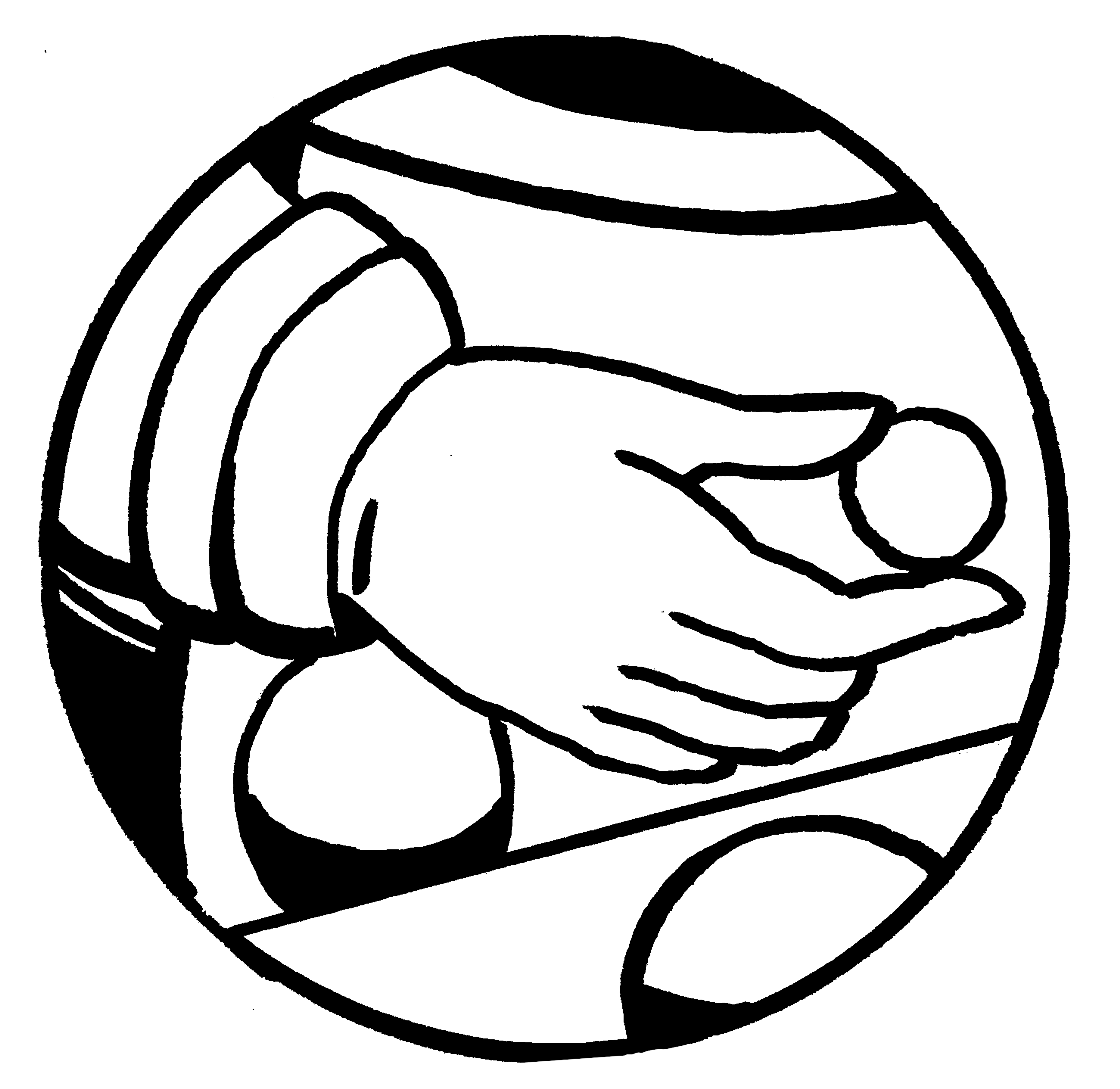}
\end{figure}
\end{minipage}
\begin{minipage}{0.7\textwidth} 
\begin{flushright}
%% The following metadata, in particular
%% the Paper No. and the DOI will be inserted by the journal
Ars Inveniendi Analytica (2021), Paper No. 3, 49 pp.
\\
DOI 10.15781/ys6e-4d80
\end{flushright}
\end{minipage}

\ccnote

\vspace{1cm}

%%      -------------------------------------------------------------------------------
%%      -------------------------- TITLE ----------------------------
%%      -------------------------------------------------------------------------------
%% Authors, please put here the full title of the article

\begin{center}
\begin{huge}
\textit{Free boundary regularity}

\textit{  in the triple membrane problem}

\end{huge}
\end{center}

\vspace{1cm}

%%      -------------------------------------------------------------------------------
%%      -------------------------- AUTHORS AND AFFILIATIONS ----------------------------
%%      -------------------------------------------------------------------------------
%% Authors, please put here your full names and affiliations

\begin{minipage}[t]{.28\textwidth}
\begin{center}
{\large{\bf{Ovidiu Savin}}} \\
\vskip0.15cm
\footnotesize{Columbia University}
\end{center}
\end{minipage}
\hfill
\noindent
\begin{minipage}[t]{.28\textwidth}
\begin{center}
{\large{\bf{Hui Yu}}} \\
\vskip0.15cm
\footnotesize{Columbia University}
\end{center}
\end{minipage}
\hfill
\noindent

\vspace{1cm}

%%% Please replace "James Mustard" below 
%%% with the name of the managing editor for your submission.
%%% If you are unsure about their identity
%%% please ask an editor-in-chief about.

\begin{center}
\noindent \em{Communicated by Guido De Philippis}
\end{center}
\vspace{1cm}

%%      -------------------------------------------------------------------------------
%%      -------------------------- BEGIN ABSTRACT ----------------------------
%%      -------------------------------------------------------------------------------
%% Authors, please put here the ABSTRACT and KEYBOARDS

\noindent \textbf{Abstract.} \textit{We investigate the regularity of the free boundaries in the three elastic membranes problem.} 

\textit{We show that the two free boundaries corresponding to the coincidence regions between consecutive membranes are $C^{1,\log}$-hypersurfaces near a regular intersection point.  We also study two types of singular intersections. The first type of singular points are locally covered by a $C^{1,\alpha}$-hypersurface.  The second type of singular points stratify and each stratum is locally covered by a $C^1$-manifold. }
\vskip0.3cm

\noindent \textbf{Keywords.} Free boundary regularity, stratification of singular set, system of obstacle problems.
\vspace{0.5cm}

%%      -------------------------------------------------------------------------------
%%      -------------------------- BEGIN ARTICLE ----------------------------
%%      -------------------------------------------------------------------------------
%% Authors, copy the body of your paper here

%%%%%%%%%%%%%%%%%%%%%%%%%%%%%%%%%%%%%%%%%%%%%%%%%%%%%%%%%%%%%%%%%%%%%%%%%%%%%%%%%%%%%%%%%%%%%%%%%%%%%%
\section{Introduction}
For an integer $N\ge 2$, the $N$-membrane problem describes the shapes of $N$ elastic membranes under the action of forces. Mathematically, given a domain $\Omega\subset\R^d$ and  bounded functions $\{f_k\}_{k=1,2,\dots,N}$, we study the minimizer of the following functional \begin{equation}\label{FirstEquation}(u_1,u_2,\dots,u_N)\mapsto\int_{\Omega}\sum(\frac{1}{2}|\nabla u_k|^2+f_ku_k)\end{equation} over the class of  functions with prescribed data on $\partial\Omega$, and subject to the constraint \begin{equation}\label{SecondEquation}u_1\ge u_2\ge\dots\ge u_N \text{ in $\Omega$.}\end{equation} The function $f_k$ represents the force acting on the $k$th membrane, whose height is described by the unknown $u_k.$ Since the membranes cannot penetrate each other, the functions $\{u_k\}$ are well-ordered inside the domain. This leads to  the constraint \eqref{SecondEquation}. On the other hand, consecutive membranes can contact each other. Between the contact region $\{u_k=u_{k+1}\}$ and the non-contact region $\{u_k>u_{k+1}\}$, we have the \textit{free boundary} $\partial\{u_k>u_{k+1}\}.$ 

Existence and uniqueness of the minimizer in the multiple membrane problem were established  by Chipot and Vergara-Caffarelli \cite{CV}. They also proved that solutions are in $C^{1,\alpha}(\Omega)$ for all $\alpha\in(0,1)$. When the force terms $\{f_k\}$ are H\"older continuous, the authors recently obtained  the optimal $C^{1,1}$-regularity of solutions in Savin-Yu  \cite{SY}.   

The remaining questions that need to be addressed concern the regularity of the $N-1$ free boundaries $\partial\{u_k>u_{k+1}\}$ for  $k=1,2\dots, N-1$. To this end, it is natural to consider the case of \textit{constant} force terms that  satisfy a \textit{non-degeneracy condition} specific in obstacle-type problems $$f_1>f_2>\dots>f_N.$$

When $N=2$, there is only one free boundary $\partial\{u_1>u_2\}$, and the problem is equivalent to the classical obstacle problem for the difference $u_1-u_2$. In the non-contact region $\{u_1>u_2\}$, $\Delta(u_1-u_2)=f_1-f_2$ is constant.  This implies that $\partial\{u_1>u_2\}$ enjoys the same regularity as the free boundary in the obstacle problem which was extensively studied, see \cite{C,C2,W,M,CSV,FSe}.
In particular $\partial\{u_1>u_2\}$ is a smooth hypersurface outside a singular set of possible cusps. Similar results were proved for problems involving nonlinear operators by Silvestre \cite{Si},  and  even for problems involving operators of different orders 
 in Caffarelli-De Silva-Savin \cite{CDS}. 
 
With one more membrane, the situation changes drastically. 

When $N=3$, we have a coupled system of obstacle problems with interacting free boundaries, $\partial\{u_1>u_2\}$ and $\partial\{u_2>u_3\}$, which can cross each other. It can be viwed as a natural extension of the obstacle problem to the vector valued case.

To the knowledge of the authors, up to now very little is known about free boundary problems with interacting free boundaries, although these problems arise naturally in various contexts, see for instance Aleksanyan \cite{A}, Andersson-Shahgholian-Weiss \cite{ASW} and Lee-Park-Shahgholian \cite{LPS}.

It is instructive to look at the Euler-Lagrange equations  when $f_1=1,f_2=0,$ and $ f_3=-1$. For the regularity of $\partial\{u_1>u_2\}$, it is useful to write the equation for the difference $u_1-u_2:$ $$\Delta (u_1-u_2)=\mathcal{X}_{\{u_1>u_2>u_3\}}+\frac{3}{2}\mathcal{X}_{\{u_1>u_2=u_3\}}.$$ In the non-contact region $\{u_1>u_2\}$, the right-hand side jumps between $1$ and  $\frac{3}{2}$. This occurs when the two free boundaries, $\partial\{u_1>u_2\}$ and $\partial\{u_2>u_3\}$, cross each other.  When this happens, most of the known methods from the obstacle problem fail to apply.  As a result, very little is understood about the free boundaries when $N>2$, except that they are porous and have zero Lebesgue measure, see \cite{LR}.

In this work, we develop new techniques to deal with the system of interacting free boundaries. 
They apply to general H\"older continuous forcing terms that satisfy the non-degeneracy condition, however in order to focus on the main ideas, we assume throughout that $$f_1=1, f_2=0 \text{ and } f_3=-1.$$  
In this case, the average $(u_1+u_2+u_3)/3$ is harmonic.  Subtracting it from each $u_k$ does not affect the problem or the free boundaries. Hence we can assume $$u_1+u_2+u_3=0.$$ 
In a neighborhood of a point on $\partial\{u_1>u_2\}$ which does not intersect $\partial\{u_2>u_3\}$, the problem reduces back to the obstacle problem with constant right hand side for the difference $u_1-u_2$. Therefore in this neighborhood, $\partial \{u_1>u_2\}$ inherits the regularity properties of the free boundary in the classical obstacle problem. Thus it suffices to study what happens near points where the two free boundaries $\partial\{u_1>u_2\}$ and $\partial\{u_2>u_3\}$ intersect.
%, namely, a point on $\partial\{u_1>u_2\}\cap\partial\{u_2>u_3\}.$ 

Suppose $x_0\in\partial\{u_1>u_2\}\cap\partial\{u_2>u_3\}$, and we define the rescaled solutions $$((u_1)_r,(u_2)_r,(u_3)_r)(x)=\frac{1}{r^2}(u_1,u_2,u_3)(x_0+rx).$$  Up to a subsequence of $r\to 0$, they converge to $2$-homogeneous solutions, see \cite{SY}.

 It is illustrative to look at four such blow-up profiles. See Figure \ref{OneDSolution}.
\begin{enumerate}
\item{The stable half-space solution: $$ u_1=\frac{1}{2}\max\{x_1, 0\}^2, u_2=0, u_3=-u_1.$$}
\item{The unstable half-space solution: \begin{align*}u_1&=\frac{1}{2}\min\{x_1,0\}^2+\frac{1}{4}\max\{x_1,0\}^2,\\ u_2&=-\frac{1}{4}\min\{x_1,0\}^2+\frac{1}{4}\max\{x_1,0\}^2,\\ u_3&=-\frac{1}{4}\min\{x_1,0\}^2-\frac{1}{2}\max\{x_1,0\}^2.\end{align*}}
\item{The hybrid solution:
\begin{align*}u_1&=\frac{1}{4}\max\{x_1, 0\}^2+\frac{1}{4}x\cdot Bx, \\u_2&=-\frac{1}{4}\max\{x_1, 0\}^2+\frac{1}{4}x\cdot Bx,\\ u_3&=-\frac{1}{2}x\cdot Bx,\end{align*}
where $B$ is a symmetric matrix satisfying $trace(B)=1$ and $3x\cdot Bx\ge x_1^2$; or
$$u_1=\frac{1}{2}x\cdot Bx, u_2=\frac{1}{4}\max\{x_1, 0\}^2-\frac{1}{4}x\cdot Bx, u_3=-u_1-u_2.$$
}
\item{The parabola solution: $$u_1=\frac{1}{2}x\cdot Ax, u_2=\frac{1}{2}x\cdot Bx, u_3=-u_1-u_2$$where $A,B$ are symmetric matrices with $trace(A)=1,$ $trace(B)=0$, $A\ge B$ and $A+2B\ge 0.$}
\end{enumerate}
In  \cite{SY} we showed that in the plane, up to a rotation,  these  profiles are the only $2$-homogeneous solutions. A similar classification holds for general $N$.

\begin{figure}[h]
\includegraphics[width=1\linewidth]{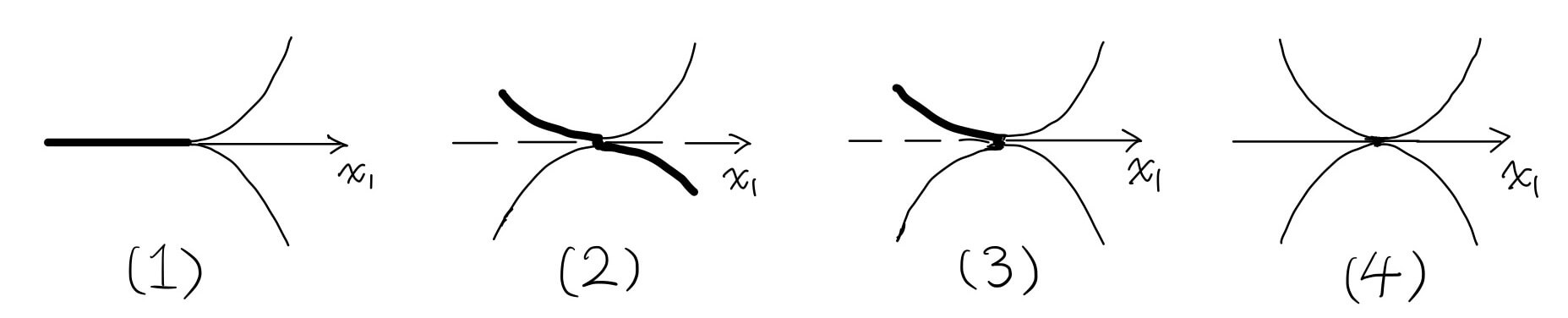}
\caption{Homogeneous solutions on $\R$.}
\label{OneDSolution}
\end{figure}

Given an intersection point $x_0\in\partial\{u_1>u_2\}\cap\partial\{u_2>u_3\}$, we say that $x_0$ is a \textit{regular point} if a subsequence of  rescalings converge to a (rotated) stable half-space solution. We call $x_0$ a \textit{singular point of type 1} if  a subsequence of rescalings converge to a (rotated) unstable half-space solution. Also, we say $x_0$ is a  \textit{singular point of type 2} if  a subsequence of rescalings converge to a parabola solution. The precise definitions are postponed to the next section. 

Around a point $x_0\in\partial\{u_1>u_2\}\cap\partial\{u_2>u_3\}$ where the rescalings converge to a hybrid solution, the behavior of the free boundaries is qualitatively different, and will be addressed in a future work.

For both types of half-space solutions, the contact sets are half spaces. It is intriguing that  the two free boundaries coincide in both cases. Heuristically, this says that the free boundaries intersect tangentially at points where the contact sets have positive density. 

To be precise, our result for regular intersection points is:

\begin{theorem}\label{MainResult2}
Suppose that $(u_1,u_2,u_3)$ is a solution to the $3$-membrane problem in $\Omega$. Let $\mathbf{Reg}$ denote the collection of regular points. 

Then for $x_0\in\mathbf{Reg}\cap\Omega$, there is $r>0$ such that $$\partial\{u_1>u_2\}\cap\partial\{u_2>u_3\}\cap B_r(x_0)=\mathbf{Reg}\cap B_r(x_0),$$ and both $\partial\{u_1>u_2\}$ and $\partial\{u_2>u_3\}$ are $C^{1,\log}$-hypersurfaces in $B_r(x_0)$, intersecting tangentially. 
\end{theorem}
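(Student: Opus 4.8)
The plan is to follow the familiar obstacle-problem strategy — blow-up classification, uniqueness of blow-ups, non-degeneracy, and a dichotomy argument — but adapted to the coupled system. The heart of the matter is that at a regular intersection point the two contact sets $\{u_1=u_2\}$ and $\{u_2=u_3\}$ should, after blow-up, coincide with the \emph{same} half-space, and I want to upgrade this to a genuine statement about the geometry of the two free boundaries near $x_0$.

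First I would set up the appropriate ``linearized'' quantities. Near $x_0$ I would look at the difference $w := u_1-u_2$, which in $\{u_1>u_2\}$ solves $\Delta w = 1 + \tfrac12\mathcal X_{\{u_2=u_3\}}$, and the analogous difference $v := u_2-u_3$. The key structural observation (from the classification of $2$-homogeneous solutions in \cite{SY}) is that near a regular point the blow-up forces $\{u_2=u_3\}$ to have full density from the side of $\{u_1=u_2\}$ and vice versa, so the jump term $\mathcal X_{\{u_2=u_3\}}$ is, to leading order, constant near $x_0$; thus $w$ behaves like a solution to the classical obstacle problem with right-hand side close to $\tfrac32$, and $v$ like one with right-hand side close to $\tfrac32$ as well. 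I would make this precise by an improvement-of-flatness / iteration scheme: show that if on $B_1$ the set $\{u_1=u_2\}$ is trapped in a slab $\{|x\cdot e|\le \delta\}$ and likewise $\{u_2=u_3\}$ is trapped in a slab with the \emph{same} normal $e$ (this is where the coupling is used — one must show the two normals are forced together at each scale), then on $B_{1/2}$ both are trapped in a thinner slab with a new direction $e'$ satisfying $|e'-e|\lesssim\delta$. The $C^{1,\log}$ (rather than $C^{1,\alpha}$) modulus will come out of the fact that the error in the right-hand side, hence in the normal direction, decays only like $1/|\log r|$ rather than geometrically — exactly the phenomenon that produces $C^{1,\log}$ regularity for the obstacle problem free boundary in the analysis of regular points (cf.\ \cite{C,C2}).

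The order of the argument would be: (i) recall from \cite{SY} that rescalings at a regular point subconverge to a rotated stable half-space solution, and show by a compactness/openness argument that the set of regular points is relatively open in $\partial\{u_1>u_2\}\cap\partial\{u_2>u_3\}$ and that no other type of blow-up can occur in a small ball $B_r(x_0)$ — this gives the identity $\partial\{u_1>u_2\}\cap\partial\{u_2>u_3\}\cap B_r(x_0)=\Reg\cap B_r(x_0)$; (ii) prove a \emph{joint non-degeneracy} statement: near a regular point, $\sup_{B_\rho(y)}(u_1-u_2)\gtrsim\rho^2$ and $\sup_{B_\rho(y)}(u_2-u_3)\gtrsim\rho^2$ for $y$ on the respective free boundaries, which combined with the $C^{1,1}$ bound from \cite{SY} controls the coincidence sets from both sides; (iii) run the simultaneous flatness-improvement iteration described above to get that both free boundaries are graphs of $C^{1,\log}$ functions over the common tangent plane $e^{\perp}$ at $x_0$, with the normal fields converging to $e$; (iv) observe that tangency is immediate since both admit $e$ as normal at $x_0$.

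The main obstacle I anticipate is step (iii), specifically controlling the two free boundaries \emph{together}: a priori $\partial\{u_1>u_2\}$ and $\partial\{u_2>u_3\}$ could have slightly different normal directions at a given scale, and the jump in the right-hand side of the equation for $w$ is governed by $\partial\{u_2>u_3\}$, so the two iterations are genuinely coupled and neither can be closed in isolation. The resolution should be to iterate a \emph{pair} of flatness parameters $(\delta_1^{(k)},\delta_2^{(k)})$ and a \emph{single} common direction $e^{(k)}$, showing that at each scale one can choose one direction working for both slabs, with the coupling error between the two bounded by the geometric decay of $\max(\delta_1,\delta_2)$; the penalty for this coupling — essentially that each step only gains a definite additive amount in $\log$-scale — is precisely what degrades $C^{1,\alpha}$ to $C^{1,\log}$. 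A secondary technical point is ensuring the harmonic replacement / linearization estimates tolerate a right-hand side that is merely bounded-and-close-to-constant rather than exactly constant, which is routine but must be done with the $\log$-type modulus tracked carefully.
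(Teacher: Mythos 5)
Your high-level plan — blow-up at a regular point, joint flatness iteration for the two free boundaries, and a $C^{1,\log}$ modulus from a slower-than-geometric decay in the normal direction — is in the right spirit, but several load-bearing details are wrong or missing. First, the claim that near a regular point $\Delta(u_1-u_2)$ in $\{u_1>u_2\}$ is close to $\tfrac32$ is off: the stable half-space blow-up has $\{u_1=u_2\}=\{u_2=u_3\}$, so in the limit $\{u_1>u_2\}\cap\{u_2=u_3\}=\emptyset$ and the right-hand side is exactly $1$; the whole difficulty is that at finite scale there is a thin strip between $\Gamma_1$ and $\Gamma_2$ where the right-hand side jumps to $\tfrac32$, and neither value dominates. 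Second, your assertion that $C^{1,\log}$ already arises at regular points of the classical obstacle problem is incorrect — there the free boundary is $C^{1,\alpha}$ (indeed real-analytic); $C^{1,\log}$ is a new phenomenon here, caused precisely by the coupling, and indeed the paper shows it is sharp (Proposition \ref{GenericOptimal}).

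Third, and most importantly, your resolution of the coupling via ``a single common direction $e^{(k)}$'' with an unspecified ``additive gain in log-scale'' does not supply the mechanism. The paper's route is structurally different in two respects. It tracks \emph{two} approximating normals $\alpha,\beta$ (one per free boundary) and the angle between them, and proves a genuine \emph{dichotomy} (Proposition \ref{IOF1}): either the two approximating hyperplanes are well-separated at the current scale, in which case the free boundaries decouple and one invokes standard $C^{1,\alpha}$ obstacle regularity; or they are close, in which case one either improves the flatness $\eps$ geometrically or, once $|\alpha-\beta|$ hits the critical threshold $\delta\eps^{1/2}$, reduces the angle by a definite additive amount $\eps$ at a much smaller scale. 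The additive-per-step angle reduction — which is what manufactures the $\log$ — comes from an explicit harmonic replacement against an auxiliary half-space harmonic function whose boundary expansion contains a term $A_2 x_1 x_2 \log r$ (Appendix B); that $\log r$ coefficient is exactly the $1/|\log r|$-rate decay you invoke, but it has to be extracted from this computation, not assumed. Finally, the paper's way to make all these barrier comparisons rigorous is to reformulate the problem as a coupled system of obstacle problems for $(u_1,-u_3)$ and prove a hidden comparison principle for that system (Section 3, Proposition \ref{Comparison}); without something of that sort, the harmonic replacement estimates and the ``trapping by translated approximate solutions'' lemmas have nothing to stand on. Your step (i), obtaining openness of $\Reg$ by semicontinuity of the Weiss energy and the energy gaps, is fine; the gaps are in (ii)–(iii).
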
 
\begin{remark}We remark that the $C^{1,\log}$-regularity is optimal, and it occurs at regular intersection points under small generic perturbations, see Proposition \ref{GenericOptimal}.  The generic condition, that is used here and later in Remark \ref{SecondRemark}, is inspired by the work of Colding-Minicozzi on mean curvature flows \cite{CM}.
\end{remark}

\begin{remark}

Although our approach follows in the spirit of the improvement-of-flatness technique, we point out that a standard application of this technique does not work in our problem. Traditionally, this technique is only applicable to problems where the free boundary is at least $C^{1,\alpha}$, which allows a linearization of the problem. In our problem, however, the free boundary is only $C^{1,\log}$, and a direct linearization is not possible. 

Instead, we establish a dichotomy as in Proposition \ref{IOF1}, which might be the most novel contribution of this work. The iteration of such dichotomy naturally leads to $C^{1,\log}$-regularity when classical techniques do not apply. This same strategy has recently been applied to the thin obstacle problem in Savin-Yu \cite{SY4}.
\end{remark}

Our result for singular points of type 1 is:

 \begin{theorem}\label{MainResult3}
Suppose that $(u_1,u_2,u_3)$ is a solution to the $3$-membrane problem in $\Omega$. Let $\mathbf{Sing}_1$ denote the collection of singular points of type 1. 

Then $\mathbf{Sing}_1\cap\Omega$ is locally covered by a $C^{1,\alpha}$-hypersurface. 
\end{theorem}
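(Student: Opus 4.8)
The plan is to follow the classical strategy for singular sets in obstacle-type problems — Caffarelli's uniqueness-of-blowup argument combined with a Whitney-type extension — but carried out for the difference $v := u_1 - u_3$ (or an appropriate linear combination adapted to the unstable profile), which is the natural scalar quantity whose coincidence set is the common half-space $\{x\cdot\alpha \le a\}$ in the unstable blowup. First I would show that at a singular point of type 1 the blowup is \emph{unique}: rather than merely extracting a subsequential limit equal to a rotated unstable half-space solution $\tfrac12\min\{x\cdot e,0\}^2 + \tfrac14\max\{x\cdot e,0\}^2$ (and its companions in $u_2,u_3$), one proves that the full limit $\lim_{r\to 0}(u_k)_r$ exists and the direction $e = e(x_0)$ is well-defined. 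The standard tool is a Monneau-type monotonicity formula, or alternatively a direct ODE/Gronwall argument on the Weiss energy of $v_r$; the key structural input is that the unstable half-space solution is an \emph{isolated} homogeneous solution modulo rotation in a suitable topology, which should follow from the classification in \cite{SY} together with a second-variation/nondegeneracy computation (the ``unstable'' terminology suggests the relevant linearized operator has a spectral gap, which is exactly what powers the monotonicity argument).

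Next I would upgrade uniqueness of the blowup to a \emph{quantitative, uniform} rate. Concretely, I would prove that there is a modulus $\omega$ with
\begin{equation*}
\Big\| (u_1,u_2,u_3)(x_0 + rx) - r^2 p_{x_0}(x) \Big\|_{L^\infty(B_1)} \le r^2\,\omega(r)
\end{equation*}
where $p_{x_0}$ is the (unique) blowup profile at $x_0$, with $\omega$ independent of $x_0$ ranging over a compact subset of $\SOne \cap \Omega$. This is the technical heart and the step I expect to be the main obstacle: in the two-membrane (classical obstacle) problem this uniformity comes from compactness plus the monotonicity formula, but here the right-hand side of the equation for $v$ genuinely jumps (between the values corresponding to $\{u_1>u_2>u_3\}$ and $\{u_1>u_2=u_3\}$), so one must control the crossing of the two free boundaries near $x_0$. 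I would handle this by observing that near a type-1 point the contact sets of \emph{both} differences have positive density in the same half-space (as emphasized in the paragraph preceding Theorem \ref{MainResult2}), so the ``bad'' region where the jump occurs is, after rescaling, confined to a thin neighborhood of $\{x\cdot e = a\}$ and can be absorbed; the nondegeneracy $f_1 > f_2 > f_3$ gives the needed lower bounds on the solutions away from that slab.

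With a uniform modulus in hand, the direction map $x_0 \mapsto e(x_0)$ and the ``vertex'' data inherit a modulus of continuity on $\SOne$, which by a Whitney extension theorem (exactly as in Caffarelli's treatment of the singular set, or Monneau's refinement) shows $\SOne$ is locally contained in a $C^{1,\alpha}$-hypersurface. To extract the Hölder exponent $\alpha$ rather than merely a $C^1$ modulus, I would track the rate $\omega(r)$ more carefully: the spectral gap at the unstable profile should give a genuine power rate $\omega(r) \lesssim r^\gamma$ for the non-radial part of the error (the radial part being handled separately), and a standard dyadic iteration then yields $|e(x_0) - e(y_0)| \lesssim |x_0 - y_0|^\alpha$ for some $\alpha = \alpha(d) > 0$. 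Finally I would remark that only a hypersurface \emph{covering} is claimed — the set $\SOne$ itself need not be relatively open in that hypersurface — which is consistent with the $C^1$-stratification picture for the more degenerate type-2 points stated elsewhere in the paper.
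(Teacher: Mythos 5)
Your general template (uniqueness of blowup $\to$ uniform modulus $\to$ Whitney/cone argument) is reasonable, but the technical heart you correctly flag as ``the main obstacle'' is left as a gap, and the mechanism you suggest for closing it will not work. The difficulty is not that the jump region is ``a thin neighborhood of $\{x\cdot e=a\}$ that can be absorbed'' --- that thinness is precisely what has to be proved, and it is equivalent to controlling the angle between the two actual free boundaries $\Gamma_1$ and $\Gamma_2$ near $x_0$. Before blow-up, $\Gamma_1$ and $\Gamma_2$ need not coincide, and the slab between them can a priori open up at a rate incompatible with a Monneau-type argument for $v=u_1-u_3$. Worse, the equation for $v$ is not of obstacle type: $\Delta v$ jumps between $2$ and $\tfrac32$ \emph{inside} $\{v>0\}$, so there is no classical Monneau monotonicity for $v$ to invoke, and a ``spectral gap for the linearized operator'' does not obviously apply when the linearization itself changes depending on which free boundary you are near. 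The ``unstable'' terminology in the paper does not point to a stabilizing spectral gap; it points to the opposite phenomenon, which is the actual engine of the proof.

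The paper's route is genuinely different. It stays with the coupled system $(u,w)=(u_1,-u_3)$ and never reduces to a scalar monotonicity argument. Proposition \ref{IOF2} is an improvement-of-flatness statement with a built-in dichotomy on the angle $|\alpha-\beta|$ between the two approximating hyperplanes: if the angle is subcritical, the approximation improves by a fixed factor at scale $\rho_1$; if the angle reaches the critical threshold $\delta\eps^{1/2}$, then Lemma \ref{ImprovementOfAngle2} shows the angle \emph{increases} by a definite amount --- this is the quantitative form of instability. Lemma \ref{EnergyDrop} converts a supercritical angle into a definite drop in the Weiss energy, and Lemma \ref{ProtectAngle} uses the fact that $W((u_k),0)=W_1$ at a point of $\SOne$ to rule out ever reaching the critical angle: if you did, iterating the angle-increase would force a Weiss drop below $W_1$, contradicting monotonicity. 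With the angle forever subcritical, Lemma \ref{LongIteration2} iterates the subcritical branch of Proposition \ref{IOF2} and obtains a geometric ($\eps_{m+1}=\tfrac12\eps_m$) rate, hence the $C^{1,\alpha}$ cone estimate $\Gamma_1\cap\Gamma_2\cap B_r\subset\{|x_1|\le Cr^{1+\alpha_d}\}$. Your final Whitney step matches the implicit last step of the paper, but everything before it --- in particular the two-free-boundary angle control and the instability$\to$energy-drop mechanism --- is missing from your sketch and cannot be replaced by the scalar Monneau argument you propose.
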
 
\begin{remark}\label{SecondRemark}Singular points of type 1 are not stable. Under generic local perturbations, they are removed from $\partial\{u_1>u_2\}\cap\partial\{u_2>u_3\}$, see Remark \ref{SOneNotStable}.\end{remark}

For parabola solutions, the contact sets $\{u_1=u_2\}$ and $\{u_2=u_3\}$ are of lower dimensions. This tangential contact implies that the solution, before blowing up, is $C^2$ at a singular point of type 2. The situation is reminiscent to that of a singular point in the obstacle problem.  

To be precise, our result for singular points of type 2 is the following:
\begin{theorem}\label{MainResult1}
Suppose that $(u_1,u_2,u_3)$ is a solution to the $3$-membrane problem in $\Omega$. Let $\mathbf{Sing}_2$ denote the collection of singular points of type 2. Then $$\mathbf{Sing}_2\cap\Omega=\cup_{k=0,1,\dots, d-1}\Sigma^k,$$ where $\Sigma^0$ consists of isolated points, and  $\Sigma^k$ is locally covered by a $C^1$-manifold of dimension $k$ for each $k=1,\dots, d-1$.
\end{theorem}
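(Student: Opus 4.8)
The strategy follows the classical template for the singular set of the obstacle problem (as in Caffarelli's work and its refinements by Caffarelli--Shahgholian--Weiss / Figalli--Serra), adapted to the triple system. The starting point is that at a singular point of type $2$ a blow-up is a parabola solution $u_1=\tfrac12 x\cdot Ax$, $u_2=\tfrac12 x\cdot Bx$, $u_3=-u_1-u_2$, for which both contact sets $\{u_1=u_2\}=\{x\cdot(A-B)x=0\}$ and $\{u_2=u_3\}=\{x\cdot(A+2B)x=0\}$ are the intersections of $\R^d$ with the kernels of the positive semidefinite forms $A-B$ and $A+2B$, hence affine subspaces (after the blow-up). First I would establish a Weiss-type monotonicity formula for the system together with $2$-homogeneity of blow-ups (available from \cite{SY}), and then prove uniqueness of the blow-up at each point $x_0\in\mathbf{Sing}_2$: the limiting parabola solution, equivalently the pair of subspaces $V_1(x_0)=\ker(A-B)$ and $V_2(x_0)=\ker(A+2B)$, does not depend on the subsequence. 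Uniqueness should come from a Monneau-type monotonicity formula comparing the rescalings with the fixed parabola profile; the ordering constraints $A\ge B$, $A+2B\ge 0$ guarantee that competitor parabola solutions form a compact, convex-like family so the standard argument closes.

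Next I would define the stratification by the dimension of the contact geometry of the blow-up: let $k(x_0)$ be the dimension of the smaller of the two kernel subspaces, or more precisely the dimension of the "invariance space" of the homogeneous blow-up (the largest subspace along which it is translation invariant), and set $\Sigma^k=\{x_0\in\mathbf{Sing}_2: \dim = k\}$. Then $\mathbf{Sing}_2\cap\Omega=\cup_{k=0}^{d-1}\Sigma^k$ by the classification of parabola solutions. Upper semicontinuity of $x_0\mapsto$ (invariance dimension), which follows from uniqueness of blow-ups plus compactness of the rescalings, shows each $\Sigma^{\le k}:=\cup_{j\le k}\Sigma^j$ is relatively closed. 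To get that $\Sigma^k$ is covered by a $C^1$ manifold of dimension $k$, I would run Whitney's extension theorem exactly as in the obstacle problem: the map $x_0\mapsto (A(x_0),B(x_0))$ giving the second-order Taylor expansion is shown to be continuous on $\Sigma^k$ (from blow-up uniqueness and a rate estimate), and the compatibility/coherence of these Taylor polynomials across nearby points of $\Sigma^k$ is checked, after which Whitney extension produces a $C^1$ function whose appropriate level set contains $\Sigma^k$; that level set is a $k$-dimensional $C^1$ manifold since on $\Sigma^k$ the blow-up is invariant along exactly a $k$-plane. For $\Sigma^0$ one argues directly that such points are isolated, because a non-degenerate (zero invariance) parabola blow-up forces the solution to be $C^2$ and genuinely quadratic there, so nearby free-boundary-intersection points cannot accumulate.

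**Main obstacles.** The genuinely new difficulty compared to the scalar obstacle problem is that we are dealing with a \emph{system}: the solution components are coupled through the ordering constraint and the right-hand side of $\Delta(u_1-u_2)$ jumps where the two free boundaries cross, so the classical monotonicity machinery (Weiss, Monneau) must be set up for the vector $(u_1,u_2,u_3)$ — or better, for the two differences $u_1-u_2$ and $u_2-u_3$ simultaneously — and one must verify the monotone quantities are well-behaved despite the two interacting obstacle-type structures. In particular the Monneau-type formula used for blow-up uniqueness needs the competitor to be a genuine parabola solution of the system, which is why the constraints $A\ge B$ and $A+2B\ge0$ (and the convexity they encode) are essential; checking that the error terms in the monotonicity computation are controlled near $\mathbf{Sing}_2$ — where, a priori, the other free boundary could be close — is the delicate point. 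A secondary technical issue is proving the \emph{continuity} (with a modulus) of $x_0\mapsto(A(x_0),B(x_0))$ on each stratum, which feeds Whitney extension; in the obstacle problem this uses a logarithmic/epiperimetric-type decay of the Weiss energy, and one expects an analogous, possibly only logarithmic, rate here, which suffices for $C^1$ (though not better) regularity of the strata.
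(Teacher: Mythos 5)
Your plan matches the paper's proof: the paper establishes precisely the Monneau-type monotonicity of $M(r)=r^{-(d+3)}\int_{\partial B_r}\sum(u_k-v_k)^2$ against a fixed parabola solution $(v_k)$, and then invokes Monneau's classical argument for blow-up uniqueness, stratification, and Whitney extension. The one concrete ingredient you leave vague --- controlling the sign of the coupled terms near $\mathbf{Sing}_2$, which you flag as the delicate point --- is handled cleanly in the paper by the pointwise inequality $\sum(u_k-v_k)\Delta(u_k-v_k)\ge 0$, verified via the rearrangement inequality using $v_1\ge v_2\ge v_3$ and $f_1>f_2>f_3$; with that, no additional error estimates or (log-)epiperimetric decay are needed for the $C^1$ conclusion.
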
 Recall that $d$ in the theorem above is the dimension of the ambient space. 

It is interesting to note that Theorem \ref{MainResult1} holds for a general number of membranes $N$. The counterparts of Theorem \ref{MainResult2} and Theorem \ref{MainResult3} when $N\ge 4$, however,  seem to be out of reach at the moment. The main difficulty is that around points in $\mathbf{Reg}$ and $\mathbf{Sing}_1$, the behavior of the solutions are not described by the corresponding blow-up limits. For instance, at a regular point, all blow-up solutions are rotations of the first profile in Figure \ref{OneDSolution}. On the other hand, for a typical solution (before blowing up), the two free boundaries separate and the solutions fail to be one-dimensional. This break of symmetry lies behind several important open problems in free boundary problems as well as geometric analysis \cite{DSV}. 
When $N=3$, we overcome this challenge with a hidden comparison principle in the system. See Proposition \ref{Comparison}.

This paper is organized as follows. In the next section, we gather several  definitions and previous results from Savin-Yu \cite{SY}. In Section 3, we reformulate the $3$-membrane problem as a coupled system of obstacle problems. In Sections 4 and 5, we work with this reformulation and give two improvement of flatness results. These are the heart of the paper. In Sections 6 and 7, we prove Theorem \ref{MainResult2} and Theorem \ref{MainResult3}, respectively. In  these two sections, we also point out the optimality of the results as well as what happens under generic perturbations. In Section 8, we give the proof of Theorem \ref{MainResult1}. 

\medskip

\textbf{Acknowledgement}:
O.~S.~is supported by  NSF grant DMS-1500438. 
H.~Y.~is supported by NSF grant DMS-1954363.

%%%%%%%%%%%%%%%%%%%%%%%%%%%%%%%%%%%%%%%%%%%%%%%%%%%%%%%%%%%%%%%%%%%%%%%%%%%%%%%%%%%%%%%%%%%%%%%%%%%%%%
\section{Preliminaries}
In this section we collect some preliminary materials. Most of the results here can be found in Savin-Yu \cite{SY}.

We begin with the definition of a solution to the $3$-membrane problem: 
\begin{definition}\label{Solution}
Let $\Omega$ be a domain in $\R^d$. 

A triplet of continuous functions  on $\overline{\Omega}$, $(u_1,u_2,u_3)$, is called  \textit{a solution to the $3$-membrane problem in $\Omega$} if 
\begin{enumerate}\item{$u_1+u_2+u_3=0 \text{ and }  u_1\ge u_2\ge u_3 \text{ in $\Omega$,}$ and } \item{the following equations are satisfied
$$\begin{cases}\Delta u_1=&\mathcal{X}_{\{u_1>u_2\}}+\frac{1}{2}\mathcal{X}_{\{u_1=u_2>u_3\}},\\
\Delta u_2=&\frac{1}{2}\mathcal{X}_{\{u_1=u_2>u_3\}}-\frac{1}{2}\mathcal{X}_{\{u_1>u_2=u_3\}},\\
\Delta u_3=&-\mathcal{X}_{\{u_2>u_3\}}-\frac{1}{2}\mathcal{X}_{\{u_1>u_2=u_3\}}.
\end{cases}$$}\end{enumerate}
\end{definition}  
This is the system of Euler-Lagrange equations for a minimizer of \eqref{FirstEquation} under the constraint in \eqref{SecondEquation}, when $N=3$ and $f_1=1, f_2=0 ,f_3=-1.$

To simplify notations, we denote the two \textit{free boundaries} by $$\Gamma_1=\partial\{u_1>u_2\}\cap\Omega \text{ and }\Gamma_2=\partial\{u_2>u_3\}\cap\Omega.$$The main question we study in this paper  is the regularity of $\Gamma_1$ and $\Gamma_2.$

Around points on $\Gamma_1\cap\{u_2>u_3\}$ and $\Gamma_1\cap \mathrm{Int}\{u_2=u_3\},$  the problem reduces to the $2$-membrane problem for $(u_1,u_2)$, for which the regularity has been fully addressed. The same happens for points on  $\Gamma_2\cap\{u_1>u_2\}$ and $\Gamma_2\cap \mathrm{Int}\{u_1=u_2\}.$ As a result, it suffices to study the regularity of $\Gamma_k$  $(k=1,2)$ near free boundary points \textit{with the highest multiplicity}, namely, points on $\Gamma_1\cap\Gamma_2.$

Around the free boundaries, we have the following:
\begin{proposition}\label{QuadraticGrowth}
Let $\Trip$ be a solution to the $3$-membrane problem in $B_1$ with $0\in\Gamma_{1}$. 

Then there is a dimensional constant $0<C<\infty$ such that $$\frac{1}{4d}r^2\le\sup_{B_r}(u_1-u_2)\le Cr^2 \text{ for $r\in(0,1)$.}$$

Similar estimates hold for $(u_2-u_3)$ if $0\in\Gamma_2$.
\end{proposition}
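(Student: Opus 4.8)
The plan is to prove the two bounds separately. The upper bound $\sup_{B_r}(u_1-u_2)\le Cr^2$ should follow from the known $C^{1,1}$-regularity of solutions (the optimal interior regularity from Savin-Yu \cite{SY}) together with the fact that $u_1-u_2$ vanishes together with its gradient at the free boundary point $0\in\Gamma_1$: since $u_1\ge u_2$ with equality on a set whose boundary passes through $0$, and $u_1-u_2\in C^{1,1}_{loc}$, a second-order Taylor expansion with $\nabla(u_1-u_2)(0)=0$ gives $|u_1-u_2|(x)\le C|x|^2$ on $B_{1/2}$, hence the stated bound after rescaling. The vanishing of the gradient at $0$ is where one uses that $0$ is an accumulation point of the contact set $\{u_1=u_2\}$ (or, alternatively, a nondegeneracy/continuity argument for $\nabla(u_1-u_2)$ across the free boundary).

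For the lower bound I would use the sub-mean-value / ODE-for-the-maximum technique that is classical in the obstacle problem. Set $w=u_1-u_2\ge 0$. From the Euler-Lagrange system, $\Delta w = \mathcal{X}_{\{u_1>u_2>u_3\}}+\tfrac32\mathcal{X}_{\{u_1=u_2>u_3\}} \ge \mathcal{X}_{\{w>0\}}$ in $B_1$; in particular $\Delta w \ge 1$ on the open set $\{w>0\}$. Let $m(r)=\sup_{B_r} w$. One shows $m$ is not too small by comparing $w$ on $\partial B_r$ with the explicit subsolution-type barrier: on the ball $B_r$, the function $v(x)=\tfrac{1}{2d}|x|^2$ satisfies $\Delta v = 1$. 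The idea is that if $m(r)$ were much smaller than $\tfrac{1}{4d}r^2$, then $w - (\tfrac{1}{2d}|x|^2 - \tfrac{1}{4d}r^2)^+$... more cleanly: since $0\in\partial\{w>0\}$, pick a point $x_r\in B_r$ where $w>0$; inside the connected component $U$ of $\{w>0\}$ containing a path from near-$0$, apply the maximum principle to $w-\tfrac{1}{2d}|x-x_0|^2$ for a suitable center. The standard packaging is: because $\Delta w\ge 1$ on $\{w>0\}$ and $w=0$ on $\partial\{w>0\}\cap B_1$, for any $x_0\in\{w>0\}\cap B_{r/2}$ the function $w(x)-\tfrac{1}{2d}|x-x_0|^2$ is subharmonic on $\{w>0\}\cap B_r$, equals $-\tfrac{1}{2d}|x-x_0|^2<0$ on $\partial\{w>0\}$ and equals $w(x_0)>0$ at $x_0$, so its maximum is attained on $\partial B_r$, giving $\sup_{\partial B_r} w \ge w(x_0)+\tfrac{1}{2d}(\operatorname{dist}(x_0,\partial B_r))^2 - \text{error}$. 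Letting $x_0\to 0$ and optimizing the radii yields $m(r)\ge \tfrac{1}{4d}r^2$ (the constant $\tfrac14$ absorbing the passage from $B_{r/2}$ to $B_r$ and $\operatorname{dist}\to r$). The symmetric statement for $u_2-u_3$ follows identically from $\Delta(u_2-u_3)=\tfrac12\mathcal{X}_{\{u_1=u_2>u_3\}}+\mathcal{X}_{\{u_2>u_3=...\}}\ge \mathcal{X}_{\{u_2>u_3\}}$, i.e. $\Delta(u_2-u_3)\ge 1$ on $\{u_2>u_3\}$.

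The main obstacle I anticipate is not the nondegeneracy (lower) bound, which is robust, but rather making the upper bound genuinely self-contained: one must know a priori that $w=u_1-u_2$ is $C^{1,1}$ near $0$ and that $\nabla w(0)=0$. The first is quotable from \cite{SY}; the second requires knowing that the free boundary point $0$ is approached by contact points, which in turn uses that $0\in\Gamma_1=\partial\{u_1>u_2\}\cap\Omega$ means every neighborhood of $0$ meets $\{u_1=u_2\}$, and there $w$ and $\nabla w$ both vanish, so by $C^{1,1}$ continuity of $\nabla w$ we get $\nabla w(0)=0$. Once these two inputs are in place the estimate is a one-line Taylor expansion. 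I would therefore structure the write-up as: (i) cite $C^{1,1}$ and derive the upper bound in two sentences; (ii) spend the bulk of the argument on the barrier comparison for the lower bound; (iii) remark that replacing $(u_1,u_2)$ by $(u_2,u_3)$ gives the last claim verbatim.
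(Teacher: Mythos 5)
Your lower bound argument is correct and is the standard nondegeneracy barrier: since $\Delta(u_1-u_2)\ge \mathcal{X}_{\{u_1>u_2\}}$, for $x_0\in\{u_1>u_2\}$ the function $(u_1-u_2)-\frac{1}{2d}|x-x_0|^2$ is subharmonic on $\{u_1>u_2\}\cap B_r(x_0)$, negative on the free boundary part, and positive at $x_0$, so its maximum sits on $\partial B_r(x_0)$; letting $x_0\to 0$ gives the claim. (Minor slip: the right-hand side of $\Delta(u_1-u_2)$ should be $\mathcal{X}_{\{u_1>u_2>u_3\}}+\frac32\mathcal{X}_{\{u_1>u_2=u_3\}}$, not $\{u_1=u_2>u_3\}$ in the second characteristic function; on the set you wrote $u_1-u_2\equiv 0$.)

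The upper bound as you propose it is circular with respect to the intended logical structure. In the paper's Section~2 the quadratic growth estimate (this Proposition) is stated \emph{before} Theorem~\ref{OptimalRegularity}, and the text introduces the $C^{1,1}$ bound with ``as a consequence'': the dimensional $C^{1,1}$ estimate is \emph{derived from} quadratic growth (via compactness of the rescalings), not the other way around. Quoting $C^{1,1}$ from \cite{SY} to prove quadratic growth therefore inverts the order of the theory. There is also a hypothesis mismatch: Theorem~\ref{OptimalRegularity} is stated under $0\in\Gamma_1\cap\Gamma_2$, whereas Proposition~\ref{QuadraticGrowth} only assumes $0\in\Gamma_1$; and even a non-dimensional interior $C^{1,1}$ bound would not by itself deliver the dimensional constant claimed. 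You did anticipate that the upper bound needs to be ``genuinely self-contained,'' and here is the self-contained argument. Set $w=u_1-u_2\ge 0$; then $0\le\Delta w\le\frac32$, so $w$ is subharmonic with bounded Laplacian, and $w(0)=0$. For $r<1/2$ let $\tilde w$ be the harmonic replacement of $w$ in $B_{2r}$. By comparison $\tilde w\ge w\ge 0$, and since $\Delta(\tilde w-w)=-\Delta w\ge-\frac32$ with zero boundary values, comparison with $\frac{3}{4d}\bigl((2r)^2-|x|^2\bigr)$ gives $\tilde w - w\le\frac{3}{d}r^2$ in $B_{2r}$, hence $\tilde w(0)\le\frac{3}{d}r^2$. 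The Harnack inequality for the nonnegative harmonic function $\tilde w$ in $B_{2r}$ then yields $\sup_{B_r}\tilde w\le C_d\,\tilde w(0)\le C_d r^2$, and therefore $\sup_{B_r}w\le C_d r^2$. This uses only $0\in\Gamma_1$, produces a purely dimensional constant, and does not presuppose any regularity beyond continuity.
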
  
Recall that $d$ is the dimension of the ambient space. 

As a consequence, we have the optimal regularity of solutions:
\begin{theorem}\label{OptimalRegularity}
Let $\Trip$ be a solution to the $3$-membrane problem in $B_1$ with $0\in\Gamma_{1}\cap\Gamma_2$. 

Then $\sum\|D^2u_k\|\le C$ in $B_{1/2}$ for a dimensional constant $C$.
\end{theorem}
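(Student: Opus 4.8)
The plan is to deduce Theorem \ref{OptimalRegularity} from the quadratic growth estimate of Proposition \ref{QuadraticGrowth} together with the $C^{1,\alpha}$ bounds on solutions from \cite{CV}, using a standard obstacle-problem style argument. The key point is that each $u_k$ solves a Poisson equation $\Delta u_k = g_k$ with $g_k$ bounded (the right-hand sides in Definition \ref{Solution} take values in $\{0,\pm 1,\pm\tfrac12\}$), so interior elliptic estimates already give $u_k \in W^{2,p}$ for all $p$, but not the uniform $C^{1,1}$ bound; the uniform bound has to come from the free-boundary structure.

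First I would reduce everything to controlling $D^2 u_k$ at points near the free boundaries. Away from $\Gamma_1 \cup \Gamma_2$ in $B_{1/2}$, the relevant right-hand sides are locally constant, so each $u_k$ is locally harmonic-plus-quadratic and interior estimates give bounds on $D^2 u_k$ in terms of the oscillation of $u_k$ and the distance to the free boundary. So the real issue is a uniform second-derivative bound at (or approaching) a point of $\Gamma_1 \cap \Gamma_2$. By translating, it suffices to bound $\|D^2 u_k\|_{L^\infty(B_{1/4})}$ when $0 \in \Gamma_1 \cap \Gamma_2$, and for this I would prove the scale-invariant estimate $\sup_{B_r}|u_k| \le C r^2$ for all small $r$, then upgrade this pointwise quadratic control to a $C^{1,1}$ bound.

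The quadratic decay of $u_k$ itself: since $u_1 + u_2 + u_3 = 0$ and $u_1 \ge u_2 \ge u_3$, we have $u_1 \ge 0 \ge u_3$ and $|u_2| \le u_1 - u_3 \le (u_1-u_2)+(u_2-u_3)$, so it is enough to bound $u_1 - u_2$ and $u_2 - u_3$ from above. But that is exactly the content of Proposition \ref{QuadraticGrowth}: since $0 \in \Gamma_1$ we get $\sup_{B_r}(u_1-u_2) \le C r^2$, and since $0 \in \Gamma_2$ we get $\sup_{B_r}(u_2 - u_3) \le C r^2$; combined with $u_1(0) = u_2(0) = u_3(0)$ (all three coincide at a point on both free boundaries — using the continuity and ordering, together with normalizing the harmonic average, one checks the common value may be taken to be $0$) and the sign information, this yields $|u_k| \le C r^2$ on $B_r$ for each $k$. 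The differences $v_1 = u_1 - u_2$ and $v_2 = u_2 - u_3$ are the natural obstacle-type quantities: each is nonnegative, solves $\Delta v_i = h_i$ with $h_i$ bounded (in fact $h_1 = \mathcal X_{\{u_1>u_2\}} + \tfrac12 \mathcal X_{\{u_1 = u_2 > u_3\}}$ and similarly for $h_2$), and vanishes to second order at $0$.

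For the final upgrade from "$u_k$ vanishes quadratically at every free-boundary point" to "$D^2 u_k$ is bounded", I would run the usual dichotomy/iteration argument: fix any $x \in B_{1/4}$, let $\rho = \mathrm{dist}(x, \Gamma_1 \cup \Gamma_2)$; if $\rho \ge 1/8$ interior estimates in $B_{1/16}(x)$ suffice since the right-hand side is constant there; if $\rho < 1/8$, pick the nearest free-boundary point $x_0$, apply the quadratic growth on $B_{2\rho}(x_0) \ni B_\rho(x)$ to get $\mathrm{osc}_{B_\rho(x)} u_k \le C\rho^2$, and then interior Schauder/$W^{2,p}$ estimates on $B_{\rho/2}(x)$ (where $\Delta u_k$ is constant) give $|D^2 u_k(x)| \le C\rho^{-2}\,\mathrm{osc}_{B_\rho(x)}u_k \le C$. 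The main obstacle, and the substantive input, is really Proposition \ref{QuadraticGrowth} itself — in particular the \emph{lower} growth bound is not needed here but the \emph{upper} bound, and the fact that one has it simultaneously for $u_1-u_2$ and $u_2-u_3$ at the same point, is what makes the common vanishing of all $u_k$ work; everything after that is the routine obstacle-problem covering argument. One should also double-check that the constant in Proposition \ref{QuadraticGrowth} is uniform up to scale (it is stated scale-invariantly for $r \in (0,1)$), so no additional compactness is needed.
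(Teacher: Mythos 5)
Your reduction to the quadratic decay $\sup_{B_r}|u_k|\le Cr^2$ at the origin is fine (and the parenthetical about ``normalizing the harmonic average'' is unnecessary: since $\sum u_k\equiv 0$ is part of Definition \ref{Solution} and all three functions coincide at $0\in\Gamma_1\cap\Gamma_2$, they are all zero there automatically). The gap is in the covering step. You take a general $x\in B_{1/4}$, set $\rho=\mathrm{dist}(x,\Gamma_1\cup\Gamma_2)$, let $x_0$ be a nearest free-boundary point, and then invoke ``quadratic growth at $x_0$'' to claim $\mathrm{osc}_{B_\rho(x)}u_k\le C\rho^2$. But Proposition \ref{QuadraticGrowth} at $x_0$ controls only the \emph{one} difference that vanishes there: $u_1-u_2$ if $x_0\in\Gamma_1$, or $u_2-u_3$ if $x_0\in\Gamma_2$. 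Your earlier derivation of $|u_k|\le Cr^2$ needed \emph{both} bounds simultaneously, which is available precisely because $0\in\Gamma_1\cap\Gamma_2$. At a nearby $x_0\in\Gamma_1\setminus\Gamma_2$, nothing in Proposition \ref{QuadraticGrowth} forces $u_2-u_3$ (or $u_k$ after subtracting an affine function) to have oscillation $O(\rho^2)$ on $B_\rho(x)$; its gradient at $x_0$ is a genuine $O(1)$ quantity, and the $C^{1,\alpha}$ bound from \cite{CV} only yields $O(\rho^{1+\alpha})$, which gives $|D^2u_k(x)|\lesssim\rho^{\alpha-1}$ and blows up.

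One cannot repair this by treating $v_1=u_1-u_2$ and $v_2=u_2-u_3$ on their separate natural scales either: $\Delta v_2$ jumps between $1$ and $3/2$ across $\Gamma_1$ (this is exactly the phenomenon the Introduction singles out as the reason ``most of the known methods from the obstacle problem fail to apply''), so the ball $B_{\mathrm{dist}(x,\Gamma_2)}(x)$ on which $v_2>0$ does \emph{not} have constant Laplacian when $\Gamma_1$ cuts through it, and the clean interior Schauder estimate you want is unavailable. The coupling between the two free boundaries is the whole difficulty; the uniform $C^{1,1}$ bound is the main result of \cite{SY} (cited in the Introduction as ``the optimal $C^{1,1}$-regularity of solutions'') and is not recovered by the routine single-obstacle covering argument you describe. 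You would need an additional structural input — e.g., a comparison/replacement argument that handles the jump in the right-hand side across the ``other'' free boundary, as is done in \cite{SY} — before the dichotomy $\rho\gtrsim 1$ vs.\ $\rho\ll 1$ can close.
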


This gives compactness of the family of rescaled solutions. To be precise, for $x_0\in\Gamma_1\cap\Gamma_2$, we define $$(u_k)_{x_0,r}=\frac{1}{r^2}u_k(x_0+rx) \text{ for $k=1,2,3$}.$$ As $r\to0$, these functions are locally uniformly  bounded in $C^{1,1}$. Consequently, there are functions $(u_k)_{x_0}\in C^{1,1}_{loc}(\R^d)$ such that, up to a subsequence, $$(u_k)_{x_0,r}\to (u_k)_{x_0} \text{ locally uniformly.}$$ 

The triplet $((u_k)_{x_0})_{k=1,2,3}$ is called a \textit{blow-up profile} at $x_0$. 

This is a slight abuse of notation. At this stage, we do not have uniqueness of blow-ups.  This blow-up profile  not only depends on the point $x_0$, but could also depend on the particular subsequence of $r\to0.$ An important result of this paper is that for the three types of free boundary points in Definition \ref{FreeBoundaryPoints}, blow-ups are indeed unique.

The blow-up profile $((u_k)_{x_0})$ solves the $3$-membrane problem in $\R^d$. The origin is a free boundary point with the highest multiplicity, that is,  $$0\in\partial\{(u_1)_{x_0}>(u_2)_{x_0}\}\cap\partial\{(u_2)_{x_0}>(u_3)_{x_0}\}.$$

To study these blow-up profiles, we use  a monotonicity formula inspired by the Weiss energy \cite{W}. This monotonicity formula holds for general $N\ge 2$. In this paper, we only need the special case when $N=3$. 

For a point $x_0\in\Gamma_1\cap\Gamma_2$ and small $r>0$, the functional is defined as 
\begin{equation}\label{WeissEnergy}\begin{split}W((u_k),x_0,r)=&\frac{1}{r^{d+2}}\int_{B_r(x_0)}\frac{1}{2}\sum|\nabla u_k|^2+u_1-u_3\\-&\frac{1}{r^{d+3}}\int_{\partial B_r(x_0)}\sum u_k^2.\end{split}\end{equation} This is monotone:
\begin{theorem}\label{Monotonicity}
Let $\Trip$ be a solution to the $3$-membrane problem in $B_1$ with $0\in\Gamma_1\cap\Gamma_2.$  Then $W((u_k),0,r)$ is a non-decreasing function in $r$.   

Moreover, if $W((u_k),0,r)$ is constant in $r$, then $(u_1,u_2,u_3)$ is $2$-homogeneous in $B_1.$
\end{theorem}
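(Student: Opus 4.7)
The plan is to follow a standard Weiss-type computation tailored to the structure of the system. First I would recast $W$ in scale-invariant form by introducing the rescalings $v_k^r(x) := r^{-2} u_k(rx)$, for which a change of variables gives
$$W((u_k),0,r) = \int_{B_1}\Bigl[\tfrac12\sum_k|\nabla v_k^r|^2 + v_1^r - v_3^r\Bigr] dx - \int_{\partial B_1}\sum_k (v_k^r)^2 \, d\sigma.$$
Then $\frac{d}{dr}W$ is obtained by differentiating under the integral with $\dot v_k := \partial_r v_k^r = r^{-1}(x\cdot\nabla v_k^r - 2 v_k^r) =: r^{-1} w_k$.

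After integrating by parts in the kinetic term, the derivative splits into interior and boundary pieces:
$$\frac{d}{dr}W = \int_{B_1}\Bigl[-\sum_k(\Delta v_k^r)\dot v_k + (\dot v_1 - \dot v_3)\Bigr] dx + \sum_k \int_{\partial B_1} \dot v_k\bigl(\partial_\nu v_k^r - 2 v_k^r\bigr)\, d\sigma.$$
I expect the interior integrand to vanish almost everywhere by a stratum-by-stratum check using the Euler-Lagrange equations in Definition \ref{Solution}. On the open stratum $\{v_1^r>v_2^r>v_3^r\}$ the identity $\sum_k \Delta v_k^r \dot v_k = \dot v_1 - \dot v_3$ is immediate from $(\Delta v_1^r, \Delta v_2^r, \Delta v_3^r) = (1, 0, -1)$. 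On $\{v_1^r = v_2^r > v_3^r\}$ the Laplacians become $(\tfrac12, \tfrac12, -1)$, so the identity survives provided $\dot v_1 = \dot v_2$ almost everywhere on that set; the stratum $\{v_1^r > v_2^r = v_3^r\}$ is handled symmetrically, and the triple coincidence stratum is trivial since $v_1^r+v_2^r+v_3^r\equiv 0$.

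For the boundary term, I would substitute the pointwise identity $\partial_\nu v_k^r - 2 v_k^r = w_k$ on $\partial B_1$ together with $\dot v_k = r^{-1} w_k$, producing
$$\frac{d}{dr}W = \frac{1}{r}\sum_k \int_{\partial B_1} w_k^2 \, d\sigma \ge 0,$$
which is the claimed monotonicity. If this vanishes on an interval of radii, then $w_k\equiv 0$ on each sphere in that range, i.e.\ $y \cdot \nabla u_k(y) = 2 u_k(y)$ for all relevant $y$, which is Euler's identity for $2$-homogeneity of the triple.

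The main obstacle is justifying the a.e.\ equality $\dot v_i = \dot v_j$ on $\{v_i^r = v_j^r\}$ used in the stratum-wise computation. I would handle it by observing that the map $(r,x) \mapsto v_i^r(x) - v_j^r(x)$ is nonnegative and, thanks to the $C^{1,1}$ bound of Theorem \ref{OptimalRegularity}, Lipschitz jointly in $(r,x)$; the standard fact that a nonnegative Lipschitz function has vanishing gradient almost everywhere on its zero set, applied in the $r$-direction, gives precisely what is needed. With that detail in hand the two interior integrals cancel exactly and the proof is complete.
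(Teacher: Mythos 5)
Your computation is correct and is the standard Weiss-type argument that the reference \cite{SY} uses for this formula; the paper itself states Theorem \ref{Monotonicity} without proof, importing it from \cite{SY}. The change of variables, the formula for $\dot v_k$, the integration by parts, the identification of the boundary term as $r^{-1}\sum_k\int_{\partial B_1}w_k^2$, and the stratum-by-stratum verification that the interior integrand vanishes are all right, as is the application of the fact that a nonnegative Lipschitz function has vanishing derivative a.e.\ on its zero set.

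Two small points are worth tightening. First, the triple coincidence stratum is not ``trivial'' merely because $\sum_k v_k^r\equiv 0$: what one actually needs there is $\dot v_1-\dot v_3=0$ a.e., which follows from the same Lipschitz-zero-set argument applied to $v_1^r-v_3^r\ge 0$ (or, equivalently, to $v_1^r\ge 0$ and $-v_3^r\ge 0$, using that the constraint forces all three to vanish on that set); it is the same mechanism as on the other strata rather than a separate observation. Second, the joint Lipschitz regularity of $(r,x)\mapsto v_i^r(x)-v_j^r(x)$ holds uniformly only for $r$ bounded away from $0$ (the time derivative $\partial_r v_k^r$ has a $1/r$ factor), so the a.e.\ identity and the resulting $W'(r)\ge 0$ are obtained for a.e.\ $r\in(\delta,1)$ and then on all of $(0,1)$ by letting $\delta\to 0$; this also requires, and you should state, that $W$ is absolutely continuous in $r$ on compact subintervals, which follows from the same Lipschitz bounds. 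The Fubini step converting ``a.e.\ $(r,x)$ in the zero set'' into ``for a.e.\ $r$, a.e.\ $x$ in the slice'' should also be made explicit. With those points spelled out the proof is complete and matches the expected argument.
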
 

This gives strong restrictions on blow-up profiles:
\begin{corollary}
Let $\Trip$ be a solution to the $3$-membrane problem in $\Omega$ with $x_0\in\Gamma_{1}\cap\Gamma_2.$ Suppose $((u_1)_{x_0},(u_2)_{x_0},(u_3)_{x_0})$ is a blow-up profile at $x_0$. 

Then for $k=1,2,3,$ $(u_k)_{x_0}$ is a $2$-homogeneous function in $\R^d.$
\end{corollary}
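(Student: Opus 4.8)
The plan is to combine two facts already available in the excerpt: the monotonicity of the Weiss energy $W((u_k),x_0,r)$ from Theorem \ref{Monotonicity}, and the compactness of rescalings coming from the uniform $C^{1,1}$ bound in Theorem \ref{OptimalRegularity}. First I would observe that since $((u_1)_{x_0},(u_2)_{x_0},(u_3)_{x_0})$ is obtained as a locally uniform limit of the rescalings $(u_k)_{x_0,r_j}$ along some sequence $r_j\to 0$, and these rescalings are uniformly bounded in $C^{1,1}_{loc}$, the convergence is in fact locally in $C^1$; in particular $\nabla (u_k)_{x_0,r_j}\to\nabla (u_k)_{x_0}$ locally uniformly. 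This is enough to pass to the limit in each of the two integral terms defining $W$.

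Next I would exploit the scaling behaviour of the Weiss energy. A direct change of variables $y=x_0+rx$ shows that $W((u_k),x_0,\rho r)=W((u_k)_{x_0,r},0,\rho)$ for every $\rho,r>0$; the functional is homogeneous under the parabolic rescaling precisely because the density $\tfrac12\sum|\nabla u_k|^2+u_1-u_3$ carries the right weight $r^{d+2}$ and the boundary term carries the weight $r^{d+3}$. Applying this with $r=r_j$ and using the $C^1_{loc}$ convergence above, we get, for each fixed $\rho>0$,
\begin{equation*}
W((u_k)_{x_0},0,\rho)=\lim_{j\to\infty} W((u_k)_{x_0,r_j},0,\rho)=\lim_{j\to\infty} W((u_k),x_0,\rho r_j).
\end{equation*}
Since $\rho r_j\to 0$ as $j\to\infty$ and $W((u_k),x_0,\cdot)$ is monotone non-decreasing by Theorem \ref{Monotonicity}, it has a limit $W(x_0,0^+)$ as the radius tends to $0$, and the right-hand side above equals this limit independently of $\rho$. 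Hence $W((u_k)_{x_0},0,\rho)$ is constant in $\rho$.

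Finally I would invoke the rigidity clause of Theorem \ref{Monotonicity}: the blow-up $((u_k)_{x_0})$ is itself a solution to the $3$-membrane problem in $\R^d$ with the origin a free boundary point of highest multiplicity (this is already asserted in the excerpt), so the theorem applies to it on any ball $B_R$, and constancy of its Weiss energy forces it to be $2$-homogeneous in $B_R$ for every $R$, i.e.\ $2$-homogeneous on all of $\R^d$. The main obstacle — really the only delicate point — is justifying that $W$ passes to the limit under the blow-up convergence: one must make sure the gradients converge strongly enough (not merely weakly) so that $\int |\nabla u_k|^2$ is continuous along the sequence, and that the boundary integrals over $\partial B_\rho$ converge; both follow from the uniform $C^{1,1}$ bound, which upgrades the weak-$*$ convergence of second derivatives to locally uniform convergence of the functions and their gradients. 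Everything else is a bookkeeping computation with the scaling identity and an appeal to monotonicity.
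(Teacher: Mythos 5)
Your argument is correct and is exactly the standard Weiss-monotonicity route that the paper has in mind when it states this corollary as a consequence of Theorem~\ref{Monotonicity} (the paper omits the details, deferring to \cite{SY}): the scaling identity $W((u_k),x_0,\rho r)=W((u_k)_{x_0,r},0,\rho)$, the $C^{1}_{loc}$ convergence of the rescalings furnished by the uniform $C^{1,1}$ bound of Theorem~\ref{OptimalRegularity}, the existence of $\lim_{r\to0}W((u_k),x_0,r)$ by monotonicity, and finally the rigidity clause applied to the blow-up on every ball. No gap.
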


In two dimensions, homogeneous solutions have been completely classified, even for general $N$ \cite{SY}. 

In what follows, we use the following standard notation:
\begin{notation}
We denote by $\Sph$ the set of unit vectors in $\R^d$.  

The standard basis for $\R^d$ is denoted by $\{e^k\}_{k=1,2,\dots,d}$. The coordinate function in the direction of $e^k$ is denoted by $x_k.$  
\end{notation}

With these notations, we extend the homogeneous solutions from 2D to general dimensions. Under the assumption $\sum u_k=0,$ it suffices to define $u_1$ and $u_3$.

\begin{definition}\label{SHS}
For  $e\in\Sph$, the \textit{stable half-space solution in direction $e$} is $$ u_1^{0,e}=\frac{1}{2}\max\{x\cdot e, 0\}^2, u_3^{0,e}=-u_1^{0,e}.$$ The class of stable half-space solutions is denoted by $\mathcal{SH}$, that is, 
$$\mathcal{SH}=\{(u_k^{0,e}):e\in\Sph\}.$$
\end{definition} 

\begin{definition}\label{UHS}
For  $e\in\Sph$, the \textit{unstable half-space solution in direction $e$} is  \begin{align*}u_1^{1,e}&=\frac{1}{2}\min\{x\cdot e,0\}^2+\frac{1}{4}\max\{x\cdot e,0\}^2,\\ u_3^{1,e}&=-\frac{1}{4}\min\{x\cdot e,0\}^2-\frac{1}{2}\max\{x\cdot e,0\}^2.\end{align*}The class of unstable half-space solutions is denoted by $\mathcal{UH}$, that is, 
$$\mathcal{UH}=\{(u_k^{1,e}):e\in\Sph\}.$$
\end{definition} 

\begin{definition}\label{HybridSolution}
For a symmetric matrix $B$ and a unit vector $e\in\Sph$ satisfying $$trace(B)=1\text{ and } 3B-e\otimes e\ge 0,$$ the \textit{hybrid solution with direction $e$ and coefficient matrix $B$} is
$$u_1^{(e,B)}=\frac{1}{4}\max\{x\cdot e,0\}^2+\frac{1}{4}x\cdot Bx, \quad u_3^{(e,B)}=-\frac 12x\cdot Bx.$$
Symmetrically, the \textit{hybrid solution with  coefficient matrix $B$ and direction $e$} is
$$u_1^{(B,e)}=\frac{1}{2}x\cdot Bx, \quad u_3^{(e,B)}=-\frac{1}{4}\max\{x\cdot e,0\}^2-\frac 14x\cdot Bx.$$

\end{definition}

\begin{definition}\label{ParabolaSolution}
For symmetric matrices $A$ and $B$ with $trace(A)=1$, $trace(B)=-1$, and $2A+B\ge 0\ge A+2B$, the \textit{parabola solution with coefficient matrices $(A,B)$} is 
$$u_1^{(A,B)}=\frac{1}{2}x\cdot Ax, u_3^{(A,B)}=\frac{1}{2}x\cdot Bx.$$
The class of parabola solutions is denoted by $\mathcal{P}$, that is, 
$$\mathcal{P}=\{(u_k^{(A,B)}): trace(A)=1 ,  trace(B)=-1 , 2A+B\ge 0\ge A+2B\}.$$
\end{definition}

One consequence of Theorem \ref{Monotonicity} is that there is a well-defined function for $x_0\in\Gamma_{1}\cap\Gamma_2$ by \begin{equation}\label{LimitOfWeiss}W((u_k),x_0):=\lim_{r\to 0}W((u_k),x_0,r)=W((u_k)_{x_0},0,1),\end{equation} where $(u_k)_{x_0}$ is a blow-up profile at $x_0.$

For the four types of solutions above, we have \begin{equation}\label{FirstEnergy}W((u_k^{0,e}),0)=W_{0}, W((u_k^{1,e}),0)=W_{1}, W((u_k^{(e,B)}),0)=W_{2} \text{ and } W((u_k)^{A,B})=W_3,\end{equation} where each $W_k$ is a positive dimensional constant. They satisfy $$W_1=\frac{3}{2}W_0, W_2=\frac{7}{4}W_0  \text{ and } W_3=2W_0.$$
Heuristically, this implies that among the four types of solutions, the stable half-space solution is the most stable, and the parabola solution is the least stable. 

This motivates the following definition:

\begin{definition}\label{FreeBoundaryPoints}
Suppose $x_0\in\Gamma_{1}\cap\Gamma_2$.  We define the following four classes of free boundary points with the highest multiplicity:
\begin{enumerate}
\item{We call $x_0$ a \textit{regular point} if there is a blow-up profile in $\mathcal{SH}.$ 

The collection of regular points is denoted by $\Reg$.}
\item{We call $x_0$ a \textit{singular point of type 1} if there is a blow-up profile in $\mathcal{UH}.$ 

The collection of singular points of type 1 is denoted by $\SOne$.}
\item{We call $x_0$ a \textit{hybrid point} if there is a blow-up profile given by a hybrid solution as in Definition \ref{HybridSolution}.
}

\item{We call $x_0$ a \textit{singular point of type 2} if there is a blow-up profile in  $\mathcal{P}.$ 

The collection of singular points of type 2 is denoted by $\STwo$.}

\end{enumerate}
\end{definition}

Thanks  to the classification of homogeneous solutions in 2D, these four classes form a partition of $\Gamma_{1}\cap\Gamma_2.$  For general dimensions, the comparison of $W_0$, $W_1$, $W_2$ and $W_3$ implies that the four classes are mutually disjoint. However, it is not clear whether they exhaust the entire $\Gamma_1\cap\Gamma_2.$ 

As mentioned in Introduction, we focus on regular points and  two types of singular points in the remaining part of this paper. Free boundary regularity around hybrid points will be addressed in  a future work.

%%%%%%%%%%%%%%%%%%%%%%%%%%%%%%%%%%%%%%%%%%%%%%%%%%%%%%%%%%%%%%%%%%%%%%%%%%%%%%%%%%%%%%%%%%%%%%%%%%%%%%
\section{A system of obstacle problems}
In this section, we reformulate the $3$-membrane problem as a  coupled system of two obstacle problems.  This system enjoys a more transparent comparison principle. 

Suppose that $\Trip$ is a solution to the $3$-membrane problem in $\Omega$. The first  condition in Definition \ref{Solution} leads to   $$u_1\ge -\frac{1}{2}u_3 \text{ in $\Omega$}.$$  The second condition gives $$\Delta u_1\le 1 \text{ in $\Omega$,}$$and$$\Delta u_1=1 \text{ in $\{u_1>-\frac{1}{2}u_3\}$}.$$ That is, $u_1$ solves the obstacle problem with the \textit{unknown} obstacle $-\frac{1}{2}u_3$.   A similar argument applies to $-u_3$, which solves the obstacle problem with $\frac{1}{2}u_1$ as the obstacle. 
 
With this observation, we recast the $3$-membrane problem as a coupled  system of two obstacle problems:
\begin{definition}\label{ReSol}
Let $\Omega$ be a domain in $\R^d$. Suppose $(u,w)$ is a pair of continuous functions on $\overline{\Omega}$ satisfying $$u\ge0,\text{ } w\ge 0, \text{ } u\ge\frac{1}{2}w \text{ and } w\ge\frac{1}{2}u.$$
We say that the pair $(u,w)$ is a \textit{subsolution} in $\Omega$ (to the system of obstacle problems), and write $$(u,w)\in\underline{\mathcal{A}}(\Omega),$$ if $$\Delta u\ge\mathcal{X}_{\{u>\frac{1}{2}w\}} \text{ and } \Delta w\ge\mathcal{X}_{\{w>\frac{1}{2}u\}} \text{ in $\Omega.$}$$
We say that the pair $(u,w)$ is a \textit{super solution} in $\Omega$, and write $$(u,w)\in\overline{\mathcal{A}}(\Omega),$$ if $$\Delta u\le 1 \text{ and } \Delta w\le 1 \text{ in $\Omega.$}$$
The pair $(u,w)$ is called a \textit{solution} in $\Omega$ if $(u,w)\in\Subsol(\Omega)\cap\Supsol(\Omega).$ In this case, we write $$(u,w)\in\mathcal{A}(\Omega).$$
\end{definition} 

\begin{remark}\label{EquivalenceBetweenProblems}This problem is equivalent to the $3$-membrane problem,  in the sense that $(u,w)\in\mathcal{A}(\Omega)$ if and only if the triplet $(u, -u+w, -w)$ solves the $3$-membrane problem as in Definition \ref{Solution}. In particular, there are  two free boundaries in the reformulated problem, namely, \begin{equation*}\label{Gu}\Gamma_u:=\partial\{u>\frac{1}{2}w\}\cap\Omega, \text{ and  } \Gamma_w:=\partial\{w>\frac{1}{2}u\}\cap\Omega.\end{equation*} \end{remark}

With this equivalence, we have the following two results in the spirit of Proposition \ref{QuadraticGrowth} and Theorem \ref{OptimalRegularity}:

\begin{proposition}\label{QuadGrowthRef}
Suppose $(u,w)\in\mathcal{A}(B_1)$ with  $0\in\Gamma_{u}$.  Then there is a dimensional constant $0<C<\infty$ such that  $$\frac{1}{4d}r^2\le\sup_{B_r}(u-\frac{1}{2}w)\le Cr^2 \text{ for $r\in (0,1)$.}$$

Similar estimates hold for $(w-\frac{1}{2}u)$ if $0\in\Gamma_w$.
\end{proposition}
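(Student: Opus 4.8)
The plan is to transfer the quadratic growth estimate for the $3$-membrane problem (Proposition \ref{QuadraticGrowth}) to the reformulated system via the equivalence in Remark \ref{EquivalenceBetweenProblems}. Given $(u,w)\in\mathcal{A}(B_1)$ with $0\in\Gamma_u$, set $u_1=u$, $u_2=-u+w$, $u_3=-w$, so that $(u_1,u_2,u_3)$ solves the $3$-membrane problem with $u_1+u_2+u_3=0$. The key point is that $u-\frac12 w=\frac12(u_1-u_3)$, while the hypothesis $0\in\Gamma_u=\partial\{u>\frac12w\}$ translates to $0\in\partial\{u_1>u_3\}$. However Proposition \ref{QuadraticGrowth} is stated for points on $\Gamma_1=\partial\{u_1>u_2\}$, controlling $\sup_{B_r}(u_1-u_2)$, not $\sup_{B_r}(u_1-u_3)$, so a direct citation is not quite enough; I would instead reprove the two-sided bound directly for the quantity $v:=u-\frac12w$, mimicking the proof of Proposition \ref{QuadraticGrowth}.

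For the upper bound, I would observe that $\Delta u\le 1$ and $\Delta w\ge \mathcal{X}_{\{w>\frac12 u\}}\ge 0$, so $\Delta v=\Delta u-\frac12\Delta w\le 1$, i.e. $v$ is a subsolution of $\Delta v\le 1$; combined with $v\ge 0$ (from $u\ge\frac12w$) and $v(0)=0$ (since $0\in\Gamma_u$ and $u,w$ are continuous with $u=\frac12w$ on $\Gamma_u$), a standard barrier comparison against $\frac{1}{2d}|x|^2 + (\text{harmonic})$ — or more simply the $C^{1,1}$ estimate of Theorem \ref{OptimalRegularity} after checking $0\in\Gamma_1\cap\Gamma_2$, or an elementary mean-value/subharmonicity argument — yields $\sup_{B_r}v\le Cr^2$. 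Actually the cleanest route for the upper bound is: $w-2v = 2\cdot\frac12 u - \ldots$; let me instead just use that $u\ge 0$, $\Delta u\le 1$, $u(0)=0$ gives $\sup_{B_r}u\le Cr^2$ by the obstacle-problem-type estimate, and similarly $\sup_{B_r}w\le Cr^2$ using $w\ge0$, $\Delta w\le1$, $w(0)\le u(0)\cdot? $ — one must check $w(0)=0$, which follows because $0\le w(0)\le 2u(0)=0$ is false in general; rather $w(0)\le 2u(0)$? No: $u\ge\frac12w$ gives $w(0)\le 2u(0)=0$, so indeed $w(0)=0$. Hence $\sup_{B_r}v\le\sup_{B_r}u\le Cr^2$.

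For the lower bound, which is the substantive direction, I would argue by nondegeneracy exactly as in the obstacle problem: since $0\in\Gamma_u$, there are points $x_j\in\{u>\frac12 w\}$ with $x_j\to 0$; at such points $\Delta v=\Delta u-\frac12\Delta w\ge \mathcal{X}_{\{u>\frac12w\}}-\frac12=1-\frac12=\frac12$ on the connected component of $\{u>\frac12 w\}$ containing $x_j$ — wait, we need $\Delta w\le 1$ there, giving $\Delta v\ge 1-\frac12=\frac12>0$, so $v-\frac14|x-x_j|^2$ is subharmonic on $\{u>\frac12w\}\cap B_r$, and the maximum principle pushes the supremum of $v$ on $\partial B_r(x_j)$ (for $r$ comparable to $\mathrm{dist}(x_j,\partial\{u>\frac12w\})$, or by iterating up to $\partial B_r$ of the origin) to be at least $\frac{1}{4d}r^2$ after the usual argument of walking from $x_j$ to the boundary. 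Taking $x_j\to0$ and using continuity gives $\sup_{B_r}v\ge\frac{1}{4d}r^2$ for all $r\in(0,1)$. The symmetric statement for $w-\frac12u$ at a point of $\Gamma_w$ follows by swapping the roles of $u$ and $w$, which the system respects.

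The main obstacle I anticipate is the lower (nondegeneracy) bound: one must be careful that $\Delta v\ge \frac12$ holds \emph{throughout} the noncontact set $\{u>\frac12w\}$ and not merely pointwise at $x_j$, which requires $\Delta w\le 1$ everywhere (true, as $(u,w)\in\Supsol$) and $\Delta u\ge \mathcal X_{\{u>\frac12 w\}}$ (true, as $(u,w)\in\Subsol$); with the strict positivity $\Delta v\ge\frac12$ in hand, the classical iteration along a chain of balls from $x_j$ out to radius $r$ is routine. A minor subtlety is ensuring $w(0)=0$ for the upper bound, handled above via $w\le 2u$ and $u(0)=0$. No genuinely new idea beyond the standard obstacle-problem toolkit is needed once the equivalence and the sign $\Delta v\ge\frac12$ are recorded.
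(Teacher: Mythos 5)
The approach you outline is more complicated than needed, and the reason you give for not directly transferring Proposition~\ref{QuadraticGrowth} rests on an algebraic mistake. With $u_1=u$, $u_2=-u+w$, $u_3=-w$, you claim $u-\tfrac12 w=\tfrac12(u_1-u_3)$ and that $\Gamma_u$ corresponds to $\partial\{u_1>u_3\}$. But in fact
$$u_1-u_2=u-(-u+w)=2u-w=2\bigl(u-\tfrac12w\bigr),$$
so $u-\tfrac12w=\tfrac12(u_1-u_2)$, and correspondingly $\{u>\tfrac12 w\}=\{u_1>u_2\}$, i.e.\ $\Gamma_u=\Gamma_1$. (Your formula $\tfrac12(u_1-u_3)$ equals $\tfrac12(u+w)$, which is not the quantity in question.) With the correct identity, $0\in\Gamma_u=\Gamma_1$ is exactly the hypothesis of Proposition~\ref{QuadraticGrowth}, which gives $\tfrac{1}{4d}r^2\le\sup_{B_r}(u_1-u_2)\le Cr^2$, and dividing by $2$ gives the statement (with a harmless adjustment of the dimensional constants). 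That is all the paper does here; your decision to reprove from scratch was triggered by the error.

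Your fallback direct argument has a genuine gap in the upper bound. You assert $u(0)=0$ and hence $w(0)\le 2u(0)=0$, but $0\in\Gamma_u$ only gives $u(0)=\tfrac12 w(0)$, i.e.\ $v(0)=0$; it does not force $u(0)=0$. Indeed, at a point of $\Gamma_1$ in the original three-membrane problem only the difference $u_1-u_2$ vanishes, not $u_1=u$ itself, so $u(0)=w(0)/2$ may well be strictly positive. Consequently you cannot simply invoke an obstacle-problem growth estimate for $u$ alone. The ``mean-value/subharmonicity'' route you mention also does not directly apply to $v$, since $\Delta v=\Delta u-\tfrac12\Delta w$ can be as small as $-\tfrac12$, so $v$ is not subharmonic. (One can repair this by running the Harnack-type dyadic iteration on $v+\tfrac{1}{4d}|x|^2$, or by a short case analysis on whether $0\in\Gamma_w$, but you did not do this.) The lower bound, by contrast, is fine: $\Delta v\ge 1-\tfrac12=\tfrac12$ on $\{v>0\}$ and the classical Caffarelli nondegeneracy argument applies; just note the barrier should be $v-\tfrac{1}{4d}|x-x_j|^2$, not $v-\tfrac14|x-x_j|^2$, to make it subharmonic and recover the stated constant $\tfrac{1}{4d}$.
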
  

\begin{theorem}
Suppose $(u,w)\in\mathcal{A}(B_1)$ with  $0\in\Gamma_{u}\cap\Gamma_w$.

Then $\|D^2u\|+\|D^2w\|\le C$ in $B_{1/2}$ for a dimensional constant $C$.
\end{theorem}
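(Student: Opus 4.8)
The plan is to deduce the $C^{1,1}$ bound for $(u,w)$ directly from the already-established interior $C^{1,1}$ estimate for the triple membrane problem (Theorem \ref{OptimalRegularity}), using the equivalence recorded in Remark \ref{EquivalenceBetweenProblems}. Concretely, given $(u,w)\in\mathcal{A}(B_1)$ with $0\in\Gamma_u\cap\Gamma_w$, set $u_1=u$, $u_2=-u+w$, $u_3=-w$. By Remark \ref{EquivalenceBetweenProblems} the triplet $(u_1,u_2,u_3)$ solves the $3$-membrane problem in $B_1$ in the sense of Definition \ref{Solution}; moreover $\{u_1>u_2\}=\{u>\frac12 w\}$ and $\{u_2>u_3\}=\{w>\frac12 u\}$, so $0\in\Gamma_1\cap\Gamma_2$. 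Theorem \ref{OptimalRegularity} then gives $\sum_k\|D^2u_k\|\le C$ in $B_{1/2}$ for a dimensional constant $C$. Since $u=u_1$ and $w=-u_3$, we immediately get $\|D^2u\|+\|D^2w\|=\|D^2u_1\|+\|D^2u_3\|\le\sum_k\|D^2u_k\|\le C$ in $B_{1/2}$, which is the claim.

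The only point that needs a word of care is that Definition \ref{ReSol} a priori requires $u,w\ge 0$, $u\ge\frac12 w$, $w\ge\frac12 u$, which under the substitution translates exactly into $u_1\ge 0$, $u_3\le 0$, $u_1\ge u_2$, $u_2\ge u_3$; together with $u_1+u_2+u_3=0$ (which holds by construction) this is condition (1) of Definition \ref{Solution}. One should also check that the differential inequalities match: $\Delta u\ge\mathcal{X}_{\{u>\frac12 w\}}$ and $\Delta u\le 1$ are equivalent to $\Delta u_1$ equal to $1$ on $\{u_1>u_2\}$ and to something in $[\tfrac12\mathcal{X}_{\{u_1=u_2>u_3\}},1]$ on the coincidence set, with the precise value pinned down by the corresponding condition on $w$; this is precisely the verification carried out in Section 3 when passing from Definition \ref{Solution} to Definition \ref{ReSol}, read in reverse, so it may simply be cited.

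Alternatively, and this is essentially how Theorem \ref{OptimalRegularity} itself is proved, one can argue intrinsically: $u$ solves an obstacle problem with obstacle $\tfrac12 w$ (so $0\le\Delta u\le 1$ and $\Delta u=1$ on $\{u>\tfrac12 w\}$), and symmetrically for $w$; Proposition \ref{QuadGrowthRef} supplies the quadratic growth $\sup_{B_r}(u-\tfrac12 w)\le Cr^2$ and $\sup_{B_r}(w-\tfrac12 u)\le Cr^2$, and one then runs the standard obstacle-problem bootstrap — nondegeneracy plus quadratic growth give, via a compactness/blow-up or a direct barrier argument, a bound on $D^2u$ along the free boundary, which propagates to all of $B_{1/2}$ since $\Delta u$ is bounded and $u$ is smooth away from $\Gamma_u$. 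I expect the main (and really the only) obstacle to be bookkeeping: making sure the Laplacian identities in Definition \ref{Solution} and Definition \ref{ReSol} are genuinely equivalent on the full coincidence set $\{u_1=u_2=u_3\}$ as well as on the two single-contact sets, so that no case is lost in the translation. Given Remark \ref{EquivalenceBetweenProblems}, this is already available, and the cleanest write-up is the one-line reduction to Theorem \ref{OptimalRegularity} above.
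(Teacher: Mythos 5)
Your proof is correct and follows exactly the route the paper intends: the theorem is stated right after Remark \ref{EquivalenceBetweenProblems} as a direct translation of Theorem \ref{OptimalRegularity} under the substitution $(u_1,u_2,u_3)=(u,-u+w,-w)$, which is precisely your argument. The paper does not spell out a proof, and your one-line reduction (together with the observation $\|D^2u\|+\|D^2w\|=\|D^2u_1\|+\|D^2u_3\|\le\sum_k\|D^2u_k\|$) is the intended verification.
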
 

As a direct consequence of Proposition \ref{QuadGrowthRef}, solutions enjoy the following non-degeneracy properties:
\begin{lemma}\label{NonDegeneracy}
Suppose $(u,w)\in\mathcal{A}(B_r)$. 

If $u-\frac{1}{2}w\le\frac{1}{4d}r^2$ along $\partial B_r$, then $u(0)=\frac{1}{2}w(0).$

If $u\le\frac{1}{4d}r^2$ along $\partial B_r$, then $u(0)=0.$ 

If $w-\frac{1}{2}u\le\frac{1}{4d}r^2$ along $\partial B_r$, then $w(0)=\frac{1}{2}u(0).$

If $w\le\frac{1}{4d}r^2$ along $\partial B_r$, then $w(0)=0.$ 
\end{lemma}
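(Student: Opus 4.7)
The plan is to prove all four statements by a maximum-principle comparison against the radial quadratic barrier $\phi(x) = \frac{1}{4d}|x|^2$, which satisfies $\Delta\phi = \frac{1}{2}$. By the symmetry $u \leftrightarrow w$ of the system in Definition \ref{ReSol}, it suffices to treat the two statements involving $u$; the other two follow verbatim after interchanging the roles of the components.

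For the first statement, set $v = u - \frac{1}{2}w$; the constraint in Definition \ref{ReSol} ensures $v \geq 0$ throughout $B_r$. Suppose toward a contradiction that $v(0) > 0$, and let $\Omega' \subset B_r$ denote the connected component of the open set $\{v > 0\}$ that contains the origin. On $\Omega'$ we have $u > \frac{1}{2}w$, so combining $\Delta u \geq \mathcal{X}_{\{u > \frac{1}{2}w\}}$ with $\Delta u \leq 1$ pins down $\Delta u = 1$; since also $\Delta w \leq 1$, we obtain $\Delta v \geq \frac{1}{2} = \Delta\phi$ in $\Omega'$, so $v - \phi$ is subharmonic there. On $\partial\Omega' \cap B_r$ we have $v \equiv 0 \leq \phi$, and on $\partial\Omega' \cap \partial B_r$ we have $v \leq \frac{1}{4d}r^2 = \phi$ by hypothesis. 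The maximum principle applied on $\Omega'$ then yields $v \leq \phi$ inside, and in particular $v(0) \leq 0$, contradicting $v(0) > 0$. Together with $v \geq 0$, this forces $v(0) = 0$.

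The second statement proceeds in the same way once one verifies the distributional bound $\Delta u \geq \frac{1}{2}$ on the full positivity set $\{u > 0\}$ rather than merely on $\{u > \frac{1}{2}w\}$. To see this, invoke the equivalence in Remark \ref{EquivalenceBetweenProblems}: $u = u_1$ for the three-membrane solution $(u_1, u_2, u_3) = (u, -u+w, -w)$, and the first line of Definition \ref{Solution} reads
\[
\Delta u = \mathcal{X}_{\{u > \frac{1}{2}w\}} + \tfrac{1}{2}\mathcal{X}_{\{u = \frac{1}{2}w\}\cap\{w > \frac{1}{2}u\}}.
\]
On $\{u > 0\} \cap \{u = \frac{1}{2}w\}$ we have $w = 2u > 0$, hence $w = 2u > \frac{1}{2}u$, so this set is contained in the second indicator. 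Consequently $\Delta u \geq \frac{1}{2}$ on all of $\{u > 0\}$. The identical barrier argument, with $u$ in place of $v$ and $\{u > 0\}$ in place of $\{v > 0\}$, then gives $u(0) \leq 0$; combined with $u \geq 0$ this yields $u(0) = 0$.

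The only step that is not purely formal is this identification of $\Delta u$ on the a.e.\ coincidence set $\{u > 0\} \cap \{u = \frac{1}{2}w\}$, which is invisible in the formulation of Definition \ref{ReSol} (where one only has the inequality $\Delta u \geq \mathcal{X}_{\{u > \frac{1}{2}w\}}$) and must be extracted from the finer Euler--Lagrange system in Definition \ref{Solution} through the dictionary in Remark \ref{EquivalenceBetweenProblems}. Once this is in place, all four conclusions are immediate from the subharmonic comparison with $\phi$.
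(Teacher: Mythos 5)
Your proof is correct, and it is essentially the standard barrier/maximum-principle argument that underlies the paper's Proposition~\ref{QuadGrowthRef}; the paper presents Lemma~\ref{NonDegeneracy} without a written proof, merely asserting it follows from that proposition, and your write-up supplies the underlying reasoning. The one place where you add a genuinely necessary step that the paper glosses over is in the second (and, by symmetry, fourth) statement: the weak inequality $\Delta u\ge\mathcal{X}_{\{u>\frac{1}{2}w\}}$ from Definition~\ref{ReSol} is \emph{not} sufficient to make $u-\frac{1}{4d}|x|^2$ subharmonic on the whole set $\{u>0\}$, since it says nothing about $\Delta u$ on $\{u>0\}\cap\{u=\frac{1}{2}w\}$; your observation that on this set one has $w=2u>\frac{1}{2}u$, so that the finer Euler--Lagrange identity from Definition~\ref{Solution} (via Remark~\ref{EquivalenceBetweenProblems}) yields $\Delta u=\frac{1}{2}$ there and hence $\Delta u\ge\frac{1}{2}$ on all of $\{u>0\}$, is exactly the right fix and makes the barrier comparison go through. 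The symmetry reduction from four statements to two is also correct, as Definition~\ref{ReSol} is manifestly invariant under $u\leftrightarrow w$.
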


We  need to compare pairs of functions. To simplify notations,
we write \begin{equation}\label{ComparisonBetweenPairs}(u,w)\ge(u',w')\end{equation}   if $u\ge w$ and $w\ge w'$. 

We have the comparison principle between a subsolution and a super solution:
\begin{proposition}\label{Comparison}
Let $\Omega$ be  bounded.

Suppose $(u,w)\in\Subsol(\Omega)$ and $(u',w')\in\Supsol(\Omega)$. 

If $(u,w)\le(u',w')$ along $\partial\Omega,$ then $(u,w)\le(u',w')$ in $\Omega.$
\end{proposition}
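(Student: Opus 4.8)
The plan is to prove the comparison principle by a contradiction argument combined with the maximum principle, exploiting the coupling in the system. Suppose $(u,w) \le (u',w')$ on $\partial\Omega$ but the conclusion fails in $\Omega$. Set $v_1 = u - u'$ and $v_2 = w - w'$; both are continuous on $\overline{\Omega}$, both are $\le 0$ on $\partial\Omega$, and by assumption $\max\{\sup_\Omega v_1, \sup_\Omega v_2\} > 0$. The key structural point is that wherever $v_1$ attains a positive interior maximum, we should be able to show $\Delta v_1 \ge 0$ there (in the viscosity/distributional sense), which contradicts the strong maximum principle unless $v_1$ is constant, and similarly for $v_2$; the coupling through the obstacle terms is what makes this work.

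First I would reduce to the genuinely coupled case. On the open set $\{u > \tfrac12 w\}$ we have $\Delta u \ge 1$ (subsolution), while $\Delta u' \le 1$ (super solution), so $\Delta v_1 \ge 0$ there. On the complement, $u = \tfrac12 w$, so on the interior of $\{u = \tfrac12 w\}$ one has $v_1 = \tfrac12 w - u' \le \tfrac12 w - \tfrac12 u' = \tfrac12 v_2$ using $u' \ge \tfrac12 w'$. Hence at any interior point $x_0$ where $v_1$ attains its maximum $M = \sup_\Omega v_1$, either $x_0 \in \{u > \tfrac12 w\}$, where the strong maximum principle forces $v_1 \equiv M$ on the connected component and we propagate to the boundary for a contradiction; or $x_0$ lies in $\{u \le \tfrac12 w\}$, where $v_1(x_0) \le \tfrac12 v_2(x_0) \le \tfrac12 \sup_\Omega v_2$, so $M \le \tfrac12 \sup_\Omega v_2$. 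Running the symmetric argument with $v_2$ gives the reverse inequality $\sup_\Omega v_2 \le \tfrac12 M$. Combining, $M \le \tfrac14 M$, so $M \le 0$, and symmetrically $\sup_\Omega v_2 \le 0$; this is the desired contradiction.

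The point requiring care — and what I expect to be the main obstacle — is the rigorous handling of the maximum principle when the inequalities $\Delta u \ge \mathcal{X}_{\{u > w/2\}}$ hold only distributionally and the sets involved are merely measurable, not open-and-closed. The clean way around this is to avoid pointwise statements on the contact set and instead argue globally: on the open set $U = \{u > \tfrac12 w\} \cap \Omega$ we have $\Delta v_1 \ge 0$ in the distributional sense, so $v_1$ is subharmonic there and attains its sup over $\overline{U}$ on $\partial U = (\partial U \cap \partial\Omega) \cup (\partial U \cap \Omega)$. On $\partial U \cap \Omega$ we have $u = \tfrac12 w$ (by continuity), hence $v_1 \le \tfrac12 v_2$ as above; on $\partial U \cap \partial\Omega$ we have $v_1 \le 0$. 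On $\Omega \setminus U$ we have $u \le \tfrac12 w$ directly, so again $v_1 \le \tfrac12 v_2$. Therefore $\sup_\Omega v_1 \le \max\{0, \tfrac12 \sup_\Omega v_2\}$ with no interior-maximum-point analysis needed, and symmetrically $\sup_\Omega v_2 \le \max\{0, \tfrac12 \sup_\Omega v_1\}$; feeding one into the other yields $\sup_\Omega v_1 \le 0$ and $\sup_\Omega v_2 \le 0$, i.e. $(u,w) \le (u',w')$ in $\Omega$. A minor technical check is that a distributional subsolution of $\Delta v \ge 0$ which is continuous up to the boundary of a bounded open set indeed satisfies the maximum principle, which is standard.
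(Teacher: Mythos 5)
Your argument is correct, and it takes a genuinely different route from the paper's. The paper proves the comparison by a sliding argument: define $t_0=\inf\{t:(u'+t,w'+t)\ge(u,w)\}$, and show $t_0>0$ is impossible because at an interior touching point one must have $u>\tfrac12 w$ (this uses $t_0>0$ together with $u'\ge\tfrac12 w'\ge\tfrac12 w - \tfrac12 t_0$), whence $\Delta u\ge 1\ge\Delta(u'+t_0)$ and the strong maximum principle forces $u\equiv u'+t_0$, which propagates to the boundary and contradicts the strict gap. Your argument instead sets $v_1=u-u'$, $v_2=w-w'$ and iterates a weak maximum principle: on $\{u>\tfrac12 w\}$ you get $\Delta v_1\ge 0$ directly, on the boundary of that set and on its complement you get the pointwise bound $v_1\le\tfrac12 v_2$ from the constraint inequalities, and the two symmetric estimates $\sup v_1\le\max\{0,\tfrac12\sup v_2\}$ and $\sup v_2\le\max\{0,\tfrac12\sup v_1\}$ close up. The key computation $u\le\tfrac12 w$ and $u'\ge\tfrac12 w'$ imply $v_1\le\tfrac12 v_2$ is checked correctly, and the final iteration is clean since $\Omega$ bounded and $u,u',w,w'$ continuous on $\overline\Omega$ guarantee the suprema are finite. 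What your version buys: it avoids the strong maximum principle (only the weak one for distributional subharmonic functions is needed), it does not use connectedness of $\Omega$, and it produces a quantitative contraction estimate $\sup v_1\le\max\{0,\tfrac12\sup v_2\}$ rather than just the qualitative comparison. The paper's sliding argument is closer to the viscosity-solutions template and may generalize more readily to situations without the explicit $\tfrac12$ coupling factor, but for this particular system your approach is if anything cleaner. One small remark on presentation: your first paragraph (contradiction at an interior maximum point) has the gap you yourself flagged — the maximum need not lie in the open set $\{u>\tfrac12 w\}$, and one cannot directly invoke the strong maximum principle at a point in the contact set; but the reworked argument in your third paragraph sidesteps this entirely by bounding $v_1$ on $\partial U$ and on $\Omega\setminus U$ separately, so the final version is sound.
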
 

\begin{proof}
Define $t_0=\inf\{t\in\R:(u'+t,w'+t)\ge (u,w) \text{ in $\overline{\Omega}$}\}.$ It suffices to show that $t_0\le 0.$

Suppose, on the contrary, that $t_0>0$. 

By the definition of $t_0$, there is a point $x_0\in\overline{\Omega}$ such that $$\text{either } u(x_0)=u'(x_0)+t_0\text{ or }w(x_0)=w'(x_0)+t_0.$$ We only deal with the first case. The argument for the other is similar. 

With $(u,w)\le(u',w')$ along $\partial\Omega$ and $t_0>0$, this point $x_0$ is in the interior of $\Omega.$ 

With $t_0>0$,  we have $$u(x_0)=u'(x_0)+t_0>\frac{1}{2}w'(x_0)+\frac{1}{2}t_0\ge\frac{1}{2}w(x_0).$$ By continuity, the comparison $u>\frac{1}{2}w$  holds in an entire neighborhood of $x_0$, say, $\mathcal{N}.$ 

Inside $\mathcal{N}$, we have $$\Delta u\ge 1\ge \Delta (u'+t_0).$$ Also $u\le u'+t_0$ and $u(x_0)=u'(x_0)+t_0.$ The strong comparison principle implies $$u=u'+t_0 \text{ inside $\mathcal{N}.$}$$
Consequently, we can replace $x_0$ by any point $y_0\in\mathcal{N}$, and use the same argument to get $u=u'+t_0$ in a neighborhood of $y_0.$

This implies $u=u'+t_0$ in the entire $\Omega$. Continuity forces $u=u'+t_0$ along $\partial\Omega$, contradicting the comparison along $\partial\Omega$ since $t_0>0.$
\end{proof} 

A more useful version is as follows:
\begin{lemma}\label{FirstComparison}
Suppose $(u,w)\in\Subsol(B_2)$ and $(u',w')\in\Supsol(B_2)$ with $u'\le 1$ along $\partial B_2.$

If, for some $\eps\in (0,\frac{1}{4d})$, we have $$(u',w')\ge (u+10d \, \eps\mathcal{X}_{\{u'>\frac{1}{4d}\}}-\eps,w+10d \, \eps\mathcal{X}_{\{w'>\frac{1}{4d}\}}-\eps) \text{ along $\partial B_2$,}$$
then $(u',w')\ge (u,w)$ in $B_1.$
\end{lemma}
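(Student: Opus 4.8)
\textbf{Proof plan for Lemma \ref{FirstComparison}.}

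The plan is to apply the comparison principle of Proposition \ref{Comparison} on the ball $B_2$, but not directly to the pair $(u',w')$; rather to a suitably \emph{lowered} competitor, and then to leverage the strict separation gained on the portion of $\partial B_2$ where the obstacle constraint is active for $(u',w')$. The key point is to convert the hypothesis, which involves the characteristic functions $\mathcal{X}_{\{u'>\frac{1}{4d}\}}$ and $\mathcal{X}_{\{w'>\frac{1}{4d}\}}$, into a clean pointwise comparison $(u,w)\le(u'+t, w'+t)$ on all of $\partial B_2$ for an appropriate small constant $t$, after which Proposition \ref{Comparison} gives the inequality in $B_2$ and hence in $B_1$; the improvement from $-\eps$ to $0$ must then come from a separate argument using the non-degeneracy Lemma \ref{NonDegeneracy}.

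Concretely, first I would fix the constant shift $t=10d\,\eps$ (note $t<\tfrac14$ since $\eps<\tfrac1{4d}$) and consider the super solution $(u'+t, w'+t)\in\Supsol(B_2)$. On $\partial B_2$, I claim $(u,w)\le(u'+t,w'+t)$: on the set $\{u'>\frac1{4d}\}$ the hypothesis gives $u\le u'+10d\,\eps-\eps\le u'+t$, while on the complementary set $\{u'\le\frac1{4d}\}$ the hypothesis still gives $u\le u'+\eps\le \frac1{4d}+\eps$, but I need $u\le u'+t$ there, which does not follow from the hypothesis alone — so the correct competitor is instead $(u', w')$ itself compared against the \emph{lowered} subsolution $(u-\eps+10d\eps\mathcal X_{\{u'>1/4d\}},\dots)$; that is exactly the hypothesis. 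Therefore the honest route is: apply Proposition \ref{Comparison} directly to $(u,w)\in\Subsol(B_2)$ and the super solution $(u'+10d\eps\,\mathcal X_{\{u'>1/4d\}}-\eps,\,w'+10d\eps\,\mathcal X_{\{w'>1/4d\}}-\eps)$ — except the latter is not a super solution because adding a characteristic function is not subharmonic. So the real mechanism must be a localization: I would split $B_2$ into the open region $U=\{u'>\frac1{4d}\}\cap\{w'>\frac1{4d}\}$ (and its analogues) where the added constant is genuinely $10d\eps$ so that $(u'+10d\eps-\eps, w'+10d\eps-\eps)$ is a super solution dominating $(u,w)$ on $\partial U\cap B_2$ by the boundary hypothesis and by the definition of $U$ on $\partial U\cap B_2$; and the region where $u'\le\frac1{4d}$, on which I invoke Lemma \ref{NonDegeneracy} to conclude $u'$ is comparable to (in fact at most a dimensional multiple of) its boundary oscillation, forcing $u\le u'$ there too after absorbing the $\eps$.

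In more detail, the core step: on $\{u'\le \frac1{4d}\text{ on a sub-ball } B_\rho(z)\subset B_2\}$, Lemma \ref{NonDegeneracy} (second statement, after rescaling) shows $u'(z)=0$ cannot happen unless genuinely flat, but more usefully $u'$ being small on a sphere forces $u'$ small inside; combined with $\Delta u\ge \mathcal X\ge 0$ off $\{u=\frac12w\}$ and the sub-mean-value property, one pushes $u\le u'$ pointwise. I would run the $t_0$-sliding argument of Proposition \ref{Comparison}'s proof verbatim: set $t_0=\inf\{t: (u'+t,w'+t)\ge(u,w)\text{ in }\overline{B_2}\}$, suppose $t_0>0$, pick a touching point $x_0\in\overline{B_2}$; the boundary hypothesis together with the fact that wherever $u'\le\frac1{4d}$ the sub-mean-value inequality for $u$ controls $u$ by its values on $\partial B_2$ (which are $\le u'+\eps\le\frac1{4d}+\eps<t_0+$ anything once we know $t_0\le 10d\eps$, a bound obtained by first applying Proposition \ref{Comparison} crudely) pins $x_0$ in the interior and in $\{u'>\frac1{4d}\}$ (or the $w$-analogue), where $(u'+t_0,w'+t_0)$ is an honest super solution and the strong maximum principle propagates the contact to a clopen set, contradicting the strict boundary inequality. \textbf{The main obstacle} is precisely this bookkeeping at the interface $\{u'=\frac1{4d}\}$: making sure the jump $10d\eps\mathcal X$ in the hypothesis is exactly large enough (the constant $10d$ versus $\frac1{4d}$) to absorb both the $-\eps$ and the dimensional loss in the sub-mean-value estimate coming from Proposition \ref{QuadGrowthRef}, while small enough that the final conclusion holds on $B_1$ rather than a smaller ball — I expect the factor $10d$ is calibrated so that $u'$ rising from $\le\frac1{4d}$ on part of $\partial B_2$ to larger values can only do so at the expense of at least $10d\eps$ of growth that dominates the perturbation, and verifying this quantitatively is where the care lies.
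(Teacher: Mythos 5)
Your proposal circles the right tools (Proposition \ref{Comparison}, the role of the $10d\eps$ jump) but never produces the construction that actually closes the argument, and the fallback routes you sketch do not work. First, a sign error derails the opening discussion: on $\{u'>\frac1{4d}\}$ the hypothesis gives $u'\ge u+10d\eps-\eps$, i.e.\ $u\le u'-(10d-1)\eps\le u'$, not ``$u\le u'+10d\eps-\eps$''; and on $\{u'\le\frac1{4d}\}$ you do get $u\le u'+\eps<u'+t$, so the boundary comparison with $t=10d\eps$ in fact holds everywhere — but it only yields $u\le u'+10d\eps$ in $B_1$, which is weaker than the claim. The $t_0$-sliding argument you then propose has a genuine gap: on $\partial B_2\cap\{u'\le\frac1{4d}\}$ the hypothesis gives only $u\le u'+\eps$ with no strict margin, so nothing prevents the touching level $t_0$ from lying in $(0,\eps]$ with the contact point on $\partial B_2$, and the strong-maximum-principle propagation never starts. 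The appeals to Lemma \ref{NonDegeneracy}, Proposition \ref{QuadGrowthRef}, and a sub-mean-value estimate to ``pin'' the touching point inside $\{u'>\frac1{4d}\}$ are unsubstantiated, and none of this appears in the paper's proof.

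The paper's mechanism, which you did not find, is a pointwise barrier: for each fixed $x_0\in B_1$ one sets
$$\varphi=(1-4d\eps)\,u'+2\eps|x-x_0|^2,\qquad \psi=(1-4d\eps)\,w'+2\eps|x-x_0|^2.$$
The factor $1-4d\eps$ exactly cancels the Laplacian $4d\eps$ of the added quadratic, so $\Delta\varphi,\Delta\psi\le 1$, and the inequalities $2\varphi\ge\psi\ge 0$, $2\psi\ge\varphi\ge 0$ persist; thus $(\varphi,\psi)\in\Supsol(B_2)$. On $\partial B_2$ the quadratic contributes at least $2\eps$ (since $|x-x_0|\ge 1$), which absorbs the $-\eps$ in the hypothesis, while the shrinkage costs at most $4d\eps\,u'$, namely $\le\eps$ on $\{u'\le\frac1{4d}\}$ and $\le 4d\eps$ on $\{u'>\frac1{4d}\}$ (here $u'\le 1$ is used), the latter being dominated by the $+10d\eps$ jump. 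Hence $\varphi\ge u$ and $\psi\ge w$ on $\partial B_2$; Proposition \ref{Comparison} gives $(\varphi,\psi)\ge(u,w)$ in $B_2$; and evaluating at $x_0$, where the quadratic vanishes, yields $u'(x_0)\ge(1-4d\eps)u'(x_0)=\varphi(x_0)\ge u(x_0)$ because $u'\ge 0$. This quadratic correction — free at the target point, strictly positive on the outer sphere — is precisely the missing idea, and is what makes the constant $10d$ work against the $\frac1{4d}$ threshold.
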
 
Recall that $d$ is the dimension of the ambient space. 

\begin{proof}
For each $x_0\in B_1$, define $$\varphi(x)=(1-4d\eps)u'(x)+2\eps|x-x_0|^2$$ and $$\psi(x)=(1-4d\eps)w'(x)+2\eps|x-x_0|^2.$$
Then $\varphi$ and $\psi$ are both non-negative and satisfy $$2\varphi=(1-4d\eps)2u'(x)+4\eps|x-x_0|^2\ge(1-4d\eps)w'(x)+4\eps|x-x_0|^2\ge\psi.$$Similarly, we have $2\psi\ge\varphi.$  

Moreover, with $\Delta u'\le 1$, we have $$\Delta\varphi=(1-4d\eps)\Delta u'+4d\eps\le 1.$$ Similarly, we have $\Delta\psi\le 1.$

Therefore, $(\varphi,\psi)\in\Supsol(B_2).$

We now compare $(\varphi,\psi)$ with $(u,v)$ along $\partial B_2.$ Note that here $|x-x_0|\ge 1.$

Along $\partial B_2\cap\{u'\le\frac{1}{4d}\},$ we have $u'\ge u-\eps$. Consequently, $$\varphi\ge u'-4d\eps u'+2\eps\ge u'-\eps+2\eps\ge u.$$

Along $\partial B_2\cap\{u'>\frac{1}{4d}\}$, we have $1\ge u'\ge u+10d\eps-\eps$. As a result, 
$$\varphi\ge u'-4d\eps+2\eps\ge u+10d\eps-\eps-4d\eps+2\eps\ge u. $$
To conclude, $\varphi\ge u$ along $\partial B_2.$ A similar argument gives $\psi\ge w$ along $\partial B_2.$

Now  Proposition \ref{Comparison} gives $(\varphi, \psi)\ge(u,w) \text{ in $B_2$.}$ At the point $x_0\in B_1$, this implies $(u',w')(x_0)\ge (u,w)(x_0).$\end{proof} 

We also have the symmetric comparison, which follows from similar arguments:
\begin{lemma}\label{SecondComparison}
Suppose $(u,w)\in\Supsol(B_2)$ and $(u',w')\in\Subsol(B_2)$ with $u\le 1$ along $\partial B_2.$

If, for some $\eps\in (0,\frac{1}{8d})$, we have $$(u',w')\le (u-10d\, \eps\mathcal{X}_{\{u'>\frac{1}{8d}\}}+\eps,w-10d\, \eps\mathcal{X}_{\{w'>\frac{1}{8d}\}}+\eps) \text{ along $\partial B_2$,}$$
then $(u',w')\le (u,w)$ in $B_1.$
\end{lemma}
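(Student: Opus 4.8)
The plan is to mirror the proof of Lemma \ref{FirstComparison}, making only the sign changes needed for the reversed inequalities, and invoking the same comparison principle (Proposition \ref{Comparison}) after an auxiliary perturbation. Fix $x_0\in B_1$. Now the roles of sub- and supersolution are swapped: $(u,w)$ is the supersolution, so we want a \emph{subsolution} built from $(u',w')$ that lies above $(u,w)$ near $\partial B_2$ in the right sense. Accordingly I would set
\[
\varphi(x)=(1-4d\,\eps)\,u'(x)-2\eps|x-x_0|^2,\qquad
\psi(x)=(1-4d\,\eps)\,w'(x)-2\eps|x-x_0|^2,
\]
subtracting rather than adding the quadratic bump, and similarly scaling by $(1-4d\,\eps)$. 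Since $\eps<\frac{1}{8d}$ we have $1-4d\,\eps>\frac12>0$, so the comparisons $2\varphi\ge\psi$ and $2\psi\ge\varphi$ and the nonnegativity constraints are preserved exactly as before (these only use $(u',w')\in\Subsol$ in the weak algebraic sense, which is the ordering $u'\ge0$, $w'\ge0$, $2u'\ge w'$, $2w'\ge u'$); here one should be slightly careful: a pair in $\Subsol$ already satisfies those constraints by Definition \ref{ReSol}, so the algebraic step is immediate.

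The differential inequalities go the other way: from $\Delta u'\ge\mathcal{X}_{\{u'>\frac12 w'\}}$ I would get
\[
\Delta\varphi=(1-4d\,\eps)\Delta u'-4d\,\eps\ \ge\ (1-4d\,\eps)\mathcal{X}_{\{u'>\frac12 w'\}}-4d\,\eps,
\]
and I need the right-hand side to dominate $\mathcal{X}_{\{\varphi>\frac12\psi\}}$. Note $\{\varphi>\frac12\psi\}=\{(1-4d\,\eps)(u'-\frac12 w')>\eps|x-x_0|^2\}\subset\{u'>\frac12 w'\}$, so on that set the right-hand side is $(1-4d\,\eps)-4d\,\eps=1-8d\,\eps$, which is $\ge$ the indicator value $1$ only if $\eps\le0$ — so a literal repeat fails, and instead I would follow the asymmetry already visible in the statement, namely redefine $\varphi,\psi$ with coefficient $(1+4d\,\eps)$ or equivalently rescale so that after dividing through the threshold becomes $\frac1{8d}$ rather than $\frac1{4d}$; the exponent bookkeeping is exactly what forces the $\frac{1}{8d}$ in the hypothesis versus the $\frac1{4d}$ of Lemma \ref{FirstComparison}. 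Concretely, using $\varphi=(1+4d\eps)u'-2\eps|x-x_0|^2$ gives $\Delta\varphi\ge(1+4d\eps)\mathcal{X}_{\{u'>\frac12w'\}}-4d\eps\ge\mathcal{X}_{\{\varphi>\frac12\psi\}}$ since on $\{\varphi>\frac12\psi\}$ the bound is $1+4d\eps-4d\eps=1$ and off it the indicator is $0$ while $\Delta\varphi\ge-4d\eps$ is not enough — so in fact one needs $\Delta\varphi\ge0$ off the contact set, i.e. one should only subtract the bump where it helps. The cleanest route, and the one I would ultimately write, is to \emph{not} perturb $(u',w')$ at all on the differential-inequality side but to compare directly, absorbing the $\eps$-losses into the boundary comparison; that is, show $(u',w')\in\Subsol(B_2)$ already and that $(u-10d\eps\mathcal{X}_{\{u'>1/8d\}}+\eps,\,w-10d\eps\mathcal{X}_{\{w'>1/8d\}}+\eps)$, suitably interpreted, is a supersolution majorizing $(u',w')$ on $\partial B_2$.

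Thus the real structure of the argument is: (i) verify $(\varphi,\psi)$ (the appropriate $\eps$-shifted, quadratically bumped version of $(u,w)$) is a \emph{supersolution} on $B_2$, using $\Delta u\le1$, $\Delta w\le1$; (ii) check on $\partial B_2$, where $|x-x_0|\ge1$, that $\varphi\ge u'$ by splitting into $\{u'\le\frac1{8d}\}$ (where the hypothesis gives $u\ge u'-\eps$, and the $+2\eps|x-x_0|^2\ge2\eps$ from the bump plus the $-4d\eps$ scaling loss, which needs $2\eps-4d\eps\cdot1\ge-\eps$, true since $\eps<\frac1{8d}$ makes $4d\eps<\frac12$ so $2\eps-4d\eps>0\ge-\eps$) and $\{u'>\frac1{8d}\}$ (where $u\ge u'+10d\eps-\eps$ and we need $10d\eps-\eps-4d\eps+2\eps\ge0$, i.e. $6d\eps+\eps\ge0$, trivially true); similarly $\psi\ge w'$; (iii) apply Proposition \ref{Comparison} to conclude $(\varphi,\psi)\ge(u',w')$ on $B_2$, hence $(u,w)(x_0)\ge(u',w')(x_0)$ after sending the bump term, which vanishes at $x_0$, and reading off the scaling.

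The step I expect to be the main obstacle — and the one to get exactly right — is the constant bookkeeping in (ii): which exactly of $(1-4d\eps)$, $(1+4d\eps)$, threshold $\frac1{4d}$ versus $\frac1{8d}$, and bump coefficient $2\eps$ versus a larger multiple makes every boundary inequality close \emph{with the correct direction} for a subsolution-from-above comparison. Because Lemma \ref{FirstComparison} used $\eps\in(0,\frac1{4d})$ and threshold $\frac1{4d}$ while this lemma uses $\eps\in(0,\frac1{8d})$ and threshold $\frac1{8d}$, the asymmetry is genuine (it comes from the $\frac12$'s in $u\ge\frac12w$, $w\ge\frac12u$ interacting differently with scaling up versus down), so one cannot simply quote "similar arguments"; I would carry the four boundary sub-cases ($u'$ small/large, $w'$ small/large) explicitly. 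Everything else — nonnegativity, the $2\varphi\ge\psi$ type inequalities, the Laplacian estimates, the final limit $x\to x_0$ — transfers from the previous proof essentially verbatim with the single sign flip on the quadratic term and on the monotone coupling direction in Proposition \ref{Comparison}.
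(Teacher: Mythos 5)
Your final paragraph correctly identifies the paper's intended argument: perturb the \emph{supersolution} $(u,w)$ by $\varphi=(1-4d\eps)u+2\eps|x-x_0|^2$, $\psi=(1-4d\eps)w+2\eps|x-x_0|^2$, check $(\varphi,\psi)\in\Supsol(B_2)$ from $\Delta u,\Delta w\le 1$, verify $(\varphi,\psi)\ge(u',w')$ along $\partial B_2$, apply Proposition \ref{Comparison}, and read off $u(x_0)\ge u'(x_0)$ at the center of the bump. Your case $\{u'>\frac1{8d}\}$ closes correctly. But the case $\{u'\le\frac1{8d}\}$ as you wrote it has a genuine computational error, and the ``verification'' you offer is false.

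You control the scaling loss by $(1-4d\eps)u\ge u-4d\eps$ (using $u\le 1$), which gives
\[
\varphi - u' \ \ge\ (u-u') - 4d\eps + 2\eps \ \ge\ -\eps - 4d\eps + 2\eps \ =\ (1-4d)\,\eps,
\]
which is \emph{negative} for every $d\ge 1$. You then assert that $\eps<\frac1{8d}$ forces $4d\eps<\frac12$ and hence $2\eps-4d\eps>0$; this implication is false (take $d=10$, $\eps=0.01$: then $4d\eps=0.4<\frac12$, yet $2\eps-4d\eps=-0.38$). The issue is that in Lemma \ref{FirstComparison} the $-4d\eps u'$ loss was controlled by $u'\le\frac1{4d}$ on the small set, because the perturbed function and the set in the split were the \emph{same} function; here you perturbed $u$ but your split is on $u'$, so $u\le 1$ is all you have, and it is too lossy. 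The correct bookkeeping is to keep the factor $(1-4d\eps)$ attached to the hypothesis: from $u\ge u'-\eps$,
\[
\varphi \ \ge\ (1-4d\eps)(u'-\eps) + 2\eps \ =\ u' + \eps - 4d\eps\,u' + 4d\eps^2,
\]
and now the loss term $-4d\eps\,u'$ is in terms of $u'$, which \emph{is} controlled on the set $\{u'\le\frac1{8d}\}$: $-4d\eps\,u'\ge -\frac{\eps}{2}$, so $\varphi - u' \ge \frac{\eps}{2}+4d\eps^2>0$. With that correction, both boundary cases close and the rest of your outline (supersolution check, Proposition \ref{Comparison}, evaluation at $x_0$) goes through. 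The long preliminary discussion of perturbing $(u',w')$ is not wrong so much as a dead end you correctly abandon, but it is not needed and should be cut.
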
 
%%%%%%%%%%%%%%%%%%%%%%%%%%%%%%%%%%%%%%%%%%%%%%%%%%%%%%%%%%%%%%%%%%%%%%%%%%%%%%%%%%%%%%%%%%%%%%%%%%%%%%
\section{Improvement of flatness: Case 1}
In this section and the next, we prove two improvement-of-flatness type results for the system of obstacle problems. We give these results in two cases that are relevant to the free boundary regularity as in Theorem \ref{MainResult2} and Theorem \ref{MainResult3}.

In this section, we study the case related to regular points in the $3$-membrane problem. By Definition \ref{FreeBoundaryPoints}, around such points the solution is approximated  by stable half-space solutions. For our argument, we need to include  rotations and translations of such profiles, that is, functions of the form 
\begin{equation}\label{Pabab1}
\Pabab=\frac{1}{2}\max\{x\cdot\alpha-a,0\}^2 \text{ and } \Qabab=\frac{1}{2}\max\{x\cdot\beta-b,0\}^2,
\end{equation}  where $\alpha,\beta\in\Sph$ and $a,b\in\R.$

We often write $\Pab$ and $\Qab$ or even just $P$ and $Q$  instead of $\Pabab$ and $\Qabab$ for these profiles.

In terms of the system of obstacle problems as in Definition \ref{ReSol}, we work with the following class of solutions:
\begin{definition}\label{RegPointSol}
For  $\alpha,\beta\in\Sph$, $a,b\in\R$ and $\eps>0$, we say that $$(u,w)\in\mathcal{R}(\alpha,\beta;a,b;\eps) \text{ in $B_r$}$$ if $$(u,w)\in\mathcal{A}(B_r) \text{ with } 0\in\Gamma_u,$$ and $$|u-\Pabab|<\eps r^2 \text{ and } |w-\Qabab|<\eps  r^2\text{ in $B_r.$}$$
\end{definition} 

We often write $$(u,w)\in\mathcal{R}(\alpha,\beta;\eps) \text{ in $B_r$}$$when there is no need to emphasize  $a$ and $b$.

Up to a rotation, it suffices to study the case when $\alpha$ and $\beta$ satisfy\begin{equation}\label{Symmetric}
\alpha_1=\beta_1>0, \alpha_2=-\beta_2\ge0 \text{ and } \alpha_k=\beta_k=0 \text{ for $k\ge3.$ }
\end{equation} 

Lemma \ref{NonDegeneracy} leads to bounds on the parameters:
\begin{lemma}\label{Localization1}
Suppose $(u,w)\in\mathcal{R}(\alpha,\beta;a,b;\eps)$ in $B_1$ with a small $\eps$. 

Then $$|a|,|b|, |\alpha-\beta|<C\eps^{1/2}$$ for a dimensional constant $C$. 
\end{lemma}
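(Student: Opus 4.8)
The strategy is to use the non-degeneracy statements in Lemma \ref{NonDegeneracy} applied at balls of radius comparable to $\eps^{1/2}$, together with the fact that $u$ is $\eps$-close to the explicit profile $P(\alpha;a)$ and $w$ is $\eps$-close to $Q(\beta;b)$. The point is that these profiles vanish identically on one side of a hyperplane, so if the plane $\{x\cdot\alpha=a\}$ were too far from the origin, then near the origin $u$ would have to be either too large or too small (of size $\sim a^2$), contradicting either non-degeneracy or the growth bound $u\le Cr^2$.

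First I would bound $a$ (and symmetrically $b$). Since $0\in\Gamma_u$, Proposition \ref{QuadGrowthRef} gives $\sup_{B_r}(u-\tfrac12 w)\ge \tfrac1{4d}r^2$ for all $r\in(0,1)$, hence in particular $\sup_{B_r}u\ge\tfrac1{4d}r^2$. Now suppose $a>0$ is large compared to $\eps^{1/2}$, say $a\ge M\eps^{1/2}$ for a large dimensional $M$ to be chosen; then on $B_{a}$ the profile $P(\alpha;a)=\tfrac12\max\{x\cdot\alpha-a,0\}^2$ vanishes, so $u\le \eps a^2 \cdot a^{-2}\cdot a^2$... more precisely $u< \eps a^2$ on $B_a$ is false — rather $|u-P|<\eps\,a^2$ on $B_a$ forces $u<\eps a^2$ there (since $P=0$ on $B_a\subset\{x\cdot\alpha\le a\}$ when $|a|\ge a$, i.e.\ on $B_{|a|}$). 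Choosing $r=|a|$ in the lower growth bound $\sup_{B_r}u\ge\tfrac1{4d}r^2$ gives $\eps|a|^2\ge\tfrac1{4d}|a|^2$, i.e.\ $\eps\ge\tfrac1{4d}$, contradicting smallness of $\eps$. For $a<0$ with $|a|$ large, on $B_{|a|}$ we have $x\cdot\alpha-a\ge -|x|+|a|\ge 0$, so $P(\alpha;a)\ge\tfrac12(|a|-|x|)^2$; evaluating $u$ near $x=0$ gives $u\ge \tfrac12 a^2-\eps a^2 \ge \tfrac14 a^2$, while $u(0)=0$ would already be contradicted — but here one uses instead that $u-\tfrac12w$ must vanish somewhere, or simply applies Lemma \ref{NonDegeneracy}: on $B_\rho$ with $\rho$ slightly less than $|a|$, $u-\tfrac12w\ge u - \tfrac12(Q+\eps\rho^2)$; since $Q\le P$-type bounds are not automatic, the cleanest route is: $u\le\tfrac14 r^2$ fails and $u\ge\tfrac1{4d}r^2$ fails simultaneously cannot happen, so pick $r\sim|a|$ and derive $\eps\gtrsim 1$. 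In all cases we conclude $|a|,|b|\le C\eps^{1/2}$.

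Next, $|\alpha-\beta|$. Having localized $a,b=O(\eps^{1/2})$, both profiles are within $O(\eps)$ (after absorbing the $a,b$ shifts, which cost $O(\eps^{1/2}\cdot 1)=O(\eps^{1/2})$ in sup norm on $B_1$ — so actually one works on a ball of radius $\sim\eps^{1/2}$... ) — here I would instead argue via the constraint $u\ge\tfrac12 w$ and $w\ge\tfrac12 u$. On the region $\{x\cdot\alpha>a,\ x\cdot\beta<b\}$ we have $u\approx\tfrac12(x\cdot\alpha-a)^2$ while $w\approx 0$, which is consistent; the binding constraint comes from comparing where $P$ grows but $Q$ does not. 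Using $w\ge\tfrac12 u - $ (errors), at a point $x$ with $x\cdot\alpha-a$ of order $t$ and $x\cdot\beta-b<0$, we get $\tfrac12 t^2 - O(\eps)\le w \le \tfrac12\max\{x\cdot\beta-b,0\}^2 + \eps = \eps$, forcing $t^2\lesssim \eps$. Choosing $x$ of length $\sim\eps^{1/2}$ in the half-space $\{x\cdot\alpha>a\}\setminus\{x\cdot\beta>b\}$ — which is non-empty and contains such a point unless $\{x\cdot\alpha>a\}\cap B_{c\eps^{1/2}}\subset\{x\cdot\beta>b\}\cap B_{c\eps^{1/2}}$ — this inclusion of half-spaces (with $|a|,|b|\lesssim\eps^{1/2}$) is a geometric statement that forces $|\alpha-\beta|\lesssim\eps^{1/2}$. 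Running the symmetric argument with the roles of $u,w$ and $\alpha,\beta$ exchanged closes the estimate.

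The main obstacle I anticipate is the $|\alpha-\beta|$ bound: one must carefully choose the scale (it should be $r\sim\eps^{1/2}$, where the error $\eps r^2\cdot$(rescaling) is genuinely smaller than the main quadratic term $\tfrac12 r^2$), and verify that the relevant lens-shaped region between the two half-spaces actually contains a point of $B_r$ at which the ordering constraint $w\ge\tfrac12 u$ (or $u\ge\tfrac12 w$) is violated by the profiles unless $\alpha$ and $\beta$ are close. The bounds on $a$ and $b$ are more routine, being direct consequences of Lemma \ref{NonDegeneracy} and Proposition \ref{QuadGrowthRef} applied at scale $r\sim|a|$ respectively $r\sim|b|$.
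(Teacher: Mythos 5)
Your overall plan (non-degeneracy for the plane being too far on one side, the ordering constraints $u\ge\tfrac12 w$, $w\ge\tfrac12 u$ in the ``lens'' where one profile vanishes and the other does not) identifies the right ingredients, and the upper bounds $a,b\le C\eps^{1/2}$ via Lemma~\ref{NonDegeneracy} are essentially the paper's Step~1. However, two of the steps as you've written them do not go through. For the lower bound $a,b\ge-C\eps^{1/2}$ your reasoning confuses $u$ with $u-\tfrac12 w$: when $a<0$ we have $u(0)\approx\tfrac12 a^2>0$, and only $u-\tfrac12 w$ vanishes at $0$ and satisfies the quadratic growth bound of Proposition~\ref{QuadGrowthRef}; so ``$u(0)=0$ would already be contradicted'' and ``$u\le\tfrac14 r^2$ fails'' are both false starting points. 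The paper instead uses that $0\in\Gamma_u$ means $u(0)=\tfrac12 w(0)$, which combined with $|u-P|,|w-Q|<\eps$ yields the relation $\bigl|\tfrac12\max\{-a,0\}^2-\tfrac14\max\{-b,0\}^2\bigr|\le\tfrac32\eps$; one then closes the estimate by testing $u\ge\tfrac12 w$ at a point $x\approx -r\alpha$ with $r\sim|a|$ (which sits in $B_1$ once $|a|\lesssim 1$, a fact that follows from $\sup_{B_1}(u-\tfrac12 w)\le C$). Neither ingredient appears in your writeup.

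For $|\alpha-\beta|$ the idea (testing $w\ge\tfrac12 u$ where $P$ is positive but $Q$ vanishes, giving $x\cdot\alpha-a\le\sqrt{6\eps}$) is correct, but your choice of scale $r\sim\eps^{1/2}$ defeats it: on a ball of radius $c\eps^{1/2}$ every point automatically satisfies $|x\cdot\alpha-a|\lesssim\eps^{1/2}$, so the derived constraint carries no information, and the half-space inclusion you invoke only forces $|\alpha-\beta|\lesssim 1$ at that scale (the bound one gets is of order $(|a|+|b|)/r$). The argument must be run at unit scale. The paper avoids the geometric lemma entirely by first normalizing via~\eqref{Symmetric} so that $\alpha_1=\beta_1$, $\alpha_2=-\beta_2\ge 0$, and then simply evaluating $u\ge\tfrac12 w$ at $x=-e^2$: there $P(-e^2)=\tfrac12\max\{-\alpha_2-a,0\}^2$ and $Q(-e^2)=\tfrac12\max\{\alpha_2-b,0\}^2$, so with $|a|,|b|\le C\eps^{1/2}$ one gets $\alpha_2\le C\eps^{1/2}$ directly, and $|\alpha-\beta|=2\alpha_2$. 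I'd suggest reworking both of these steps: for $a^-,b^-$ start from the identity at the origin, and for $|\alpha-\beta|$ adopt the symmetry normalization and test at a unit-scale point.
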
 
\begin{proof}
\textit{Step 1: Localizing $a$ and $b$.}

Suppose $a>0$, then $B_a\subset\{x\cdot\alpha-a<0\}$. As a result, we have  $$u\le\eps \text{ in $B_a.$}$$ If $a>\sqrt{4d\eps}$, then Lemma \ref{NonDegeneracy} implies $u=0$ in a neighborhood of $0$, contradicting $0\in\Gamma_u.$ 

Therefore, $a\le\sqrt{4d\eps}$. Similarly, $b\le\sqrt{4d\eps}.$

On the other hand, with $u(0)=\frac{1}{2}w(0),$ we have \begin{equation*}-\frac{3}{2}\eps\le\frac{1}{2}\max\{-a,0\}^2-\frac{1}{4}\max\{-b,0\}^2\le\frac{3}{2}\eps.\end{equation*}
This implies that  $a, b\ge-C\eps^{1/2}$ for a dimensional $C$. 

\textit{Step 2: Localizing $(\alpha-\beta)$.}

The condition $u\ge\frac{1}{2}w$ at $-e^2$ implies
$$\frac{1}{2}\max\{-\alpha_2-a,0\}^2+\eps\ge\frac{1}{4}\max\{\alpha_2-b,0\}^2-\frac{1}{2}\eps.$$
With $\alpha_2\ge 0$ and $|a|,|b|\le C\eps^{1/2}$ from Step 1, this implies $\alpha_2\le C\eps^{1/2}.$
\end{proof}

The main result in this section is the following:
\begin{proposition}[Improvement of flatness: Case 1]\label{IOF1}
There are  small positive constants $\delta, \eps_d$ and $\rho_k$  $(k=0,1,2),$ and  large constants $M$ and $C$, depending only on the dimension,  such that the following holds:

Suppose $$(u,w)\in\mathcal{R}(\alpha,\beta;a,b;\eps)\text{ in $B_1$}$$ with $$|\alpha-\beta|<2\delta\eps^{1/2}$$ for some $\eps<\eps_d.$ Then we have two alternatives:

1) If $|a-b|>\rho_0|\alpha-\beta|+M\rho_0\eps^{3/4}$, then $\Gamma_u\cap\Gamma_w\cap B_{\rho_0}=\emptyset$, and  $\Gamma_u$ is a $C^{1,\gamma}$-hypersurface in $B_{\rho_0}$ with $C^{1,\gamma}$-norm  bounded by $C\eps$;

2) Otherwise, there are $\alpha',\beta'\in\Sph$ with $|\alpha'-\alpha|+|\beta'-\beta|<C\eps$ such that $$(u,w)\in\mathcal{R}(\alpha',\beta';\eps/2) \text{ in $B_{\rho_1}$.}$$  

Moreover, if $|\alpha-\beta|>\delta\eps^{1/2}$,  then  there are $\alpha'',\beta''\in\Sph$ with $|\alpha''-\alpha|+|\beta''-\beta|<C\eps$ such that $$(u,w)\in\mathcal{R}(\alpha'',\beta'';\frac{1}{32}\eps) \text{ in $B_{\rho_2}$}$$ and $$|\alpha''-\beta''|<|\alpha-\beta|-\eps.$$ 
\end{proposition}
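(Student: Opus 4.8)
\textbf{Proof strategy for Proposition \ref{IOF1}.}
The plan is to argue by compactness. Suppose the conclusion fails; then there is a sequence $(u_n,w_n)\in\mathcal{R}(\alpha_n,\beta_n;a_n,b_n;\eps_n)$ in $B_1$ with $\eps_n\to0$ and $|\alpha_n-\beta_n|<2\delta\eps_n^{1/2}$ for which neither alternative holds with the constants in question. By Lemma \ref{Localization1} the parameters $a_n,b_n,\alpha_n-\beta_n$ are all $O(\eps_n^{1/2})$, so after rotating into the normalization \eqref{Symmetric} and passing to a subsequence we may assume $\alpha_n,\beta_n\to e^1$, and the normalized errors
$$
\bar u_n:=\frac{u_n-\Pabab}{\eps_n},\qquad \bar w_n:=\frac{w_n-\Qabab}{\eps_n}
$$
converge, by the $C^{1,1}$ estimates of Theorem \ref{OptimalRegularity} together with interior gradient estimates away from the coincidence set, to a limiting pair $(\bar u,\bar w)$. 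The main point is to identify the linearized problem solved by $(\bar u,\bar w)$: in the region $\{x\cdot e^1>0\}$ (where $u_n,w_n$ are positive and equal to solid paraboloids up to $o(\eps_n)$) both $\bar u_n$ and $\bar w_n$ are harmonic, so $\bar u,\bar w$ are harmonic there; the quadratic growth of Proposition \ref{QuadGrowthRef} and the constraint $0\in\Gamma_{u_n}$ pin down the behavior near $\{x\cdot e^1=0\}$, giving a Signorini-type (thin obstacle) condition for $\bar u$ on the hyperplane $\{x_1=0\}$, and similarly for $\bar w$; the coupling $u_n\ge\frac12 w_n$, $w_n\ge\frac12 u_n$ passes to the limit as a one-sided constraint between $\bar u$ and $\bar w$ on $\{x_1=0\}$. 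One then expands the limit to second order and reads off that, modulo the free parameters (an affine correction to $a,b$, i.e.\ a translation, and an infinitesimal rotation of $\alpha,\beta$, which are precisely the directions one mods out), $\bar u$ and $\bar w$ decay like $|x|^{2+\mu}$ for some dimensional $\mu>0$ on dyadic balls.

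With this decay in hand one runs the dichotomy. If in the limit the ``gap parameter'' (the limit of $(a_n-b_n)/\eps_n^{1/2}$, appropriately scaled) is large, then for $n$ large the two paraboloids $P(\alpha_n;a_n)$ and $Q(\beta_n;b_n)$ are separated on the scale $\rho_0$, $\Gamma_{u_n}\cap\Gamma_{w_n}\cap B_{\rho_0}=\emptyset$, and near $\Gamma_{u_n}$ the system decouples into the classical obstacle problem for $u_n$ alone (with right-hand side $1$), whose free boundary is $C^{1,\gamma}$ with norm controlled by the flatness $\eps_n$ via the standard improvement-of-flatness for the obstacle problem; this gives alternative (1). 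Otherwise the gap is comparable to $|\alpha_n-\beta_n|+\eps_n^{3/4}$, and one uses Lemma \ref{FirstComparison} and Lemma \ref{SecondComparison} to trap $(u_n,w_n)$ between translated/rotated paraboloids $P(\alpha_n',\beta_n'),Q(\alpha_n',\beta_n')$: the decay estimate for the linearized solution says that, after choosing $\alpha_n',\beta_n'$ to absorb the first-order part of $\bar u,\bar w$ (so $|\alpha_n'-\alpha_n|+|\beta_n'-\beta_n|=O(\eps_n)$) and a suitable adjustment of the centers, the rescaled error at scale $\rho_1$ is at most $\frac12\eps_n\rho_1^2$, contradicting the failure of alternative (2). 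For the ``moreover'' part, the additional hypothesis $|\alpha_n-\beta_n|>\delta\eps_n^{1/2}$ means the \emph{transversal} component $(\alpha_n)_2=-(\beta_n)_2$ is genuinely of order $\eps_n^{1/2}\gg\eps_n$; the constraint $u_n\ge\frac12 w_n$ near $-e^2$ then forces the limit to prefer a configuration in which this transversal gap strictly shrinks, and tracking the quantitative loss through the same comparison lemmas yields $|\alpha_n''-\beta_n''|<|\alpha_n-\beta_n|-\eps_n$ at the smaller scale $\rho_2$, with a controlled loss $\frac1{32}$ in the flatness (the worse constant reflecting that we are now improving the \emph{angle} rather than the flatness, spending part of the gain to do so).

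The delicate bookkeeping is to keep straight three competing small quantities — the flatness $\eps$, the angle $|\alpha-\beta|\lesssim\eps^{1/2}$, and the gap $|a-b|$ — and to make the choices of $\rho_0,\rho_1,\rho_2,M,\delta$ in the right order: first fix the dimensional decay rate $\mu$ of the linearized problem, then choose $\rho_1$ (hence $\rho_2=\rho_1$ up to a constant) so that $\rho_1^{\mu}<\tfrac14$ and $\rho_2^\mu<\tfrac1{64}$, then choose $\rho_0$ small in terms of $\rho_1$ so that in the ``large gap'' regime the obstacle-problem improvement-of-flatness applies cleanly, then $M$ large and $\delta$ small so that the threshold $\rho_0|\alpha-\beta|+M\rho_0\eps^{3/4}$ sits in the window where the dichotomy is exhaustive, and finally $\eps_d$ small enough that all the comparison lemmas (which require $\eps<\tfrac1{4d}$, $\tfrac1{8d}$) and the compactness are in force.

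\textbf{The main obstacle.} The hardest step is the linearization: because the free boundary $\Gamma_u$ here is only $C^{1,\log}$, one cannot linearize the free boundary condition in the classical way, and one must instead extract from the compactness argument the correct limiting problem — a coupled pair of Signorini problems on a common hyperplane with a one-sided constraint between them — and prove the dyadic decay $\|\bar u\|_{L^\infty(B_\rho)}+\|\bar w\|_{L^\infty(B_\rho)}\le C\rho^{2+\mu}\big(\|\bar u\|_{L^\infty(B_1)}+\|\bar w\|_{L^\infty(B_1)}\big)$ after subtracting the admissible first-order (translation/rotation) corrections. Establishing that decay, i.e.\ showing that the only global solutions of the linearized system with subquadratic-plus-$\eps$ growth are the infinitesimal translations and rotations of the paraboloid pair, is the crux and is exactly where the novelty of the dichotomy (rather than a one-shot improvement of flatness) is needed, since the linearized solutions themselves need not be better than the borderline decay.
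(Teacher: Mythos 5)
Your proposal diverges from the paper's proof in a fundamental and, I'm afraid, fatal way: you identify the wrong linearized problem, and you assert a power--law decay that does not hold and cannot hold.

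First, the linearization. The paper does \emph{not} reduce to a Signorini/thin--obstacle problem. The key observation (Lemma \ref{TrappingByTranslations}, built on the explicit approximate solutions $(\Phi,\Psi)$ of Definition \ref{ApproxSol1}) is that $(u,w)$ is trapped between translates $\PP(\cdot\pm A\eps\alpha)$, which forces $\hu=(u-P)/\eps$ to be uniformly small, of size $C(\delta^2+\eps^{1/2})$, on the slab $\{x_1'\le a+C\eps^{1/2}\}$. In the complementary region $\{x_1'>a+C\eps^{1/2}\}$ one has $u>\tfrac12 w$, so $\hu$ is exactly harmonic there. The limiting problem is thus a plain Dirichlet problem for a harmonic function with \emph{prescribed} boundary data on a hyperplane --- there is no unilateral constraint surviving in the limit. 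This is why the explicit construction of approximate solutions is needed: it is what tells you the boundary trace of $\hu$ is not merely small, but, when $|\alpha-\beta|$ is at the critical level, is essentially $\eta (x_2')^2\mathcal{X}_{\{x_2'<0\}}$ with $\eta\sim|\alpha-\beta|^2/\eps\sim\delta^2$. Your compactness scheme, as described, never extracts this structured boundary trace.

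Second, and more seriously, the decay. You assert ``$\bar u$ and $\bar w$ decay like $|x|^{2+\mu}$ for some dimensional $\mu>0$'' after modding out translations and rotations, and you propose to choose $\rho_1,\rho_2$ so that $\rho_1^\mu<\tfrac14$. No such $\mu$ exists, and the paper's Remark after Theorem \ref{MainResult2} explicitly warns that a direct linearization of this kind fails because the free boundary is only $C^{1,\log}$. The linearized solutions include the auxiliary function $\tilde H$ of Proposition \ref{AuxiliaryFunct}, whose expansion at the origin contains the term $A_2\,x_1'x_2'\log r$: the set of global solutions of the linearized problem with near--quadratic growth is \emph{not} exhausted by infinitesimal translations and rotations, so the Liouville--type claim you lean on is false. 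You actually gesture at this in your last sentence (``the linearized solutions themselves need not be better than the borderline decay''), but that sentence contradicts the $\rho^{2+\mu}$ decay you use to fix $\rho_1,\rho_2$.

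Third, the angle--improvement mechanism and the choice of $\rho_2$. The real engine of the ``moreover'' part is precisely that logarithmic term: absorbing $\eta A_2\log\rho_2\,\xi^2$ into a new direction $\alpha'$ rotates $\alpha$ toward $e^1$ by an amount controlled from below by $\eps\,\eta A_2|\log\rho_2|$, and one must \emph{choose} $\rho_2$ small enough, depending on the dimensional $\delta$, so that $\eta A_2\log\rho_2<-|\sum_{k\ge2}\gamma_k\xi^k|-1$. In particular $\rho_2$ is not ``$\rho_1$ up to a constant''; it is much smaller, and is what makes the angle drop by a definite $\eps$ at the cost of only a factor $\tfrac{1}{32}$ in the flatness (this is exactly why the iteration in Lemma \ref{LongIteration1} produces a modulus like $r(-\log r)^{-1}$ rather than $r^{1+\alpha}$). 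Your explanation --- that the constraint $u\ge\tfrac12 w$ near $-e^2$ ``forces the limit to prefer a configuration in which this transversal gap strictly shrinks'' --- does not produce a quantitative decrease and does not identify where the $\log$ comes from.

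Finally, a remark on structure: the paper's proof is a direct barrier argument at finite $\eps$ (approximate solutions as explicit sub/supersolutions, comparison via Lemmas \ref{FirstComparison}--\ref{SecondComparison}, then boundary Schauder for the harmonic replacement $h$), not a compactness argument. A compactness version could be made to work, but only after replacing your Signorini linearization by the correct Dirichlet one and replacing the fictitious $\rho^{2+\mu}$ decay by the precise first--order expansion $A_1x_1'-A_2x_1'x_2'\log r+O(r^2)$ of Proposition \ref{AuxiliaryFunct}, at which point you would in effect be reconstructing the paper's dichotomy.
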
 

Let $\ell_1$ and $\ell_2$ denote the hyperplanes $\{x\cdot\alpha=a\}$ and $\{x\cdot\beta=b\}$, respectively. When the angle between $\ell_1$ and $\ell_2$ is small, this proposition deals with the case when $u$ and $w$ are approximated by half-space profiles with $\ell_1$ and $\ell_2$ as free boundaries. 

In the first alternative, $\ell_1$ and $\ell_2$ are well-separated in $B_{2\rho_0}$. In this case, the two free boundaries $\Gamma_u$ and $\Gamma_w$ decouple. The regularity of $\Gamma_u$ follows from the theory of the obstacle problem. 

In the second alternative, we improve the approximation of $(u,w)$ in $B_{\rho_1}$. An iteration of this improvement leads to regularity of $\Gamma_u$. To iterate, however,  the angle between hyperplanes needs to stay small. 

This issue becomes urgent once $|\alpha-\beta|$ reaches the critical level $\delta\eps^{1/2}$.  In this case, instead of $\rho_1$, we go to a much smaller scale $\rho_2$. At this scale, the angle decreases by a definite amount $\eps$.  After $k$ iterations, the angle is less than $(2\delta\eps^{1/2}-k\eps)$. Consequently, within at most $\eps^{-1/2}$ steps, the angle becomes subcritical. 

We give our proof of Proposition \ref{IOF1} in the following subsections. 

%%%%%%%%%%%%%%%%%%%%%%%%%%%%%%%%%%%%%%%%%%%%%%%%%%%%%%%%%%%%%%%%%%%%%%%%%%%%%%%%%%%%%%%%%%%%%%%%%%%%%%
\subsection{Approximate solutions}In this subsection, we build approximate solutions based on half-space profiles. They play a crucial role in our analysis and are used in estimating the behavior of the solutions near the free boundaries. We first give some heuristics by looking at the problem in one dimension. 

\begin{eg}\label{1DEx}
Suppose on $\R$, we are given two half-space profiles $$P=\frac{1}{2}\max\{x-a,0\}^2 \text{ and }Q=\frac{1}{2}\max\{x-b,0\}^2$$ with small $a< b$. Our goal is to find an actual solution, say, $(\Phi,\Psi),$ that best approximates $(P,Q)$.

It is natural to take  $\Phi=P.$  

But $\Psi\ge\frac{1}{2}\Phi$ forces $\Psi\ge\frac{1}{4}(x-a)^2$ for $x\ge a.$ Meanwhile, $Q=0$ on $(a,b)$. To minimize the error, we take $\Phi=\frac{1}{4}(x-a)^2$ on some interval, say $(a,t)$. 

We need to determine the point $t$, after which $\Psi>\frac{1}{2}\Phi$. To minimize the error between $\Psi$ and $Q$, we need to match the derivatives of $\Psi$ and $Q$ at $t$. This leads to the condition $\frac{1}{2}(t-a)=(t-b),$ which gives $t=2b-a.$

For $x>t$, $\Psi>\frac{1}{2}\Phi$, thus $\Psi''=1$. To ensure $C^{1,1}$-regularity of $\Psi$, we define $\Psi=\frac{1}{2}(x-b)^2+\frac{1}{2}(b-a)^2.$

This gives the approximate solution on $\R$. See Figure \ref{OneDConstruction1}.
\end{eg}

\begin{figure}[ht]
\includegraphics[width=0.8\linewidth]{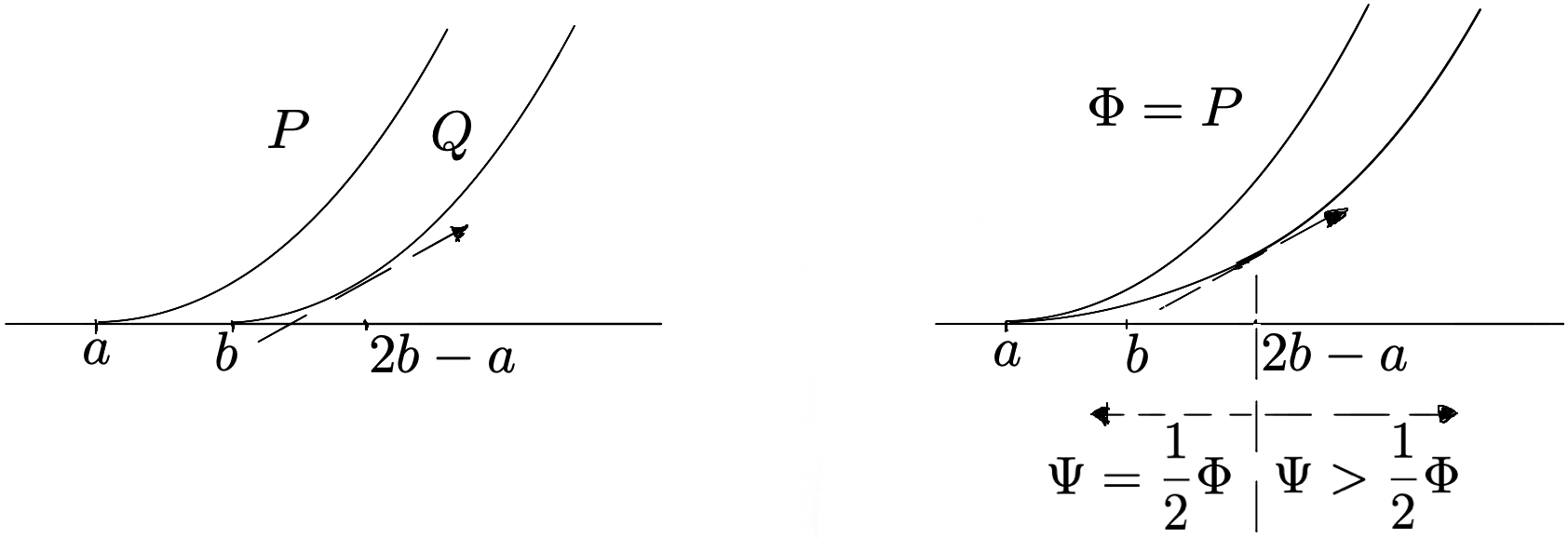}
\caption{Approximate solution on $\R$.}
\label{OneDConstruction1}
\end{figure}

In general dimensions,  the starting point is two half-space profiles $\Pabab$ and $\Qabab$ as in \eqref{Pabab1}. With \eqref{Symmetric}, it suffices to work in the $(x_1,x_2)$-plane. 

 The idea is to  follow  Example \ref{1DEx} for each fixed $x_2$. Such a line intersects  $\{x\cdot\alpha=a\}$ at $x_1=\frac{a-\alpha_2x_2}{\alpha_1}$ and intersects  $\{x\cdot\beta=b\}$ at $x_1=\frac{b-\beta_2x_2}{\beta_1}$. The smaller value between the two takes the place of $a$ as in Example \ref{1DEx}, while the larger one takes the role of $b$.  Then the free boundary point $2 b -a$ lies either on the line $\{(2\alpha-\beta)\cdot x=2a-b\}$ or $\{(2\beta-\alpha)\cdot x=2b-a\}.$ See Figure \ref{TwoDConstruction}.

\begin{definition}\label{ApproxSol1}
Corresponding to  $\PQabab$ as in \eqref{Pabab1}, the \textit{approximate solution} $(\Phi,\Psi)$ is defined as follows:
\begin{enumerate}
\item{Inside $\{\alpha_2x_2\ge \frac{a-b}{2}\}=\{x \cdot \alpha -a \ge x \cdot \beta-b\}$, $$\Phi=\frac{1}{2}\max\{x\cdot\alpha-a,0\}^2,$$ and $$\Psi=\begin{cases}\frac{1}{4}\max\{x\cdot\alpha-a,0\}^2 &\text{ for $(2\beta-\alpha)\cdot x<2b-a$,}\\ \frac{1}{2}(x\cdot\beta-b)^2+\frac{1}{2}(2\alpha_2x_2-a+b)^2 &\text{ for $(2\beta-\alpha)\cdot x\ge2b-a$.}
\end{cases}$$}
\item{Inside $\{\alpha_2x_2< \frac{a-b}{2}\}=\{x \cdot \alpha -a < x \cdot \beta-b\},$
$$\Phi=\begin{cases}\frac{1}{4}\max\{x\cdot\beta-b,0\}^2 &\text{ for $(2\alpha-\beta)\cdot x<2a-b,$}\\ \frac{1}{2}(x\cdot\alpha-a)^2+\frac{1}{2}(2\alpha_2x_2-a+b)^2 &\text{ for $(2\alpha-\beta)\cdot x\ge 2a-b$,}
\end{cases}$$ and 
$$\Psi=\frac{1}{2}\max\{x\cdot\beta-b,0\}^2.$$}
\end{enumerate}
\end{definition}

 \begin{figure}[ht]
\includegraphics[width=0.8\linewidth]{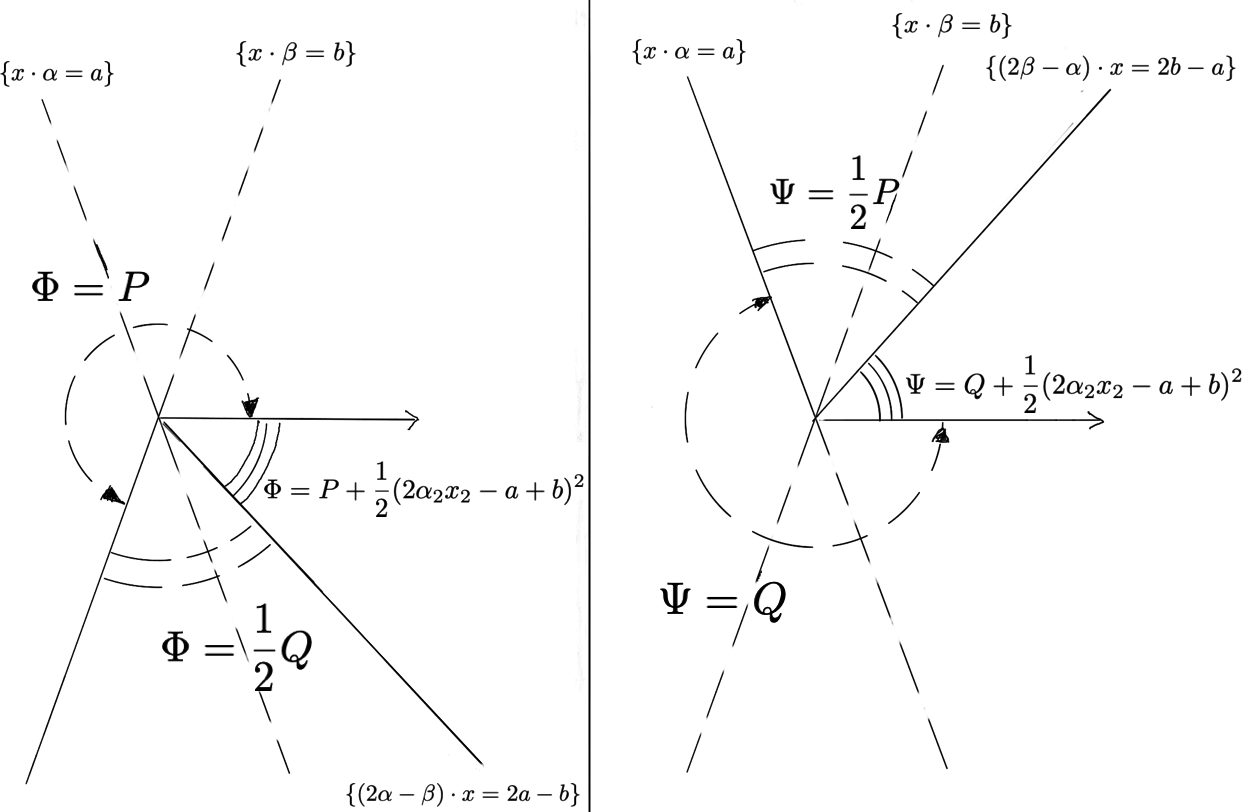}
\caption{Approximate solution in $\R^2$.}
\label{TwoDConstruction}
\end{figure}

The contact situation of the approximate solution is depicted Figure \ref{ContactApprox1}.
\begin{figure}[h]
\includegraphics[width=0.6\linewidth]{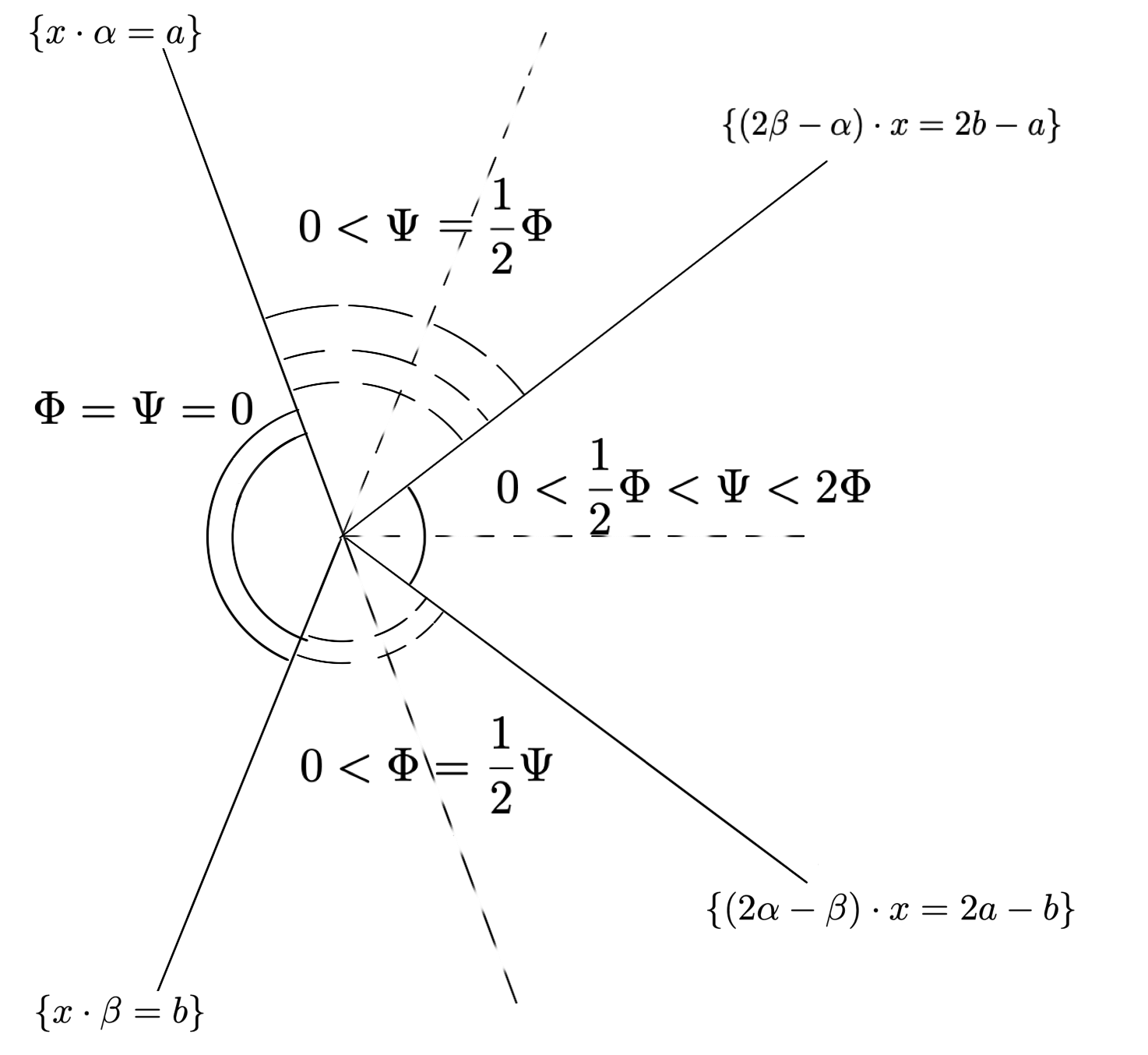}
\caption{Contacting situation of $\PP$.}
\label{ContactApprox1}
\end{figure}

These are approximate solutions in the sense described by the following lemma.  
\begin{lemma}\label{PropOfApproxSol1}
Let $(\Phi,\Psi)$ be the approximate solution defined above.  

Then $\Phi$ and $\Psi$ are $C^{1,1}$ functions. 

Moreover, there is a dimensional constant $C$ such that $$|\PP-(P,Q)|\le C(|\alpha-\beta|^2+|a-b|^2) \text{ in $B_1$,}$$and$$\PP\in\Subsol(\R^d) \text{ and }(1-C|\alpha-\beta|^2)\PP\in\Supsol(\R^d).$$
\end{lemma}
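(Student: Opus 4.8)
The plan is to establish the three claims of Lemma~\ref{PropOfApproxSol1} --- $C^{1,1}$-regularity of $\Phi,\Psi$; the quadratic $C(|\alpha-\beta|^2+|a-b|^2)$ closeness to $(P,Q)$; and the sub/supersolution statements --- by a direct but careful case analysis dictated by the piecewise definitions in Definition~\ref{ApproxSol1}. By the symmetry in the definition (swapping the roles of $(\Phi,P,\alpha,a)$ and $(\Psi,Q,\beta,b)$ together with the reflection $\alpha_2 \leftrightarrow -\alpha_2$), it suffices to do all computations in the region $\{\alpha_2 x_2 \ge \frac{a-b}{2}\}$; everything transfers verbatim to the other region. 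Throughout I would use coordinates $s=x\cdot\alpha-a$ and $t=x\cdot\beta-b$, so that the dividing set is $\{s=t\}$, i.e. $\{2\alpha_2 x_2 = a-b\}$ after using \eqref{Symmetric}, and I would record that $|\nabla s|=|\nabla t|=1$, $\nabla s\cdot\nabla t = \alpha\cdot\beta = 1 - \tfrac12|\alpha-\beta|^2$, and that on the dividing set $|s-t|=0$ while the "offset" $2\alpha_2 x_2 - a + b$ appearing in the formulas equals $s+t$ there (this is the quantity playing the role of $(b-a)$ in Example~\ref{1DEx}).

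\emph{$C^{1,1}$-regularity.} There are three interfaces to check: (i) the free-boundary-type interface $\{s=0\}$ inside $\{2\beta-\alpha)\cdot x < 2b-a\}$, where $\Phi$ and the first branch of $\Psi$ transition from $0$ to a quadratic --- here $C^{1,1}$ is immediate from the $\max\{\cdot,0\}^2$ form; (ii) the interface $\{(2\beta-\alpha)\cdot x = 2b-a\}$ inside the upper region, where the two branches of $\Psi$ meet --- I would check that along this hyperplane $x\cdot\beta-b = \tfrac12(2\alpha_2 x_2 - a + b)$ (this is exactly the matching condition $t = \tfrac12(s+t)$, i.e. the "$t=2b-a$" rule of Example~\ref{1DEx} rewritten), so that both $\Psi$ and its gradient agree, being quadratics with the same Hessian only in the $\beta$-direction but differing by a function that vanishes to second order on the interface; and (iii) the central interface $\{s=t\}$, where one must match the upper-region formulas against the lower-region ones --- on $\{s=t\}$ one has $s=t$ and offset $=s+t=2s$, and I would verify that the two definitions of $\Phi$ (and of $\Psi$) agree there together with their first derivatives, the key point being that on $\{s = t\}$ the "corrected" branch $\tfrac12 s^2 + \tfrac12(s+t)^2$ and the "uncorrected" branch $\tfrac12 t^2 + \tfrac12(s+t)^2$ coincide. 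In each case the functions are smooth on either side with uniformly bounded Hessians (the Hessians are fixed rank-one or rank-two constant matrices built from $\alpha\otimes\alpha$, $\beta\otimes\beta$, $e^2\otimes e^2$), so $C^{1,1}$ across follows from $C^1$-matching.

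\emph{Closeness to $(P,Q)$.} Off the "correction" regions one has $\Phi=P$ or $\Psi=Q$ exactly, so the error is supported where a branch of the form $\tfrac12 s^2 + \tfrac12(s+t)^2$ or $\tfrac14 t^2$ etc.\ is active. In all such regions I would bound $|s-t|$ and the offset $|s+t|$ (when $s,t$ have controlled signs there) by $C(|\alpha-\beta|+|a-b|)$ on $B_1$ --- using that the region itself is squeezed between the hyperplanes $\{s=0\},\{t=0\},\{s=t\},\{(2\beta-\alpha)\cdot x = 2b-a\}$ whose mutual angles/offsets are $O(|\alpha-\beta|+|a-b|)$ --- and then observe that each branch differs from the corresponding $P$ or $Q$ by a quadratic expression in these small quantities, giving the claimed $O(|\alpha-\beta|^2+|a-b|^2)$ bound. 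This is the step with the most bookkeeping but no real difficulty.

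\emph{Sub/supersolution.} For the Laplacian: in every branch $\Delta\Phi$ and $\Delta\Psi$ are explicit constants. For the "main" branches $\Delta\Phi=\Delta\Psi=|\alpha|^2=1$ (or $=|\beta|^2=1$); for the correction branches like $\Psi=\tfrac14\max\{s,0\}^2$ one gets $\Delta\Psi=\tfrac12<1$, and for $\Psi=\tfrac12 t^2 + \tfrac12(s+t)^2$ one gets $\Delta\Psi = 1 + |\nabla(s+t)|^2 = 1 + |\alpha+\beta|^2 = 1 + (4 - |\alpha-\beta|^2\cdot\text{(const)})$ --- wait, more precisely $|\alpha+\beta|^2 = 2 + 2\alpha\cdot\beta = 4 - |\alpha-\beta|^2$, so $\Delta\Psi = 2 + |\alpha+\beta|^2\cdot\tfrac12$? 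I would recompute this exactly; the point is it is a constant of the form $c_0 - c_1|\alpha-\beta|^2$ with $c_0 \le 2$, hence $\le 1$ after multiplying by $(1-C|\alpha-\beta|^2)$ for suitable $C$, while staying $\ge \mathcal X_{\{\cdot\}}$ for the subsolution bound. The genuinely delicate part --- and I expect this to be \textbf{the main obstacle} --- is verifying the \emph{order constraints} $\Phi\ge 0$, $\Psi\ge 0$, $\Phi\ge\tfrac12\Psi$, $\Psi\ge\tfrac12\Phi$ (and their analogues for $(1-C|\alpha-\beta|^2)(\Phi,\Psi)$), because these are pointwise inequalities between piecewise quadratics across the interfaces. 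The inequality $\Psi\ge\tfrac12\Phi$ in the region where $\Psi=\tfrac14\max\{s,0\}^2$ and $\Phi=\tfrac12\max\{s,0\}^2$ is an identity; the more subtle ones occur where $\Phi = \tfrac12 s^2 + \tfrac12(s+t)^2$ (lower region) must dominate $\tfrac12\Psi = \tfrac14\max\{t,0\}^2$, and symmetrically --- here I would use that in the relevant region $s$ and $t$ are both close to each other and the offset $s+t$ contributes a nonnegative term, reducing the inequality to something like $\tfrac12 s^2 + \tfrac12(s+t)^2 \ge \tfrac14 t^2$, which holds with room to spare when $|s-t|$ is small. The correction factor $(1-C|\alpha-\beta|^2)$ on the supersolution is exactly what absorbs the $O(|\alpha-\beta|^2)$ errors generated by $\alpha\ne\beta$ in these comparisons and in the Laplacian computation above; choosing $C$ dimensional at the end closes the argument.
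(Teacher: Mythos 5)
Your overall plan---a branch-by-branch case analysis of Definition~\ref{ApproxSol1}, matching values and gradients at the interfaces to get $C^{1,1}$, bounding each branch against $(P,Q)$, computing Laplacians, and verifying the ordering constraints---is the same strategy as the paper's proof. However, there is a concrete sign error that undermines two of your three verifications and that you only partially notice.

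You set $s = x\cdot\alpha - a$ and $t = x\cdot\beta - b$ and claim the offset $2\alpha_2 x_2 - a + b$ equals $s+t$. In fact, by \eqref{Symmetric}, $s - t = (x\cdot\alpha - x\cdot\beta) - (a-b) = 2\alpha_2 x_2 - (a-b)$, so the offset is $s-t$, not $s+t$. (Your own statement that the offset is $s+t$ ``on the dividing set'' where ``$|s-t|=0$'' is already inconsistent, since the offset manifestly vanishes there.) This error propagates. For the Laplacian you get $\Delta\Psi = 1 + |\alpha+\beta|^2 \approx 5$ in the corrected branch, which you sense is off and say you would recompute; the correct value is $1 + |\nabla(s-t)|^2 = 1 + |\alpha-\beta|^2 = 1 + 4\alpha_2^2$, which is close to $1$ and is exactly what the factor $(1-C|\alpha-\beta|^2)$ is designed to absorb. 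More seriously, you do \emph{not} flag that the same error breaks the closeness estimate: the correction term in the ``corrected'' branch is $\tfrac12(\text{offset})^2 = \tfrac12(s-t)^2$, and $|s-t| \le |\alpha-\beta| + |a-b|$ on $B_1$ is precisely why this term is $O(|\alpha-\beta|^2 + |a-b|^2)$. With your $s+t$ the correction would be $O(1)$ and the lemma would be false. In the other nontrivial branch, where $\Phi = \tfrac12 Q = \tfrac14 (t^+)^2$, the paper controls the error via the chain $0 \le s^+ \le \tfrac12 t \le x\cdot(\beta-\alpha)-(b-a)$ valid on $E=\{2s\le t\}\cap\{t>0\}$, which simultaneously makes \emph{both} $P$ and $\tfrac12 Q$ small; your phrasing ``each branch differs from $P$ or $Q$ by a quadratic in these small quantities'' does not quite capture this, since it is not a single small correction but two separately small pieces. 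Finally, you flag the ordering constraints $\Phi\ge\tfrac12\Psi$, $\Psi\ge\tfrac12\Phi$ as ``the main obstacle,'' but with the correct offset they reduce to perfect-square identities (e.g.\ $2t^2 + 2(s-t)^2 - s^2 = (2t-s)^2 \ge 0$) with equality precisely on the interfaces, which is why the paper omits the verification as routine.
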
 
The  classes $\Subsol$ and $\Supsol$ are defined in Definition \ref{ReSol}.

\begin{proof}
By definition, we have $\Phi$ and $\Psi$ are $C^{1,1}$. Moreover,  $\Delta\Phi\le 1+4\alpha_2^2$. Thus $(1-C|\alpha-\beta|^2)\Delta\Phi\le 1.$ 

On the other hand, in $\{\Phi>\frac{1}{2}\Psi\}$, we either have 
\begin{equation}\label{phiP}
\Phi=P \quad \mbox{ or} \quad  \Phi=P+\frac{1}{2}((\alpha-\beta)\cdot x-(a-b))^2.
\end{equation} 
In both cases, we have $\Delta\Phi\ge 1.$ 
Similar arguments apply to $\Psi$, and we have $$\PP\in\Subsol(\R^d)\text{ and }(1-C|\alpha-\beta|^2)\PP\in\Supsol(\R^d).$$

Now we estimate $|\Phi-P|$. In the set $$E:=\{(2\alpha-\beta)\cdot x\le 2a -b\} \cap \{x\cdot\beta>b\},$$ we have
$$0\le (x\cdot \alpha -a)^+\le \frac 12 (x\cdot\beta-b) \le x\cdot(\beta-\alpha)-(b-a),$$
thus $$|\Phi-P|=|\frac{1}{2}Q-P|\le C(|\alpha-\beta|^2+|b-a|^2).$$
In the complement $E^c$, \eqref{phiP} holds, and the inequality above remains valid in the whole $B_1$. A similar estimate holds for $|\Psi-Q|.$
\end{proof} 

These approximate solutions lead to fine estimate of solutions:
\begin{lemma}\label{TrappingByTranslations}
Suppose $(u,w)\in\mathcal{R}(\alpha,\beta;a,b;\eps)$ in $B_1$.

Let $\PP$ denote the approximate solution corresponding to $\PQabab$ as in \eqref{Pabab1}.
Then there are dimensional constants $A$ and $\eps_d$ such that
$$\PP(\cdot-A\eps\alpha)\le (u,w)\le\PP(\cdot+A\eps\alpha) \text{ in $B_{1/2}$}$$ if $\eps<\eps_d$.
\end{lemma}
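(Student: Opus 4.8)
The statement to prove is Lemma \ref{TrappingByTranslations}: if $(u,w) \in \mathcal{R}(\alpha,\beta;a,b;\eps)$ in $B_1$, then the approximate solution $\PP$ satisfies $\PP(\cdot - A\eps\alpha) \le (u,w) \le \PP(\cdot + A\eps\alpha)$ in $B_{1/2}$. The natural strategy is to deduce the two-sided trapping from the comparison lemmas already established, namely Lemma \ref{FirstComparison} (a subsolution is below a super solution when it is below, suitably perturbed, on the boundary) and Lemma \ref{SecondComparison} (the symmetric statement). By Lemma \ref{PropOfApproxSol1}, $\PP \in \Subsol(\R^d)$ and $(1-C|\alpha-\beta|^2)\PP \in \Supsol(\R^d)$; after a harmless rescaling (working in $B_2$ in place of $B_1$, or by a covering argument, since the hypotheses are at scale $1$) these play the roles of sub/super solutions in the comparison lemmas, while $(u,w) \in \mathcal{A}$ serves as the complementary object.

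First I would establish the \emph{upper} bound $(u,w) \le \PP(\cdot + A\eps\alpha)$. Here $(u,w)$ is the subsolution and the translated approximate solution (slightly dilated down to land in $\Supsol$) is the super solution. On $\partial B_{1/2}$ one must check the boundary hypothesis of Lemma \ref{FirstComparison}: translating $\PP$ by $A\eps\alpha$ moves the free-boundary hyperplanes $\ell_1,\ell_2$ in the direction where $P,Q$ increase, so away from the contact sets $\PP(\cdot+A\eps\alpha)$ is larger than $P,Q$ by a definite amount $\gtrsim A\eps \cdot (\text{distance to } \ell_i)$, which dominates the $\eps$-error $|u - P|, |w - Q| < \eps$ provided $A$ is a large enough dimensional constant; near the contact sets the quadratic profiles and the $10d\eps\mathcal{X}_{\{\cdot > \frac{1}{4d}\}}$ bookkeeping term in Lemma \ref{FirstComparison} absorb the discrepancy. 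One also uses Lemma \ref{Localization1} to know $|a|,|b|,|\alpha-\beta| = O(\eps^{1/2})$, so that the error term $C|\alpha-\beta|^2 = O(\eps)$ coming from the dilation $(1-C|\alpha-\beta|^2)$ is itself of lower order and can be folded into $A\eps$. Then Lemma \ref{FirstComparison} yields the inequality in the smaller ball. The \emph{lower} bound $\PP(\cdot - A\eps\alpha) \le (u,w)$ is entirely symmetric: now $\PP(\cdot - A\eps\alpha)$, slightly dilated down, is the subsolution and $(u,w) \in \Supsol$ the super solution, and one invokes Lemma \ref{SecondComparison} after the same boundary check, translating in the opposite direction so that the shifted profiles sit \emph{below} $P,Q$ by a definite margin outside the contact sets.

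The main obstacle is the boundary verification near the free-boundary hyperplanes and the contact sets, where the profile is only $C^{1,1}$ and the translation interacts nontrivially with the piecewise definition of $\PP$ in Definition \ref{ApproxSol1}: one must confirm that translating by $A\eps\alpha$ genuinely increases (resp. decreases) $\PP$ pointwise by at least the required amount \emph{uniformly} across all the pieces (the regions $\{(2\beta-\alpha)\cdot x \lessgtr 2b-a\}$, etc.), and that the indicator corrections in Lemmas \ref{FirstComparison}–\ref{SecondComparison} are compatible with where $u' > \tfrac{1}{4d}$ versus $u' > \tfrac{1}{8d}$. This is essentially a careful but routine case analysis in the $(x_1,x_2)$-plane using that $\alpha \cdot \nabla P = (x\cdot\alpha - a)^+ \ge 0$ and the analogous one-sided monotonicity of each branch of $\Phi,\Psi$; the smallness from $\eps < \eps_d$ and the localization of parameters keeps all the cross terms negligible. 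Once the boundary inequalities hold, the two comparison lemmas close the argument immediately.
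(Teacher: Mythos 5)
Your proposal matches the paper's proof: the paper establishes the upper bound by applying Lemma \ref{FirstComparison} to $(F,G)=(1-C\eps)\PP(\cdot+A\eps\alpha)$, which lies in $\Supsol$ by Lemma \ref{PropOfApproxSol1}, verifying the boundary hypothesis exactly as you describe via the monotonicity of $\Phi,\Psi$ along $\alpha$ together with Lemma \ref{Localization1}, and obtains the lower bound symmetrically using Lemma \ref{SecondComparison}. One tiny slip: for the lower bound no dilation is needed, since $\PP(\cdot-A\eps\alpha)$ already lies in $\Subsol$.
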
 Recall the notation for comparison between pairs of functions as in \eqref{ComparisonBetweenPairs}.

We can replace $\alpha$ by $\beta$ and get the same comparisons. 

\begin{proof}We prove the upper bound. The lower bound follows from a similar argument.  The strategy is to apply Lemma \ref{FirstComparison} to translations of the approximate solution.

By Lemma \ref{Localization1}, we have $|a|,|b|,|\alpha-\beta|\le C\eps^{1/2}$. Thus Lemma \ref{PropOfApproxSol1} implies that $$(1-C\eps)\PP\in\Supsol(\R^d)$$ and $$|\PP-(P,Q)|\le C\eps \text{ in $B_1$.}$$

Define $$(F,G)=(1-C\eps)\PP(\cdot+A\eps\alpha),$$ where $A$ is a large constant to be chosen. 

Then $(F,G)\in\Supsol(B_1).$

For $\eps$ small, both $\Psi$ and $\Phi$ are non-decreasing in the direction along $\alpha$. Thus  we have the following comparison
\begin{align*}
(F,G)&\ge(1-C\eps)\PP\\&\ge(1-C\eps)((P,Q)-C\eps)\\&\ge(P,Q)-C\eps\\&\ge(u,w)-C\eps
\end{align*}inside $B_{3/4}$. 
Note that $$\{F>\frac{1}{4d}\}\subset\{\Phi(\cdot+A\eps\alpha)>\frac{1}{4d}\}\subset\{x\cdot\alpha-a>c\}$$ for a small dimensional constant $c>0.$ On the last set, we have $$\Phi(x+A\eps\alpha)-\Phi(x)\ge A\eps(x\cdot\alpha-a)\ge cA\eps.$$Thus on $\{F>\frac{1}{4d}\}\cap B_{3/4}$ we have
$$F=(1-C\eps)\Phi(x+A\eps\alpha)\ge \Phi(x)+cA\eps-C\eps\ge u+cA\eps-C\eps.$$ By choosing $A$ large, depending only on the dimension, we have $$F\ge u+10d\eps \text{ in $\{F>\frac{1}{4d}\}\cap B_{3/4}$.}$$ Similar comparison holds between $G$ and $w$ on $\{G>\frac{1}{4d}\}\cap B_{3/4}.$

Combining all these, we can apply Lemma \ref{FirstComparison} to get $(u,w)\le(F,G) \text{ on $B_{1/2}$.}$ This gives the desired upper bound. 
\end{proof} 

It is convenient to use the orthonormal basis for $\R^d$ containing $\alpha$. To simplify our notations, we introduce the following:
\begin{notation}\label{RotatedBasis}Let  $\{\alpha, \xi^2,\xi^3,\dots\xi^d\}$ be the orthonormal basis for $\R^d$ with $$\xi^2=-\alpha_2 e^1+\alpha_1 e^2,$$ and $\xi^k=e^k$ for $k\ge 3.$ 

 Let $x_k'$ denote the coordinate in the $\xi^k$ direction.  \end{notation} 

\subsection{Free boundary regularity when $\{x\cdot\alpha=a\}$ and $\{x\cdot\beta=b\}$ are well-separated} In this subsection we prove the $C^{1,\alpha}$-regularity of $\Gamma_u$ when the two hyperplanes are well-separated in $B_{2\rho_0}$. This is  alternative (1) in  Proposition \ref{IOF1}.

When the two hyperplanes are well-separated, the free boundaries $\Gamma_u$ and $\Gamma_w$ are at a definite distance to each other. Effectively, we are dealing with a single obstacle problem. Thus we can apply the result from Appendix A. 

Note that $\rho_0$ will be chosen in the next subsection, depending only on the dimension $d$. It suffices to prove the result at unit scale. 

Depending on the relative position of the hyperplanes, there are two cases to consider.  

 We first deal with the case when $b<a$. See Figure \ref{WellSeparated1}.

\begin{lemma}\label{FollowingTheReferee}
Suppose $(u,w)\in\mathcal{R}(\alpha,\beta;a,b;\eps)$ in $B_2$ for some $\eps<\eps_d.$

If  $a-b-4\alpha_2>M\eps^{3/4},$ then $$\Gamma_u\cap B_{3/2}\subset\{w>\frac{1}{2}u\}, \text{ and } \Gamma_w\cap B_{3/2}\subset\{u=\frac{1}{2}w\}.$$ Moreover, both free boundaries $\Gamma_u$ and $\Gamma_w$ are $C^{1,\alpha}$-hypersurfaces in $B_{1}$ with $C^{1,\alpha}$-norm  bounded by $C\eps$.

Here $\eps_d$, $M$, and $C$ are dimensional constants.
\end{lemma}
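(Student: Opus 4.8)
The plan is to use the approximate solution $(\Phi,\Psi)$ of Definition~\ref{ApproxSol1} together with the trapping of Lemma~\ref{TrappingByTranslations} and the non-degeneracy of Lemma~\ref{NonDegeneracy} to locate both free boundaries with $\eps^{3/4}$-precision; the separation hypothesis then decouples them, and $u-\tfrac12 w$ and $w$ become solutions of classical constant-coefficient obstacle problems to which the flatness theory of Appendix~A applies. I would first normalize by \eqref{Symmetric}; then Lemma~\ref{Localization1} gives $|a|,|b|,\alpha_2=\tfrac12|\alpha-\beta|\le C\eps^{1/2}$, while the hypothesis forces $a>b$ and $4\alpha_2<a-b$, so that every point of $B_2$ lies in case~(2) of Definition~\ref{ApproxSol1}. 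There $\Psi=\tfrac12\max\{x\cdot\beta-b,0\}^2$ is an honest half-space solution with free boundary $\ell_2=\{x\cdot\beta=b\}$, and, writing
\[
h:=(2\alpha-\beta)\cdot x-(2a-b),\qquad q:=a-b-2\alpha_2x_2,
\]
one has $\Phi-\tfrac12\Psi=\tfrac14\max\{h,0\}^2$, $\;x\cdot\beta-b=h+2q$, and $M\eps^{3/4}<q\le C\eps^{1/2}$ throughout $B_{3/2}$. It will be essential to keep every estimate below phrased in the variable $h$: rewriting in $x\cdot\alpha$ or $x\cdot\beta$ would bring in errors of order $\alpha_2\sim\eps^{1/2}$ that would swamp the $\eps^{3/4}$ gap in the hypothesis.

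Next I would feed the trapping $\Phi(\cdot-A\eps\alpha)\le u\le\Phi(\cdot+A\eps\alpha)$ and $\Psi(\cdot-A\eps\beta)\le w\le\Psi(\cdot+A\eps\beta)$ (Lemma~\ref{TrappingByTranslations}, after a routine rescaling onto $B_{3/2}$) into the explicit formulas above. Because $\tfrac14\max\{h,0\}^2$ touches $0$ quadratically, on a ball of radius $\sim\eps^{3/4}$ around a point where $h<-C_1\eps^{3/4}$ one has $u-\tfrac12 w\lesssim\eps^{3/2}$, so Lemma~\ref{NonDegeneracy} forces $u=\tfrac12 w$ there; together with the matching lower bound this gives a dimensional $C_1$ with
\[
\{u>\tfrac12 w\}\subset\{h>-C_1\eps^{3/4}\},\qquad \Gamma_u\subset\{|h|\le C_1\eps^{3/4}\},
\]
while the genuine half-space structure of $\Psi$ yields
\[
\{w>\tfrac12 u\}\supset\{x\cdot\beta-b>C_1\eps\},\qquad \{w=0\}=\{u=0\}\subset\{x\cdot\beta-b\le C_1\eps\}
\]
(the identity $\{u=0\}=\{w=0\}$ is immediate from the inequalities in Definition~\ref{ReSol}). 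Fixing $M$ a large dimensional multiple of $C_1$, and $\eps_d$ accordingly, one has $\{x\cdot\beta-b>C_1\eps\}=\{h>C_1\eps-2q\}\supset\{h>-M\eps^{3/4}\}$, so on $B_{3/2}$
\[
\{u>\tfrac12 w\}\subset\{h>-C_1\eps^{3/4}\}\subset\{h>-M\eps^{3/4}\}\subset\{w>\tfrac12 u\},
\]
and, the $h$-ranges of $\Gamma_u$ and $\Gamma_w$ being disjoint, $\Gamma_u\cap\Gamma_w=\emptyset$. The two conclusions then follow formally: $\Gamma_w\subset(\{w>\tfrac12 u\})^c\subset(\{u>\tfrac12 w\})^c=\{u=\tfrac12 w\}$; and $\Gamma_u\subset\overline{\{u>\tfrac12 w\}}\subset\overline{\{w>\tfrac12 u\}}=\{w>\tfrac12 u\}\cup\Gamma_w$, whence $\Gamma_u\subset\{w>\tfrac12 u\}$. (In particular $x\cdot\beta-b=h+2q>M\eps^{3/4}$ on $\Gamma_u$, and $\Gamma_w$ lies within $C_1\eps$ of $\ell_2$.)

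For the regularity I would pass to the notation $u_1=u$, $u_2=w-u$, $u_3=-w$ of Remark~\ref{EquivalenceBetweenProblems}. The inclusion $\{u>\tfrac12 w\}\subset\{w>\tfrac12 u\}$ reads $\{u_1>u_2\}\subset\{u_2>u_3\}$, so $\{u_1>u_2=u_3\}=\emptyset$ on $B_{3/2}$ and the system of Definition~\ref{Solution} collapses to $\Delta(u_1-u_2)=\mathcal{X}_{\{u_1>u_2\}}$ and $\Delta(-u_3)=\mathcal{X}_{\{u_2>u_3\}}$. Hence $v:=u-\tfrac12 w=\tfrac12(u_1-u_2)\ge0$ solves $\Delta v=\tfrac12\mathcal{X}_{\{v>0\}}$ on $B_{3/2}$, and, using $\{u_2>u_3\}=\{w>\tfrac12 u\}=\{w>0\}$ on $B_{3/2}$ (again by the decoupling), $w=-u_3\ge0$ solves $\Delta w=\mathcal{X}_{\{w>0\}}$ on $B_{3/2}$; after scaling the right-hand sides to $1$ these are classical obstacle problems. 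By the trapping and the $C^{1,1}$ bounds of Lemma~\ref{PropOfApproxSol1}, $v$ is $C\eps$-close to $\tfrac14\max\{h,0\}^2$ and $w$ is $C\eps$-close to $\Psi$ on $B_{3/2}$, i.e.\ both are $C\eps$-flat, with $0\in\Gamma_u=\partial\{v>0\}$ and with $\Psi$ seen near each point of $\Gamma_w\cap B_1$ as a $C\eps$-perturbation of a half-space solution (since $\Gamma_w$ is $C\eps$-close to $\ell_2$). Invoking the quantitative flatness estimate of Appendix~A at each point of $\Gamma_u\cap B_1$ and of $\Gamma_w\cap B_1$ then shows that both free boundaries are $C^{1,\alpha}$-hypersurfaces in $B_1$ with $C^{1,\alpha}$-norm bounded by $C\eps$.

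The crux, and the only delicate point, is the two-scale bookkeeping in the second step: the hypothesis separates the two straight free boundaries by only $\sim\eps^{3/4}$, which is exactly the resolution to which the $O(\eps)$-translation trapping, combined with non-degeneracy, can pin down the quadratically-contacting level set $\{h=0\}$. One must therefore keep all estimates in the variable $h$ (so the $O(\alpha_2)=O(\eps^{1/2})$ tilt does not interfere) and choose $M$ large relative to the dimensional constant $C_1$. The rest — deducing the two scalar obstacle problems from $\{u_1>u_2\}\subset\{u_2>u_3\}$, and the covering argument invoking Appendix~A — is soft.
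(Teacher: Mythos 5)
Your proposal is correct and follows essentially the same route as the paper: trapping by translated approximate solutions (Lemma~\ref{TrappingByTranslations}) plus non-degeneracy (Lemma~\ref{NonDegeneracy}) to pin the two free boundaries down to $\eps^{3/4}$-precision, the separation hypothesis then yields the decoupling inclusion $\{u>\tfrac12 w\}\cap B_{3/2}\subset\{w>\tfrac12 u\}\cap B_{3/2}$, and Theorem~\ref{ObReg} gives the regularity of two decoupled scalar obstacle problems. Your explicit $h$-variable bookkeeping (with $h=(2\alpha-\beta)\cdot x-(2a-b)$, $q=a-b-2\alpha_2 x_2$) is just a transparent rewriting of the paper's chain of inclusions \eqref{Separation1}--\eqref{ChainOfInclusion}. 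The one place where you genuinely depart is Step~3 for $\Gamma_w$: the paper builds the modified pair $(\tilde u,\tilde w)$ that agrees with $(u,w)$ in $\{x\cdot\alpha\le a\}$ and equals $(\tfrac12 w,w)$ elsewhere, then applies Theorem~\ref{ObReg} to $\tilde w$; you instead note that the decoupling makes the middle-contact region $\{u_1>u_2=u_3\}$ empty in $B_{3/2}$, so that $\Delta w=\mathcal{X}_{\{w>\tfrac12 u\}}=\mathcal{X}_{\{w>0\}}$ already, i.e.\ $w$ itself is a classical obstacle-problem solution in $B_{3/2}$. This shortcut is valid (your check that $\{w>0\}\subset\{w>\tfrac12 u\}$ via the decoupling is the only point requiring care) and removes the auxiliary construction, at no cost.
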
 

The assumption \eqref{Symmetric} is still in effect. 
\begin{figure}[h]
\includegraphics[width=0.6\linewidth]{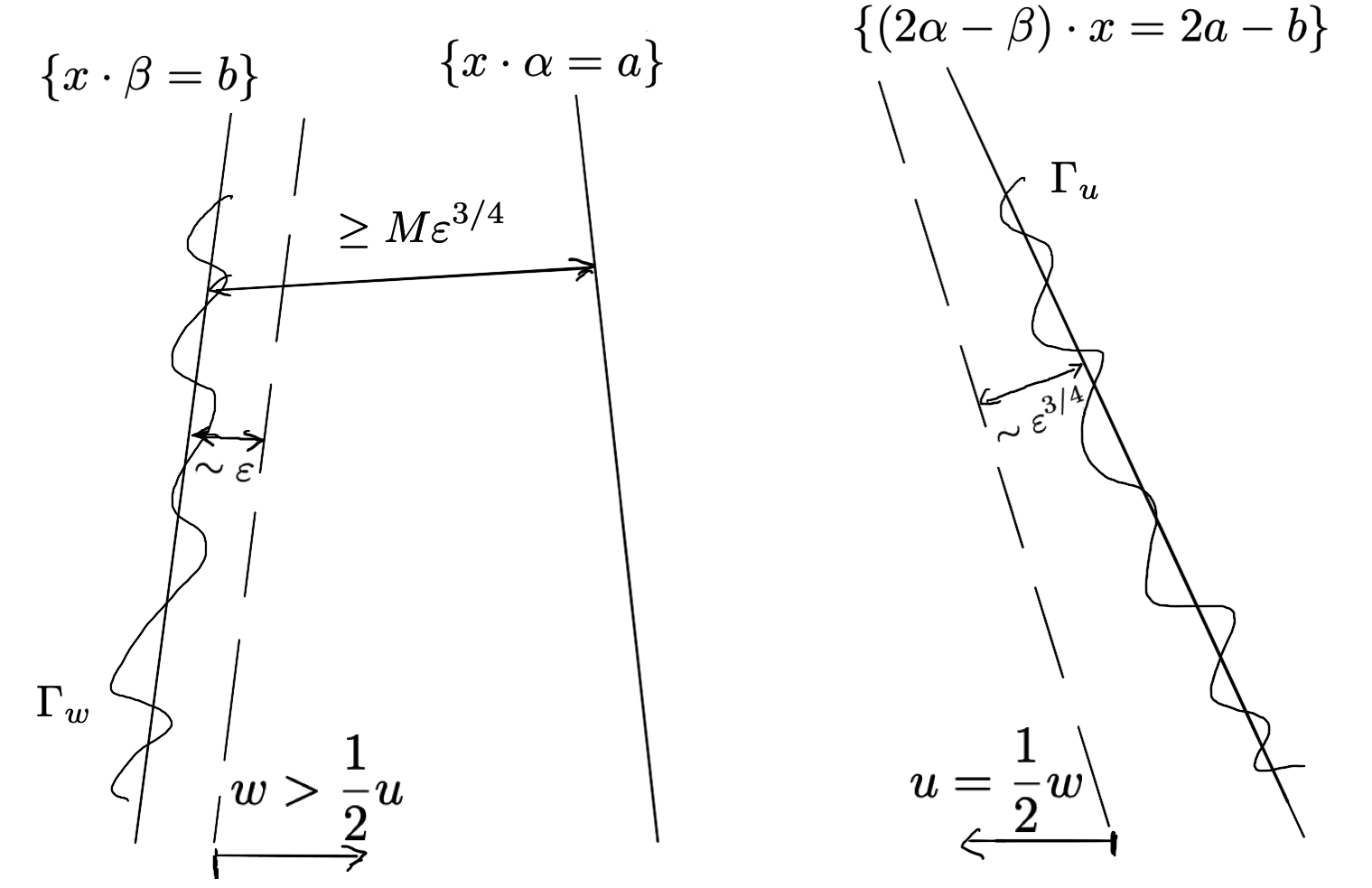}
\caption{Well-separated free boundaries.}
\label{WellSeparated1}
\end{figure}

\begin{proof}
Let $\PP$ be the approximate solution  corresponding to  $\PQabab$.

\textit{Step 1: Separation of free boundaries.}

Note that $a-b-4\alpha_2>M\eps^{3/4}$ implies $\alpha_2x_2\le\frac{a-b}{2}$ inside $B_2$. Thus we are always in the second alternative  in Definition \ref{ApproxSol1}. 

 In particular, $\Psi\ge\Phi\text{ in $\R^d.$}$

Using Lemma \ref{TrappingByTranslations}, inside $B_{3/2}$, we have
\begin{align*}
2w(x)-u(x)&\ge 2\Psi(x-A\eps\beta)-\Phi(x+A\eps\beta)\\&\ge2\Psi(x-A\eps\beta)-\Psi(x+A\eps\beta).\end{align*}
This implies \begin{equation}\label{Separation1}2w>u \text{ in $B_{3/2}\cap\{x\cdot\beta-b>3A\eps\}$}.\end{equation}

On the other hand, inside $\{(2\alpha-\beta)\cdot x<2a-b\}$, by Definition \ref{ApproxSol1}, $$2\Phi(x-A\eps\alpha)=\Psi(x-A\eps\alpha).$$ Thus  in $B_{4/3}\cap \{x\cdot\alpha<2a-b-2\alpha_2x_2\}$, Lemma \ref{TrappingByTranslations} gives $$2u(x)-w(x)\le 2\Phi(x+A\eps\alpha)-\Psi(x-A\eps\alpha)\le 2\Phi(x+A\eps\alpha)-2\Phi(x-A\eps\alpha)\le C\eps^{3/2}.$$ Lemma \ref{NonDegeneracy} implies \begin{equation}\label{Separation2}2u=w \text{ in $B_{3/2}\cap \{(2\alpha-\beta)\cdot x\le 2a-b-C\eps^{3/4}\}.$}\end{equation}

Under our assumption $a-b-4\alpha_2>M\eps^{3/4},$ we have
\begin{equation}\label{ChainOfInclusion}\{x\cdot\beta-b\le 3A\eps\}\subset\{x\cdot\alpha\le a\}\subset\{(2\alpha-\beta)\cdot x\le 2a-b-C\eps^{3/4}\} \text{ inside $B_{3/2}$}\end{equation} if $M$ is large. Thus \eqref{Separation1} and \eqref{Separation2} imply
$$\Gamma_u\cap B_{3/2}\subset\{w>\frac{1}{2}u\}, \text{ and } \Gamma_w\cap B_{3/2}\subset\{u=\frac{1}{2}w\}.$$

\textit{Step 2: Regularity of $\Gamma_u$.}

With \eqref{Separation1} and \eqref{Separation2}, we have $$\{u>\frac{1}{2}w\}\cap B_{3/2}\subset\{w>\frac{1}{2}u\}\cap B_{3/2}.$$

As a result, $$\Delta(2u-w)=\mathcal{X}_{\{2u-w>0\}}\text{ in $B_{3/2}.$}$$  
Moreover,  if we define $\alpha'=\frac{\alpha_1e^1+3\alpha_2e^2}{|\alpha_1e^1+3\alpha_2e^2|}$, then  $$|(2u-w)-\frac{1}{2}\max\{x\cdot\alpha'-2a+b,0\}^2|<C\eps \text{ in $B_{3/2}$.}$$ An application of Theorem \ref{ObReg} gives the desired regularity of $\Gamma_u$.

\textit{Step 3: Regularity of $\Gamma_w$.}

Since $2u=w$ in $B_{3/2}\cap\{(2\alpha-\beta)\cdot\le 2a-b-C\eps^{3/4}\}\supset B_{3/2}\cap\{x\cdot\alpha\le a\}$, the following is still a solution to the system of obstacle problems in $B_{3/2}$:
$$(\tilde{u},\tilde{w})=\begin{cases}
(u,w) &\text{ in $\{x\cdot\alpha\le a\}$,}\\
(\frac{1}{2}w,w) &\text{ in $\{x\cdot\alpha> a\}$.}
\end{cases}$$

Note that  \eqref{Separation1} and  \eqref{ChainOfInclusion} imply $\Gamma_w\cap B_{3/2}\subset\{x\cdot\alpha\le a\}$. Inside this set, $(\tilde{u},\tilde{w})=(u,w)$. Thus $\Gamma_w\cap B_{3/2}=\Gamma_{\tilde{w}}\cap B_{3/2}$. Consequently, it suffices to prove the desired regularity for $\Gamma_{\tilde{w}}\cap B_1.$

To this end, note that $\tilde{w}=2\tilde{u}$ inside $B_{3/2}$, we have  $$\Delta\tilde{w}=\mathcal{X}_{\{\tilde{w}>0\}} \text{ in $B_{3/2}$}$$ with $$\Gamma_{\tilde{w}}\cap B_{3/2}=\partial\{\tilde{w}>0\}\cap B_{3/2}.$$Therefore, Theorem  \ref{ObReg} implies $\Gamma_{\tilde{w}}\cap B_1$ is a $C^{1,\alpha}$-hypersurface with $C^{1,\alpha}$-norm  bounded by $C\eps.$\end{proof} 

 The second case is when $a<b$. To get the following, we just need to switch the roles of $u$ and $w$ and apply the previous lemma.
\begin{lemma}
Suppose $(u,w)\in\mathcal{R}(\alpha,\beta;a,b;\eps)$ in $B_2$ for some $\eps<\eps_d.$

If  $b-a-4\alpha_2>M\eps^{3/4},$ then $$\Gamma_u\cap B_{3/2}\subset \{w=\frac{1}{2}u\},$$ and $\Gamma_u$ is a $C^{1,\alpha}$-hypersurface in $B_{1}$ with $C^{1,\alpha}$-norm  bounded by $C\eps$.

Here $\eps_d$, $M$, and $C$ are dimensional constants.
\end{lemma}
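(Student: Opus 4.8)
The plan is to reduce this lemma directly to the previous one, Lemma \ref{FollowingTheReferee}, by swapping the roles of the two membranes. The key observation is that the system of obstacle problems in Definition \ref{ReSol} is symmetric under interchanging $u \leftrightarrow w$: if $(u,w) \in \mathcal{A}(\Omega)$ then so is $(w,u)$, and the free boundaries simply exchange, $\Gamma_u \leftrightarrow \Gamma_w$. Likewise, if $(u,w) \in \mathcal{R}(\alpha,\beta;a,b;\eps)$ in $B_2$, meaning $|u - P(\alpha;a)| < \eps r^2$, $|w - Q(\beta;b)| < \eps r^2$ and $0 \in \Gamma_u$, then after the swap we have $(w,u)$ with $w$ close to $P(\beta;b) = \frac{1}{2}\max\{x\cdot\beta - b, 0\}^2$ and $u$ close to $Q(\alpha;a) = \frac{1}{2}\max\{x\cdot\alpha - a, 0\}^2$. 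However, the definition of $\mathcal{R}$ requires $0 \in \Gamma_u$, which after the swap should become $0 \in \Gamma_w$ — so one must first check this holds.

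First I would verify that under the hypothesis $b - a - 4\alpha_2 > M\eps^{3/4}$, the origin lies on $\Gamma_w$. Since $0 \in \Gamma_u$ by assumption and $(u,w)$ is a solution, Lemma \ref{NonDegeneracy} together with the closeness to half-space profiles pins down the contact configuration near $0$; the condition that $\{x\cdot\beta = b\}$ lies on the $\{x\cdot\beta < b\}$ side relative to $\{x\cdot\alpha = a\}$ (coming from $b > a + 4\alpha_2 + M\eps^{3/4}$) forces $w$ to also vanish to second order at $0$, hence $0 \in \Gamma_w$. One checks $w(0) = \frac{1}{2}u(0)$ is impossible here with $w(0) > 0$: indeed $u(0) = \frac12 w(0)$ from $0\in\Gamma_u$ and the approximate profile values give $u(0) \approx \frac12\max\{-a,0\}^2$, $w(0)\approx\frac12\max\{-b,0\}^2$, and with $b > a$ this is only consistent with $w(0) = 0$ up to $O(\eps)$ — then non-degeneracy (Lemma \ref{NonDegeneracy}) upgrades this to $w(0) = 0$ exactly, so $0 \in \Gamma_w \subset \Gamma_u$ and in particular $0\in\Gamma_{(w)}$ for the swapped pair.

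Then I would apply Lemma \ref{FollowingTheReferee} to the swapped pair $(\hat u, \hat w) := (w, u) \in \mathcal{R}(\beta,\alpha; b, a; \eps)$ in $B_2$, whose separation hypothesis becomes exactly $b - a - 4\alpha_2 > M\eps^{3/4}$ (noting $\alpha_2 = |\alpha - \beta|/2$ up to the normalization in \eqref{Symmetric}, so $\alpha_2$ is unchanged under the swap of the roles of $\alpha$ and $\beta$). Lemma \ref{FollowingTheReferee} yields $\Gamma_{\hat u} \cap B_{3/2} \subset \{\hat w > \frac12 \hat u\}$ and $\Gamma_{\hat w} \cap B_{3/2} \subset \{\hat u = \frac12 \hat w\}$, with both free boundaries being $C^{1,\alpha}$-hypersurfaces in $B_1$ with norm bounded by $C\eps$. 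Translating back through $\hat u = w$, $\hat w = u$: the statement $\Gamma_{\hat w} \cap B_{3/2} \subset \{\hat u = \frac12\hat w\}$ becomes $\Gamma_u \cap B_{3/2} \subset \{w = \frac12 u\}$, which is the desired inclusion, and $\Gamma_u = \Gamma_{\hat w}$ is $C^{1,\alpha}$ in $B_1$ with norm bounded by $C\eps$. This gives exactly the conclusion of the lemma.

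The only real obstacle is the bookkeeping in the first step — confirming $0 \in \Gamma_w$ and that the swapped pair genuinely lies in the class $\mathcal{R}(\beta,\alpha;b,a;\eps)$, which requires matching the normalization \eqref{Symmetric} (the sign conventions $\alpha_2 \geq 0 = -\beta_2$ flip under the swap, but this is harmless since one can re-rotate, or simply note the construction and both lemmas are stated symmetrically in the plane). Everything else is a direct quotation of the previous lemma; no new estimates are needed. I would present this as a two-paragraph proof: one paragraph establishing $0 \in \Gamma_w$ via Lemma \ref{NonDegeneracy} and the profile comparison, and one paragraph invoking the symmetry and Lemma \ref{FollowingTheReferee}.
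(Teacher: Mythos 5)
Your overall plan --- invoke the $u\leftrightarrow w$ symmetry of the system and cite Lemma \ref{FollowingTheReferee} --- is exactly the route the paper takes (the paper's proof consists of the single sentence ``switch the roles of $u$ and $w$ and apply the previous lemma''). However, the way you implement it has a genuine gap: the claim that $0\in\Gamma_w$ is false in this regime, so you cannot apply Lemma \ref{FollowingTheReferee} as a black box to the pair $(w,u)\in\mathcal{R}(\beta,\alpha;b,a;\eps)$, since membership in $\mathcal{R}$ requires the origin to lie on the \emph{first} free boundary. Indeed, from $0\in\Gamma_u$ one has $u(0)=\frac12 w(0)$, and in the well-separated case one expects $0$ to sit in the \emph{interior} of the contact set $\{w=\frac12 u\}$ (this is literally what the conclusion you are deriving says once specialized to $0\in\Gamma_u$: it forces $u(0)=w(0)=0$, and the swapped first inclusion of Lemma \ref{FollowingTheReferee} reads $\Gamma_w\cap B_{3/2}\subset\{u>\frac12 w\}$, which excludes $0$). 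So ``$w$ vanishes to second order at $0$, hence $0\in\Gamma_w$'' is a non sequitur --- vanishing to second order at an interior contact point is precisely what happens when $0\notin\Gamma_w$. Your own reasoning is thus internally inconsistent.

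The swap does work, but at the level of the \emph{proof} of Lemma \ref{FollowingTheReferee}, not its statement. The only place the hypothesis $0\in\Gamma_u$ is used in that proof is through Lemma \ref{Localization1}, whose conclusion $|a|,|b|,|\alpha-\beta|<C\eps^{1/2}$ is symmetric in $(\alpha,a)\leftrightarrow(\beta,b)$; once those bounds are in hand (established from the \emph{un-swapped} hypothesis $0\in\Gamma_u$), Lemma \ref{TrappingByTranslations} and the separation estimates \eqref{Separation1}--\eqref{ChainOfInclusion} transfer verbatim under $(u,w,\alpha,\beta,a,b)\mapsto(w,u,\beta,\alpha,b,a)$. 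Moreover, the regularity of $\Gamma_u$ you want falls out of the swapped version of Step 3 of that proof (regularity of $\Gamma_w$), where one reduces to a scalar obstacle problem for an auxiliary function $\tilde w$ with $\partial\{\tilde w>0\}=\Gamma_u\ni 0$; so the application of Theorem \ref{ObReg} there is legitimate without ever needing $0\in\Gamma_w$. In short: drop the attempt to show $0\in\Gamma_w$ (it is false), and instead observe that the parameter bounds and the separation/reduction steps are symmetric, so the previous proof carries over with the roles of $u,w$ and $(\alpha,a),(\beta,b)$ interchanged.
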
 

%%%%%%%%%%%%%%%%%%%%%%%%%%%%%%%%%%%%%%%%%%%%%%%%%%%%%%%%%%%%%%%%%%%%%%%%%%%%%%%%%%%%%%%%%%%%%%%%%%%%%%
\subsection{Improvement of approximation and angle when  $\{x\cdot\alpha=a\}$ and $\{x\cdot\beta=b\}$ are close} In this subsection, we prove the second alternative in Proposition \ref{IOF1}.  For this alternative, we have 
\begin{equation}\label{Assum2}
|a-b|<\rho_0|\alpha-\beta|+M\rho_0\eps^{3/4},\end{equation} where $M$ is the constant from the previous subsection, and $\rho_0$ is a dimensional constant to be chosen.

Assumption \eqref{Symmetric} is still in effect. 

There are two results to prove. Firstly, we show an improvement of approximation at a small scale if $|\alpha-\beta|$ is less than $2\delta\eps^{1/2}.$ Secondly, we show that once $|\alpha-\beta|$ reaches the critical value $\delta\eps^{1/2}$, we can improve the angle by a definite amount at a smaller scale. 

\begin{lemma}\label{ImprovementOfApproximation1}
Suppose for parameters satisfying $|\alpha-\beta|<2\delta\eps^{1/2}$ and \eqref{Assum2}, we have $(u,w)\in \mathcal{R}(\alpha,\beta;a,b;\eps)$ in $B_1$ for some $\eps<\eps_d$.

Then $$(u,w)\in\mathcal{R}(\alpha',\beta';\frac{1}{2}\eps) \text{ in $B_{\rho_1}$}$$ with $|\alpha'-\alpha|+|\beta'-\beta|\le C\eps.$

Here $\eps_d$, $\delta$, $\rho_1$ and $C$ are dimensional constants. \end{lemma}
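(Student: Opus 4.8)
The plan is to run a compactness/contradiction argument in the spirit of improvement-of-flatness, but with the approximate solution $\PP$ (rather than a hyperplane) playing the role of the linearized profile. Suppose the statement fails: there is a sequence $(u_n,w_n)\in\mathcal{R}(\alpha_n,\beta_n;a_n,b_n;\eps_n)$ in $B_1$ with $\eps_n\to0$ (or $\eps_n$ along a subsequence converging to some $\eps_\infty\le\eps_d$; the genuinely hard case is $\eps_n\to0$), with $|\alpha_n-\beta_n|<2\delta\eps_n^{1/2}$, with \eqref{Assum2}, but for which no admissible $(\alpha',\beta')$ gives $(u_n,w_n)\in\mathcal{R}(\alpha',\beta';\tfrac12\eps_n)$ in $B_{\rho_1}$. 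By Lemma \ref{Localization1} the parameters are controlled, $|a_n|,|b_n|,|\alpha_n-\beta_n|\le C\eps_n^{1/2}$, so after rotating we may assume \eqref{Symmetric} and extract limits $\alpha_n,\beta_n\to e$. The key is to renormalize the error: set
\[
\hat u_n=\frac{u_n-\Phi_n}{\eps_n},\qquad \hat w_n=\frac{w_n-\Psi_n}{\eps_n},
\]
where $(\Phi_n,\Psi_n)$ is the approximate solution attached to $\PQabab$ with parameters $(\alpha_n,\beta_n;a_n,b_n)$. These are uniformly bounded by Definition \ref{RegPointSol}, and Lemma \ref{TrappingByTranslations} pins down their behavior near the free boundary: since $\PP(\cdot-A\eps_n\alpha_n)\le(u_n,w_n)\le\PP(\cdot+A\eps_n\alpha_n)$, the normalized errors $\hat u_n,\hat w_n$ are Lipschitz in a neighborhood of the contact set, uniformly in $n$, and vanish (quantitatively) there. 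Away from the contact set $\Delta u_n=\Delta w_n=1=\Delta\Phi_n=\Delta\Psi_n$ in the relevant regions, so $\hat u_n,\hat w_n$ are harmonic there; combined with the Lipschitz control near the free boundary and interior $C^{1,1}$ estimates one gets $C^{1,\gamma}_{loc}$ compactness, and along a subsequence $(\hat u_n,\hat w_n)\to(\hu,\hat w)$ locally uniformly.

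Next I would identify the limiting (linearized) problem. On the half-space $\{x\cdot e>0\}$ — where, in the limit, both $u$ and $w$ are strictly above their obstacles — both $\hu$ and $\hat w$ are harmonic; the quadratic trapping from Lemma \ref{TrappingByTranslations} forces the Neumann-type / matching condition at $\{x\cdot e=0\}$ (the error is controlled by the normal derivative of $\Phi_n$, which degenerates linearly), so in fact $\hu$ and $\hat w$ extend to global solutions of a constant-coefficient linear problem with at most quadratic growth; by the classification of such entire solutions (Liouville), $\hu$ and $\hat w$ are quadratic polynomials plus linear terms adapted to the half-space geometry. Crucially, the free boundary condition $0\in\Gamma_{u_n}$ together with the non-degeneracy (Lemma \ref{NonDegeneracy}, Proposition \ref{QuadGrowthRef}) forces a normalization at the origin that kills the troublesome modes, so $\hu$ (and $\hat w$) is, up to the allowed quadratic terms, exactly the first-order variation of $\Pabab$ in its parameters $(\alpha,a)$ — i.e. it can be absorbed by an $O(\eps_n)$ adjustment of the direction $\alpha$ and the offset $a$ (and similarly $\beta,b$). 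Choosing $\alpha',\beta'$ accordingly, and $\rho_1$ small (fixed, dimensional) so that the Taylor remainder $\eps_n o_{\rho_1}(1)<\tfrac12\eps_n$ in $B_{\rho_1}$, contradicts the assumed failure.

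The main obstacle — and where the argument is genuinely different from the classical obstacle-problem improvement of flatness — is establishing the uniform Lipschitz / $C^{1,\gamma}$ estimate for the normalized errors $\hat u_n,\hat w_n$ \emph{across} the approximate free boundary of $\PP$, in particular across the ``corner'' region $\{\alpha_n\cdot x-a_n = \beta_n\cdot x-b_n\}$ where the construction of $\PP$ switches branches and $\PP$ is only $C^{1,1}$ (not smooth). Here the classical linearization is unavailable precisely because the background profile is not smooth; the resolution is to use Lemma \ref{TrappingByTranslations} as a \emph{replacement} for boundary regularity — the two-sided trapping by translates of $\PP$ gives exactly the modulus of continuity of the error near the contact set — together with Lemmas \ref{FirstComparison}–\ref{SecondComparison} to propagate these bounds. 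A secondary technical point is ruling out that the limit $\hu$ is a nontrivial ``angle mode'': this is where the hypothesis $|\alpha-\beta|<2\delta\eps^{1/2}$ (rather than merely small) and \eqref{Assum2} enter, ensuring the quadratic discrepancy between $\PP$ and $(P,Q)$ from Lemma \ref{PropOfApproxSol1} is of size $o(\eps_n)$ and does not pollute the linear limit; with $\delta$ chosen small depending only on $d$, the $|\alpha-\beta|^2+|a-b|^2\lesssim\delta^2\eps_n$ term is negligible after dividing by $\eps_n$. Finally one checks $|\alpha'-\alpha|+|\beta'-\beta|\le C\eps_n$ directly from the size of the linear correction, completing the contradiction.
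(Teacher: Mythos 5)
Your proposal captures the correct ingredients (approximate solutions, the trapping lemma, normalizing the error), but the compactness/contradiction/Liouville route you propose departs from the paper's direct argument in ways that create genuine gaps.

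The paper works directly, not by contradiction. It normalizes by the half-space profile, $\hu := \frac{1}{\eps}(u - P)$ with $P = \TopHalf$, rather than by the approximate solution $\Phi$ as you do. This choice is not cosmetic: $P$ is smooth in $\{x\cdot\alpha > a\}$, so the paper only ever works on the half $B_{3/4}\cap\{x_1' > a + C\eps^{1/2}\}$, where $2u - w \ge 2P - Q - 3\eps > 0$ forces $\Delta u = 1 = \Delta P$ and thus $\Delta\hu = 0$. On the complementary slab $\{x_1' \le a + C\eps^{1/2}\}$, the trapping lemma combined with Lemma \ref{PropOfApproxSol1} gives only an $L^\infty$ bound $|\hu| \le C(\delta^2 + \eps^{1/2})$, which is fed into a harmonic replacement $h$ (with zero Dirichlet data on $\{x_1' = a + C\eps^{1/2}\}$) and boundary regularity on a fixed half-ball. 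No Liouville theorem, no classification of entire solutions, and no regularity across the corner of $\Phi$ is ever needed. The ``main obstacle'' you identify — uniform $C^{1,\gamma}$ control of $\hat u_n$ across the corner region of $\PP$ — is an artifact of your choice to subtract $\Phi$; the paper sidesteps it entirely.

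The more serious issue is quantitative. You claim the discrepancy between $\PP$ and $(P,Q)$ is $o(\eps_n)$ ``after dividing by $\eps_n$,'' so it ``does not pollute the linear limit.'' This is false: Lemma \ref{PropOfApproxSol1} plus $|\alpha-\beta| < 2\delta\eps^{1/2}$ and \eqref{Assum2} give $|\PP - (P,Q)| \le C\delta^2\eps$, which after dividing by $\eps$ is $O(\delta^2)$ — a fixed small constant, not vanishing as $\eps_n\to 0$. Consequently the limit $\hu$ in a compactness argument is not an exact solution of a clean linearized problem with zero boundary data; it carries an $O(\delta^2)$ persistent error. The paper handles this by tracking the explicit estimate $|u - P(\alpha')| \le C\eps(r^3 + \delta^2 + \eps^{1/2})$ and then choosing $\rho_1$ and $\delta$ \emph{afterward} so that $C\rho_1^3, C\delta^2, C\eps_d^{1/2}$ are each below $\frac{1}{9}\rho_1^2$. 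That three-way absorption is incompatible with the pure compactness framework you describe, where $\delta$ would have to be fixed before passing to the limit and the resulting limit would not be a clean Liouville-type solution. You would need to account for the $\delta^2$ defect explicitly, at which point you might as well run the argument directly as the paper does. (A second, smaller gap: the contradiction argument as stated only covers $\eps_n\to 0$; the paper's direct estimate handles all $\eps < \eps_d$ uniformly with no separate case.)
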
 

\begin{proof}
Let $\PP$ denote the approximate solution corresponding to $\PQabab$ as in \eqref{Pabab1}.

Under the assumptions, we have $$|a-b|<2\delta\eps^{1/2}+C\eps^{3/4}.$$ In particular, Lemma \ref{PropOfApproxSol1} implies \begin{equation}\label{Smal}|\PP-(P,Q)|\le C\eps(\delta^2+\eps^{1/2}) \text{ in $B_1$.}\end{equation}

Note that $2u-w\ge 2P-Q-3\eps$ implies $$u>\frac{1}{2}w \text{ in $B_{1}\cap\{x_1'\ge a +C\eps^{1/2}\}$},$$where $x_1'$ is the coordinate function introduced in Notation \ref{RotatedBasis}. Consequently, if we define $$\hu=\frac{1}{\eps}(u-P),$$ then $$\Delta\hu=0 \text{ in $B_{1}\cap\{x_1'\ge a+ C\eps^{1/2}\}.$ }$$

Meanwhile, on $B_{3/4}\cap\{x_1' \le a+ C\eps^{1/2}\}$,  Lemma \ref{TrappingByTranslations} gives $$u(x)-\Phi(x)\le\Phi(x+A\eps\alpha)-\Phi(x)\le C\eps^{3/2},$$ and  $$u(x)-\Phi(x)\ge- C\eps^{3/2}.$$ Combining these with \eqref{Smal}, we have $$|\hu|\le C(\delta^2+\eps^{1/2}) \text{ in $B_{3/4}\cap\{x_1'\le a+ C\eps^{1/2}\}$.}$$

Let $h$ denote the solution to the following problem 
$$\begin{cases}
\Delta h=0 &\text{ in $B_{3/4}\cap\{x_1' > a+ C\eps^{1/2}\}$, }\\
h=\hu &\text{ along $\partial B_{3/4}\cap\{x_1'>  a+C\eps^{1/2}\}$, }\\
h=0 &\text{ along $B_{3/4}\cap\{x_1'=a+ C\eps^{1/2}\}$.}
\end{cases}$$ Then $$|\hu-h|\le C(\delta^2+\eps^{1/2}) \text{ in $B_{3/4}\cap\{x_1'> a+C\eps^{1/2}\}$.}$$

Boundary regularity of $h$ gives $$|h-\gamma_1(x_1'-a-C\eps^{1/2})-(x_1'-a-C\eps^{1/2})\sum_{k\ge 2}\gamma_kx_k'|\le Cr^3 \text{ in $B_r\cap\{x_1'>a+ C\eps^{1/2}\}$}$$ for some  bounded constants $\gamma_k$ and $r<1/2.$

If we define $$\alpha'=\frac{\alpha+\eps\sum_{k\ge 2}\gamma_k\xi^k}{|\alpha+\eps\sum_{k\ge 2}\gamma_k\xi^k|}$$ and $a'=a+\eps\gamma_1$, then $|\alpha-\alpha'|+|a'-a|\le C\eps$, and the previous estimate leads to
$$|u-\frac{1}{2}(x\cdot\alpha'-a')^2|\le C\eps(r^3+\delta^2+\eps^{1/2})$$
inside  $B_r\cap\{x_1'>a+ C\eps^{1/2}\}$ for $r<\frac{1}{2}.$ 

Here $\xi^k$ is the basis element in Notation \ref{RotatedBasis}.

Note that $u\le\Phi(\cdot+A\eps\alpha)\le C\eps^{3/2}$ in $B_{3/4}\cap\{x_1'\le a+ C\eps^{1/2}\}$, we have 
$$|u-P(\alpha')|\le C\eps(r^3+\delta^2+\eps^{1/2})$$ in $B_r$ if $r<\frac{1}{2}.$ Here we are using the notation in \eqref{Pabab1}.

From here, we choose $\rho_1$ small such that $C\rho_1^3<\frac{1}{9}\rho_1^2$, then $\delta$  and $\eps_d$ small such that $C(\delta^2+\eps_d^{1/2})<\frac{1}{9}\rho_1^2$, then 
$$|u-P(\alpha')|\le \frac{1}{3}\eps \rho_1^2 \text{ in $B_{\rho_1}$.}$$
A symmetric argument gives a similar estimate on $w$. 

Thus $(u,w)\in\mathcal{R}(\alpha',\beta';\frac{1}{2}\eps) \text{ in $B_{\rho_1}$.}$
\end{proof}

Now we give the improvement of the angle $|\alpha-\beta|$, once it reaches the critical level:
\begin{lemma}\label{ImprovementOfAngle1}
Suppose for parameters satisfying $\delta\eps^{1/2}<|\alpha-\beta|<2\delta\eps^{1/2}$ and \eqref{Assum2}, we have $(u,w)\in \mathcal{R}(\alpha,\beta;a,b;\eps)$ in $B_1$ for some $\eps<\eps_d$. 

Then $$(u,w)\in\mathcal{R}(\alpha',\beta';\frac{1}{32}\eps) \text{ in $B_{\rho_2}$}$$ with $|\alpha'-\alpha|+|\beta'-\beta|\le C\eps,$ and $$|\alpha'-\beta'|\le|\alpha-\beta|-\eps.$$

Here $\eps_d$, $\delta$, $\rho_2$, $C$  and $\rho_0$ (from \eqref{Assum2}) are dimensional. 
\end{lemma}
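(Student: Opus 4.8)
The plan is to reuse the machinery of Lemma \ref{ImprovementOfApproximation1}, but to push the expansion of the harmonic replacement one order further and, crucially, to extract from the contact conditions a \emph{rigidity} on the first-order coefficients that governs the angle. Concretely, write $\hu=\eps^{-1}(u-P(\alpha))$ and $\hat w=\eps^{-1}(w-Q(\beta))$. As in the previous lemma, in the region where $u>\tfrac12 w$ the function $\hu$ is harmonic, and near the hyperplane $\{x\cdot\alpha=a\}$ it is comparable to the translation error, which is $O(\eps^{1/2})$ once we divide by $\eps$. So $\hu$ is, up to $O(\eps^{1/2})$, a harmonic function on a half-space vanishing on the boundary hyperplane; its linear part is $\gamma_1(x_1'-a)+(x_1'-a)\sum_{k\ge2}\gamma_k x_k'$ with the $\gamma_k$ bounded, and similarly $\hat w$ has a linear part governed by coefficients $\eta_k$ on the half-space $\{x\cdot\beta>b\}$. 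Defining $\alpha',\beta'$ by tilting $\alpha,\beta$ by $\eps\sum_{k\ge2}\gamma_k\xi^k$ and the corresponding rotation for $\beta$, one gets $(u,w)\in\mathcal R(\alpha',\beta';c\eps)$ in $B_{\rho_2}$ with a fixed small constant $c$, provided $\rho_2$ and $\eps_d,\delta$ are chosen so the cubic error $C\rho_2^3$ and the quantity $C(\delta^2+\eps_d^{1/2})$ are both $\le \tfrac{1}{32}$ of $\eps\rho_2^2$. This is exactly the bookkeeping already done in Lemma \ref{ImprovementOfApproximation1}, just with $\tfrac12$ replaced by $\tfrac1{32}$, so it is routine.

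The new content is the inequality $|\alpha'-\beta'|\le|\alpha-\beta|-\eps$, and this is where the \emph{quantitative} use of the overlap region enters. The point is that the two free boundaries $\Gamma_u$ and $\Gamma_w$ do not merely sit near the hyperplanes $\ell_1=\{x\cdot\alpha=a\}$ and $\ell_2=\{x\cdot\beta=b\}$: along the contact set $\{u=\tfrac12 w\}\cap\{w>\tfrac12 u\}$ the combination $2u-w$ vanishes to second order, and along $\{w=\tfrac12 u\}\cap\{u>\tfrac12 w\}$ the combination $2w-u$ vanishes. Under \eqref{Assum2} with $|\alpha-\beta|>\delta\eps^{1/2}$ the two hyperplanes genuinely cross inside $B_{\rho_0}$, and the approximate solution $\PP$ of Definition \ref{ApproxSol1} forces $\Phi$ to equal $\tfrac12\Psi$ on one side of the cone $\{x\cdot\alpha-a\le x\cdot\beta-b\}$ and $\Psi=\tfrac12\Phi$ on the other. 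Translating this, via Lemma \ref{TrappingByTranslations} and Lemma \ref{NonDegeneracy}, into a statement about $u$ and $w$, one finds that $u$ is forced to ``see'' the hyperplane $\{x\cdot\beta=b\}$ on part of $B_{\rho_2}$ and $w$ to ``see'' $\{x\cdot\alpha=a\}$ on another part. The harmonic expansion of $\hu$ (resp.\ $\hat w$) must therefore be consistent with $P$ matching a half-space profile in direction $\beta$ (resp.\ $\alpha$) on that part. Comparing the two linear parts — the one coming from $\{x\cdot\alpha'=a'\}$ and the one coming from the forced half of the contact set — pins down the tangential gradients $\sum_{k\ge2}\gamma_k\xi^k$ and $\sum_{k\ge2}\eta_k\xi^k$ so that the new normals $\alpha'$ and $\beta'$ are pulled toward each other: the component of $\alpha'-\beta'$ along the original direction of separation $\alpha-\beta$ decreases by a definite multiple of $\eps$, while the orthogonal components are $O(\eps^{3/2})$ or better, and for $\eps<\eps_d$ this yields $|\alpha'-\beta'|\le|\alpha-\beta|-\eps$.

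The main obstacle is making the last paragraph quantitative and sign-definite: one must show that the tilt dictated by the harmonic first-order data actually reduces the angle rather than merely changing it by $O(\eps)$ in an uncontrolled direction. The key is that the geometry is not symmetric — in the crossing configuration, one membrane's free boundary is rigidly slaved to the other's on a macroscopic (order-$\rho_0$) portion of the ball, and this asymmetry breaks the tie. I would set up coordinates adapted to $\alpha-\beta$ (so that the separation is along $\xi^2$, using \eqref{Symmetric}), compute the linearized matching condition explicitly on the two complementary regions of $B_{\rho_2}$ determined by the crossing line, and check that the $\xi^2$-components of $\gamma$ and $\eta$ have the right signs and magnitude $\ge c\delta$ relative to $|\alpha-\beta|\sim\delta\eps^{1/2}$, which after the normalization $\alpha'=(\alpha+\eps\sum\gamma_k\xi^k)/|\cdots|$ gives a gain of order $\eps$ in $|\alpha'-\beta'|$. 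Everything else — the $C^{1,1}$ estimates, the trapping by translations, the harmonic approximation, and the final choice of $\rho_2,\delta,\eps_d$ — is a direct transcription of the proof of Lemma \ref{ImprovementOfApproximation1}.
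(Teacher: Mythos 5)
Your plan reproduces the improvement-of-approximation mechanism of Lemma \ref{ImprovementOfApproximation1} but misses the specific mechanism the paper uses to gain on the angle, and the substitute you propose does not in fact work.

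The first issue is in your estimate of the boundary data of $\hu=\eps^{-1}(u-P)$ on the truncated hyperplane $\{x_1'=C\eps^{1/2}\}$. You claim it is $O(\eps^{1/2})$, ``a harmonic function on a half-space vanishing on the boundary.'' In the regime of this lemma ($\alpha_2\sim\delta\eps^{1/2}$), that is false: the approximate solution from Definition \ref{ApproxSol1} satisfies $\Phi-P=2(\alpha_2 x_2)^2\mathcal{X}_{\{x_2<0\}}$ on that hyperplane, so after dividing by $\eps$ the boundary value of $\hu$ is $\eta (x_2')^2\mathcal{X}_{\{x_2'<0\}}+O(\eps^{1/2})$ with $\eta=2\alpha_2^2/\eps\in(\tfrac12\delta^2,2\delta^2)$. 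This $\delta^2$ term is a fixed small constant, not vanishing with $\eps$, and it is precisely what distinguishes this lemma from Lemma \ref{ImprovementOfApproximation1}. If you treat $\hu$ as vanishing on the boundary (up to $O(\eps^{1/2})$), its linear coefficients $\gamma_k$ are merely bounded and have no definite sign, and there is no reason the resulting tilt should decrease the angle.

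The second issue is that the actual gain comes from the fact that this one-sided quadratic boundary data generates a $\log r$ term in the linear expansion of the harmonic replacement: Proposition \ref{AuxiliaryFunct} gives $\tilde H - A_1 x_1' - A_2 x_1' x_2'\log r = O(r^2)$ with $A_1,A_2>0$ of definite sign. Choosing $\rho_2$ small (depending only on $\delta$ and the dimension) makes $|\eta A_2\log\rho_2|$ as large as one wants, so the $\log\rho_2$ term dominates the bounded $\gamma_k$ contribution and pushes $\alpha'$ toward $e^1$ by at least $\eps$; the sign of the push is forced by the sign of $\eta$ and $A_2$. Your ``rigidity'' via matching conditions on the crossing region is a heuristic for the same asymmetry, but it is not a proof: the coefficients of a harmonic function are determined by its boundary data, so there is no additional matching condition to pin them down; and without the $\log$-divergence there is no mechanism to make the first-order tilt outrun the generic $O(\eps)$ tilt that appears also in Lemma \ref{ImprovementOfApproximation1} (where it can point in any direction). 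Your claimed bound $|\gamma_2|\ge c\delta$ is not established and is not what the paper uses — the $\gamma_k$ remain merely bounded; the lever is $\log\rho_2$, not a lower bound on $\gamma_2$.

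To repair the argument you would need to (i) resolve the boundary data of $\hu$ to leading order, obtaining $\eta (x_2')^2\mathcal{X}_{\{x_2'<0\}}$; (ii) invoke the explicit expansion of the half-space harmonic function with this data (Appendix B) to extract the sign-definite $\log r$ term; and (iii) absorb it into the normal, choosing $\rho_2$ once and for all so that $\eta A_2\log\rho_2 < -|\sum_{k\ge2}\gamma_k\xi^k|-1$. Step (ii) is the genuinely new ingredient here, and it is entirely absent from your proposal.
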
 

\begin{proof}
Let $\PP$ denote the approximate solution corresponding to $\PQabab$ as in \eqref{Pabab1}.

Under the assumptions, we have $$\alpha_2\in(\frac{1}{2}\delta\eps^{1/2},\delta\eps^{1/2}).$$

We give the proof in two steps.  The first step covers the special case when $a=b=0.$ The second step deals with general $a$ and $b$ under assumption \eqref{Assum2}.

\textit{Step 1: The special case when  $a=b=0.$}

Define $\hu=\frac{1}{\eps}(u-P)$. 

As in the previous proof, we have $$\Delta\hu=0 \text{ in $B_{1}\cap\{x_1'\ge C\eps^{1/2}\}.$ }$$

Along $B_{3/4}\cap\{x_1'= C\eps^{1/2}\},$ we have $|u-\Phi|\le C\eps^{3/2}$ by Lemma \ref{TrappingByTranslations}. 

By definition of $\Phi$, $$\Phi-P=2(\alpha_2x_2)^2\mathcal{X}_{\{x_2<0\}} \text{ along $B_{3/4}\cap\{x_1'= C\eps^{1/2}\}$.}$$ Consequently, if we define $$\eta:=2(\alpha_2)^2/\eps\in(\frac{1}{2}\delta^2,2\delta^2),$$ then 
$$|\hu-\eta x_2^2\mathcal{X}_{\{x_2<0\}}|\le C\eps^{1/2} \text{ along $B_{3/4}\cap\{x_1'= C\eps^{1/2}\}$.}$$

With the coordinate system introduced in Notation \ref{RotatedBasis}, we define $h$ to  be the solution to the following problem 
\begin{equation}\label{ThatHarmonic}\begin{cases}
\Delta h=0 &\text{ in $B_{3/4}\cap\{x_1'> C\eps^{1/2}\}$,}\\
h=\hu &\text{ in $\partial B_{3/4}\cap\{x_1'> C\eps^{1/2}\}$,}\\
h=\eta(x_2')^2\mathcal{X}_{\{x_2'<0\}} &\text{ in $ B_{3/4}\cap\{x_1'=C\eps^{1/2}\}$.}
\end{cases}\end{equation}With $|x_2-x_2'|\le C\eps^{1/2}$ along $\{x_1'=C\eps^{1/2}\}$, the previous estimate gives \begin{equation}\label{Equation41}|\hu-h|\le C\eps^{1/2} \text{ in $B_{3/4}\cap\{x_1'> C\eps^{1/2}\}$.}\end{equation}

Let $\tilde{H}$ be the solution to $$\begin{cases}
\Delta \tilde{H}=0 &\text{ in $\R^d\cap\{x_1'> C\eps^{1/2}\}$,}\\
\lim_{|x|\to+\infty}\tilde{H}=0, &\\
\tilde{H}= (x_2')^2\mathcal{X}_{\{-1<x_2'<0\}} &\text{ in $ \R^d\cap\{x_1'=C\eps^{1/2}\}$.}
\end{cases}$$Then $(h-\eta \tilde{H})$ is a  bounded harmonic function in $B_{3/4}\cap\{x_1'> C\eps^{1/2}\}$ that vanishes along $\{x_1'= C\eps^{1/2}\}.$ Consequently, there are  bounded  constants $\gamma_k$ such that for $r<1/2$,
$$|(h-\eta \tilde{H})-(x_1'-C\eps^{1/2})(\gamma_1+\sum_{k\ge2}\gamma_kx_k')|\le Cr^3 \text{ in $B_r\cap\{x_1'>C\eps^{1/2}\}$.}$$

Comparing with the auxiliary function $H$ from Proposition \ref{AuxiliaryFunct}, we see that $\tilde{H}$ can be obtained from $H$ by a translation in $x_1'$-direction and a reflection in the $x_2'$-direction.  Therefore, Proposition \ref{AuxiliaryFunct} gives 
$$|\tilde{H}-A_1(x_1'-C\eps^{1/2})-A_2(x_1'-C\eps^{1/2})x_2'\log r|\le Cr^2 \text{ in $B_r\cap\{x_1'>C\eps^{1/2}\}$,}$$ for two dimensional constants $A_1,A_2>0.$

Note that we  flipped the sign in front of $A_2$ as the consequence of the reflection in the $x_2'$-direction.

Combining this with the previous estimate and \eqref{Equation41}, $$|u-\frac{1}{2}(x_1')^2-\eps(x_1')(\gamma_1+\sum_{k\ge2}\gamma_kx_k'+\eta A_2x_2'\log r)|\le C\eps(\delta^2 r^2+r^3+\eps^{1/2})$$
inside $B_r\cap\{x_1'>C\eps^{1/2}\}$. 

If we define $$\alpha'=\frac{\alpha+\eps\sum_{k\ge2}\gamma_k\xi^k+\eps\eta A_2\log r \xi^2}{|\alpha+\eps\sum_{k\ge2}\gamma_k\xi^k+\eps\eta A_2\log r \xi^2|}$$ and $a'=-\eps\gamma_1$, then 
$$|u-P(\alpha';a')|\le C\eps(\delta^2 r^2+r^3+\eps^{1/2})\text{ inside $B_r$},$$ where $P(\alpha';a')$ is defined in \eqref{Pabab1}.

To see the improvement of angle, we estimate $|\alpha'-e^1|$:
\begin{align*}
|\alpha'-e^1|\le& |\alpha+\eps\sum_{k\ge2}\gamma_k\xi^k+\eps\eta A_2\log r \xi^2-e^1|+C\eps^2\\\le& |\alpha+\eps\eta A_2\log r \xi^2-e^1|+\eps|\sum_{k\ge2}\gamma_k\xi^k|+C\eps^2\\=&|(\alpha_1-1-\alpha_2\eps\eta A_2\log r ,\alpha_2+\alpha_1\eps\eta A_2\log r )|\\&+\eps|\sum_{k\ge2}\gamma_k\xi^k|+C\eps^2, 
\end{align*}where we have used the definition of $\xi^2$ as in Notation \ref{RotatedBasis}. 

With $|\alpha-e^1|\le\eps^{1/2}$, this gives 
$$|\alpha'-e^1|\le |(\alpha_1-1 ,\alpha_2+\eps\eta A_2\log r )|+\eps|\sum_{k\ge2}\gamma_k\xi^k|+C\eps^{3/2}.$$

Since $\eta\in(\frac{1}{2}\delta^2,2\delta^2)$ while $|\sum_{k\ge2}\gamma_k\xi^k|$ is  bounded by a dimensional constant, we can find $\rho_2$ small, depending only on  the dimension, such that $$\eta A_2\log \rho_2<-|\sum_{k\ge2}\gamma_k\xi^k|-1.$$
Then
\begin{align*} |(\alpha_1-1 ,\alpha_2+\eps\eta A_2\log \rho_2 )|&\le |\alpha-e^1| +2\eps\eta A_2\log \rho_2+C\eps^{3/2}\\&\le -\eps-\eps|\sum_{k\ge 2}\gamma_k\xi^k|+C\eps^{3/2}.\end{align*}
Combining all these, we have $$|\alpha'-e^1|<|\alpha-e^1|-\eps$$ if $\eps_d$ is chosen small.

 If we fix $\delta$ small such that $C\eta<C\delta^2<\frac{1}{24}$, then choose $\rho_2$ small such that $C\rho_2^3<\frac{1}{24}\rho_2^2$, and finally choose $\eps_d$ small such that $C\eps_d^{1/2}<\frac{1}{24}\rho_2$,  then $$|u-P(\alpha';a')|<\frac{3}{24}\eps\rho_2^2 \text{ in $B_{\rho_2}$.}$$

Similarly, we can find $\beta'$ and $b'$ such that 
 $|w-Q(\beta';b')|<\frac{3}{24}\eps\rho_2^2 \text{ in $B_{\rho_2}$}$ and $$|\beta'-e^1|\le |\beta-e^1|-\eps.$$

Combining these, we have $$(u,w)\in\mathcal{R}(\alpha',\beta';\frac{1}{32}\eps) \text{ in $B_{2\rho_2}$ }$$with $$|\alpha'-\beta'|\le |\alpha-e^1|+|\beta-e^1|-2\eps\le|\alpha-\beta|-2\eps+C\eps^{3/2}<|\alpha-\beta|-\eps$$ if $\eps_d$ is small. 

This completes the proof for the case when $a=b=0.$

\textit{Step 2: General $a$ and $b$ satisfying \eqref{Assum2}.}

Under assumption \eqref{Assum2}, we have $|\frac{a-b}{2\alpha_2}|<\rho_0+M\rho_0\eps^{1/4}/\delta.$ Consequently, if $\eps_d$ is small, then $$\bar{x}:=\frac{a+b}{2\alpha_1}e^1+\frac{a-b}{2\alpha_2}e^2\in B_{2\rho_0}.$$ 

Note that $\bar{x}\cdot\alpha=a$ and $\bar{x}\cdot\beta=b$, by our assumptions on $(u,w)$,  we have $$|u-\frac{1}{2}\max\{(x-\bar{x})\cdot\alpha,0\}^2|<\eps \text{ and } |w-\frac{1}{2}\max\{(x-\bar{x})\cdot\beta,0\}^2|<\eps \text{ in $B_1$.}$$ Therefore, we can apply the result in the previous step to $(u,w)(\cdot-\bar{x})$. This gives $$(u,w)\in\mathcal{R}(\alpha',\beta'';\frac{1}{32}\eps) \text{ in $B_{2\rho_2}(\bar{x})$}$$with $|\alpha'-\beta'|<|\alpha-\beta|-2\eps.$

To conclude, simply note that if we choose $\rho_0<\frac{1}{4}\rho_2$, $B_{2\rho_2}(\bar{x})\supset B_{\rho_2}(0).$
\end{proof} 
This completes our proof for Proposition \ref{IOF1}.  In Section \ref{RegularPartSection}, it is used to prove the regularity of free boundaries near regular points as in Theorem \ref{MainResult2}.

%%%%%%%%%%%%%%%%%%%%%%%%%%%%%%%%%%%%%%%%%%%%%%%%%%%%%%%%%%%%%%%%%%%%%%%%%%%%%%%%%%%%%%%%%%%%%%%%%%%%%%%%%%%%%%%%%%%%%%%%%%%%%%%%%%%%%%%%%%%%%%%%%%%%%%%%%
\section{Improvement of flatness: Case 2}
In this section, we work with the system of obstacle problems introduced in Section 3. We give an improvement of flatness result relevant to singular points of type 1 in the $3$-membrane problem. 

According to Definition \ref{FreeBoundaryPoints}, around these points, the solution is approximated by unstable half-space solutions. We need to include translations and rotations of such profiles, that is, functions of the form
\begin{equation}\label{Pabab2}
\begin{cases}\PababTwo=&\frac{1}{2}\min\{x\cdot\alpha-a,0\}^2+\frac{1}{4}\max\{x\cdot\beta-b,0\}^2 \\
\QababTwo=&\frac{1}{4}\min\{x\cdot\alpha-a,0\}^2+\frac{1}{2}\max\{x\cdot\beta-b,0\}^2\end{cases}
\end{equation} for $\alpha,\beta\in\Sph$ and $a,b\in\R$.

We often write $P(\alpha,\beta)$ and $Q(\alpha,\beta)$ or even just $P$ and $Q$ for these profiles. 

In terms of the system of obstacle problems, we work with the following class of solutions:
\begin{definition}\label{SingSol}
For $\alpha,\beta\in\Sph$, $a,b\in\R$ and $\eps>0$, we say that $$(u,w)\in\Sabab \text{ in $B_r$}$$ if 
$$(u,w)\in\Sol(B_r) \text{ with } 0\in\Gamma_{u}\cap\Gamma_w,$$ and
$$|u-\PababTwo|<\eps r^2 \text{ and }|w-\QababTwo|<\eps r^2 \text{ in $B_r$}.$$
\end{definition} 
Recall that the class $\Sol$ is defined in Definition \ref{ReSol}.

We simply write $\Sab$ instead of $\Sabab$ if there is no need to emphasize $a$ and $b$.

Throughout this section, we still assume the symmetry assumption \eqref{Symmetric}.

Similar to Lemma \ref{Localization1}, the parameters are  bounded:
\begin{lemma}\label{Localization2}
Suppose $(u,w)\in\Sabab$ in $B_1$.  

Then $$|a|,|b|, |\alpha-\beta|<C\eps^{1/2}$$ for a dimensional constant $C$. 
\end{lemma}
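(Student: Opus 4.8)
The plan is to mirror the two-step argument used for Lemma \ref{Localization1}, exploiting the non-degeneracy statements in Lemma \ref{NonDegeneracy} together with the fact that now $0$ lies on \emph{both} free boundaries, $0\in\Gamma_u\cap\Gamma_w$. The key difference from the stable case is that the model profiles $\PababTwo$ and $\QababTwo$ grow quadratically on \emph{both} sides of the hyperplanes $\{x\cdot\alpha=a\}$ and $\{x\cdot\beta=b\}$, so the contact sets $\{u=\tfrac12 w\}$ and $\{w=\tfrac12 u\}$ are thin; the localization of $a,b$ must therefore use the vanishing of $u$ at the origin (i.e. $u(0)=0$ and $w(0)=0$, which follow from $0\in\Gamma_u\cap\Gamma_w$ and the structure of the problem — since near a point on both free boundaries the only way $\PababTwo(0)$ is consistent with $\eps$-closeness is if $a,b$ are small), rather than merely $u(0)=\tfrac12 w(0)$.

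First I would localize $a$. Suppose $a>0$. On the ball $B_a$ we have $x\cdot\alpha-a<0$, so $\PababTwo=\tfrac12(x\cdot\alpha-a)^2$ there, which is at least $\tfrac12 a^2$ near the origin; but the $\eps$-closeness forces $u\ge \tfrac12(x\cdot\alpha-a)^2-\eps r^2$, and combined with $u(0)\le \tfrac12 a^2 + \eps$ this is not yet a contradiction, so instead I use that $0\in\Gamma_u$ means $u$ cannot be bounded below by a strictly positive quadratic — more precisely, if $a>\sqrt{Cd\,\eps}$ then on $B_a$ we would get $w-\tfrac12 u$ or $u$ itself comparable to $a^2\gg \eps$, and applying Lemma \ref{NonDegeneracy} on an appropriate sub-ball $B_{a/2}$ would force $u\equiv 0$ or $w\equiv\tfrac12 u$ in a neighborhood of the origin, contradicting $0\in\Gamma_u$. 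If instead $a<0$, the symmetric model is $\tfrac12(x\cdot\alpha-a)^2$ with $a<0$, again giving $u(0)$ close to $\tfrac12 a^2$; but then $w(0)$ is close to $\tfrac14 a^2$, and the constraint $w\ge\tfrac12 u$ combined with $0\in\Gamma_w$ forces $|a|\le C\eps^{1/2}$. The same reasoning applied to $w$ and the hyperplane $\{x\cdot\beta=b\}$ yields $|b|\le C\eps^{1/2}$.

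Second I would localize $|\alpha-\beta|$, which under the symmetry normalization \eqref{Symmetric} amounts to bounding $\alpha_2$ (since $\alpha_1=\beta_1$, $\alpha_2=-\beta_2$, so $|\alpha-\beta|=2|\alpha_2|$). I would evaluate the constraint $u\ge\tfrac12 w$, equivalently $2u\ge w$, at a boundary point such as $x=-e^2$: there $\PababTwo(-e^2)=\tfrac12\min\{-\alpha_2-a,0\}^2+\tfrac14\max\{-\beta_2-b,0\}^2$ and $\QababTwo(-e^2)=\tfrac14\min\{-\alpha_2-a,0\}^2+\tfrac12\max\{-\beta_2-b,0\}^2$; using $\beta_2=-\alpha_2\le0$ and the already-established bounds $|a|,|b|\le C\eps^{1/2}$, the inequality $2u\ge w$ together with $\eps$-closeness forces $\alpha_2\le C\eps^{1/2}$ exactly as in Step 2 of Lemma \ref{Localization1}. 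The main obstacle I anticipate is the first step: unlike the stable case, the unstable profiles do not vanish on a half-space, so one must be more careful to extract a genuine sub-ball on which $u$ (or $w-\tfrac12u$) is uniformly above the $\tfrac1{4d}r^2$ threshold before invoking Lemma \ref{NonDegeneracy}; this requires choosing the radius of that sub-ball proportional to $|a|$ and checking the non-degeneracy hypothesis holds with the correct constant. Everything else is a routine repetition of the stable-case computation.
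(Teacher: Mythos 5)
Your Step 2 does not go through, and the failure is structural rather than a slip of arithmetic. For the unstable profiles one computes
$$2\,\PababTwo-\QababTwo=\tfrac{3}{4}\min\{x\cdot\alpha-a,0\}^2\ge 0,\qquad 2\,\QababTwo-\PababTwo=\tfrac{3}{4}\max\{x\cdot\beta-b,0\}^2\ge 0$$
at \emph{every} point of $\R^d$. So the constraint $u\ge\tfrac12 w$ evaluated at $-e^2$ (or anywhere else) only yields $\tfrac38(\alpha_2+a)^2\ge-\tfrac32\eps$, which is vacuously true; the same happens with $w\ge\tfrac12 u$. This is exactly where the unstable case differs from Lemma \ref{Localization1}: there the profile $\tfrac12\max\{x\cdot\alpha-a,0\}^2$ vanishes on a half-space, so at $-e^2$ the quantity $2P-Q$ becomes negative of order $\alpha_2^2$, forcing $\alpha_2\lesssim\eps^{1/2}$; here both combinations stay nonnegative and the pointwise constraint is powerless. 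A correct route is to exploit the Laplacian mismatch in the overlap wedge $\{x\cdot\alpha<a\}\cap\{x\cdot\beta>b\}$: there $\Delta\PababTwo=\tfrac32$, while once one is at distance $\gtrsim\eps^{1/2}$ from both hyperplanes one has $u-\tfrac12w\ge\tfrac38(x\cdot\alpha-a)^2-\tfrac32\eps>0$ and $w-\tfrac12u>0$, hence $\Delta u=1$. On a ball $B_\rho(-\tfrac12 e^2)$ inside this strict region, $\Delta(u-P)=-\tfrac12$ with $|u-P|\le\eps$, and comparison with $\tfrac1{4d}|x+\tfrac12e^2|^2$ via the mean value property forces $\rho\le C\eps^{1/2}$; since (using $|a|,|b|\le C\eps^{1/2}$ from Step~1) one may take $\rho$ comparable to $\alpha_2$, this gives the angle bound.

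Your Step 1 also needs repair on the $a<0$ side. For $a<0$ the term $\tfrac12\min\{-a,0\}^2$ appearing in $\PababTwo(0)$ is identically zero, so $\PababTwo(0)$ carries no information about $a$; the claim that ``$u(0)$ is close to $\tfrac12a^2$'' seems to come from the \emph{stable} profile and is false here. The correct observation is that when $a<0$ the entire ball $B_{|a|}$ lies in $\{x\cdot\alpha-a\ge 0\}$, where $2\PababTwo-\QababTwo\equiv 0$; hence $u-\tfrac12w\le\tfrac32\eps$ in $B_{|a|}$, and the nondegeneracy of Proposition \ref{QuadGrowthRef} (applicable because $0\in\Gamma_u$) gives $\tfrac1{4d}|a|^2\le\tfrac32\eps$. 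The analogous argument for $b>0$ uses $2\QababTwo-\PababTwo\equiv 0$ in $B_b$ together with $0\in\Gamma_w$. (The directions $a>0$ and $b<0$ do follow cleanly from $u(0)=w(0)=0$, as you correctly note in your opening paragraph, since $\PababTwo(0)\ge\tfrac12\max\{a,0\}^2$ and $\QababTwo(0)\ge\tfrac12\max\{-b,0\}^2$.)
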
 

The main result in this section is:
\begin{proposition}[Improvement of flatness: Case 2]\label{IOF2}
There are  small positive constants $\delta, \eps_d$ and $\rho_k$  $(k=1,2),$ and  a large constant $C$, depending only on the dimension,  such that the following holds:

Suppose $$(u,w)\in\Sab \text{ in $B_1$}$$ with $$|\alpha-\beta|<2\delta\eps^{1/2}$$ for some $\eps<\eps_d.$ 

Then  there are $\alpha',\beta'\in\Sph$ with $|\alpha'-\alpha|+|\beta'-\beta|<C\eps$ such that $$(u,w)\in\mathcal{S}(\alpha',\beta';\eps/2) \text{ in $B_{\rho_1}$.}$$  

Moreover,  if $|\alpha-\beta|>\delta\eps^{1/2}$,   then there are $\alpha'',\beta''\in\Sph$ with $|\alpha''-\alpha|+|\beta''-\beta|<C\eps$ such that $$(u,w)\in\mathcal{S}(\alpha'',\beta'';\eps/2) \text{ in $B_{\rho_2}$}$$ and $$|\alpha''-\beta''|>|\alpha-\beta|+20\eps.$$  
\end{proposition}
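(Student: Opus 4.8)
\textbf{Strategy for Proposition \ref{IOF2}.} The plan is to mirror the proof of Proposition \ref{IOF1}, but now there is no ``decoupling'' alternative: the unstable profile \eqref{Pabab2} has both quadratic pieces genuinely present on either side of each hyperplane, so the two free boundaries stay glued together regardless of the sign of $a-b$, and the only possible conclusion is an improvement of approximation (plus an improvement of the angle when it is near-critical). As in Section 4, I would first build an \emph{approximate solution} $(\Phi,\Psi)$ attached to the pair $\PababTwo,\QababTwo$: for each fixed slice transverse to the common (near-)hyperplane, interpolate between the two unstable one-dimensional profiles, matching first derivatives at the appropriate transition point so that $(\Phi,\Psi)$ is $C^{1,1}$, lies in $\Subsol(\R^d)$, and $(1-C|\alpha-\beta|^2)(\Phi,\Psi)\in\Supsol(\R^d)$, with $|(\Phi,\Psi)-(P,Q)|\le C(|\alpha-\beta|^2+|a-b|^2)$. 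Then the analogue of Lemma \ref{TrappingByTranslations}, obtained by feeding translates of $(\Phi,\Psi)$ into Lemma \ref{FirstComparison} and Lemma \ref{SecondComparison} (using that $\Phi,\Psi$ are monotone along $\alpha$ for $\eps$ small), traps $(u,w)$ between $(\Phi,\Psi)(\cdot\mp A\eps\alpha)$ in $B_{1/2}$.

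\textbf{Linearization step.} With the trapping in hand, set $\hat u=\tfrac1\eps(u-P)$ and $\hat w=\tfrac1\eps(w-Q)$. In the region where $x\cdot\alpha-a$ is bounded away from $0$ on the positive side, $u$ is harmonic (the constraint $u>\tfrac12 w$ is strict there by the trapping), and likewise on the negative side $u$ satisfies $\Delta u=1$, so $u-P$ is harmonic on each side; the same for $w$. The functions $\hat u,\hat w$ are bounded (by $C(\delta^2+\eps^{1/2})$) in $B_{3/4}$ away from an $O(\eps^{1/2})$-neighborhood of the hyperplane. One then compares $\hat u$ (and $\hat w$) with the harmonic replacement having the same boundary data on $\partial B_{3/4}$ and the correct boundary value along $\{x_1'=C\eps^{1/2}\}$ read off from $(\Phi,\Psi)-(P,Q)$, picks off the first-order Taylor coefficients $\gamma_k$, and rotates: $\alpha'=\mathrm{normalize}(\alpha+\eps\sum_{k\ge2}\gamma_k\xi^k)$, $a'=a+\eps\gamma_1$, and symmetrically $\beta',b'$. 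Choosing $\rho_1$, then $\delta$, then $\eps_d$ small (in that order) as in Lemma \ref{ImprovementOfApproximation1} yields $(u,w)\in\mathcal{S}(\alpha',\beta';\eps/2)$ in $B_{\rho_1}$ with $|\alpha'-\alpha|+|\beta'-\beta|\le C\eps$.

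\textbf{Improvement of the angle.} When $\delta\eps^{1/2}<|\alpha-\beta|<2\delta\eps^{1/2}$, the transition layer of $(\Phi,\Psi)$ contributes a genuinely quadratic ``bump'' along $\{x_1'=C\eps^{1/2}\}$ of size $\sim\alpha_2^2\sim\eps\delta^2$, just as in Lemma \ref{ImprovementOfAngle1}. Subtracting the corresponding multiple of the auxiliary function $\tilde H$ from Proposition \ref{AuxiliaryFunct} (appropriately translated/reflected) exposes a term of the form $A_2(x_1'-C\eps^{1/2})x_2'\log r$ with $A_2>0$. The crucial sign check is that this $\log$-term pushes $\alpha_2$ (hence $\alpha''_2$) in the same direction for both $u$ and $w$, so that their \emph{separation} $|\alpha''-\beta''|$ increases rather than decreases — opposite to Case 1. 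Here the unstable nature of the profile is what flips the sign: in \eqref{Pabab2} the $\tfrac12$-coefficient sits on the \emph{negative} side for $P$ and on the \emph{positive} side for $Q$, so the geometry of the bump, and thus the sign of $A_2$'s contribution, is reversed relative to Definition \ref{ApproxSol1}. Choosing $\rho_2$ small so that $\eta A_2|\log\rho_2|$ dominates $|\sum_{k\ge2}\gamma_k\xi^k|+20$, then $\eps_d$ small to absorb $O(\eps^{3/2})$ errors, gives $|\alpha''-\beta''|>|\alpha-\beta|+20\eps$ together with $(u,w)\in\mathcal{S}(\alpha'',\beta'';\eps/2)$ in $B_{\rho_2}$; the reduction to $a=b=0$ is handled by translating to the point $\bar x=\tfrac{a+b}{2\alpha_1}e^1+\tfrac{a-b}{2\alpha_2}e^2$ as in Step 2 of Lemma \ref{ImprovementOfAngle1}, noting $|a|,|b|\le C\eps^{1/2}$ from Lemma \ref{Localization2}.

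\textbf{Main obstacle.} The delicate point is the sign of the angle change: one must verify that in the unstable configuration the dominant $\log$-correction forces the two approximating hyperplanes \emph{apart}, with a quantitatively controlled rate that beats all the lower-order rotation errors $\gamma_k$ — i.e. that the analogue of the constant $A_2>0$ in Proposition \ref{AuxiliaryFunct} enters with the ``divergent'' sign for both $u$ and $w$ simultaneously. This is exactly the structural difference between Case 1 (where angles contract and the iteration must be stopped before the angle grows) and Case 2 (where angles expand, so that a type-$1$ singular point is, in a quantified sense, repulsive). Everything else — the construction of $(\Phi,\Psi)$, the trapping lemma, the harmonic-replacement estimates, and the bookkeeping of the small constants $\delta,\rho_1,\rho_2,\eps_d$ — is routine once the $C^{1,\log}$-type auxiliary function from Proposition \ref{AuxiliaryFunct} is available.
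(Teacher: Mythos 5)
Your overall strategy --- mirror Case 1 by constructing approximate solutions, trapping, linearizing, and invoking the auxiliary function of Proposition \ref{AuxiliaryFunct} with the sign of the $\log$-term reversed so that the angle expands --- is the right one and matches the paper. There are, however, two genuine gaps.

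First, the trapping lemma cannot be obtained by spatial translations. You assert that "$\Phi,\Psi$ are monotone along $\alpha$ for $\eps$ small," but this fails for the unstable profile \eqref{Pabab2}: near the transition layer $\Phi$ behaves like $\tfrac12(x\cdot\alpha-a)^2$ on the negative side and $\tfrac14(x\cdot\beta-b)^2$ on the positive side, so $\partial_\alpha\Phi$ changes sign, and no translate $\Phi(\cdot\pm A\eps\alpha)$ dominates $\Phi$ pointwise. What the paper's Lemma \ref{TrappingByTranslations2} actually proves is a trapping by \emph{parameter} translation,
$$\Phi(\alpha,\beta;a-A\eps,b+A\eps)\le u\le\Phi(\alpha,\beta;a+A\eps,b-A\eps),$$
spreading the hyperplanes apart for a lower barrier and pushing them together for an upper one. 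This is not cosmetic; it is what makes Lemmas \ref{FirstComparison}--\ref{SecondComparison} applicable here.

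Second, the coarse bound $|a|,|b|\le C\eps^{1/2}$ from Lemma \ref{Localization2} is not enough. The angle-improvement step reads off a boundary bump of size $\sim\alpha_2^2\sim\delta^2\eps$; with only $a,b=O(\eps^{1/2})$ the error in identifying that bump is $O(\eps)$, the same order as the signal, and the argument collapses. Because the class $\mathcal{S}$ in Definition \ref{SingSol} requires $0\in\Gamma_u\cap\Gamma_w$ (not just $0\in\Gamma_u$ as in Case 1), the paper first proves the \emph{refined} localization $|a|+|b|\le C\eps$ (Lemma \ref{RefinedLocalization}), which removes the need for the $\bar x$-translation you propose --- a translation which, with only the coarse bound, would move the base point by $O(|a-b|/\alpha_2)=O(1/\delta)$, possibly outside $B_1$. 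The slip about $u$ being harmonic on the positive side (there $u=\tfrac12 w$ and $\Delta u=\tfrac12$; it is $u-P$ that is harmonic) is harmless, and your diagnosis of the sign flip --- the active region for $u$ moves to the negative side of the hyperplane, forcing an $x_1'$-reflection of the auxiliary function that reverses the sign of the $A_2x_1'x_2'\log r$ term --- is essentially the one the paper uses.
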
 

The most intriguing feature  is that the angle between the hyperplanes increases definitely once it reaches the critical level $\delta\eps^{1/2}$. This is a consequence of the instability of the unstable half-space solutions. Later, we need this instability to show that the angle never reaches the critical level at a singular point of type 1. 

We give the proof of Proposition \ref{IOF2} in the following subsections.  We omit proofs that are similar to the  ones in Section 4. 
%%%%%%%%%%%%%%%%%%%%%%%%%%%%%%%%%%%%%%%%%%%%%%%%%%%%%%%%%%%%%%%%%%%%%%%%%%%%%%%%%%%%%%%%%%%%%%%%%%%%%%%%%%%%%%%%%%%%%%%%%%%%%%%%%%%%%%%%%%%%%%%%%%%%%%
\subsection{Approximate solutions}In this subsection, we build approximate solutions. We begin with the problem in one dimension.
\begin{eg}\label{1DEx2}
Suppose on $\R$, we are given two profiles 
$$P=\frac{1}{2}\min\{x-a,0\}^2+\frac{1}{4}\max\{x-b,0\}^2$$ and $$Q=\frac{1}{4}\min\{x-a,0\}^2+\frac{1}{2}\max\{x-b,0\}^2.$$ Our goal is to construct  an actual solution, $\PP$,   that best approximates $(P,Q).$

If $a\le b$, then $(P,Q)$ already solves the system of obstacle problem. In this case it suffices to take $\PP=(P,Q).$

If $a>b$, it is natural to take $\Phi=P$ and $\Psi=\frac{1}{2}\Phi$ for $x<<b.$ We need to determine the point $t$ such that $\Phi=2\Psi$ (thus $\Phi''=1$) for $x<t$,  and $\Phi=\frac{1}{2}\Psi$ (thus $\Phi''=\frac{1}{2}$ ) for $x>t$.  To approximate $P$, we need $\Phi'(t)=P'(t)$. This condition implies $(t-a)=\frac{1}{2}(t-b).$ We choose $t=2a-b$. Similar argument applies to $\Psi$.  

This gives the approximate solution $\PP$ on $\R$: $$\Phi(x)=\begin{cases}
\frac{1}{2}(x-a)^2+(a-b)^2 &\text{ if $x<2a-b$,}\\\frac{1}{4}(x-b)^2+\frac{1}{2}(a-b)^2&\text{ if $x\ge2a-b$,}
\end{cases}$$ and 
$$\Psi(x)=\begin{cases}
\frac{1}{4}(x-a)^2+\frac{1}{2}(a-b)^2 &\text{ if $x<2b-a$,}\\\frac{1}{2}(x-b)^2+(a-b)^2&\text{ if $x\ge2b-a$.}
\end{cases}$$This completes the construction in one dimension. See Figure \ref{OneDConstruction2}.\end{eg}

\begin{figure}[ht]
\includegraphics[width=0.6\linewidth]{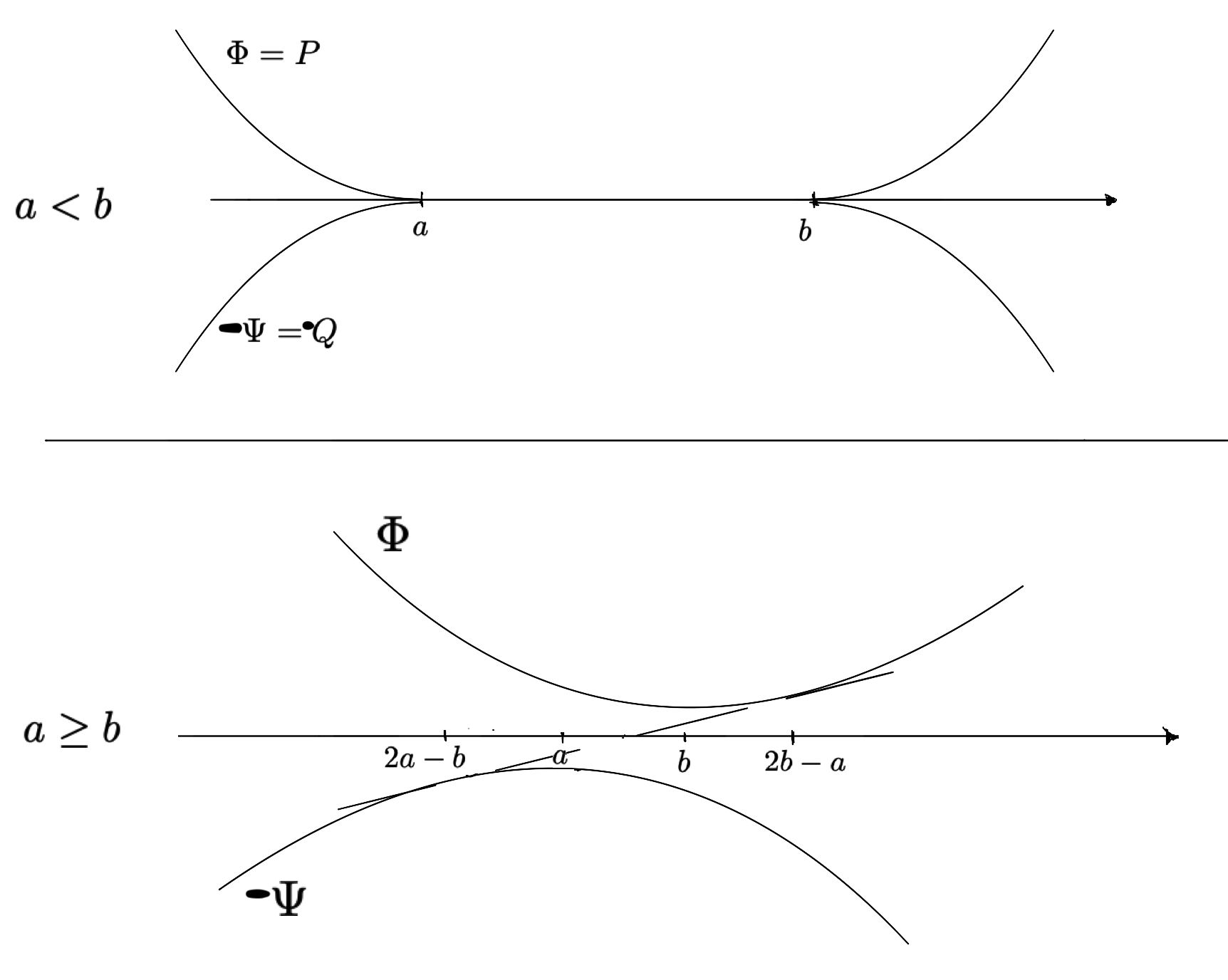}
\caption{Approximate solution on $\R$.}
\label{OneDConstruction2}
\end{figure}

For higher dimensions, we follow the same strategy along each hyperplane with fixed $x_2$.  See the strategy after Example \ref{1DEx}. 

Under our assumption \eqref{Symmetric}, this gives the following. See Figure \ref{TwoDConstruction2}.

 \begin{definition}\label{ApproxSol2}
Corresponding to $\PQabab$ as in \eqref{Pabab2},
the \textit{approximate solution} $$(\Phi,\Psi)=(\Phi,\Psi)(\alpha,\beta;a,b)$$ is defined as follows:
\begin{enumerate}
\item{Inside $\{\alpha_2x_2\ge \frac{a-b}{2}\}$, $$\Phi=P\text{ and }\Psi=Q;$$}
\item{Inside $\{\alpha_2x_2\le \frac{a-b}{2}\},$
$$\Phi=\begin{cases}\frac{1}{2}(x\cdot\alpha-a)^2+(2\alpha_2x_2-a+b)^2 &\text{ for $(2\alpha-\beta)\cdot x<2a-b,$}\\ \frac{1}{4}(x\cdot\beta-b)^2+\frac{1}{2}(2\alpha_2x_2-a+b)^2 &\text{ for $(2\alpha-\beta)\cdot x\ge 2a-b$,}
\end{cases}$$ and 
$$\Psi=\begin{cases}\frac{1}{4}(x\cdot\alpha-a)^2+\frac{1}{2}(2\alpha_2x_2-a+b)^2 &\text{ for $(2\beta-\alpha)\cdot x<2b-a,$}\\ \frac{1}{2}(x\cdot\beta-b)^2+(2\alpha_2x_2-a+b)^2 &\text{ for $(2\beta-\alpha)\cdot x>2b-a$.}
\end{cases}$$}
\end{enumerate}
\end{definition}

\begin{figure}[h]
\includegraphics[width=0.8\linewidth]{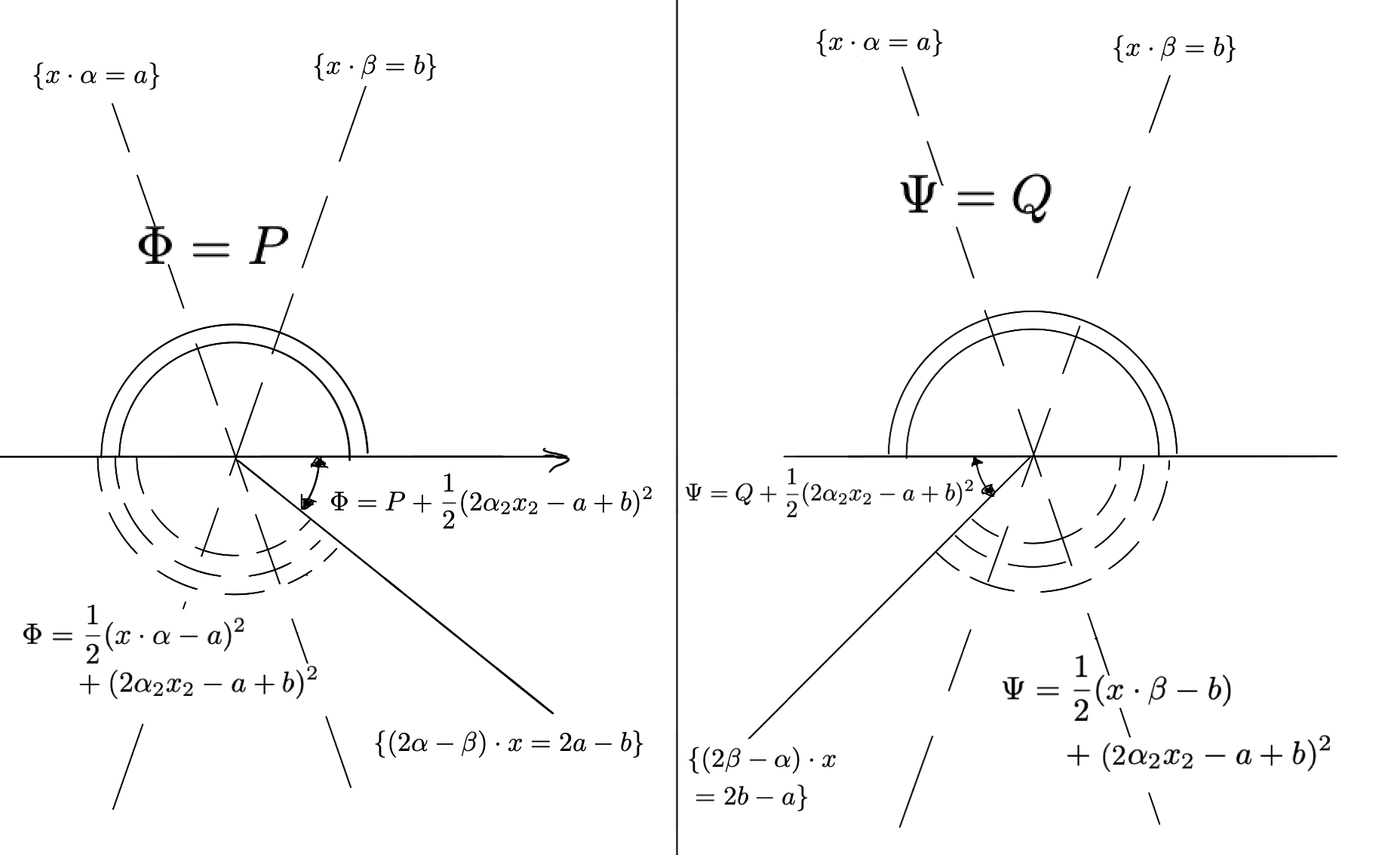}
\caption{Approximate solution in $\R^2$.}
\label{TwoDConstruction2}
\end{figure}

The contact situation of $\PP$ is depicted in Figure \ref{ContactApprox2}.
\begin{figure}[h]
\includegraphics[width=0.6\linewidth]{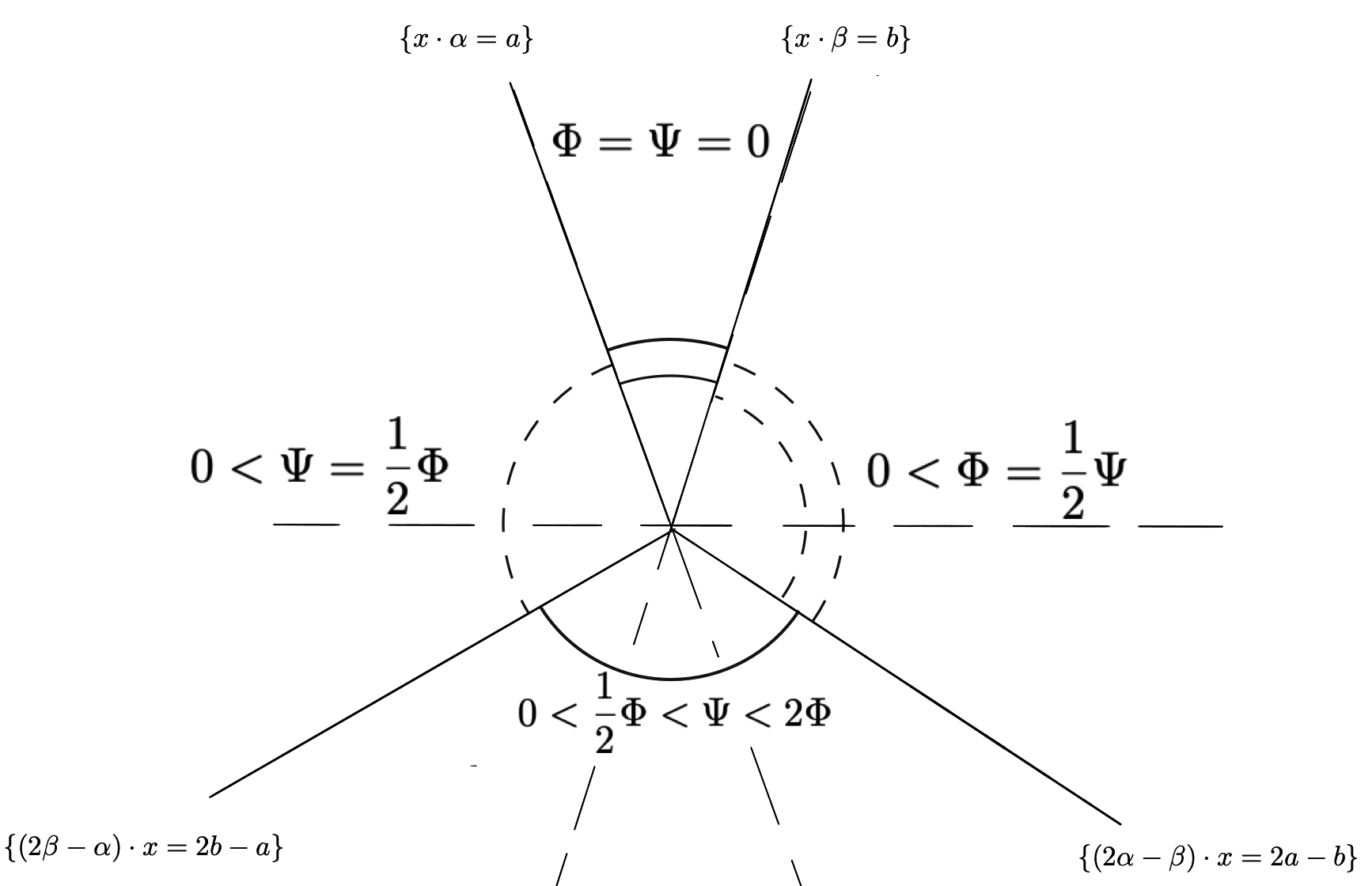}
\caption{Contacting situation of $\PP$.}
\label{ContactApprox2}
\end{figure}

Similar to Lemma \ref{PropOfApproxSol1}, we have
\begin{lemma}\label{PropOfApproxSol2}
Let $(\Phi,\Psi)$ be the approximate solution defined above.  

Then there is a dimensional constant $C$ such that $$|\PP-(P,Q)|\le C(|\alpha-\beta|^2+|a-b|^2) \text{ in $B_1$,}$$and$$\PP\in\Subsol(\R^d) \text{ and }(1-C|\alpha-\beta|^2)\PP\in\Supsol(\R^d).$$
\end{lemma}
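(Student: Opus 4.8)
\textbf{Proof proposal for Lemma \ref{PropOfApproxSol2}.}

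The plan is to mirror the proof of Lemma \ref{PropOfApproxSol1}, working region by region in the partition given by Definition \ref{ApproxSol2}, with the added simplification that on one of the two regions $(\Phi,\Psi)$ is literally $(P,Q)$, which already lies in $\Sol(\R^d)$. First I would verify the $C^{1,1}$-regularity: within each of the pieces in Definition \ref{ApproxSol2} the functions are explicit quadratics, so one only has to check $C^1$-matching across the interfaces $\{\alpha_2 x_2 = \frac{a-b}{2}\}$, $\{(2\alpha-\beta)\cdot x = 2a-b\}$ and $\{(2\beta-\alpha)\cdot x = 2b-a\}$. Each interface was chosen precisely so that the gradients of the adjacent quadratics agree there — this is the content of the derivative-matching heuristic in Example \ref{1DEx2} (the condition $(t-a)=\frac12(t-b)$ giving $t=2a-b$) — so the matching is a short computation, and equality of values at the interface can be arranged by the additive constants $(2\alpha_2 x_2 - a + b)^2$, $\frac12(2\alpha_2 x_2 - a+b)^2$, etc.

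Next I would check the (sub/super)solution inequalities. For $\Phi$: on $\{\alpha_2 x_2 \ge \frac{a-b}{2}\}$ we have $\Phi = P$, whose Laplacian is between $\frac12 + O(\alpha_2^2)$ and $1 + O(\alpha_2^2)$; on the other region $\Delta\Phi$ equals either $1 + 4\alpha_2^2$ (on the piece $\frac12(x\cdot\alpha-a)^2 + (2\alpha_2 x_2 - a+b)^2$) or $\frac12 + 2\alpha_2^2$. In the set $\{\Phi > \frac12\Psi\}$ one checks $\Delta\Phi \ge 1$ — examining the pieces, $\{\Phi > \frac12\Psi\}$ is exactly where $\Phi$ has Hessian of trace $\ge 1$ (the "$\Phi'' = 1$" branches of Example \ref{1DEx2}), so $\Delta\Phi \ge 1$ holds there; and everywhere $\Delta\Phi \le 1 + 4\alpha_2^2 \le 1 + C|\alpha-\beta|^2$, giving $(1-C|\alpha-\beta|^2)\Delta\Phi \le 1$. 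The symmetric inequalities for $\Psi$ follow by the same bookkeeping with the roles of $\alpha,a$ and $\beta,b$ interchanged. One must also confirm the pointwise constraints $\Phi \ge 0$, $\Psi \ge 0$, $\Phi \ge \frac12\Psi$, $\Psi \ge \frac12\Phi$ defining membership in $\Subsol$; these again reduce to inspecting the explicit quadratics on each region, using that the added squares $(2\alpha_2 x_2 - a + b)^2$ are nonnegative.

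For the $C^{1,1}$-closeness estimate $|(\Phi,\Psi) - (P,Q)| \le C(|\alpha-\beta|^2 + |a-b|^2)$ in $B_1$, the region $\{\alpha_2 x_2 \ge \frac{a-b}{2}\}$ is trivial since there $(\Phi,\Psi) = (P,Q)$. On the complementary region I would argue as in Lemma \ref{PropOfApproxSol1}: on the slab where $(2\alpha-\beta)\cdot x \le 2a - b$ while $x\cdot\alpha < a$ (so the $P$-part is the min-square $\frac12(x\cdot\alpha-a)^2$), the inequality $0 \le a - x\cdot\alpha \le \frac12(a - x\cdot\beta) \le \ldots$ combined with $\alpha_2 x_2 \le \frac{a-b}{2}$ bounds $|x\cdot\alpha - a|$ and $|2\alpha_2 x_2 - a + b|$ by $C(|\alpha-\beta| + |a-b|)$, hence $|\Phi - P| \le C(|\alpha-\beta|^2 + |a-b|^2)$; on the complementary piece the analogous bound comes directly from the explicit formula $\Phi = \frac14(x\cdot\beta-b)^2 + \frac12(2\alpha_2 x_2 - a+b)^2$ versus $P = \frac14\max\{x\cdot\beta - b, 0\}^2$ plus the $x\cdot\alpha$-min term, using $|x\cdot\alpha - x\cdot\beta| \le |\alpha - \beta|$ on $B_1$ and $|\alpha_2| \le |\alpha - \beta|$. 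The same for $|\Psi - Q|$. I expect the main obstacle to be purely organizational rather than deep: there are several pieces and several interfaces, and one must be careful that the case distinctions in Definition \ref{ApproxSol2} genuinely cover $\R^d$ consistently (in particular that the nested conditions defining $\Phi$ and $\Psi$ are compatible on overlaps) and that $\{\Phi > \frac12\Psi\}$ really coincides with the union of the trace-$\ge 1$ branches — this last identification is where one could slip, so I would pin it down first before doing the Laplacian comparisons.
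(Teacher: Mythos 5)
Your plan is sound and follows exactly the route the paper intends: the paper gives no separate proof of Lemma \ref{PropOfApproxSol2} but simply says ``similar to Lemma \ref{PropOfApproxSol1},'' so the region-by-region verification you sketch is the expected argument. The concern you flag --- whether the nested case distinctions in Definition \ref{ApproxSol2} are mutually compatible and whether $\{\Phi>\tfrac12\Psi\}$ coincides with the branches where $\Delta\Phi\ge 1$ --- is the right thing to pin down, and it does resolve in your favor. Writing $s=x\cdot\alpha-a$, $t=x\cdot\beta-b$, so that $2\alpha_2 x_2-a+b=s-t$: in $\{s\le t\}$ the subcase $\{2s\ge t\}\cap\{2t<s\}$ is vacuous (it forces $s\le t\le 2s$ with $s<0$, a contradiction), and in the subcase $\{2s\ge t\}\cap\{2t\ge s\}$ one computes $\Phi-\tfrac12\Psi\equiv 0$, so there is nothing to check there. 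In the two remaining non-trivial subcases one finds $\Phi-\tfrac12\Psi=\tfrac38 s^2+\tfrac34(s-t)^2$ and $\Phi-\tfrac12\Psi=(s-\tfrac12 t)^2$ respectively, both nonnegative, and $\Delta\Phi=1+8\alpha_2^2\ge1$ there; in $\{s\ge t\}$ one has $\Phi-\tfrac12\Psi=\tfrac38\min\{s,0\}^2$ and $\Delta\Phi=1$ on $\{s<0\}$. So the identification you wanted holds. Two small slips that do not affect the argument: the Laplacian of $(2\alpha_2 x_2-a+b)^2$ is $8\alpha_2^2$, not $4\alpha_2^2$, so the bounds are $\Delta\Phi\le1+8\alpha_2^2$ and $\tfrac12+4\alpha_2^2$ on the respective pieces; since $2|\alpha_2|=|\alpha-\beta|$ this still gives $\Delta\Phi\le1+2|\alpha-\beta|^2$ and the supersolution bound follows with $C\ge 2$.
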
 

Similar to Lemma \ref{TrappingByTranslations}, using the notation as in Definition \ref{ApproxSol2}, we have
\begin{lemma}\label{TrappingByTranslations2}
Suppose $(u,w)\in\Sabab$ in $B_1$.

Then there are dimensional constants $A$ and $\eps_d$ such that
$$\begin{cases}&\Phi(\alpha,\beta;a-A\eps,b+A\eps)\le u\le\Phi(\alpha,\beta;a+A\eps,b-A\eps)\\
&\Psi(\alpha,\beta;a-A\eps,b+A\eps)\le w\le\Psi(\alpha,\beta;a+A\eps,b-A\eps)\end{cases}\text{ in $B_{1/2}$}$$ if $\eps<\eps_d.$
\end{lemma}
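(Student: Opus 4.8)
The plan is to mirror the proof of Lemma \ref{TrappingByTranslations} but now applying the two comparison lemmas \ref{FirstComparison} and \ref{SecondComparison} to translated copies of the approximate solution $(\Phi,\Psi)=(\Phi,\Psi)(\alpha,\beta;a,b)$ built in Definition \ref{ApproxSol2}. I will prove the upper bound $u\le\Phi(\alpha,\beta;a+A\eps,b-A\eps)$ and $w\le\Psi(\alpha,\beta;a+A\eps,b-A\eps)$ in $B_{1/2}$; the lower bound is entirely symmetric, with the roles of sub/supersolution exchanged and Lemma \ref{SecondComparison} used in place of Lemma \ref{FirstComparison}. First, by Lemma \ref{Localization2} we have $|a|,|b|,|\alpha-\beta|\le C\eps^{1/2}$, so Lemma \ref{PropOfApproxSol2} gives $(1-C\eps)(\Phi,\Psi)\in\Supsol(\R^d)$ and $|(\Phi,\Psi)-(P,Q)|\le C\eps$ in $B_1$. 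The key geometric observation is that translating the free-boundary offsets from $(a,b)$ to $(a+A\eps,\,b-A\eps)$ moves the hyperplanes $\{x\cdot\alpha=a\}$ and $\{x\cdot\beta=b\}$ apart, and along the relevant region this increases $\Phi$ and $\Psi$ by a definite amount of order $\eps$.

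Concretely, set $(F,G)=(1-C\eps)(\Phi,\Psi)(\alpha,\beta;a+A\eps,\,b-A\eps)$ with $A$ a large dimensional constant to be fixed; then $(F,G)\in\Supsol(B_1)$. Since changing $a\mapsto a+A\eps$ only makes $\Phi$ larger (the quadratic $\min\{\cdot,0\}^2$ and $\max\{\cdot,0\}^2$ pieces are monotone in the offsets in the right directions, and similarly for the gluing terms $(2\alpha_2x_2-a+b)^2$ which one checks change by $O(\eps^{3/2})$ after using $|\alpha_2|\le C\eps^{1/2}$), we get $(F,G)\ge(1-C\eps)(\Phi,\Psi)(\alpha,\beta;a,b)\ge(P,Q)-C\eps\ge(u,w)-C\eps$ throughout $B_{3/4}$. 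Next I analyze the region $\{F>\tfrac{1}{4d}\}\cap B_{3/4}$: there $\Phi$ is uniformly away from its "minimum branch", so $x\cdot\alpha-a$ is bounded below by a small dimensional constant $c>0$ (or $x\cdot\alpha-a$ is bounded above by $-c$, in the $\min\{\cdot,0\}^2$ branch), and in either branch one computes $\Phi(\alpha,\beta;a+A\eps,b-A\eps)-\Phi(\alpha,\beta;a,b)\ge cA\eps$; hence $F\ge u+cA\eps-C\eps\ge u+10d\eps$ once $A$ is chosen large depending only on the dimension. The same estimate holds for $G$ versus $w$ on $\{G>\tfrac{1}{4d}\}\cap B_{3/4}$. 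With $u'\le1$ on $\partial B_2$ replaced by the obvious rescaling to $\partial B_{3/4}$ (or by first working in $B_2$ and rescaling), Lemma \ref{FirstComparison} then yields $(u,w)\le(F,G)\le(\Phi,\Psi)(\alpha,\beta;a+A\eps,b-A\eps)$ in $B_{1/2}$.

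The lower bound is obtained by the symmetric construction: set $(F',G')=(1+C\eps)(\Phi,\Psi)(\alpha,\beta;a-A\eps,\,b+A\eps)$, which lies in $\Subsol(B_1)$ by Lemma \ref{PropOfApproxSol2} (the factor $(1+C\eps)$ absorbs the deficit in $\Delta\Phi\ge1$ near the gluing, using that in $\{\Phi>\tfrac12\Psi\}$ one has $\Delta\Phi\ge1$ exactly), and compare against $(u,w)$ viewed as a supersolution via Lemma \ref{SecondComparison}, using that moving the offsets to $(a-A\eps,b+A\eps)$ decreases $\Phi,\Psi$ by a definite $\eps$-amount on $\{F'>\tfrac1{8d}\}$, $\{G'>\tfrac1{8d}\}$ respectively. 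I expect the main obstacle to be the bookkeeping in Definition \ref{ApproxSol2}: because $(\Phi,\Psi)$ is a genuinely two-branch (for each of $\Phi$ and $\Psi$) piecewise-quadratic object whose defining regions themselves depend on $(a,b)$, one must check the monotonicity of each branch in the offsets \emph{and} verify that the moving interfaces $\{(2\alpha-\beta)\cdot x=2a-b\}$, $\{(2\beta-\alpha)\cdot x=2b-a\}$ shift by $O(\eps)$ and that across these interfaces $(\Phi,\Psi)$ remains $C^{1,1}$ so that the pointwise comparisons are not destroyed by the gluing; away from that, the argument is a routine transcription of the proof of Lemma \ref{TrappingByTranslations}. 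As there, one may state the analogous comparison with $\alpha$ replaced by $\beta$ in the translation direction, with no change in the proof.
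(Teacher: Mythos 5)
Your proposal is correct and follows the approach the paper intends (the paper states this lemma without proof, indicating it is ``similar to Lemma~\ref{TrappingByTranslations}''). You correctly identify the essential difference: since $P$ and $Q$ in \eqref{Pabab2} involve a $\min$-branch and a $\max$-branch with opposite monotonicities, a single spatial translation of $\PP$ will not dominate both pieces, so one must instead translate the offsets in opposite directions $(a,b)\mapsto (a\pm A\eps,\, b\mp A\eps)$; that $\Phi$ and $\Psi$ are nondecreasing in $a$ and nonincreasing in $b$ (which you correctly flag as the ``bookkeeping'' to verify) then makes the comparison go through exactly as before. Two small remarks. First, the factor $(1+C\eps)$ in your lower barrier $(F',G')$ is unnecessary and its stated justification is off: by Lemma~\ref{PropOfApproxSol2}, $\PP$ itself already belongs to $\Subsol(\R^d)$ (the subsolution condition requires only $\Delta\Phi\ge \mathcal{X}_{\{\Phi>\frac12\Psi\}}$, not $\Delta\Phi\ge 1$ everywhere), so $\PP(\alpha,\beta;a-A\eps,b+A\eps)$ can be used directly in Lemma~\ref{SecondComparison} without rescaling; the scaling factor is genuinely needed only for the supersolution side, exactly as in Lemma~\ref{TrappingByTranslations}. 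Second, your closing sentence about stating ``the analogous comparison with $\alpha$ replaced by $\beta$ in the translation direction'' does not transfer here: the Case~2 translation is in the parameter offsets, not in a spatial direction, so there is no $\alpha$-versus-$\beta$ choice to make.
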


When there is no ambiguity, we simplify our notations by writing \begin{equation}\label{Barriers2}\begin{cases}&(\Phi^-,\Psi^-)=\PP(\alpha,\beta;a-A\eps,b+A\eps),\\  &(\Phi^+,\Psi^+)=\PP(\alpha,\beta;a+A\eps,b-A\eps).\end{cases}\end{equation}
%%%%%%%%%%%%%%%%%%%%%%%%%%%%%%%%%%%%%%%%%%%%%%%%%%%%%%%%%%%%%%%%%%%%%%%%%%%%%%%%%%%%%%%%%%%%%%%%%%%%%%%%%%%%%%%%%%%%%%%%%%%%%%%%%%%%%%%%%%%%%%%%%%%%%%
\subsection{Improvement of approximation and angle}This subsection contains the proof of Proposition \ref{IOF2}. We divide the proposition into two statements. The first is an improvement of approximation result, assuming the angle $|\alpha-\beta|$ is small. The second is to show that this angle increases by a definite amount once it reaches the critical level.

We first give a finer bound on $a$ and $b$. This refinement is a consequence of our assumption that $0\in\Gamma_u\cap\Gamma_w.$ See Definition \ref{SingSol}.
\begin{lemma}\label{RefinedLocalization}
Suppose $(u,w)\in\Sabab$ in $B_1$.

 Then $$|a|+|b|\le C\eps$$ for a dimensional constant $C$.
\end{lemma}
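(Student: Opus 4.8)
The statement asserts a finer localization of the two offsets, $|a|+|b|\le C\eps$, improving on the $C\eps^{1/2}$ bound from Lemma~\ref{Localization2}. The key new ingredient compared to Lemma~\ref{Localization2} is the extra hypothesis $0\in\Gamma_w$ (not just $0\in\Gamma_u$), built into the definition of $\Sabab$. The plan is to exploit the fact that the unstable half-space profile $P(\alpha,\beta;a,b)$ is \emph{not} constant on either side of its two hyperplanes --- it has the definite quadratic growth $\tfrac12\min\{x\cdot\alpha-a,0\}^2$ coming into $\{x\cdot\alpha<a\}$ and $\tfrac14\max\{x\cdot\beta-b,0\}^2$ going into $\{x\cdot\beta>b\}$, and symmetrically for $Q$. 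Because the profiles grow quadratically on \emph{both} sides of each hyperplane, any nonzero offset is ``visible'' in the value of $u$ or $w$ near the origin, and will contradict one of the non-degeneracy statements in Lemma~\ref{NonDegeneracy} unless the offset is of order $\eps$.

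\textbf{Key steps.} First I would use Lemma~\ref{Localization2} to record that $|a|,|b|,|\alpha-\beta|\le C\eps^{1/2}$, so all the geometry takes place in a small window and the linearizations below are legitimate. Next, suppose for contradiction that, say, $a\ge K\eps$ for a large dimensional constant $K$ (the case $a\le -K\eps$, and the analogous cases for $b$, being symmetric). Evaluate $u$ at the origin: since $P(\alpha,\beta;a,b)(0)=\tfrac12 a^2+\tfrac14\max\{-b,0\}^2\ge\tfrac12 K^2\eps^2$ and $|u-P|<\eps$ in $B_1$, we get a lower bound $u(0)\ge \tfrac12 K^2\eps^2-\eps$, which is of the wrong order to directly contradict $u(0)=\tfrac12 w(0)$; so instead I would look at a ball of radius $r\sim\eps^{1/2}$ (or whatever scale makes the argument tight). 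On $\partial B_r$ we have $u\le P+\eps r^2\le \tfrac12(r+a)^2+\tfrac14 b^2+\eps r^2$, and choosing $r$ comparable to $a$ but still so small that $P$ stays $\le\tfrac{1}{4d}r^2$ on $\partial B_r$ when $a$ is \emph{small} --- here is where one sees that $a$ cannot be too large relative to $r$, and conversely one needs $r$ large enough to apply Lemma~\ref{NonDegeneracy}. The cleanest route: rescale. Set $(\tilde u,\tilde w)(x)=\tfrac{1}{r^2}(u,w)(rx)$ with $r=\eps^{1/2}$; then $(\tilde u,\tilde w)\in\Sol(B_{\eps^{-1/2}})$, the rescaled profiles have offsets $a/r,b/r=O(1)$, the rescaled error is $|{\tilde u}-\tilde P|<1$, and now $0\in\Gamma_{\tilde u}\cap\Gamma_{\tilde w}$. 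At this scale a nonzero offset of order $\eps^{1/2}$ in the original becomes order $1$, and the non-degeneracy/comparison machinery of Lemma~\ref{NonDegeneracy} (applied on a unit ball around the origin) forces it back to $O(\eps^{1/2}\cdot\eps^{1/2})=O(\eps)$ after undoing the rescaling --- i.e. one shows that the frozen profile with the wrong offset violates either $u(0)=\tfrac12 w(0)$ or $w(0)=\tfrac12 u(0)$. I would use Lemma~\ref{TrappingByTranslations2}, which traps $u$ between $\Phi(\alpha,\beta;a\mp A\eps,b\pm A\eps)$: evaluating the trap at the origin gives $u(0)\ge\Phi^-(0)\ge\tfrac12(a-A\eps)^2+\cdots$ and $w(0)\le\Psi^+(0)$, and plugging into $u(0)=\tfrac12 w(0)$ and $w(0)=\tfrac12 u(0)$ simultaneously pins down $a$ and $b$ up to $O(\eps)$.

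\textbf{Main obstacle.} The delicate point is that $u(0)=\tfrac12 w(0)$ and $w(0)=\tfrac12 u(0)$ together would naively force $u(0)=w(0)=0$, but $u,w$ are only \emph{close} to the profiles, not equal to them, so the two identities hold only up to the $O(\eps)$ error and one must carefully bookkeep which inequality is tight. Concretely: the condition $0\in\Gamma_u$ gives, via Lemma~\ref{NonDegeneracy} applied on $B_\rho$ for suitable $\rho$, that $u$ cannot be uniformly too small near $0$, which combined with the upper trap $u\le\Phi^+$ bounds how \emph{negative} the effective offset can be; while $0\in\Gamma_w$ does the symmetric job for $w$, bounding how \emph{positive} the offset in the $\beta$-hyperplane can be. Balancing these two one-sided bounds is what upgrades $\eps^{1/2}$ to $\eps$, and getting the constants to close --- in particular choosing the auxiliary radius $\rho$ (a dimensional constant) so that the frozen profile evaluated on $\partial B_\rho$ genuinely falls below the non-degeneracy threshold $\tfrac{1}{4d}\rho^2$ when and only when $|a|,|b|\lesssim\eps$ --- is the crux of the argument. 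I expect the proof to be short once the right scaling is fixed, but the sign-chasing in the simultaneous use of the four clauses of Lemma~\ref{NonDegeneracy} is where care is needed.
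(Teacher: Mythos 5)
Your general strategy---exploit the trapping by translated approximate solutions (Lemma~\ref{TrappingByTranslations2}) together with the non-degeneracy of Lemma~\ref{NonDegeneracy}---is the right one, but there is a genuine gap that runs through the whole proposal: you treat the free-boundary conditions at the origin as only approximately true. They are \emph{exact}. Since $0\in\Gamma_u\cap\Gamma_w$ by the definition of $\Sabab$, continuity of $u,w$ together with $u\ge\tfrac12 w$ and $w\ge\tfrac12 u$ forces $u(0)=\tfrac12 w(0)$ and $w(0)=\tfrac12 u(0)$ to hold exactly, whence $u(0)=w(0)=0$ \emph{exactly}. This fact is independent of how well $(u,w)$ approximates the profile and is precisely what makes the argument close. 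You say the two identities ``hold only up to the $O(\eps)$ error,'' but the error enters only in comparing $u$ to $P$, not in the free-boundary identities themselves. Once you have $u(0)=0$, the one-sided bounds become immediate: from $u\ge\Phi^-$ (the lower barrier, with shifted offsets $a-A\eps,\,b+A\eps$) you get $0=u(0)\ge\Phi^-(0)$; since $\Phi^-(0)$ is a sum of nonnegative squares, each square must vanish, and this gives $a\le A\eps$ and $b\ge -A\eps$ with no ``bookkeeping of which inequality is tight.'' Without the exact vanishing $u(0)=0$ you are left, as you note, with a lower bound on $u(0)$ that is ``of the wrong order,'' and the argument stalls.

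The second issue is the scale at which you invoke non-degeneracy. Rescaling by $r=\eps^{1/2}$ does not help: at that scale the offsets become $O(1)$, but the approximation error also becomes $O(1)$ ($\eps/r^2=1$), so you have lost control of the solution near the profile and Lemma~\ref{NonDegeneracy} can no longer be made quantitative. Likewise, a dimensional-constant radius $\rho$ is too coarse. The correct scale for the remaining one-sided bounds (say, ruling out $a\le -M\eps$) is $r\sim M\eps$: at this scale the upper/lower barriers reduce to alternative~(1) of Definition~\ref{ApproxSol2} on all of $B_{r}$, one computes $2u-w\le 2\Phi^+-\Psi^-\le CM\eps^2$ there, and the non-degeneracy threshold $\tfrac{1}{4d}r^2\sim M^2\eps^2$ beats the error when $M$ is large, forcing $u=\tfrac12 w$ in a neighborhood of $0$ and contradicting $0\in\Gamma_u$. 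This $\eps$-dependent radius comparison, rather than a rescaling to scale $\eps^{1/2}$ or a fixed $\rho$, is the crux of the second half of the proof.
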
 

\begin{proof}
We use the notation in \eqref{Barriers2}.

Suppose $(a-A\eps)-(b+A\eps)>0$, then either $\Phi^-(0)=\frac{1}{2}(-a+A\eps)^2+(-a+b+2A\eps)^2$ or $\Phi^-(0)=\frac{1}{4}(-b-A\eps)+\frac{1}{2}(-a+b+2A\eps)^2$. 

In both cases, $0=u(0)\ge\Phi^-(0)$  gives the desired estimate.

Consequently, it suffices to consider the case when $a-b\le 2A\eps.$ 

In this case, the comparison $0=u(0)\ge\Phi^-(0)$ implies \begin{equation}\label{ABound}a\le A\eps \text{ and } b\ge -A\eps.\end{equation}

Suppose, on the contrary, that $a\le -M\eps$ for a large $M$. 

The previous estimate on $b$ implies $a-b+2A\eps\le-\frac{1}{2}M\eps$ if $M\gg A.$ Thus $$B_{\frac{1}{4}M\eps}\subset \{\alpha_2x_2\ge \frac{1}{2}(a-b+2A\eps)\}.$$Thus for both $\Phi^+$ and $\Psi^-$, only alternative (1) in Definition \ref{ApproxSol2} is relevant in $B_{\frac{1}{4}M\eps}.$ 

As a result, we have
\begin{equation*}
2u-w\le 2\Phi^+-\Psi^-\le  \frac{1}{4}MA\eps^2 \text{ in $B_{\frac{1}{4}M\eps}$.}\end{equation*}
With Lemma \ref{NonDegeneracy}, this implies $u=\frac{1}{2}w$ in a neighborhood of $0$ if $M$ is large, contradicting $0\in\Gamma_u.$

Consequently, $a\ge -C\eps$. Similar arguments give $b\ge -C\eps$. 

Combined with \eqref{ABound}, we have the desired estimate. \end{proof}

We now prove the improvement of approximation in Proposition \ref{IOF2}.
\begin{lemma}
Suppose $(u,w)\in\Sab$ in $B_1$ with $|\alpha-\beta|<2\delta\eps^{1/2}$ for some $\eps<\eps_d$.

Then for some $\alpha',\beta'\in\Sph$ satisfying $$|\alpha'-\alpha|+|\beta'-\beta|\le C\eps$$ we have $$(u,w)\in\mathcal{S}(\alpha',\beta';\frac{1}{2}\eps) \text{ in $B_{\rho_1}.$}$$Here $\delta, \eps_d, \rho_1$ and $C$ are dimensional constants. 
\end{lemma}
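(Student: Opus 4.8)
The plan is to follow the template of Lemma~\ref{ImprovementOfApproximation1}, adapting it to the two-sided geometry of the unstable profiles. Let $\PP$ denote the approximate solution of Definition~\ref{ApproxSol2}, and write $\Phi^\pm,\Psi^\pm$ as in \eqref{Barriers2}. By Lemma~\ref{RefinedLocalization} we now have the sharp localization $|a|+|b|\le C\eps$, which together with $|\alpha-\beta|<2\delta\eps^{1/2}$ and Lemma~\ref{PropOfApproxSol2} gives $|\PP-(P,Q)|\le C\eps\,\delta^2$ in $B_1$. The new feature compared to Case~1 is that the free boundary $\Gamma_u$ is not a graph over a single hyperplane: near $\{x\cdot\alpha=a\}$ we must linearize the \emph{transmission-type} relation $\Delta u=\tfrac12$ on $\{u<\tfrac12 w\text{-side}\}$ versus $\Delta u=1$ on the other side, so the correct linearized object is a two-sided harmonic-type problem, not a one-phase obstacle linearization.

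First I would set $\hu=\tfrac1\eps(u-P)$. Since $P$ itself satisfies $\Delta P=\tfrac12$ on $\{x\cdot\alpha<a\}$ and $\Delta P=\tfrac14$ on $\{x\cdot\alpha>a\}$ in the distributional sense (a jump of the normal derivative across $\ell_1=\{x\cdot\alpha=a\}$), and since $u$ solves the genuine system, I would use Lemma~\ref{TrappingByTranslations2} to show that away from an $O(\eps^{1/2})$-neighborhood of $\ell_1$ the function $u$ agrees with the relevant quadratic up to $O(\eps^{3/2})$, hence $\Delta\hu=0$ there on both sides. On the thin slab $\{|x\cdot\alpha-a|\le C\eps^{1/2}\}$ the trapping lemma gives $|\hu|\le C\delta^2$. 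Thus $\hu$ is, up to error $C\eps^{1/2}$, a function which is harmonic on each side of $\ell_1$, bounded by $C\delta^2$, and whose trace and flux jump across $\ell_1$ are controlled. Next I would replace $\hu$ by the exact solution $h$ of the corresponding two-sided boundary value problem in $B_{3/4}$ (harmonic on each side of $\{x_1'=a'\}$ for the appropriate shifted coordinate, matching $\hu$ on $\partial B_{3/4}$, with the prescribed jump in normal derivative across the hyperplane coming from the structure of $P$), so that $|\hu-h|\le C\eps^{1/2}$. Boundary/transmission regularity for $h$ — which is $C^{1,\alpha}$ up to and across the flat interface because the jump is in the lower-order data — yields a first-order Taylor expansion $h\approx \gamma_1(x_1'-a')+(x_1'-a')\sum_{k\ge2}\gamma_k x_k'$ on each side with dimensional bounds on the $\gamma_k$, giving $|h-(\text{linear part})|\le Cr^3$ in $B_r$. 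Then set $\alpha'=(\alpha+\eps\sum_{k\ge2}\gamma_k\xi^k)/|\cdot|$, $a'=a+\eps\gamma_1$, and symmetrically $\beta',b'$ from the analogous expansion of $w-Q$. Unwinding: $|u-\PababTwo(\alpha',\beta';a',b')|\le C\eps(r^3+\delta^2+\eps^{1/2})$ in $B_r$, and the same for $w$. Finally choose $\rho_1$ with $C\rho_1^3<\tfrac1{9}\rho_1^2$, then $\delta,\eps_d$ small with $C(\delta^2+\eps_d^{1/2})<\tfrac1{9}\rho_1^2$, to conclude $(u,w)\in\mathcal{S}(\alpha',\beta';\tfrac12\eps)$ in $B_{\rho_1}$ with $|\alpha'-\alpha|+|\beta'-\beta|\le C\eps$.

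The main obstacle I anticipate is the correct treatment of the interface across $\ell_1$. Unlike Case~1, where the linearized problem was a clean Dirichlet problem in a half-ball (the other phase contributing only through the obstacle constraint on the complement), here the leading profile $P$ has a genuine kink across $\{x\cdot\alpha=a\}$, so the comparison functions $\Phi^\pm$ must be chosen to straddle this kink, and the linearized function $h$ must be allowed to be harmonic on both sides with a prescribed (half-order) jump in its normal derivative. Getting the error estimate $|\hu-h|\le C\eps^{1/2}$ uniformly across the slab, and the $C^{1,\alpha}$ transmission expansion for $h$ with dimensional constants, is the delicate point; everything else is a routine adaptation of Section~4. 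A secondary technical point is verifying that the constructed $(\alpha',a')$, $(\beta',b')$ are consistent — i.e.\ that $0\in\Gamma_u\cap\Gamma_w$ is preserved and that $|u-P(\alpha',\beta';a',b')|$, not merely $|u-P(\alpha';a')|$ for a single phase, is what is being estimated — which is automatic once one notes that the second alternative of Definition~\ref{ApproxSol2} is inactive in $B_1$ when $|a-b|$ and $|\alpha-\beta|$ are both $O(\eps^{1/2})$ and the profiles are compared on the scale $\rho_1$.
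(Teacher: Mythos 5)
Your proposal follows the same high-level template as the paper — linearize $\hu=\tfrac1\eps(u-P)$, replace it by a harmonic function whose boundary Taylor expansion supplies the corrected directions and offsets, and conclude by choosing $\rho_1,\delta,\eps_d$ small — but the mechanism you propose for treating the interface (a two-sided transmission problem for $\hu$) is both unnecessary and not quite correct, and it obscures the one structural observation that makes the argument go through.

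Two intermediate claims are wrong. For $P=\tfrac12\min\{x\cdot\alpha-a,0\}^2+\tfrac14\max\{x\cdot\beta-b,0\}^2$ one has $\Delta P=\mathcal{X}_{\{x\cdot\alpha<a\}}+\tfrac12\mathcal{X}_{\{x\cdot\beta>b\}}$, so $\Delta P=1$ on the side to the left of both hyperplanes and $\tfrac12$ on the side to the right of both (with $\tfrac32$ or $0$ in the two intermediate wedges); the values $\tfrac12$ and $\tfrac14$ you assert are incorrect. More importantly, $P$ is $C^{1,1}$, so there is \emph{no} jump in $\partial_\nu P$ across $\ell_1$ — only the Hessian jumps. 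The transmission condition you want $h$ to satisfy therefore does not exist, and the across-the-interface $C^{1,\alpha}$ expansion you appeal to is unavailable.

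What you are missing is the paper's observation that decouples the two sides. By non-degeneracy (Lemma~\ref{NonDegeneracy}) together with the trapping of Lemma~\ref{TrappingByTranslations2}, one has $u>\tfrac12w$ \emph{and} $w=\tfrac12u$ simultaneously in $B_{7/8}\cap\{x_1'<-C\eps^{1/2}\}$. Hence $\Delta u=1$ exactly there, which matches $\Delta P=1$ because $|a|,|b|\le C\eps$ and $|\alpha-\beta|<2\delta\eps^{1/2}$ force the wedge (where $\Delta P\ne1$) into the strip $\{|x_1'|\le C\eps^{1/2}\}$. So $\hu$ is genuinely harmonic in the open half-ball, no transmission arises, and $w=\tfrac12u$ gives the $w$-estimate for free there. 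One then solves the ordinary Dirichlet problem for $h$ with $h=0$ on the flat boundary (where $|\hu|\le C(\delta^2+\eps^{1/2})$) and reads off $\alpha',a'$. The symmetric analysis for $\hat w=\tfrac1\eps(w-Q)$ on $\{x_1'>C\eps^{1/2}\}$ — where $w>\tfrac12u$ and $u=\tfrac12w$ — produces $\beta',b'$. The two half-ball problems are independent; no slab estimate is required. This is precisely the ``delicate point'' you anticipate, and without the exact relations $w=\tfrac12u$ (resp.\ $u=\tfrac12w$) the approach as you wrote it would stall there. Your final remark that alternative~(2) of Definition~\ref{ApproxSol2} is inactive in $B_1$ is also not correct in general; what holds (and what is actually used) is that it is confined to the thin strip.
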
 

The proof is similar to the proof of Lemma \ref{ImprovementOfApproximation1}. We omit some details. 

\begin{proof}
Let $(\Phi^{\pm},\Psi^{\pm})$ be the barriers as in \eqref{Barriers2}, corresponding to $\PQabab$ as in \eqref{Pabab2}.

Using $|u-P|\le\eps,|w-Q|\le\eps$ and  Lemma \ref{NonDegeneracy}, we have 
\begin{equation}\label{TouchingLeft}u>\frac{1}{2}w \text{ and } w=\frac{1}{2}u \text{ in $B_{7/8}\cap\{x_1'<-C\eps^{1/2}\},$}\end{equation}where $x_1'$ is the coordinate function in Notation \ref{RotatedBasis}.

Define $\hu=\frac{1}{\eps}(u-P)$, then $$\Delta\hu=0 \text{ in $B_{7/8}\cap\{x_1'<-C\eps^{1/2}\}.$}$$
Meanwhile,   Lemma \ref{TrappingByTranslations2} gives $$|\hu|\le C(\delta^2+\eps^{1/2}) \text{ along $B_{7/8}\cap\{x_1'=-C\eps^{1/2}\}.$}$$ 

Let $h$ be the solution to 
$$\begin{cases}
\Delta h=0 &\text{ in $B_{7/8}\cap\{x_1'<-C\eps^{1/2}\}$,}\\
h=\hu &\text{ in $\partial B_{7/8}\cap\{x_1'<-C\eps^{1/2}\}$,}\\
h=0 &\text{ in $B_{7/8}\cap\{x_1'=-C\eps^{1/2}\}$.}
\end{cases}$$Then we can use boundary regularity of $h$ to get 
$$|u-\frac{1}{2}(x_1'-a)^2-\eps(x_1'-a)(\gamma_1+\sum_{k\ge 2}\gamma_kx_k')|\le C\eps(r^3+\delta^2+\eps^{1/2})$$ in $B_{r}\cap\{x_1'<-C\eps^{1/2}\}$ for $r<1/2.$  Here $x_k'$ is the coordinate function in Notation \ref{RotatedBasis}.

If we define $$\alpha'=\frac{\alpha+\eps\sum_{k\ge2}\gamma_k\xi^k}{|\alpha+\eps\sum_{k\ge2}\gamma_k\xi^k|},$$ where $\xi^k$ is defined as in Notation \ref{RotatedBasis}, and $a'=a+\eps\gamma_1$, then $|\alpha'-\alpha|\le C\eps$ and $$|u-\frac{1}{2}(x\cdot\alpha'-a')^2|\le C\eps(r^3+\delta^2+\eps^{1/2})$$ in $B_{r}\cap\{x_1'<-C\eps^{1/2}\}$ for $r<1/2.$ 

Now with \eqref{TouchingLeft}, we have $$|w-\frac{1}{4}(x\cdot\alpha'-a')^2|\le C\eps(r^3+\delta^2+\eps^{1/2})$$ in $B_{r}\cap\{x_1'<-C\eps^{1/2}\}$ for $r<1/2.$ 

Similarly, we can find $\beta'$ and $b'$ with $|\beta'-\beta|\le C\eps$ such that 
$$|u-\frac{1}{4}(x\cdot\beta'-b')^2|\le C\eps(r^3+\delta^2+\eps^{1/2}),$$ and 
$$|w-\frac{1}{2}(x\cdot\beta'-b')^2|\le C\eps(r^3+\delta^2+\eps^{1/2})$$ in $B_{r}\cap\{x_1'>C\eps^{1/2}\}$ for $r<1/2.$ 

Combining these,  we have $$|u-P(\alpha',\beta')|+|w-Q(\alpha',\beta')|\le C\eps(r^3+\delta^2+\eps^{1/2}) $$ in $B_r$ for $r<1/2$. Recall that $P(\alpha',\beta')$ and $Q(\alpha',\beta')$ are defined  in \eqref{Pabab2}.

To conclude, we choose $\rho_1$ small such that $C\rho_1^3<\frac{1}{6}\rho_1^2$, $\delta$ small such that $C\delta^2<\frac{1}{6}\rho_1^2$, then $\eps_d$ small such that $C\eps_d^{1/2}<\frac{1}{2}\rho_1.$ Then the estimate above implies $$(u,w)\in\mathcal{S}(\alpha',\beta';\frac{1}{2}\eps) \text{ in $B_{\rho_1}$.}$$\end{proof} 

The next result is on the increase of angle:
\begin{lemma}\label{ImprovementOfAngle2}

Suppose $(u,w)\in\Sab$ in $B_1$ for some $\eps<\eps_d$ with $|\alpha-\beta|\in (\delta\eps^{1/2},2\delta\eps^{1/2}).$   

Then  there are $\alpha',\beta'\in\Sph$ with $|\alpha'-\alpha|+|\beta'-\beta|<C\eps$ such that $$(u,w)\in\mathcal{S}(\alpha',\beta';\eps/2) \text{ in $B_{\rho_2}$}$$ and $$|\alpha'-\beta'|>|\alpha-\beta|+20\eps.$$ 

Here $\eps_d$, $\delta$, $\rho_2$ and $C$ are dimensional. 
\end{lemma}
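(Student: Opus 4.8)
The plan is to follow the proof of Lemma~\ref{ImprovementOfAngle1} almost verbatim, with the roles of the two half-spaces $\{x_1'>0\}$ and $\{x_1'<0\}$ interchanged; this single change reverses the sign of the logarithmic correction in the expansion below, so that the opening angle $|\alpha-\beta|$ increases rather than decreases, which is the instability of the unstable half-space solutions made quantitative. As a first step I reduce to $a=b=0$. Since $0\in\Gamma_u\cap\Gamma_w$, Lemma~\ref{RefinedLocalization} gives $|a|+|b|\le C\eps$, so the point $\bar x=\frac{a+b}{2\alpha_1}e^1+\frac{a-b}{2\alpha_2}e^2$, which satisfies $\bar x\cdot\alpha=a$ and $\bar x\cdot\beta=b$, has $|\bar x|\le C\eps^{1/2}$ because $\alpha_2>\tfrac12\delta\eps^{1/2}$; for $\eps<\eps_d$ small this is $<\rho_2$. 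The translated pair $(u,w)(\cdot-\bar x)$ then lies in $\mathcal{S}(\alpha,\beta;0,0;C\eps)$ near $0$, so it is enough to treat $a=b=0$ and transfer the conclusion using $B_{2\rho_2}(\bar x)\supset B_{\rho_2}(0)$.

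Assume now $a=b=0$, so that $\alpha_2\in(\tfrac12\delta\eps^{1/2},\delta\eps^{1/2})$. Combining $|u-P|,|w-Q|\le\eps$ with Lemma~\ref{NonDegeneracy} and Lemma~\ref{TrappingByTranslations2}, one gets $u>\tfrac12w$ and $w=\tfrac12u$ in $B_{7/8}\cap\{x_1'<-C\eps^{1/2}\}$, where $x_1'=x\cdot\alpha$ as in Notation~\ref{RotatedBasis}; hence $\hu:=\tfrac1\eps(u-P)$ is harmonic in $B_{7/8}\cap\{x_1'<-C\eps^{1/2}\}$. Along $\{x_1'=-C\eps^{1/2}\}$, Lemma~\ref{TrappingByTranslations2} gives $|u-\Phi|\le C\eps^{3/2}$, and Definition~\ref{ApproxSol2} yields $\Phi-P=(2\alpha_2x_2)^2\mathcal{X}_{\{x_2<0\}}$ there; with $\eta:=4\alpha_2^2/\eps\in(\delta^2,4\delta^2)$ and $|x_2-x_2'|\le C\eps^{1/2}$,
$$|\hu-\eta(x_2')^2\mathcal{X}_{\{x_2'<0\}}|\le C\eps^{1/2}\quad\text{along }B_{7/8}\cap\{x_1'=-C\eps^{1/2}\}.$$
Replacing $\hu$ by the harmonic function with this boundary datum in $B_{3/4}\cap\{x_1'<-C\eps^{1/2}\}$ (error $C\eps^{1/2}$) and subtracting $\eta$ times the global profile of Proposition~\ref{AuxiliaryFunct} --- now composed with a reflection in $x_1'$ to sit in the lower half-space and a reflection in $x_2'$ to place its bump on $\{x_2'<0\}$ --- leaves a bounded harmonic function vanishing on $\{x_1'=-C\eps^{1/2}\}$, hence asymptotically linear. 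Collecting terms gives, for $r<\tfrac12$ in $B_r\cap\{x_1'<-C\eps^{1/2}\}$,
$$\Big|u-\tfrac12(x_1')^2-\eps\,x_1'\big(\gamma_1+\textstyle\sum_{k\ge2}\gamma_kx_k'-\eta A_2x_2'\log r\big)\Big|\le C\eps(\delta^2r^2+r^3+\eps^{1/2}),$$
with $A_2>0$ the dimensional constant of Proposition~\ref{AuxiliaryFunct}. The crucial feature is that the coefficient of the $x_2'\log r$ term has the sign opposite to the one in Lemma~\ref{ImprovementOfAngle1}; this is the combined effect of the extra $x_1'$-reflection and of $u$ sitting on the negative side of $\{x\cdot\alpha'=a'\}$.

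From the last display one reads off $a'=-\eps\gamma_1$ and
$$\alpha'=\frac{\alpha+\eps\sum_{k\ge2}\gamma_k\xi^k-\eps\eta A_2\log r\,\xi^2}{\big|\alpha+\eps\sum_{k\ge2}\gamma_k\xi^k-\eps\eta A_2\log r\,\xi^2\big|},$$
whence $|\alpha'-\alpha|\le C\eps$ and $|u-\tfrac12(x\cdot\alpha'-a')^2|\le C\eps(\delta^2r^2+r^3+\eps^{1/2})$ on $B_r\cap\{x_1'<-C\eps^{1/2}\}$; using $w=\tfrac12u$ there and a symmetric argument on $\{x_1'>C\eps^{1/2}\}$ produces $\beta',b'$ as well. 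Since $\log r<0$ and $\eta A_2>0$, the $\xi^2$-correction increases the $e^2$-component of $\alpha$ and decreases that of $\beta$. I first fix $\delta$ small (to absorb $\delta^2r^2$); then, using that $\eta\ge\tfrac12\delta^2$ is bounded below while $|\sum_{k\ge2}\gamma_k\xi^k|$ is bounded, I choose $\rho_2$ so small that $\eta A_2\log\rho_2<-1-|\sum_{k\ge2}\gamma_k\xi^k|$ and $C\rho_2^3<\tfrac16\rho_2^2$; finally $\eps_d$ small (to absorb $\eps^{1/2}$). The computation of Lemma~\ref{ImprovementOfAngle1}, with the sign reversed, then gives $|\alpha'-e^1|>|\alpha-e^1|+11\eps$ and $|\beta'-e^1|>|\beta-e^1|+11\eps$, so that $|\alpha'-\beta'|>|\alpha-\beta|+20\eps$; and $r=\rho_2$ in the displays for $u$ and $w$ yields $|u-P(\alpha',\beta')|+|w-Q(\alpha',\beta')|<\tfrac12\eps\rho_2^2$ in $B_{\rho_2}$, i.e.\ $(u,w)\in\mathcal{S}(\alpha',\beta';\eps/2)$ in $B_{\rho_2}$.

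The main obstacle is bookkeeping the sign of the $x_2'\log r$ term through the two reflections and through the matching of $\tfrac12(x\cdot\alpha)^2$ with $\tfrac12(x\cdot\alpha')^2$ on the negative side: this sign is exactly what turns the angle decrease of Lemma~\ref{ImprovementOfAngle1} into the angle increase claimed here. A secondary point needing care is confirming that the non-contact set of $u$ really is $\{x_1'<-C\eps^{1/2}\}$ up to the stated error, since this is what makes $\hu$ harmonic there and justifies the whole boundary-layer construction.
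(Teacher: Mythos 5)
Your proof follows the paper's strategy essentially verbatim: define $\hu=\frac{1}{\eps}(u-P)$, harmonic in $\{x_1'<-C\eps^{1/2}\}$, match the boundary layer with a reflected copy of the auxiliary function $H$ from Proposition~\ref{AuxiliaryFunct}, and read off the $x_2'\log r$-correction whose sign is now reversed, which drives the angle to increase. The identification of the sign reversal coming from the extra $x_1'$-reflection, the choice of $\rho_2$ so that the logarithmic gain dominates the $O(\eps)$ perturbations, and the final tuning of $\delta$, $\rho_2$, $\eps_d$ all match the paper.

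The one step that diverges from the paper, and where you should be careful, is the opening reduction to $a=b=0$. The paper does not translate: it uses Lemma~\ref{RefinedLocalization} to get $|a|,|b|\le C\eps$ and then absorbs the nonzero $a,b$ directly into the $O(\eps^{3/2})$ error in the boundary datum of $\hu$. Your translation by $\bar x$ is unnecessary here (that device was needed in Lemma~\ref{ImprovementOfAngle1} because there $|a-b|$ could be as large as $\sim\rho_0\delta\eps^{1/2}$, whereas now $|a|,|b|\le C\eps$ is already negligible at scale $\rho_2$), and as written it creates a small gap: after shifting, the origin is no longer on $\Gamma_u\cap\Gamma_w$, so the translated pair is not in the class $\mathcal{S}$ and you cannot literally invoke Lemma~\ref{TrappingByTranslations2} for it. This is easily fixed, either by applying the trapping lemma to the original pair and then translating its conclusion, or simply by dropping the translation and absorbing $a,b$ as the paper does, but you should say which. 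Similarly, the last inference from $|\alpha'-e^1|>|\alpha-e^1|+11\eps$ and $|\beta'-e^1|>|\beta-e^1|+11\eps$ to $|\alpha'-\beta'|>|\alpha-\beta|+20\eps$ is not a triangle-inequality step; as in the paper, one must compute $\alpha'-\beta'$ directly and observe that the $\xi^2$-corrections to $\alpha'$ and $\beta'$ have opposite signs, so their $e^2$-components spread apart. You do state this fact, so it is a matter of phrasing rather than a conceptual error.
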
 

The strategy is similar to the one in the proof of Lemma \ref{ImprovementOfAngle1}. We omit certain details. 

The key observation is that the difference $\PP-(P,Q)$ is a reflection of  what it was in the proof of Lemma \ref{ImprovementOfAngle1}. This drives the increase of the angle.

\begin{proof}
Let $\PQabab$ be as in \eqref{Pabab2}.
Let $\PP$ be the approximate solution  in Definition \ref{ApproxSol2}. Let $(\Phi^{\pm},\Psi^{\pm})$ be as in \eqref{Barriers2}. 

We again use the coordinate system introduced in Notation \ref{RotatedBasis}.

Define $\hu=\frac{1}{\eps}(u-P),$  then  $$\Delta\hu=0 \text{ in $B_{7/8}\cap\{x_1'<-C\eps^{1/2}\}.$}$$With Lemma \ref{TrappingByTranslations2}, we have $$\hu=\frac{1}{\eps}(\Phi-P)+O(\eps^{1/2})  \text{ along $B_{7/8}\cap\{x_1'=-C\eps^{1/2}\}.$}$$Now with $|a|,|b|\le C\eps$ from Lemma \ref{RefinedLocalization}, we have $$(\Phi-P)=4\alpha_2^2x_2^2\mathcal{X}_{\{x_2<0\}}+O(\eps^{3/2})\text{ along $B_{7/8}\cap\{x_1'=-C\eps^{1/2}\}.$}$$

Let $h$  be the solution to 
$$\begin{cases}
\Delta h=0 &\text{ in $B_{3/4}\cap\{x_1'<-C\eps^{1/2}\}$,}\\
h=\hu &\text{ in $\partial B_{3/4}\cap\{x_1'<-C\eps^{1/2}\}$,}\\
h=4\frac{\alpha_2^2}{\eps}(x_2')^2\mathcal{X}_{\{x_2'<0\}} &\text{ in $B_{3/4}\cap\{x_1'=-C\eps^{1/2}\}$.}
\end{cases}$$Then $$|\hu-h|\le C\eps^{1/2} \text{ in  $B_{3/4}\cap\{x_1'<-C\eps^{1/2}\}$.}$$

Note that $h$ is the reflection along $\{x_1'=0\}$ of the harmonic function in \eqref{ThatHarmonic}, we can use the same argument to get  
$$|u-\frac{1}{2}(x_1')^2-\eps(x_1')(\gamma_1+\sum_{k\ge2}\gamma_kx_k'-\frac{4\alpha_2^2}{\eps} A_2x_2'\log r)|\le C\eps(\delta^2 r^2+r^3+\eps^{1/2})$$
inside $B_r\cap\{x_1'<-C\eps^{1/2}\}$ for $r<1/2,$ where $A_2$ is the positive dimensional constant in Proposition \ref{AuxiliaryFunct}.

Note that the sign in front of $A_2$ is flipped due to the reflection. 

If we define $$\alpha'=\frac{\alpha+\eps\sum_{k\ge2}\gamma_k\xi^k-4\alpha_2^2A_2\log r \xi^2}{|\alpha+\eps\sum_{k\ge2}\gamma_k\xi^k-4\alpha_2^2A_2\log r \xi^2|}$$ and $a'=a+\eps\gamma_1$, then this implies 
$$|u-\frac{1}{2}(x\cdot\alpha'-a')^2|\le C\eps(\delta^2 r^2+r^3+\eps^{1/2})$$ inside $B_r\cap\{x_1'<-C\eps^{1/2}\}$.

Similar strategy applied to the region $\{x_1'>C\eps^{1/2}\}$ gives $$|u-\frac{1}{4}(x\cdot\beta'-b')^2|\le C\eps(\delta^2 r^2+r^3+\eps^{1/2})$$ inside $B_r\cap\{x\cdot\beta>C\eps^{1/2}\}$, where $\beta'$ is of the form 
$$\beta'=\frac{\beta+\eps\sum_{k\ge2}\gamma'_k\xi^k+4\alpha_2^2A_2\log r \xi^2}{|\beta+\eps\sum_{k\ge2}\gamma_k'\xi^k+4\alpha_2^2A_2\log r \xi^2|}.$$Using similar estimates as in the proof of Lemma \ref{ImprovementOfApproximation1}, we have 
$$\alpha'-\beta'=(2\alpha_2-8\alpha_2^2A_2\log r)e^2+\eps(\sum_{k\ge 3}(\gamma_k+\gamma_k')e^k)+O(\eps^{3/2}).$$

By choosing $\rho_2$ small, depending only on the dimensional constant $\delta$, we can ensure $$\delta^2A_2\log \rho_2\le-( |\sum_{k\ge 3}(\gamma_k+\gamma_k')e^k|+20).$$ As a result, $$|\alpha'-\beta'|\ge |2\alpha_2+2\eps|=|\alpha-\beta|+20\eps$$ if $\eps$ is small. 

This is the desired increase in angle. 

To get $(u,w)\in\mathcal{S}(\alpha',\beta';\frac{1}{2}\eps)$ in $B_{\rho_2}$, we proceed exactly like in the proof of Lemma \ref{ImprovementOfAngle1}.\end{proof}

This completes the proof of Proposition \ref{IOF2}.  In Section \ref{SingularPartSection}, this proposition is used to prove the regularity of $\SOne$ as in Theorem \ref{MainResult3}.

%%%%%%%%%%%%%%%%%%%%%%%%%%%%%%%%%%%%%%%%%%%%%%%%%%%%%%%%%%%%%%%%%%%%%%%%%%%%%%%%%%%%%%%%%%%%%%%%%%%%%%%%%%%%%%%%%%%%%%%%%%%%%%%%%%%%%%%%%%%%%%%%%%%%%%
\section{Free boundary regularity near $\Reg$}\label{RegularPartSection}
Starting from this section, we return to the $3$-membrane problem in Definition \ref{Solution}, and give our proofs for the three theorems in Introduction. 

%In this section, we prove several propositions about free boundaries near regular points. Together, these propositions give Theorem \ref{MainResult2}.  

Theorem \ref{MainResult2} is a direct consequence of the following point-wise localization of the free boundary:
\begin{lemma}\label{LongIteration1}
Suppose that $\Trip$ is a solution to the $3$-membrane problem in $B_1$ with $0\in\Gamma_1$.

If we have, for some $e\in\Sph$ and  $\eps<\eps_d$, $$|u_1-\frac{1}{2}\max\{x\cdot e-a,0\}^2|\le \eps \text{ and }|u_3+\frac{1}{2}\max\{x\cdot e-a,0\}^2|\le \eps \text{ in $B_1$,}$$ then, up to a rotation, $$\Gamma_1\cap B_r\subset\{|x_1|<Cr(-\log r)^{-1}\}$$ for all $r<1/2.$

Here $\eps_d$ and $C$ are dimensional constants. 
\end{lemma}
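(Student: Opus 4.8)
The plan is to translate the hypothesis on $(u_1,u_3)$ into the reformulated system of obstacle problems and then run the dichotomy of Proposition~\ref{IOF1} as an infinite iteration. First I would set $u=u_1$, $w=u_1-u_3$ (equivalently, $w=-u_3+(u_1+u_3)=2u_1+u_2$ in the normalization $\sum u_k=0$), so that $(u,w)\in\mathcal{A}(B_1)$ by Remark~\ref{EquivalenceBetweenProblems}, with $0\in\Gamma_u$. The hypothesis gives $|u-P(e;a)|\le C\eps$ and $|w-Q(e;a)|\le C\eps$ in $B_1$, i.e. $(u,w)\in\mathcal{R}(e,e;a,a;C\eps)$ in $B_1$ after adjusting constants; note that here $\alpha=\beta=e$, so $|\alpha-\beta|=0<2\delta\eps^{1/2}$ and we start deep inside the ``close hyperplanes'' regime where the angle never has to be forced down. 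Thus at every stage of the iteration only the second alternative of Proposition~\ref{IOF1} can occur (the first alternative, $|a-b|>\rho_0|\alpha-\beta|+M\rho_0\eps^{3/4}$, would if it ever triggered give immediately $C^{1,\gamma}$-regularity of $\Gamma_u$ in a fixed ball, which is stronger than the claimed $C^{1,\log}$ bound and localizes the free boundary even more sharply, so that branch is harmless).

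Next I would iterate. Applying Proposition~\ref{IOF1} repeatedly at scales $\rho_1$, one produces directions $\alpha_j,\beta_j\in\Sph$ and scales $r_j=\rho_1^j$ with $(u,w)\in\mathcal{R}(\alpha_j,\beta_j;2^{-j}\eps)$ in $B_{r_j}$ (after rescaling by $r_j^{-2}$), together with $|\alpha_{j+1}-\alpha_j|+|\beta_{j+1}-\beta_j|\le C2^{-j}\eps$. Hence $\alpha_j\to\alpha_\infty$ and the rate $|\alpha_j-\alpha_\infty|\le C2^{-j}\eps$ is geometric. The improvement $\eps\mapsto\eps/2$ over the fixed ratio $\rho_1$ gives, by the usual Campanato-type summation, the modulus: at scale $r$ one has $|u-P(\alpha_r;a_r)|\le C r^2\,\omega(r)$ where $\omega(r)$ decays like a power of $r$ — but what actually controls the geometry of $\Gamma_u$ is not the decay of $\eps$ but the accumulated tilting of the approximating half-space profile. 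Since the perturbation of the direction at stage $j$ is $O(2^{-j}\eps)$ (summable) while the \emph{position} parameter $a_j$ moves by $O(2^{-j}\eps\, r_j)$ relative to scale $r_j$, the free boundary at scale $r$ lies within a slab of half-width comparable to $r$ times the tail $\sum_{j\ge \log r/\log\rho_1} (\text{error terms})$. The logarithmic loss enters precisely because, in the critical regime $|\alpha-\beta|\approx\delta\eps^{1/2}$, Lemma~\ref{ImprovementOfAngle1} costs an extra passage to the much smaller scale $\rho_2$ and the harmonic auxiliary function $H$ of Proposition~\ref{AuxiliaryFunct} contributes a $\log r$ factor; summing these $\log$-losses over the $O(\eps^{-1/2})$ angle-reduction steps, and then over dyadic scales, yields the bound $Cr(-\log r)^{-1}$ rather than $Cr^{1+\gamma}$.

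Then I would convert the analytic estimates into the stated inclusion. At scale $r$, write $P(\alpha_r;a_r)=\tfrac12\max\{x\cdot\alpha_r-a_r,0\}^2$; the non-degeneracy from Proposition~\ref{QuadGrowthRef} (quadratic growth $\sup_{B_\rho}(u-\tfrac12w)\ge\frac{1}{4d}\rho^2$) forces any free boundary point of $\Gamma_u$ in $B_r$ to sit where $P(\alpha_r;a_r)$ is itself $O(r^2\omega(r))$, i.e. within distance $O(r\sqrt{\omega(r)})$ of the hyperplane $\{x\cdot\alpha_r=a_r\}$; combined with the control on how $\alpha_r,a_r$ move as $r\downarrow0$ this gives $\Gamma_u\cap B_r\subset\{|x\cdot\alpha_\infty-a_\infty|<Cr(-\log r)^{-1}\}$, and after the rotation sending $\alpha_\infty$ to $e^1$ (and translating so $a_\infty=0$, which is consistent since $0\in\Gamma_u$) this is exactly the claim for $\Gamma_1=\Gamma_u$.

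\textbf{Main obstacle.} The technical heart is bookkeeping the iteration across the \emph{two different geometric scales} $\rho_1$ and $\rho_2$: the estimate $\eps\mapsto\eps/2$ is only available at scale $\rho_1$, whereas whenever $|\alpha-\beta|$ climbs back to the critical threshold $\delta\eps^{1/2}$ one is forced to drop to scale $\rho_2\ll\rho_1$ (Lemma~\ref{ImprovementOfAngle1}), where the gain degrades to $\eps\mapsto\tfrac{1}{32}\eps$ and, crucially, the direction is tilted by an amount amplified by $|\log\rho_2|$ because of the $\log$-singular auxiliary function $H$. One must check that the number of such ``bad'' steps between consecutive ``good'' scales is controlled (at most $O(\eps^{-1/2})$ total, as noted after Proposition~\ref{IOF1}), that the angle genuinely stays subcritical thereafter, and that the resulting telescoping sum of tilts — each $O(\eps|\log\rho_2|)$ times geometrically decaying $\eps$ — produces a modulus of continuity for the limiting normal that is exactly $(-\log r)^{-1}$ and no worse. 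Getting this summation sharp, rather than merely $C^1$ or $C^{1,\gamma}$, is the delicate point; everything else is a standard iteration-of-flatness argument once the dichotomy of Proposition~\ref{IOF1} is in hand.
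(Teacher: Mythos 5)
Your overall scaffolding is right — pass to the $(u,w)$ system, iterate the dichotomy of Proposition~\ref{IOF1}, and track the drift of the approximating half-space direction — but the central mechanism producing the $(-\log r)^{-1}$ modulus is not correctly captured, and there is a small error in the set-up.

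On the set-up: the correct substitution from Remark~\ref{EquivalenceBetweenProblems} is $u=u_1$, $w=-u_3$ (so that $(u,-u+w,-w)=(u_1,u_2,u_3)$). Your $w=u_1-u_3$ would be approximated by $\max\{x\cdot e-a,0\}^2$ rather than $\tfrac12\max\{x\cdot e-a,0\}^2$, so $(u,w)$ would not sit in $\mathcal{R}(e,e;a,a;C\eps)$ with the normalization of \eqref{Pabab1}. This is likely a slip, but it needs fixing.

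The genuine gap is in the iteration bookkeeping. You write that at step $j$ one has $(u,w)\in\mathcal{R}(\alpha_j,\beta_j;2^{-j}\eps)$ at scale $\rho_1^j$ with $|\alpha_{j+1}-\alpha_j|\lesssim 2^{-j}\eps$, i.e.\ geometric decay of both the flatness and the tilt. If that were true, the telescoped drift $|\alpha_j-\alpha_\infty|$ would be geometrically summable and $\Gamma_u$ would be $C^{1,\alpha}$, not $C^{1,\log}$ — so something has to give. What actually happens in the paper's iteration is that the improvement $\eps\mapsto\eps/2$ at scale $\rho_1$ is only available when $|\alpha_k-\beta_k|<\delta\eps_k^{1/2}$ (the second case). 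When the angle reaches the critical level, Lemma~\ref{ImprovementOfAngle1} is applied at the smaller scale $\rho_2$ and produces a much weaker gain: one only obtains $\eps_{k+1}\le\eps_k-C\eps_k^{3/2}$ (and, by \eqref{remark2}, this is essentially sharp, $\eps_{k+1}\ge\eps_k-C\eps_k^{3/2}$). The key observation — which your argument never states — is that the single inequality $\eps_{k+1}\le\eps_k-C\eps_k^{3/2}$ holds in both cases, and it implies $\eps_k\le C/k^2$. Combined with $|\alpha_{k+1}-\alpha_k|\le C\eps_k$, this gives $|\alpha_k-\alpha_\infty|\le C/k$, and since $k\gtrsim\log(1/r)$ at scale $r$, the localization $\Gamma_u\cap B_r\subset\{|x\cdot\alpha_\infty|\le Cr(-\log r)^{-1}\}$ follows. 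Your explanation — summing $|\log\rho_2|$ factors over $O(\eps^{-1/2})$ angle-reduction steps and then over dyadic scales — is not a derivation and gives the wrong dependence. The role of the $\log r$ term in Proposition~\ref{AuxiliaryFunct} is indirect: it is the reason the angle improves only by $O(\eps)$ per step rather than by a multiplicative factor, which is then what underlies $\eps_{k+1}\approx\eps_k-C\eps_k^{3/2}$. You should also address the terminating branch (the paper's Step~3): once the iteration lands in alternative~(1), the localization at the finitely many remaining scales $r<\rho_0 r_k$ still needs to be produced from the $C^{1,\alpha}$-estimate and the bound $k\le C\log_{\rho_2} r$.

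Finally, the claim that alternative (1) ``never triggers since $\alpha=\beta$ initially'' is too strong: after the first step $\alpha_1,\beta_1$ are no longer equal, and the iteration can and does terminate in alternative~(1). Your parenthetical hedges against this correctly, but then the main body of the argument silently assumes it does not happen; both branches must be carried through to a conclusion as in the paper's Steps~2 and~3.
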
 

\begin{proof}
Define $u=u_1$ and $w=-u_3$. 

Due to the equivalence of the $3$-membrane problem and the system of obstacle problems, as in Remark \ref{EquivalenceBetweenProblems}, it suffices to prove that, up to a rotation, 
\begin{equation}\label{62}\Gamma_u\cap B_r\subset\{|x_1|<C r(-\log r)^{-1}\}\end{equation} for all $r<1/2.$ 

We prove this with an iterative scheme.

\textit{Step 1: Description of the iteration scheme.} 

As the starting point, we define $$u_0=u, w_0=w,$$ $$\alpha_0=\beta_0=e, a_0=b_0=a,$$ $$\eps_0=\eps \text{ and }r_0=1.$$ Then we have $$(u_0,w_0)\in\mathcal{R}(\alpha_0,\beta_0;a_0,b_0;\eps_0) \text{ in $B_1$.}$$ The class $\mathcal{R}$ is defined in Definition \ref{RegPointSol}. 

Suppose that we have completed the $k$th step in this iteration, that is, we have found 
$$(u_k,w_k)\in\mathcal{R}(\alpha_k,\beta_k;a_k,b_k;\eps_k) \text{ in $B_1$}$$ for some $\eps_k<\eps_d$ and $|\alpha_k-\beta_k|<2\delta\eps_k^{1/2}$, where $\eps_d$ and $\delta$ are the constants in Proposition \ref{IOF1}, then we proceed to the $(k+1)$th step as follows.

We consider three cases. 

\textit{The first case} is when $|a_0-b_0|>\rho_0|\alpha_k-\beta_k|+M\rho_0\eps_k^{3/4}$, that is, when the parameters fall into alternative 1 as in Proposition \ref{IOF1}. In this case, we terminate the iteration scheme. 

\textit{The second case} is when $|a_0-b_0|\le\rho_0|\alpha_k-\beta_k|+M\rho_0\eps_k^{3/4}$ and $|\alpha_k-\beta_k|<\delta\eps_k^{1/2}.$ In this case, we apply Lemma \ref{ImprovementOfApproximation1} to get $$(u_k,w_k)\in\mathcal{R}(\alpha',\beta';a',b';\frac{1}{2}\eps_k) \text{ in $B_{\rho_1}$}$$ for some $|\alpha'-\alpha_k|+|\beta'-\beta_k|+|a'-a_k|+|b'-b_k|\le C\eps_k$.  

Define $$u_{k+1}(x)=\frac{1}{\rho_1^2}u_k(\rho_1 x), w_{k+1}(x)=\frac{1}{\rho_1^2}w_k(\rho_1 x),$$
$$\alpha_{k+1}=\alpha', \beta_{k+1}=\beta', a_{k+1}=a', b_{k+1}=b', $$
$$\eps_{k+1}=\frac{1}{2}\eps_k \text{ and } r_{k+1}=\rho_1r_k.$$ Then 
$$(u_{k+1},w_{k+1})\in\mathcal{R}(\alpha_{k+1},\beta_{k+1};a_{k+1},b_{k+1};\eps_{k+1}) \text{ in $B_1$.}$$Note that in this case, we have \begin{align*}|\alpha_{k+1}-\beta_{k+1}|&\le|\alpha_{k}-\beta_k|+C\eps_k\\&<\delta\eps_k^{1/2}+C\eps_k\\&\le\sqrt{2}\delta\eps_{k+1}^{1/2}+C\eps_{k+1}.\end{align*} This implies $|\alpha_{k+1}-\beta_{k+1}|<2\delta\eps_{k+1}^{1/2}$ if $\eps_d$ is small. 

This completes the $(k+1)$th step in the second case.

\textit{The third case} is when $|a_0-b_0|\le\rho_0|\alpha_k-\beta_k|+M\rho_0\eps_k^{3/4}$ and $|\alpha_k-\beta_k|\ge\delta\eps_k^{1/2}.$ In this case, define $$\tilde{\eps}=(\frac{|\alpha_k-\beta_k|}{2\delta})^2.$$ Then \begin{equation}\label{61}\frac{1}{4}\eps_{k}\le \tilde{\eps}\le\eps_k.\end{equation} Consequently,  we have $$(u_k,w_k)\in\mathcal{R}(\alpha_k,\beta_k;a_k,b_k;4\tilde{\eps}) \text{ in $B_1.$}$$ 

By definition of $\tilde{\eps}$, we have $|\alpha_k-\beta_k|=\delta(4\tilde{\eps})^{1/2},$ thus  Lemma \ref{ImprovementOfAngle1} gives $$(u_k,w_k)\in\mathcal{R}(\alpha',\beta';a',b';\frac{1}{8}\tilde{\eps}) \text{ in $B_{\rho_2}$}$$ for some $|\alpha'-\alpha_k|+|\beta'-\beta_k|+|a'-a_k|+|b'-b_k|\le C\tilde{\eps}$ and $$|\alpha'-\beta'|\le|\alpha_k-\beta_k|-4\tilde{\eps}.$$

Define $$\eps_{k+1}=(\frac{|\alpha'-\beta'|}{2\delta})^2,$$then $$\eps_{k+1}\le (\frac{|\alpha_k-\beta_k|-4\tilde{\eps}}{2\delta})^2=(\tilde{\eps}^{1/2}-\frac{2}{\delta}\tilde{\eps})^2\le \eps_{k}-C\eps_{k}^{3/2}.$$ For the last comparison, we used \eqref{61}. 

A similar comparison, using $|\alpha'-\alpha_k|+|\beta'-\beta_k|\le C\tilde{\eps}$, implies \begin{equation}\label{remark2}\eps_{k+1}\ge \eps_k-C\eps_k^{3/2}. \end{equation}In particular, $\eps_{k+1}\ge\frac{1}{8}\tilde{\eps}$ if $\eps_d$ is small. Thus $$(u_k,w_k)\in\mathcal{R}(\alpha',\beta';a',b';\eps_{k+1}) \text{ in $B_{\rho_2}$}.$$ The $(k+1)$th step is completed in this case by defining $$u_{k+1}(x)=\frac{1}{\rho_2^2}u_k(\rho_2 x), w_{k+1}(x)=\frac{1}{\rho_2^2}w_k(\rho_2 x),$$
$$\alpha_{k+1}=\alpha', \beta_{k+1}=\beta', a_{k+1}=a', b_{k+1}=b' \text{ and } r_{k+1}=\rho_2r_k.$$

Note that in this case, \begin{equation}\label{remark1}|\alpha_{k+1}-\beta_{k+1}|=2\delta\eps_{k+1}^{1/2}.\end{equation} Consequently, either the scheme terminates in the next step, or we again fall into the third case.

This completes the description of the iteration scheme. 

If  we always end up in the second or the third case, then this scheme continues indefinitely. If at some step, the parameters fall into the first case, the scheme terminates within finite steps.

\textit{Step 2: Proof of \eqref{62} when the scheme continues indefinitely.}
 
  In this case, we have $\eps_{k+1}\le \eps_k-C\eps_k^{3/2}$ for all $k$, which implies \begin{equation}\label{63}\eps_k\le C\frac{1}{k^2}.\end{equation}Together with  $|\alpha_{k+1}-\alpha_k|\le C\eps_{k}$, we have $\alpha_k\to \alpha$ for some $\alpha\in\Sph$ with \begin{equation}\label{65}|\alpha_k-\alpha|\le C/k.\end{equation}

  For each positive $r\in(0,1/2)$, find the integer $k$ such that $$r_{k+1}\le r<r_k.$$From our construction, this implies \begin{equation}\label{64}r_k\le\frac{1}{\rho_2}r\text{ and }k\ge \log_{\rho_1}r.\end{equation}
  
From $(u,w)\in\mathcal{R}(\alpha_k,\beta_k;\eps_k) \text{ in $B_{r_k}$,}$ we have  $$\Gamma_u\cap B_r\subset \{|x\cdot\alpha_k|\le C\eps_k^{1/2}r_k\}.$$Combining this with \eqref{63}, \eqref{65} and \eqref{64}, we have   $$\Gamma_u\cap B_r\subset \{|x\cdot\alpha|\le Cr(-\log r)^{-1}\}.$$

 \textit{Step 3: Proof of \eqref{62} when the scheme terminates within finite steps.}
 
 Suppose the iteration scheme terminates at step $k$, then  the first alternative in Proposition \ref{IOF1} implies that, up to a rotation, $$\Gamma_{u_k}\cap B_{\rho_0}\subset\{|x_1|\le C\eps_{k}|x'|^{1+\alpha}\},$$ where $x'$ denotes the coordinates in the directions perpendicular to $e^1.$ Consequently, $$\Gamma_u\cap B_{\rho_0r_k}\subset\{|x_1|\le C\frac{\eps_k}{r_k^\alpha}|x'|^{1+\alpha}\}.$$
 
 For $r<\rho_0r_k$, we have $k\le C\log_{\rho_2}r$, this implies  $$\Gamma_u\cap B_{r}\subset\{|x_1|\le Cr(-\log r)^{-2}\}.$$
 
 For $r\in[\rho_0r_k,1/2)$, we can apply the same argument as in Step 2 to get the desired estimate.
 \end{proof}

\begin{remark}\label{Optimality}
In general, $C^{1,\log}$-regularity of the free boundaries is optimal at points in $\Gamma_1\cap\Gamma_2.$ 

Suppose $0\in\Gamma_1\cap\Gamma_2$, then we are always in the second or the third case as in the proof of Lemma \ref{LongIteration1}. If we are ever in the third case at one step, then \eqref{remark1} implies that we are always in the third case for later steps.  From here, \eqref{remark2} and \eqref{remark1} imply that, up to a rotation, $$c\frac{1}{k}\le|\alpha_k-e^1|\le C\frac{1}{k}.$$

As a result, for all small $r$, we can  find $x\in\Gamma_1\cap B_r$ such that $$|x_1|\ge cr(-\log r)^{-1}.$$ The free boundary $\Gamma_1$ is not better than $C^{1,\log}$ at $0$. 

In the following, we show that this is actually the \textit{generic} behavior at points in $\Reg.$
\end{remark} 

We show this generic behavior in $\R^2$. In general dimensions, the argument is similar. 

To state the generic condition, we introduce two parameters for functions defined the the sphere. For a continuous function $f:\mathbb{S}^1\to\R$, let $h$ be the solution to 
$$\begin{cases}
\Delta h=0 &\text{ in $B_1\cap\{x_1>0\}$,}\\ h=f &\text{ on $\partial B_1\cap\{x_1>0\}$,}\\ h=0 &\text{ on $B_1\cap \{x_1=0\}.$}
\end{cases}$$Define $$\gamma_1(f):=\frac{\partial}{\partial x_1}h(0), \text{ and }\gamma_2(f):=\frac{\partial^2}{\partial x_1\partial x_2}h(0).$$ Then we have the following:

\begin{proposition}\label{GenericOptimal}
Let $\varphi,\psi:\mathbb{S}^1\to\R$ be two continuous functions with $|\varphi|+|\psi|\le 1,$ $$\varphi\le\psi\le2\varphi\text{ on $\mathbb{S}^1\cap\{x_1\le 0\},$}$$  and $$\gamma_2(\varphi)\neq\gamma_2(\psi).$$

Suppose $(u_1,u_2,u_3)$ solves the $3$-membrane problem in $B_1$ with $$u_1=\frac{1}{2}\max\{x_1,0\}^2+\eps\varphi \text{ and } u_3=-\frac{1}{2}\max\{x_1,0\}^2-\eps\psi\text{ along $\partial B_1$.}$$ Then there are small positive constants $\eps_0$ and $r_0$, depending only on $|\gamma_2(\varphi)-\gamma_2(\psi)|$, such that for $\eps<\eps_0$, we have the following alternatives:

(1) If $|\gamma_1(\varphi)-\gamma_1(\psi)|\ge\frac{1}{2}r_0|\gamma_2(\varphi)-\gamma_2(\psi)|$, then $$\Gamma_1\cap\Gamma_2\cap B_{\frac{1}{4}r_0}=\emptyset.$$

(2) If $|\gamma_1(\varphi)-\gamma_1(\psi)|<\frac{1}{2}r_0|\gamma_2(\varphi)-\gamma_2(\psi)|$, then
$$\Gamma_1\cap\Gamma_2\cap B_{r_0}\neq\emptyset.$$
 Moreover,  the free boundaries $\Gamma_1$ and $\Gamma_2$ are no better than $C^{1,\log}$ at any points in $\Gamma_1\cap\Gamma_2\cap B_{r_0}.$
\end{proposition}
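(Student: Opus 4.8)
The plan is to pass to the obstacle system, linearize around the stable half‑space profile, read off the first‑order behaviour of the two free boundaries from the harmonic extensions defining $\gamma_1,\gamma_2$, and then feed this into the improvement‑of‑flatness iteration of Proposition \ref{IOF1}.

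\textbf{Reduction and flatness.} Set $(u,w)=(u_1,-u_3)$, so that $(u,w)\in\mathcal A(B_1)$ (Remark \ref{EquivalenceBetweenProblems}), $\Gamma_1=\Gamma_u=\partial\{2u>w\}$, $\Gamma_2=\Gamma_w=\partial\{2w>u\}$, $u_2=w-u$, and the data becomes $u=P_0+\eps\varphi$, $w=P_0+\eps\psi$ on $\partial B_1$, where $P_0=\tfrac12\max\{x_1,0\}^2$. The conditions $\varphi\le\psi\le2\varphi$ on $\mathbb{S}^1\cap\{x_1\le0\}$ make the data admissible and make the barriers $(1\pm C\eps)P_0(\cdot\pm C\eps e^1)\pm C\eps$ compare correctly with it on the part where $P_0$ vanishes; since $(P_0,P_0)\in\mathcal A(\R^d)$, applying Lemmas \ref{FirstComparison} and \ref{SecondComparison} gives $|u-P_0|+|w-P_0|\le C\eps$ in $B_{1/2}$. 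By Lemma \ref{NonDegeneracy}, $\{u=0\}=\{w=0\}\supseteq\{x_1\le -C\eps^{1/2}\}$ and $\Gamma_1,\Gamma_2\subset\{|x_1|\le C\eps^{1/2}\}$, and a short non‑degeneracy bootstrap sharpens this to $\Gamma_1\cap\Gamma_2\subset\{|x_1|\le C\eps\}$. Finally, at any $x_0\in\Gamma_1\cap\Gamma_2\cap B_{1/2}$ one has $u_1(x_0)=u_2(x_0)=u_3(x_0)=0$; since, after recentering at $x_0$, the triple is $C\eps$‑close to the $2$‑homogeneous stable half‑space solution on a ball of size $1/4$, monotonicity of the Weiss energy \eqref{WeissEnergy} forces the Weiss density at $x_0$ to be $\le W_0+C\eps$, hence (by the $2$D classification and \eqref{FirstEnergy}) exactly $W_0$; so $x_0\in\mathbf{Reg}$, the two free boundaries coincide near $x_0$ (Theorem \ref{MainResult2}), and the iteration scheme of Lemma \ref{LongIteration1} — which, since $a_0=b_0$, always proceeds via its ``second'' or ``third'' case — may be run centred at $x_0$.

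\textbf{Linearization.} Put $\hat u=\eps^{-1}(u-P_0)$, $\hat w=\eps^{-1}(w-P_0)$. Then $\hat u$ is harmonic in $\{u_1>u_2\}\supset\{x_1>C\eps^{1/2}\}$, equals $\varphi$ on $\partial B_1$, and is trapped between translates of $P_0$ (Lemma \ref{TrappingByTranslations}), so by compactness $\hat u\to h_\varphi$, $\hat w\to h_\psi$ uniformly on $\overline{B_{1/2}}\cap\{x_1\ge0\}$, where $h_f$ is the harmonic function in $B_1\cap\{x_1>0\}$ with $h_f=f$ on $\partial B_1\cap\{x_1>0\}$ and $h_f=0$ on $\{x_1=0\}$ — exactly the function in the definition of $\gamma_1,\gamma_2$. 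Hence $u_2=w-u=\eps h_{\psi-\varphi}+o(\eps)$ in the non‑contact region, and the normal derivative of $-u_2$ on $\{x_1=0\}$ equals $\eps\,\nu(x_2)$ with $\nu(x_2)=\partial_{x_1}h_{\varphi-\psi}(0,x_2)=(\gamma_2(\varphi)-\gamma_2(\psi))(x_2-t)+O(x_2^2)$ and $t=\tfrac{\gamma_1(\psi)-\gamma_1(\varphi)}{\gamma_2(\varphi)-\gamma_2(\psi)}$; to leading order $\Gamma_1$ lies along $\{x_1=-\eps\mu_u(x_2)\}$ and $\Gamma_2$ along $\{x_1=-\eps\mu_w(x_2)\}$ with $\mu_u-\mu_w=3\nu+O(x_2^2)$. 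Because $|\varphi|+|\psi|\le1$ forces $\|h_\cdot\|_{C^3}\le C$ universally, all $O(\cdot)$ constants are absolute, so one may fix $r_0$ small, depending only on $|\gamma_2(\varphi)-\gamma_2(\psi)|$, so that $\nu$ has a single simple zero near $t$ and $|\nu(x_2)|\ge c|\gamma_2(\varphi)-\gamma_2(\psi)|\,|x_2-t|$ on $|x_2|\le 2r_0$. Running the first step of the iteration at a point $q=(q_1,q_2)$ with $q_1=O(\eps)$ and using the above, the new parameters satisfy $|\alpha_1-\beta_1|\asymp\eps|\gamma_2(\varphi)-\gamma_2(\psi)|$ and $|a_1-b_1|\asymp \eps|\gamma_2(\varphi)-\gamma_2(\psi)|\,|q_2-t|$.

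\textbf{The dichotomy.} In case (1), $|t|\ge r_0/2$, so for $q\in\Gamma_1\cap\Gamma_2\cap B_{r_0/4}$ one has $|q_2-t|\ge r_0/4$ and hence $|a_1-b_1|\gtrsim \tfrac{r_0}{\rho_0}|\alpha_1-\beta_1|\gg\rho_0|\alpha_1-\beta_1|+M\rho_0\eps_1^{3/4}$ (using $\eps_1=\tfrac12\eps$ and $\eps\ll1$); this is exactly the condition placing the iteration in alternative (1) of Proposition \ref{IOF1}, which yields $\Gamma_1\cap\Gamma_2=\emptyset$ near $q$, contradicting $q\in\Gamma_1\cap\Gamma_2$. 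Hence $\Gamma_1\cap\Gamma_2\cap B_{r_0/4}=\emptyset$. In case (2), $|t|<r_0/2$, so the two leading‑order curves $\{x_1=-\eps\mu_u(x_2)\}$, $\{x_1=-\eps\mu_w(x_2)\}$ cross near $(0,t)\in B_{r_0}$; combining this with the fact that $-u_2=u-w$ changes sign across $\{x_2\approx t\}$ (its sign being that of $-\nu(x_2)x_1$ in the bulk), a degree/connectedness argument produces a common zero of $(u_1-u_2,u_2-u_3)$ in $B_{r_0}$, i.e.\ $\Gamma_1\cap\Gamma_2\cap B_{r_0}\ne\emptyset$. For optimality, at any $q\in\Gamma_1\cap\Gamma_2\cap B_{r_0}$ (a regular point, where the two free boundaries meet tangentially) the iteration at $q$ must drive $|\alpha_k-\beta_k|$ from its first‑step value $\asymp\eps|\gamma_2(\varphi)-\gamma_2(\psi)|$ down to $0$; since in the ``second'' case it is only perturbed by $O(\eps_k)$ with $\eps_k$ summable, the iteration is forced into the ``third'' case at some step, and thereafter — by the computation in Remark \ref{Optimality}, which rests on the logarithmic auxiliary function of Proposition \ref{AuxiliaryFunct} — one has $c/k\le|\alpha_k-e^1|\le C/k$ with $r_k\asymp\rho^k$, so $\Gamma_1$, and by symmetry $\Gamma_2$, reaches $|x_1|\ge c\,r(-\log r)^{-1}$ along it in $B_r(q)$; thus neither is better than $C^{1,\log}$ at $q$.

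\textbf{Main obstacle.} The delicate point is the quantitative bridge in the last two paragraphs: the naive first‑order linearization only pins the free boundaries down to within $O(\eps^{1/2})$, whereas the dichotomy lives at scale $\eps$, so one genuinely has to run the Proposition \ref{IOF1} iteration and track, across all scales, how the separation $|a_k-b_k|$ compares with $\rho_0|\alpha_k-\beta_k|+M\rho_0\eps_k^{3/4}$ — equivalently, to verify that the iteration lands in alternative (1) of Proposition \ref{IOF1} precisely in case (1) and is forced into its ``third'' case precisely in case (2). Showing that $\gamma_2(\varphi)\ne\gamma_2(\psi)$ genuinely obstructs the ``second''‑case‑only scenario (so that the $C^{1,\log}$ rate is attained, not merely permitted) is the heart of the argument.
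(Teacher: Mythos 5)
Your plan follows the paper's strategy closely: pass to $(u,w)=(u_1,-u_3)$, linearize to read off $\gamma_1,\gamma_2$ from the harmonic extensions $h_\varphi,h_\psi$, interpret the dichotomy through where the leading-order free-boundary lines cross relative to $B_{r_0}$, and for optimality show that the iteration of Proposition \ref{IOF1} cannot stay in the ``second'' case forever because the initial angle is $\asymp\eps|\gamma_2(\varphi)-\gamma_2(\psi)|$ while the cumulative per-step perturbation is $O(\tilde\eps)\ll\eps$. That last chain is exactly the paper's Step 1 argument.

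Two points need attention. First, your existence argument in case (2) — ``a degree/connectedness argument produces a common zero of $(u_1-u_2,u_2-u_3)$'' — as stated does not do the job: the whole common contact region $\{u_1=u_2=u_3\}$ (which contains $B_1\cap\{x_1\lesssim -\eps\}$) is a set of common zeros, so producing a zero is vacuous, and it says nothing about $\Gamma_1\cap\Gamma_2\neq\emptyset$. The paper's Step 3 does more: it locates the four points $p_1,p_2$ (the two ends of $\Gamma_u$ on $\partial B_1$, one on the hyperplane $\{x\cdot\alpha=a\}$ and the other on the transition hyperplane $\{(2\alpha-\beta)\cdot x=2a-b\}$) and $q_1,q_2$ (likewise for $\Gamma_w$), shows these endpoints alternate on $\partial B_1$ with pairwise separation $\gtrsim \eps|\gamma_2(\varphi)-\gamma_2(\psi)|\gg\sqrt{\eps\tilde\eps}$, invokes the $C^1$-regularity of the two curves, and concludes by connectedness that the two arcs must cross. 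You need this finer topological bookkeeping, not merely a zero of the pair of differences.

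Second, your ``main obstacle'' paragraph overstates the difficulty. One application of the linearization step (the argument of Lemma \ref{ImprovementOfApproximation1}, recentered at $x_0$ and rescaled to scale $r_0$) already pins $\alpha,\beta$ to within $O(\eps^2)$ of $e^1+\eps\gamma_2(\varphi)$, $e^1+\eps\gamma_2(\psi)$ and pins $a,b$ to within $O(\tilde\eps)$ of $-\eps\gamma_1(\varphi)/r_0$, $-\eps\gamma_1(\psi)/r_0$, with $\tilde\eps\le C\eps(\eps^{1/2}/r_0^2+r_0)$. Choosing $r_0\ll|\gamma_2(\varphi)-\gamma_2(\psi)|$ first and then $\eps_0$ so small that $\eps_0^{1/2}/r_0^2\ll|\gamma_2(\varphi)-\gamma_2(\psi)|$ makes $\tilde\eps\ll\eps|\gamma_2(\varphi)-\gamma_2(\psi)|$, so the $\eps$-scale comparison needed for the dichotomy and for the angle lower bound is read directly from these linearized parameters — there is no need for a multi-scale tracking argument beyond the one iteration of Proposition \ref{IOF1} you already describe. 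So the gap you worry about at the end is in fact closed by the quantitative estimates you already set up in the ``linearization'' paragraph; make that connection explicit.
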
 

\begin{remark}
Around a regular point, for $\gamma_2(\varphi)\neq\gamma_2(\psi),$  under all small perturbations in the directions of $\varphi$ and $\psi$,  either the two free boundaries decouple, or the free boundaries are precisely $C^{1,\log}$ at any remaining intersection.
\end{remark} 

\begin{proof}
\textit{Step 1: The free boundaries are no better than $C^{1,\log}$ at any point in $\Gamma_1\cap\Gamma_2\cap B_{r_0}$.}

For $r_0$ to be chosen, and $x_0\in\Gamma_1\cap\Gamma_2\cap B_{r_0},$ define $$u(x)=\frac{1}{r_0^2}u_1(x_0+r_0x) \text{ and } w(x)=-\frac{1}{r_0^2}u_3(x_0+r_0x).$$Then $$0\in\Gamma_u\cap\Gamma_w.$$  

With the same argument for Lemma \ref{ImprovementOfApproximation1}, we have 
\begin{equation}|u-\frac{1}{2}\max\{x\cdot\alpha-x_0\cdot\alpha+\eps\frac{\gamma_1(\varphi)}{r_0},0\}^2|<\tilde{\eps}\end{equation} and 
\begin{equation}|w-\frac{1}{2}\max\{x\cdot\beta-x_0\cdot\beta+\eps\frac{\gamma_1(\psi)}{r_0},0\}^2|<\tilde{\eps}\end{equation} in $B_1$ for some \begin{equation}\label{66}\tilde{\eps}\le C\eps(\frac{\eps^{1/2}}{r_0^2}+r_0)\text{ and }|\alpha-e^1-\eps\gamma_2(\varphi)|+|\beta-e^1-\eps\gamma_2(\psi)|\le C\eps^2.\end{equation}

With Remark \ref{Optimality}, it suffices to show that in the iteration in the proof for Lemma \ref{LongIteration1}, we will end up in the third case at some step.

Suppose not, then $0\in\Gamma_u\cap\Gamma_w$ implies that we always end up in the second case. At  the $k$th step, we have  $$|\alpha_k-\alpha|+|\beta_k-\beta|\le C\tilde{\eps}.$$ Together with \eqref{66}, this implies $$|\alpha_k-\beta_k|\ge c\eps(|\gamma_2(\varphi)-\gamma_2(\psi)|-Cr_0-C\frac{\eps^{1/2}}{r_0^2}).$$

By taking $r_0$ small such that $Cr_0<\frac{1}{4}|\gamma_2(\varphi)-\gamma_2(\psi)|$ and then $\eps_0$ small such that $C\frac{\eps_0^{1/2}}{r_0^2}<\frac{1}{4}|\gamma_2(\varphi)-\gamma_2(\psi)|,$ the previous estimate gives $$|\alpha_k-\beta_k|\ge c\eps|\gamma_2(\varphi)-\gamma_2(\psi)|>0.$$This is a contradiction as the left-hand side converge to $0$ as $k\to\infty.$

\textit{Step 2: If $|\gamma_1(\varphi)-\gamma_1(\psi)|\ge\frac{1}{2}r_0|\gamma_2(\varphi)-\gamma_2(\psi)|$, then $\Gamma_1\cap\Gamma_2\cap B_{\frac{1}{4}r_0}=\emptyset.$}

Define $u$ and $w$ as in \textit{Step 1} for $x_0=0$, then $$(u,w)\in\mathcal{R}(\alpha,\beta;a,b;\tilde{\eps}) \text{ in $B_1$}$$ with $a=-\eps\frac{\gamma_1(\varphi)}{r_0}$ and $b=-\eps\frac{\gamma_1(\psi)}{r_0}.$ 

With a similar argument as in Lemma \ref{RefinedLocalization}, we know that \begin{equation}\label{NoInterCondition} \text{ if }\Gamma_u\cap\Gamma_w\cap B_{\frac{1}{4}}\neq\emptyset, \text{ then }|a-b|\le\frac{1}{4}|\alpha-\beta|+C\tilde{\eps}.\end{equation}

On the other hand, with \eqref{66} and $|\gamma_1(\varphi)-\gamma_1(\psi)|\ge\frac{1}{2}r_0|\gamma_2(\varphi)-\gamma_2(\psi)|$, we have 
\begin{align*}
|a-b|&\ge\frac{1}{2}\eps|\gamma_2(\varphi)-\gamma_2(\psi)|\\&\ge\frac{1}{4}|\alpha-\beta|+\frac{1}{4}\eps|\gamma_2(\varphi)-\gamma_2(\psi)|-C\eps^2.
\end{align*}If we choose $r_0$ and $\eps_0$ small enough, then $\frac{1}{4}\eps|\gamma_2(\varphi)-\gamma_2(\psi)|-C\eps^2\gg\tilde{\eps}.$ 

Thus \eqref{NoInterCondition} implies $$\Gamma_u\cap\Gamma_w\cap B_{\frac{1}{4}}=\emptyset.$$

\textit{Step 3: If $|\gamma_1(\varphi)-\gamma_1(\psi)|<\frac{1}{2}r_0|\gamma_2(\varphi)-\gamma_2(\psi)|$, then $\Gamma_1\cap\Gamma_2\cap B_{r_0}\neq\emptyset.$}

Let $u$, $w$, $\alpha,\beta$, $a$ and $b$ be as in \textit{Step 2}. 

With $|\gamma_1(\varphi)-\gamma_1(\psi)|<\frac{1}{2}r_0|\gamma_2(\varphi)-\gamma_2(\psi)|$, the lines $\{x\cdot\alpha=a\}$ and $\{x\cdot\beta=b\}$ intersect at a point $\overline{x}\in B_{1/2}$. 

Consequently, if we define the following points $$p_1:=\partial B_1\cap \{x\cdot\alpha=a\}, \text{ } q_1:=\partial B_1\cap \{x\cdot\beta=b\},$$ $$p_2:=\partial B_1\cap \{(2\alpha-\beta)\cdot x=2a-b\},\text{ and }q_2:=\partial B_1\cap \{(2\beta-\alpha)\cdot x=2b-a\},$$ then $$|p_1-q_2|\ge c\eps|\gamma_2(\varphi)-\gamma_2(\psi)| \text{ and } |q_1-p_2|\ge c\eps|\gamma_2(\varphi)-\gamma_2(\psi)|.$$

Meanwhile,  the previous proposition implies that the free boundaries $\Gamma_u$ and $\Gamma_w$ are $C^1$ curves in $B_1$. Moreover, the intersection $\Gamma_u\cap\partial B_1$ consists of two points within $C\sqrt{\eps\tilde{\eps}}$ distance from $p_1$ and $p_2$ respectively. The intersection $\Gamma_w\cap\partial B_1$ consists of  two points within $C\sqrt{\eps\tilde{\eps}}$ distance from $q_1$ and $q_2$ respectively. 

If we choose $\eps_0$ and $r_0$ small such that $\tilde{\eps}\ll c\eps|\gamma_2(\varphi)-\gamma_2(\psi)|,$ the connectedness of the free boundaries implies that they intersect. See Figure \ref{Intersect}.

\begin{figure}[h]
\includegraphics[width=0.6\linewidth]{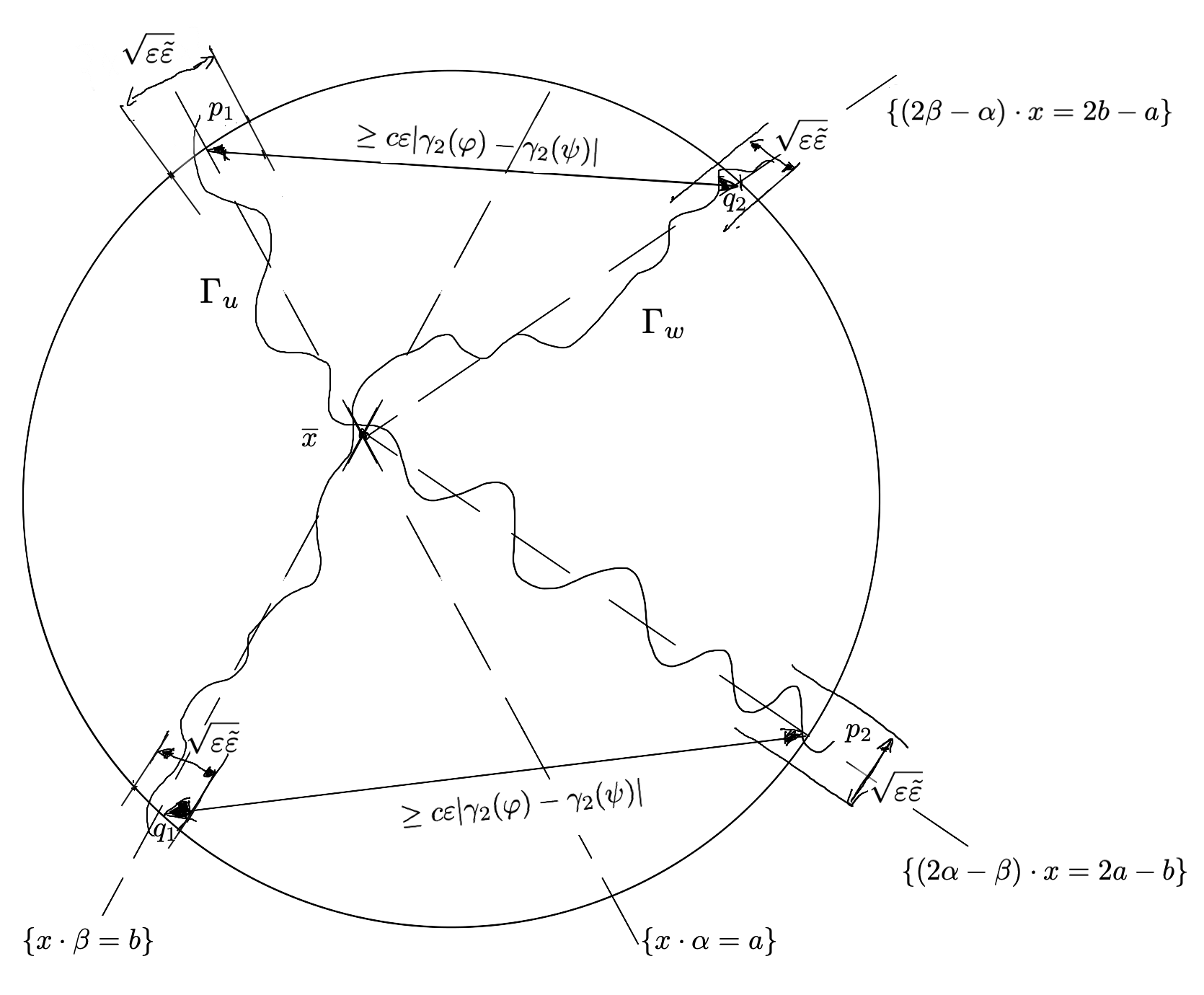}
\caption{$\Gamma_u$ and $\Gamma_w$ intersect.}
\label{Intersect}
\end{figure}
\end{proof}

%As a quick corollary of Proposition \ref{Proposition61}, we have the relative openness of $\mathbf{Reg}$ in $\Gamma_1\cap\Gamma_2$:
%\begin{corollary}
%Suppose that $\Trip$ is a solution to the $3$-membrane problem in $\Omega$ with $x_0\in\mathbf{Reg}$.

%Then there is $r_0>0$ such that  $$\Gamma_1\cap\Gamma_2\cap B_{r_0}(x_0)=\mathbf{Reg}\cap B_{r_0}(x_0).$$
%\end{corollary}

%\begin{proof}
%Let $r_0$ be the small radius found in the proof of the previous proposition. Then for $\bar{x}\in\Gamma_1\cap\Gamma_2\cap B_{r_0}(x_0)$, we have $$|u_1-\frac{1}{2}\{(x-\bar{x})\cdot e_{\bar{x}},0\}^2|\le Cr^{2}(-\log r)^{-2}$$ and $$|u_3+\frac{1}{2}\{(x-\bar{x})\cdot e_{\bar{x}},0\}^2|\le Cr^{2}(-\log r)^{-2}$$ in $B_r(\bar{x})$ for $r<r_0.$ 

%This implies $(u_1,u_2,u_3)$ blows up to $(u^{0,e_{\bar{x}}}_k)$, a stable half-space solution as in Definition \ref{SHS}. Consequently, $\bar{x}\in\Reg.$
%\end{proof} 

%%%%%%%%%%%%%%%%%%%%%%%%%%%%%%%%%%%%%%%%%%%%%%%%%%%%%%%%%%%%%%%%%%%%%%%%%%%%%%%%%%%%%%%%%%%%%%%%%%%%%%%%%%%%%%%%%%%%%%%%%%%%%%%%%%%%%%%%%%%%%%%%%%%%%%
\section{Free boundary regularity of $\SOne$}\label{SingularPartSection}
In this section, we prove   Theorem \ref{MainResult3}. Recall that singular points of type 1, $\SOne$,  are defined in Definition \ref{FreeBoundaryPoints}.

The proof is based on an iteration  of Proposition \ref{IOF2}. To iterate, however, the angle between the hyperplanes, namely, $|\alpha-\beta|$, has to stay below the critical level. 

This is obtained through the following lemma on Weiss energy:
\begin{lemma}\label{EnergyDrop}
Let $\Trip$ be a solution to the $3$-membrane problem in $B_1$ with $0\in\Gamma_1\cap\Gamma_2$. 

Suppose that $$|u_1-\PababTwo|+|u_3+\QababTwo|<\eps_d \text{ inside $B_1$}$$ for $\PQabab$ as in \eqref{Pabab2} and $$|\alpha-\beta|\in(\delta\eps_d^{1/2}, 2\delta\eps_d^{1/2}).$$ 
Then there is $\eta>0$ and $r\in (0,1/2)$, depending on the dimension, such that $$W((u_k),0,r)\le W((u_k),0,\frac{1}{2})-\eta.$$ Here $\eps_d$ is a dimensional constant, and $\delta$ is the dimensional constant in Proposition \ref{IOF2}.
\end{lemma}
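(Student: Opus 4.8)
The plan is to read the energy drop directly off the angle‑increase alternative in Proposition \ref{IOF2}: a definite change of the approximating half‑space profile between two fixed scales must, via the Weiss monotonicity formula, cost a definite amount of Weiss energy. Set $u=u_1$, $w=-u_3$; by Remark \ref{EquivalenceBetweenProblems} the hypothesis says exactly that $(u,w)\in\mathcal{S}(\alpha,\beta;a,b;\eps_d)$ in $B_1$ with $0\in\Gamma_u\cap\Gamma_w$ and $|\alpha-\beta|\in(\delta\eps_d^{1/2},2\delta\eps_d^{1/2})$, which is precisely the hypothesis of the second (angle‑increase) alternative of Proposition \ref{IOF2} with $\eps=\eps_d$. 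I will prove the lemma with $r=\rho_2$, the small dimensional scale produced there; throughout, $(u_k)_{0,s}(x)=s^{-2}u_k(sx)$.

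First I would apply Proposition \ref{IOF2}: there are $\alpha'',\beta''$ and $a'',b''$ with $|\alpha''-\alpha|+|\beta''-\beta|\le C\eps_d$, with $(u,w)\in\mathcal{S}(\alpha'',\beta'';a'',b'';\eps_d/2)$ in $B_{\rho_2}$, and with $|\alpha''-\beta''|>|\alpha-\beta|+20\eps_d$. Rescaling the two approximations to the unit ball, one gets that $(u_k)_{0,1/2}$ lies within $C\eps_d$ in $L^\infty(B_1)$ of the half‑space profiles \eqref{Pabab2} built from $(\alpha,\beta;2a,2b)$, while $(u_k)_{0,\rho_2}$ lies within $C\eps_d$ of those built from $(\alpha'',\beta'';a''/\rho_2,b''/\rho_2)$; call these profiles $P^{(1)},Q^{(1)}$ and $P^{(2)},Q^{(2)}$. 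Applying Lemma \ref{RefinedLocalization} to $(u_k)_{0,1/2}$ and to $(u_k)_{0,\rho_2}$ (both of which lie in a class $\mathcal{S}$ in $B_1$), the translation parameters of all four profiles are $O(\eps_d)$. An elementary computation with \eqref{Pabab2} shows that two such profiles with $O(\eps_d)$ translation parameters whose planes differ in angle by $\theta$ differ by at least $c\theta-C\eps_d$ on a fixed positive‑measure subset of $\partial B_1$, so
\[
\sum_k\bigl\|(u_k)_{0,1/2}-(u_k)_{0,\rho_2}\bigr\|_{L^2(\partial B_1)}\ \ge\ c\bigl|\,|\alpha''-\beta''|-|\alpha-\beta|\,\bigr|-C\eps_d\ \ge\ (20c-C)\eps_d\ \ge\ c_0\eps_d ,
\]
for a dimensional $c_0>0$ once $\eps_d$ is small — this is exactly the point of the constant $20$ in Proposition \ref{IOF2}, which makes the angle gain outweigh the $O(\eps_d)$ errors above.

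Next I would convert this separation into an energy drop. Recall the differential identity behind Theorem \ref{Monotonicity}: $s\mapsto W((u_k),0,s)$ is absolutely continuous with $\frac{d}{ds}W((u_k),0,s)=\frac{c_d}{s}\sum_k\|\partial_\nu(u_k)_{0,s}-2(u_k)_{0,s}\|_{L^2(\partial B_1)}^2$, while $\frac{d}{ds}(u_k)_{0,s}=\frac1s\bigl(\partial_\nu(u_k)_{0,s}-2(u_k)_{0,s}\bigr)$ on $\partial B_1$. Integrating the latter over $s\in[\rho_2,\tfrac12]$ and applying Cauchy--Schwarz,
\[
\Bigl(\sum_k\bigl\|(u_k)_{0,1/2}-(u_k)_{0,\rho_2}\bigr\|_{L^2(\partial B_1)}\Bigr)^2\ \le\ C\,\log\tfrac{1}{2\rho_2}\,\Bigl(W((u_k),0,\tfrac12)-W((u_k),0,\rho_2)\Bigr).
\]
Combined with the previous display this gives $W((u_k),0,\rho_2)\le W((u_k),0,\tfrac12)-\eta$ with $\eta:=c\,c_0^2\eps_d^2/\log\tfrac{1}{2\rho_2}>0$, a dimensional constant; this is the claim with $r=\rho_2$.

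The crux, and where care is genuinely needed, is the middle step. One must (i) make the profile‑separation estimate quantitative enough that the $20\eps_d$ angle gain of Proposition \ref{IOF2} truly dominates both the $O(\eps_d)$ approximation error and the $O(\eps_d)$ ambiguity in the position of the planes coming from Lemma \ref{RefinedLocalization}; and (ii) upgrade an $L^\infty(B_1)$ discrepancy to an $L^2(\partial B_1)$ one, which is legitimate precisely because the dominant part of the discrepancy sits on a fixed‑size spherical cap, the profiles being $2$‑homogeneous up to $O(\eps_d)$. Step~3 is then the standard Weiss computation (the linear term $\int u_1-u_3$ in \eqref{WeissEnergy} is scaling‑compatible with degree $2$, so the differential identity has its usual form), requiring no classification of homogeneous solutions.
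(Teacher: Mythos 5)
Your approach is genuinely different from the paper's, which runs a compactness/contradiction argument: assume the claim fails along a sequence, extract a limit via $C^{1,1}$ compactness, use that the Weiss energy is constant on $[r_n,\tfrac12]$ in the limit (forcing $2$-homogeneity by Theorem~\ref{Monotonicity}), then apply Lemma~\ref{ImprovementOfAngle2} to the limit and derive an impossibility between the $\eps_d$-approximation in $B_1$ and the improved approximation in $B_{\rho_2}$ of the \emph{same} homogeneous function. Your argument is direct and quantitative: Proposition~\ref{IOF2} gives different profiles at scales $\tfrac12$ and $\rho_2$, you convert the angle gap into an $L^2(\partial B_1)$ separation between $(u_k)_{0,1/2}$ and $(u_k)_{0,\rho_2}$, and the differential identity $\tfrac{d}{ds}W=\tfrac{c_d}{s}\sum_k\|\partial_\nu(u_k)_{0,s}-2(u_k)_{0,s}\|_{L^2(\partial B_1)}^2$ plus Cauchy--Schwarz turns that separation into an explicit drop $\eta\sim\eps_d^2/\log(1/\rho_2)$. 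What you gain is an explicit $\eta$ and avoiding the ``constant Weiss energy $\Rightarrow$ homogeneous'' half of Theorem~\ref{Monotonicity} altogether; what you pay is that you must compare two \emph{different} functions $(u_k)_{0,1/2}$ and $(u_k)_{0,\rho_2}$ rather than, as the paper does after taking limits, the same homogeneous function against two profiles. You also use the pointwise form of the Weiss derivative, which the paper does not state (only monotonicity); it is standard and verifiable here (the bulk term vanishes because $\nabla(u_i-u_j)=0$ on $\{u_i=u_j\}$), but you should cite or derive it.

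The one place I would push back is that your ``elementary computation'' in the middle step is the entire content of the lemma, and as written it is both unverified and, as a matter of constants, \emph{tighter} for you than for the paper. Write $\alpha''-\alpha\approx\sum_{k\ge2}p_k e^k$, $\beta''-\beta\approx\sum_{k\ge2}q_ke^k$; the angle gain is, to leading order, $p_2-q_2\ge 20\eps_d$, while the profile difference on $\partial B_1$ is, up to $O(\eps_d^{3/2})$ and translation corrections, $x_1(\sum p_kx_k)\mathcal X_{\{x_1<0\}}+\tfrac12x_1(\sum q_kx_k)\mathcal X_{\{x_1>0\}}$. Bounding this on a fixed cap of $\partial B_1$ controls each $|p_k|,|q_k|$ by $CK\eps_d$, where $K$ is the total $L^\infty$ slack; the contradiction requires $20>2CK$. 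In the paper's limit argument $K\approx\tfrac32$ (one function, two approximations), but in yours the approximation of $(u_k)_{0,1/2}$ by the first profile degrades to $4\eps_d$ (since an $\eps_d$-bound in $B_1$ is only a $4\eps_d$-bound in the rescaled $\mathcal S$-class in $B_{1/2}$), so $K\approx4.5$ plus translation ambiguities from Lemma~\ref{RefinedLocalization}. This roughly triples the constant you need the ``$20$'' to beat, and nothing in the paper guarantees that margin. So this is a genuine gap, not merely a matter of bookkeeping: you must either (a) actually carry out the profile-separation estimate with your larger $K$ and check $20$ suffices, or (b) iterate Proposition~\ref{IOF2} a bounded number of times (as the paper does in Lemma~\ref{ProtectAngle}) to accumulate a larger angle gain before comparing, which would comfortably dominate the $O(\eps_d)$ errors at the cost of comparing across a longer scale range.
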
 The functional $W$ is the Weiss energy defined in \eqref{WeissEnergy}.

\begin{proof}
Suppose, on the contrary, that there is no such $\eta$ and $r$. 

Then we can find a sequence of solutions $(u_k)_n$, satisfying the assumptions, but $$W((u_k)_n,0,r_n)\ge W((u_k)_n,0,\frac{1}{2})-\frac{1}{n}$$ with $r_n\to 0$ for all $n$. 

With $0\in\Gamma_1\cap\Gamma_2$, Theorem \ref{OptimalRegularity} gives compactness to the sequence. That is,  up to a subsequence, $$(u_k)_n\to (u_k) \text{ in $C^{1,\alpha}_{loc}(B_1)$}$$ for a solution $(u_k)$ in $B_1$ satisfying \begin{equation}\label{ApproxByOne}|u_1-\PababTwo|+|u_3+\QababTwo|\le\eps_d \text{ in $B_1$}\end{equation} for some $|\alpha-\beta|\in[\delta\eps_d^{1/2}, 2\delta\eps_d^{1/2}]$.

As a result, we can apply Lemma \ref{ImprovementOfAngle2} to $u=u_1$ and $w=-u_3$ to get 
\begin{equation}\label{ApproxByTwo}|u_1-P(\alpha',\beta')|<\frac{1}{2}\eps_d\rho_2^2 \text{ in $B_{\rho_2}$}\end{equation} for $P(\alpha',\beta')$ as in \eqref{Pabab2} and $$|\alpha'-\beta'|\ge|\alpha-\beta|+20\eps_d.$$

On the other hand, uniform convergence of the gradient in $B_{1/2}$ implies $$W((u_k),0,\frac{1}{2})\le W((u_k),0),$$ where $W((u_k),0)$ is the limit defined in \eqref{LimitOfWeiss}. 

Thanks to Theorem \ref{Monotonicity}, we know that $(u_k)$  is $2$-homogeneous in $B_{1/2}$.

Together with \eqref{ApproxByTwo}, this homogeneity implies that for $x\in B_{1}$,
\begin{align*}
|u_1(x)-\frac{1}{2}\min\{x\cdot\alpha'-\frac{a'}{\rho_2},0\}^2-\frac{1}{4}\max\{ x\cdot\beta'-\frac{b'}{\rho_2},0\}^2|\le\frac{1}{2}\eps_d.
\end{align*} With \eqref{ApproxByOne}, we have 
\begin{align*}|&\TopHalfTwo\\&-\frac{1}{2}\min\{x\cdot\alpha'-\frac{a'}{\rho_2},0\}^2-\frac{1}{4}\max\{ x\cdot\beta'-\frac{b'}{\rho_2},0\}^2|\le\frac{3}{2}\eps_d\end{align*} in $B_1$. 

By Lemma \ref{RefinedLocalization}, $|a|,|b|,|\frac{a'}{\rho_2}|,|\frac{b'}{\rho_2}|\le C\eps_d$.
The estimate above contradicts $|\alpha'-\beta'|\ge|\alpha-\beta|+20\eps_d$ if $\eps_d$ is small. \end{proof} 

This implies the angles remain below the critical level along $\SOne$:
\begin{lemma}\label{ProtectAngle}
Suppose $\Trip$ solves the $3$-membrane problem in $B_1$ with $$0\in\SOne$$ and $$W((u_k),0,1)\le W_1+\frac{1}{2}\eta.$$ If, for some $\eps<\eps_d$, $$|u_1-\PababTwo|+|u_3+\QababTwo|<\eps \text{ in $B_1$}$$for $\PQabab$ as in \eqref{Pabab2} with $|\alpha-\beta|<2\delta\eps^{1/2},$ then $$|\alpha-\beta|<\delta\eps^{1/2}.$$

Here $\delta$, $\eps_d$ and $\eta$ are dimensional constants. 
\end{lemma}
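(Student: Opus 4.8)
The plan is to argue by contradiction, using the improvement‑of‑flatness machinery of Section 5 to place a rescaling of the solution into the hypotheses of Lemma \ref{EnergyDrop}, and then to play the resulting drop of the Weiss energy against the fact that $0\in\SOne$.

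First I set $u=u_1$, $w=-u_3$, so that by Remark \ref{EquivalenceBetweenProblems} $(u,w)\in\mathcal{A}(B_1)$ with $\Gamma_u=\Gamma_1$, $\Gamma_w=\Gamma_2$, hence $(u,w)\in\mathcal{S}(\alpha,\beta;a,b;\eps)$ in $B_1$. Since $0\in\SOne$, Definition \ref{FreeBoundaryPoints} together with \eqref{FirstEnergy} and \eqref{LimitOfWeiss} gives $W((u_k),0)=W_1$; combining this with Theorem \ref{Monotonicity} and the hypothesis $W((u_k),0,1)\le W_1+\tfrac12\eta$ yields $W_1\le W((u_k),0,r)\le W_1+\tfrac12\eta$ for all $r\in(0,1]$. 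Now I assume, towards a contradiction, that $|\alpha-\beta|\ge\delta\eps^{1/2}$; the relevant quantity is the ratio $c:=|\alpha-\beta|\,\eps^{-1/2}\in[\delta,2\delta)$.

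The next step is to reach a scale at which this ratio sits in a fixed compact subinterval of $(\delta,2\delta)$, say near $\tfrac32\delta$. Starting from $c\ge\delta$, I iterate the improvement‑of‑approximation alternative of Proposition \ref{IOF2} (the $\mathcal{S}$‑version): each application preserves $0\in\Gamma_u\cap\Gamma_w$, replaces the closeness $\eps$ by $\eps/2$, moves $(\alpha,\beta)$ by at most $C\eps=o(\eps^{1/2})$, and holds on $B_{\rho_1}$, which after rescaling becomes $B_1$. Because $\eps$ halves while the angle barely changes, the ratio $c$ is multiplied by roughly $\sqrt2$ at each step, so after a bounded (dimensional) number of steps it exceeds $\tfrac32\delta$ while still staying below $2\delta$ at the previous step (where the last application is legitimate); the closeness $\eps$ has only shrunk by a bounded factor, so it is still $<\eps_d$. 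One final rescaling — which leaves $|\alpha-\beta|$ unchanged and merely adjusts the closeness — brings the ratio exactly into $(\delta,2\delta)$. Composing all these rescalings, I obtain $\mu\in(0,1]$ so that $(\tilde u_k):=(u_k)_{0,\mu}$ solves the $3$‑membrane problem in $B_1$, satisfies $0\in\Gamma_1\cap\Gamma_2$, is $\tilde\eps$‑close (in the sense of \eqref{Pabab2}) to an unstable‑half‑space profile with angle in $(\delta\tilde\eps^{1/2},2\delta\tilde\eps^{1/2})$ for a closeness parameter $\tilde\eps$ comparable to $\eps$ and $<\eps_d$, and has intercepts bounded by $C\tilde\eps$ by Lemma \ref{RefinedLocalization}. (If $c$ already lies in $(\delta,2\delta)$, no improvement step is needed.)

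Thus $(\tilde u_1,\tilde u_2,\tilde u_3)$ satisfies the hypotheses of Lemma \ref{EnergyDrop} with $\tilde\eps$ in place of $\eps_d$: its proof applies with any closeness below the dimensional threshold $\eps_d$, since the only structural input is Lemma \ref{ImprovementOfAngle2}, whose constants — in particular the quantitative angle increase $20(\cdot)$ — are dimensional. This gives a dimensional $\eta>0$ and $r_0\in(0,\tfrac12)$ with $W((\tilde u_k),0,r_0)\le W((\tilde u_k),0,\tfrac12)-\eta$. Undoing the rescaling and using the scale‑invariance of the Weiss functional, $W((u_k),0,\mu r_0)\le W((u_k),0,\mu/2)-\eta\le W((u_k),0,1)-\eta\le W_1-\tfrac12\eta<W_1$, which contradicts $W((u_k),0,\mu r_0)\ge W_1$. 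Hence $|\alpha-\beta|<\delta\eps^{1/2}$, with $\eta$ the dimensional constant above.

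The main obstacle is precisely the matching of scales. Lemma \ref{EnergyDrop} is phrased at the fixed closeness $\eps_d$, whereas here $\eps$ may be arbitrarily small, and rescaling — the only tool available, since we have no information outside $B_1$ — leaves the angle $|\alpha-\beta|$ unchanged and only scales the closeness, so it cannot by itself move the ratio $|\alpha-\beta|\,\eps^{-1/2}$ into the critical window. The improvement steps of Proposition \ref{IOF2} are what drive that ratio up, and the delicate verification is that one can halt at a scale where the ratio is critical, the closeness is still small, and the resulting drop of the Weiss energy is bounded below by a dimensional amount — i.e.\ that the instability underlying Lemma \ref{EnergyDrop} does not degenerate as the closeness decreases; this is what necessitates the quantitative form of Lemma \ref{ImprovementOfAngle2} rather than a purely soft compactness argument.
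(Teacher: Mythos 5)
Your proposed iteration takes the wrong alternative of Proposition \ref{IOF2}, and this creates a genuine gap at the moment you try to invoke Lemma \ref{EnergyDrop}.

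You iterate the closeness-halving branch ($\eps\to\eps/2$) to push the ratio $c=|\alpha-\beta|\,\eps^{-1/2}$ into $(\delta,2\delta)$. After a bounded number of such steps the closeness parameter has only decreased by a bounded factor, so you end with $\tilde\eps$ comparable to $\eps$, which can be arbitrarily small compared with $\eps_d$. At that point the hypotheses of Lemma \ref{EnergyDrop} are \emph{not} met: the lemma requires the angle to lie in $(\delta\eps_d^{1/2},2\delta\eps_d^{1/2})$, whereas your angle lies in the much smaller window $(\delta\tilde\eps^{1/2},2\delta\tilde\eps^{1/2})$. You acknowledge this and propose to rerun the lemma's proof ``with $\tilde\eps$ in place of $\eps_d$'', asserting that the resulting drop $\eta$ is still the fixed dimensional constant. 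That assertion is the gap, and it is in fact false. The compactness argument in Lemma \ref{EnergyDrop} yields some $\eta(\tilde\eps)>0$ at each fixed level, but $\eta(\tilde\eps)\to 0$ as $\tilde\eps\to 0$: solutions satisfying the hypotheses at level $\tilde\eps$ converge, as $\tilde\eps\to 0$, to $2$-homogeneous unstable half-space solutions, for which the Weiss energy is constant in $r$ and the drop vanishes. More directly, at every scale $W((\tilde u_k),0,\cdot)\ge W_1$, while by hypothesis and monotonicity $W((\tilde u_k),0,\tfrac12)\le W_1+\tfrac12\eta$; hence the drop available at the rescaled configuration is at most $\tfrac12\eta$, so no version of the energy-drop lemma applied there can produce a drop of the full dimensional $\eta$ needed for your contradiction.

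The paper avoids this by iterating the \emph{angle-increase} alternative (Lemma \ref{ImprovementOfAngle2}) instead, and after each step re-labelling the closeness parameter via $\eps_m:=(|\alpha_m-\beta_m|/\delta)^2$ so that the angle sits exactly on the critical threshold for the new $\eps_m$. Because the angle strictly increases by $20\eps_{m-1}$ at each step, this redefined $\eps_m$ satisfies $\eps_m\ge\eps_{m-1}+\tfrac{40}{\delta}\eps_{m-1}^{3/2}$ and therefore grows until it reaches the fixed dimensional level $\approx\eps_d$; only at that scale is Lemma \ref{EnergyDrop} invoked, with its genuinely dimensional $\eta$, and the final contradiction $W_1\le W_1-\tfrac12\eta$ closes the argument. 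Your scheme never reaches that scale, because the closeness only moves away from $\eps_d$, and you cannot compensate by relabelling: increasing $\tilde\eps$ by fiat would make the angle subcritical for the new $\tilde\eps$.
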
 

Recall that $W_1$ is the Weiss energy of unstable half-space solutions as in \eqref{FirstEnergy}.

\begin{proof} Let $\eta$ be the constant from Lemma \ref{EnergyDrop}.

Suppose $|\alpha-\beta|\ge\delta\eps^{1/2}.$
We get a contradiction by iterating Lemma \ref{ImprovementOfAngle2}.

Define $u_0=u_1$, $w_0=-u_3$, $\alpha_0=\alpha$, $\beta_0=\beta$ and $\eps_0=\eps$. Then $$(u_0,w_0)\in\mathcal{S}(\alpha_0,\beta_0;\eps_0) \text{ in $B_1$,}$$where the class of solutions $\mathcal{S}$ is defined in Definition \ref{SingSol}

Suppose, for some $(m-1)$, we have found $$(u_{m-1},w_{m-1})\in\mathcal{S}(\alpha_{m-1},\beta_{m-1};\eps_{m-1}) \text{ in $B_1$}$$ with $|\alpha_{m-1}-\beta_{m-1}|\in[\delta\eps_{m-1}^{1/2},2\delta\eps_{m-1}^{1/2}].$ 

If $\eps_{m-1}\ge\eps_d$, then we terminate the iteration. 

Otherwise, we apply Lemma 5.5  to get 
 $$(u_{m-1},w_{m-1})\in\mathcal{S}(\alpha',\beta';\eps_{m-1}) \text{ in $B_{\rho_2}$}$$ with $|\alpha'-\beta'|\ge|\alpha_{m-1}-\beta_{m-1}|+20\eps_{m-1}.$ 

Define $$u_{m}=\frac{1}{\rho_2^2}u_{m-1}(\rho_2x) \text{ and }w_{m}=\frac{1}{\rho_2^2}w_{m-1}(\rho_2x), $$ $\alpha_m=\alpha'$, $\beta_m=\beta'$ and $\eps_m=(\frac{|\alpha'-\beta'|}{\delta})^2.$

Then we have $$(u_{m},w_{m})\in\mathcal{S}(\alpha_{m},\beta_{m};\eps_{m}) \text{ in $B_1$}$$
 with $$|\alpha_m-\beta_m|=\delta\eps_m^{1/2}.$$  Moreover, $$\eps_m\ge\eps_{m-1}+\frac{40}{\delta}\eps_{m-1}^{3/2}.$$

In particular, within finite steps, we  have $\eps_m>\eps_d$, and the iteration terminates. 

At the final step,  we have $$(u_{m},w_{m})\in\mathcal{S}(\alpha_{m},\beta_{m};\eps_{m}) \text{ in $B_1$}$$ with $\frac{1}{2}\eps_d<\eps_m<\eps_d$, and $|\alpha_m-\beta_m|\in[\delta\eps_m^{1/2},2\delta\eps_m^{1/2}].$

Consequently, Lemma \ref{EnergyDrop} gives $$W((u_k)_m,0,r)\le W((u_k)_m,0,\frac{1}{2})-\eta.$$
Rescale back and use the monotonicity of Weiss energy, we have 
$$W((u_k),0,r\rho_2^m)\le W((u_k),0,1)-\eta\le W_1-\frac{1}{2}\eta.$$

Meanwhile, $0\in\SOne$ implies $W((u_k),0)=W_1>W((u_k),0,r\rho_2^m)$, contradicting the monotonicity of the Weiss functional. \end{proof} 

\begin{remark}\label{SOneNotStable}
One consequence of this lemma is that $\SOne$ is generically unstable.  

Consider perturbations as in Proposition \ref{GenericOptimal}, but for unstable half-space solutions, that is, solutions satisfying   the following along $\partial B_1$: $$u_1=\frac{1}{2}\min\{x_1,0\}^2+\frac{1}{4}\max\{x_1,0\}^2 +\eps\varphi, \text{ and } u_3=-\frac{1}{4}\min\{x_1,0\}^2-\frac{1}{2}\max\{x_1,0\}^2-\eps\psi.$$ 

Define $u=u_1$ and $w=-u_3$. With similar arguments as in the proof of Proposition \ref{GenericOptimal},  $(u,w)$ are well-approximated, within error $\tilde{\eps}$,  by unstable half-space profiles   with $$|\alpha-\beta|\sim|\gamma_2(\varphi)-\gamma_2(\psi)|> \delta\tilde{\eps}^{1/2}.$$ Lemma \ref{ProtectAngle} then says that $\SOne\cap B_{r_0}$ is empty for a small $r_0.$ 

In particular, with the complete classification of homogeneous solutions in two dimensions,  $\Gamma_1\cap\Gamma_2\cap B_{r_0}$ consists of regular points after this perturbation in $\R^2.$
\end{remark} 

The previous lemma says that the angle stays strictly below the critical level. Consequently, iterations of Proposition \ref{IOF2} can be performed indefinitely. This leads to the following point-wise estimate at points in $\SOne$.

Theorem \ref{MainResult3} is a direct consequence of this point-wise localization.
\begin{lemma}\label{LongIteration2}
Suppose $\Trip$ solves the $3$-membrane problem in $B_1$ with $$0\in\SOne$$ and $$W((u_k),0,1)\le W_1+\frac{1}{2}\eta.$$ If $$|u_1-\PababTwo|+|u_3+\QababTwo|<\eps \text{ in $B_1$,}$$ for $\PQabab$ as in \eqref{Pabab2}, $|\alpha-\beta|<2\delta\eps^{1/2}$ and $\eps<\eps_d,$ then, up to a rotation, $$\Gamma_1\cap\Gamma_2\cap B_r\subset\{|x_1|\le Cr^{1+\alpha_d}\}$$ for all $r\in (0,1/2)$. 

Here $\delta$, $\eps_d$, $\alpha_d$, $\eta$ and $C$ are dimensional constants.\end{lemma}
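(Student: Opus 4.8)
The plan is to run an infinite iteration of Proposition \ref{IOF2}, using Lemma \ref{ProtectAngle} at every stage to guarantee that the angle between the approximating hyperplanes stays strictly below the critical level $\delta\eps^{1/2}$, so that the first (subcritical) alternative of Proposition \ref{IOF2} always applies and the error $\eps$ genuinely halves at each step. First I would set $u=u_1$, $w=-u_3$, and by the equivalence in Remark \ref{EquivalenceBetweenProblems} work with the system of obstacle problems; the hypothesis gives $(u,w)\in\mathcal{S}(\alpha,\beta;\eps)$ in $B_1$ with $0\in\Gamma_u\cap\Gamma_w$. I would then define inductively $(u_k,w_k)$ as dyadic-type rescalings at scale $r_k=\rho_1^k$, with parameters $\alpha_k,\beta_k$, errors $\eps_k$, maintaining the invariant $(u_k,w_k)\in\mathcal{S}(\alpha_k,\beta_k;\eps_k)$ in $B_1$ with $\eps_k\le 2^{-k}\eps_0$ and $|\alpha_k-\beta_k|<2\delta\eps_k^{1/2}$.

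The inductive step is where Lemma \ref{ProtectAngle} enters. Because the iteration rescales at the origin (which lies in $\Gamma_1\cap\Gamma_2$) and Weiss energy is monotone with $W((u_k),0,1)\le W_1+\tfrac12\eta$, every rescaled solution $(u_k,w_k)$ still satisfies the hypotheses of Lemma \ref{ProtectAngle}: it is approximated by unstable half-space profiles with $|\alpha_k-\beta_k|<2\delta\eps_k^{1/2}$, hence in fact $|\alpha_k-\beta_k|<\delta\eps_k^{1/2}$. This strict subcriticality means we are never in the ``critical'' regime of Lemma \ref{ImprovementOfAngle2}; only the first conclusion of Proposition \ref{IOF2} is invoked, producing $\alpha_{k+1},\beta_{k+1}$ with $|\alpha_{k+1}-\alpha_k|+|\beta_{k+1}-\beta_k|\le C\eps_k$ and $(u_{k+1},w_{k+1})\in\mathcal{S}(\alpha_{k+1},\beta_{k+1};\eps_k/2)$ in $B_{\rho_1}$, which after rescaling by $\rho_1$ is $(u_{k+1},w_{k+1})\in\mathcal{S}(\alpha_{k+1},\beta_{k+1};\eps_{k+1})$ in $B_1$ with $\eps_{k+1}=\eps_k/2$. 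One must check that $|\alpha_{k+1}-\beta_{k+1}|\le|\alpha_k-\beta_k|+C\eps_k<\delta\eps_k^{1/2}+C\eps_k\le\sqrt2\,\delta\eps_{k+1}^{1/2}+C\eps_{k+1}<2\delta\eps_{k+1}^{1/2}$ for $\eps_d$ small, closing the induction.

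From the geometric convergence $\eps_k\le 2^{-k}\eps_0$ and $|\alpha_{k+1}-\alpha_k|\le C\eps_k$ one gets $\alpha_k\to\alpha_\infty$ with $|\alpha_k-\alpha_\infty|\le C\eps_k^{1/2}\le C2^{-k/2}$, and likewise $\beta_k\to\alpha_\infty$. For a given $r\in(0,1/2)$ pick $k$ with $r_{k+1}\le r<r_k$; then $r_k\le\rho_1^{-1}r$ and $k\sim\log_{1/\rho_1}(1/r)$, so $\eps_k\le 2^{-k}\le C r^{\alpha_d}$ for the dimensional exponent $\alpha_d=\log 2/\log(1/\rho_1)$. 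Since $(u_k,w_k)\in\mathcal{S}(\alpha_k,\beta_k;\eps_k)$ in $B_1$ forces $0\in\Gamma_{u_k}\cap\Gamma_{w_k}$ and, rescaling back, $\Gamma_1\cap\Gamma_2\cap B_{r_k}\subset\{|x\cdot\alpha_k|\le C\eps_k^{1/2}r_k\}$; combining with $|\alpha_k-\alpha_\infty|\le C\eps_k^{1/2}$ and $r_k\le\rho_1^{-1}r$ yields, after the rotation taking $\alpha_\infty$ to $e^1$, that $\Gamma_1\cap\Gamma_2\cap B_r\subset\{|x_1|\le C\eps_k^{1/2}r\}\subset\{|x_1|\le Cr^{1+\alpha_d/2}\}$, which is the claim (relabel $\alpha_d$). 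I expect the main obstacle to be purely bookkeeping: verifying uniformly in $k$ that the rescaled solution still satisfies the energy bound $W((u_k)_k,0,1)\le W_1+\tfrac12\eta$ needed to invoke Lemma \ref{ProtectAngle} — this follows from monotonicity of $W$ and $W((u_k),0)=W_1$ at a point of $\SOne$, but must be tracked carefully — and making sure the approximation hypothesis of Lemma \ref{ProtectAngle} (closeness to an unstable half-space profile with $|\alpha-\beta|<2\delta\eps^{1/2}$) is exactly the invariant carried by the iteration.
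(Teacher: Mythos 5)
Your proposal is correct and follows essentially the same route as the paper: set $u=u_1$, $w=-u_3$, run a dyadic iteration of Proposition \ref{IOF2}, and at each step use the scale invariance of the Weiss energy together with the bound $W((u_k),0,1)\le W_1+\tfrac12\eta$ to invoke Lemma \ref{ProtectAngle} and keep $|\alpha_m-\beta_m|<\delta\eps_m^{1/2}$, so the error genuinely halves and the parameters converge geometrically, yielding the $C^{1,\alpha_d}$ localization. The final exponent bookkeeping and the observation that the rescaled triple still has $0\in\SOne$ with the required energy bound are exactly the points the paper spells out, so your ``main obstacle'' is in fact the same short verification the paper performs.
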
 

\begin{proof}
This proof is based on an iteration of Proposition \ref{IOF2}. 

Define $u=u_1$ and $w=-u_3$. It suffices to prove  $$\Gamma_{u}\cap\Gamma_w\cap B_r\subset\{|x_1|\le Cr^{1+\alpha_d}\}$$ for all $r\in (0,1/2)$. 

Define $u_0=u$, $w_0=w$, $\alpha_0=\alpha$, $\beta_0=\beta$, and $\eps_0=\eps$. Then applying the previous lemma, we have  $$(u_0,w_0)\in\mathcal{S}(\alpha_0,\beta_0;\eps_0) \text{ in $B_1$}$$ with $|\alpha_0-\beta_0|<\delta\eps_0^{1/2}.$

Suppose we have completed the $m$th step of this iteration, that is, we have found 
$$(u_m,w_m)\in\mathcal{S}(\alpha_m,\beta_m;\eps_m) \text{ in $B_1$}$$ with $|\alpha_m-\beta_m|<\delta\eps_m^{1/2}$. 

Then we apply Proposition \ref{IOF2} to get 
$$(u_m,w_m)\in\mathcal{S}(\alpha',\beta';\frac{1}{2}\eps_m) \text{ in $B_{\rho_1}$}$$ with $|\alpha'-\beta'|<|\alpha_m-\beta_m|+C\eps_m$. 

Define$$(u_{m+1},w_{m+1})=\frac{1}{\rho_1^2}(u_m,w_m)(\rho_1x),$$ $\alpha_{m+1}=\alpha'$, $\beta_{m+1}=\beta'$ and $\eps_{m+1}=\frac{1}{2}\eps_m$.  Then 
$$(u_{m+1},w_{m+1})\in\mathcal{S}(\alpha_{m+1},\beta_{m+1};\eps_{m+1}) \text{ in $B_1$.}$$

Also, $$|\alpha_{m+1}-\beta_{m+1}|<\delta\eps_{m}^{1/2}+C\eps_m\le \delta(2\eps_{m+1})^{1/2}+C\eps_{m+1}.$$ Consequently, we  have $|\alpha_{m+1}-\beta_{m+1}|<2\delta\eps_{m+1}^{1/2}$ if $\eps_d$ is small. 

Moreover, if we define $\tilde{u}_1=u_{m+1}$, $\tilde{u}_2=-u_{m+1}+w_{m+1}$ and $\tilde{u}_3=-w_{m+1}$, then $$W((\tilde{u}_k),0,1)=W((u_k),0,\rho_1^{m+1})\le W((u_k),0,1)\le W_1+\frac{1}{2}\eta.$$ In particular, Lemma \ref{ProtectAngle} gives $$|\alpha_{m+1}-\beta_{m+1}|<\delta\eps_{m+1}^{1/2}.$$
This completes the $(m+1)$th step of the iteration.

Now for $r\in(0,1/2)$, find the integer $m_0$ such that $\rho_1^{m_0+1}\le r<\rho_1^{m_0}$.

The estimate at step $m_0$ implies that  $$\Gamma_{u}\cap\Gamma_w\cap B_{r}\subset\{|x\cdot\alpha|\le C\frac{1}{2^{m_0/2}}\eps^{1/2}r\}$$ for some $\alpha\in\Sph.$ Using $\rho_1^{m_0+1}\le r$, we have
 $$\Gamma_{u}\cap\Gamma_w\cap B_{r}\subset\{|x\cdot\alpha|\le C\eps^{1/2}r^{1+\alpha_d}\},$$where $\alpha_d$ depends on $\rho_1.$ 
\end{proof} 

%Theorem \ref{MainResult3} is a consequence of the following, which follows directly from the previous point-wise estimate.
%\begin{proposition}
%Suppose $(u_1,u_2,u_3)$ is a solution to the $3$-membrane problem in $\Omega$ with $x_0\in\SOne$.  Then there is a small $r_0>0$ such that $\SOne\cap B_{r_0}(x_0)$ is covered by a $C^{1,\alpha_d}$-hypersurface.  
%\end{proposition} 

%\begin{proof}
%This is a standard consequence of the previous point-wise estimate. We give a sketch of the proof.

%Since $x_0\in\SOne$, a rescale of $(u_k)$, say, $(u_k)_\rho$, satisfies the assumptions in Lemma \ref{LongIteration2} in $B_1$. Using the continuity of $\nabla u_k$, these assumptions are satisfied by translations of $(u_k)_\rho(x\cdot-\bar{x})$ whenever $\bar{x}$ is small and $\bar{x}\in\SOne.$ 

%Consequently, we can apply Lemma \ref{LongIteration2} and trap singular points of type 1 in strips of width $r^{1+\alpha_d}$ in $B_r(\bar{x})$ around all points $\bar{x}\in\SOne$ close to $x_0.$  This gives the desired $C^{1,\alpha_d}$ covering of $\SOne. $
%\end{proof} 

%%%%%%%%%%%%%%%%%%%%%%%%%%%%%%%%%%%%%%%%%%%%%%%%%%%%%%%%%%%%%%%%%%%%%%%%%%%%%%%%%%%%%%%%%%%%%%%%%%%%%%%%%%%%%%%%%%%%%%%%%%%%%%%%%%%%%%%%%%%%%%%%%%%%%%
\section{Free boundary regularity of $\STwo$}
In this section, we prove Theorem \ref{MainResult1} about the stratification of singular points of type 2, $\STwo,$ as in Definition \ref{FreeBoundaryPoints}.

The proof is an application of the classical ideas of Monneau \cite{M}. It suffices to prove the following monotonicity formula at points in $\STwo$. The rest follows exactly like in \cite{M}. 

The reader is encouraged to consult Colombo-Spolaor-Velichkov \cite{CSV}, Figalli-Serra \cite{FSe} for recent developments on the singular set in the classical obstacle problem, and to consult Savin-Yu \cite{SY2, SY3} for regularity of the singular set in the fully nonlinear obstacle problem. 

\begin{lemma}
Suppose that  $\Trip$ solves the $3$-membrane problem in $B_1$ with $0\in\STwo.$

Let $(v_1,v_2,v_3)$ be a parabola solution as in Definition \ref{ParabolaSolution}.

Then the following is a non-decreasing function in $r\in(0,1):$
$$M(r)=\frac{1}{r^{d+3}}\int_{\partial B_r}\sum(u_k-v_k)^2.$$
\end{lemma}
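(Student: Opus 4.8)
The plan is to imitate the Monneau-type monotonicity argument from the classical obstacle problem. The key is that both $(u_k)$ and the reference parabola solution $(v_k)$ have the same Weiss energy $W_3$ at the origin (since $0 \in \STwo$), and the same $2$-homogeneous scaling, so the difference $h_k := u_k - v_k$ will be ``small'' in a scale-invariant sense and will satisfy a favourable differential inequality. First I would record the algebraic structure: writing $u = u_1$, $w = -u_3$ and similarly for $v$, both triples satisfy the same system, and since the $v_k$ are pure quadratics with $\Delta v_1 = 1$, $\Delta v_3 = -1$ on all of $\R^d$, the Laplacians of $h_k = u_k - v_k$ are given by the indicator terms in Definition \ref{Solution} minus constants, hence $h_k$ has bounded, sign-favourable right-hand side on the relevant coincidence/non-coincidence sets. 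The crucial sign fact to extract is that $\sum_k h_k \, \Delta h_k \ge 0$ pointwise a.e., which comes from checking each of the finitely many phases (e.g. on $\{u_1 > u_2 = u_3\}$ one has $u_3 < u_2$ while $v_2 = v_3$ forces $h_2 \ge h_3$ there, etc.), exactly the computation underlying the $C^{1,1}$ bound in Theorem \ref{OptimalRegularity}.

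Next I would differentiate $M(r)$. Parametrising $\partial B_r$ and using homogeneity-degree-$2$ bookkeeping, one gets the standard identity
\begin{equation*}
M'(r) = \frac{2}{r^{d+4}} \int_{\partial B_r} \sum_k h_k \, \big( x \cdot \nabla h_k - 2 h_k \big),
\end{equation*}
and then the Rellich/Pohozaev-type computation rewrites $\int_{\partial B_r} \sum h_k (x\cdot\nabla h_k - 2h_k)$ in terms of a Weiss-type energy of $h = (h_k)$ plus a bulk term $\int_{B_r} \sum h_k \Delta h_k$. The point is that the ``Weiss energy of $h$'' piece is non-negative by the same monotonicity mechanism as in Theorem \ref{Monotonicity} (applied to the homogeneity-$2$ setting, where the linear part of $W$ drops out because $v$ is a genuine solution and the quadratic terms cancel), and the bulk term $\int_{B_r}\sum h_k \Delta h_k$ is non-negative by the pointwise sign computation above. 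Combining, $M'(r) \ge 0$.

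I expect the main obstacle to be the bookkeeping in the integration-by-parts step: one must be careful that the indicator right-hand sides are merely $L^\infty$ (not continuous), so the manipulations should be justified either by working with $W^{2,p}$ solutions and Sobolev traces, or by a standard approximation/mollification argument, and one must verify that no boundary contributions on the internal free boundaries arise (they don't, because $h_k \in C^{1,1}$ across them by Theorem \ref{OptimalRegularity}). A secondary delicate point is pinning down the exact sign in each phase of $\sum_k h_k \Delta h_k \ge 0$: this requires using the ordering $u_1 \ge u_2 \ge u_3$ together with $v_1 \ge v_2 \ge v_3$ and the fact that $\STwo$ blow-ups are parabola solutions (so $v$ has no genuine coincidence set), and it is exactly here that the constraint $2A + B \ge 0 \ge A + 2B$ from Definition \ref{ParabolaSolution} is used. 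Once $M' \ge 0$ is established the lemma follows immediately, and the stratification of $\STwo$ then follows from $M$ by the now-classical Monneau--Federer dimension-reduction scheme as in \cite{M}, which I would only cite rather than reproduce.
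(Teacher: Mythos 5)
Your proposal follows essentially the same route as the paper: differentiate $M(r)$, split the resulting boundary integral into (the radial derivative of) the difference of Weiss energies $W((u_k),0,r)-W((v_k),0,r)$ plus a bulk term $\int_{B_r}\sum (u_k-v_k)\Delta(u_k-v_k)$, use the Weiss monotonicity together with $W((u_k),0)=W_3=W((v_k),0,r)$ (which is exactly where $0\in\STwo$ enters) to handle the first piece, and a pointwise sign inequality $\sum (u_k-v_k)\Delta(u_k-v_k)\ge 0$ for the second. That is precisely the paper's proof, read in the opposite order (the paper starts from the Weiss inequality and integrates by parts down to $rM'(r)$; you start from $M'(r)$ and integrate up).

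One caveat worth flagging: your illustrative check of the pointwise sign in the phase $\{u_1>u_2=u_3\}$ is garbled. There one has $u_2=u_3$ (not $u_3<u_2$), $v_2\ge v_3$ with equality only on a cone (not $v_2=v_3$), and what one needs is $h_3\ge h_2$, not $h_2\ge h_3$; it holds because $h_3-h_2=(u_3-u_2)-(v_3-v_2)=v_2-v_3\ge 0$, where $v_2\ge v_3$ comes from $A+2B\le 0$. More usefully, the mechanism that handles all phases (including those with $u_1=u_2=u_3$) at once is the rearrangement inequality: on a block $\{u_n>u_{n+1}=\cdots=u_{n+m}>u_{n+m+1}\}$ the common value of the $u_k$ factors out, $\sum_{\mathrm{block}}\Delta(u_k-v_k)=0$, and $\sum_{\mathrm{block}}(u_k-v_k)\Delta(u_k-v_k)=\sum v_k f_k-\frac{1}{m}(\sum v_k)(\sum f_j)\ge 0$ since $(v_k)$ and $(f_k)$ are both decreasing. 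The parenthetical ``$v$ has no genuine coincidence set'' is not needed for this; only $v_1\ge v_2\ge v_3$ and $f_1>f_2>f_3$ are used.
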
 

\begin{proof}
Define $w_k=u_k-v_k$ for $k=1,2,3.$

Let $W$ denote the Weiss energy  as in \eqref{WeissEnergy}. 

By its monotonicity as in Theorem \ref{Monotonicity}, and the definition of $\STwo$, we have 
\begin{align*}
0\le& 2W((u_k),0,r)-2W((v_k),0,r)\\=&\frac{1}{r^{d+2}}\int_{B_r}\sum(|\nabla u_k|^2-|\nabla v_k|^2)+2w_1-2w_3-\frac{2}{r^{d+3}}\int_{\partial B_r}\sum (u_k^2-v_k^2)\\=&\frac{1}{r^{d+2}}\int_{B_r}\sum(|\nabla w_k|^2+2\nabla w_k\cdot\nabla v_k)+2w_1-2w_3-\frac{2}{r^{d+3}}\int_{\partial B_r}\sum (w_k^2+2w_kv_k)\\=&\frac{1}{r^{d+2}}\int_{B_r}\sum (-w_k\Delta w_k-2w_k\Delta v_k)+2w_1-2w_3\\&-\frac{2}{r^{d+3}}\int_{\partial B_r}\sum (w_k^2+2w_kv_k)+\frac{1}{r^{d+2}}\int_{\partial B_r}\sum w_k(w_k)_\nu+\frac{2}{r^{d+2}}\int_{\partial B_r}\sum w_k(v_k)_\nu,
\end{align*}where $(\cdot)_\nu$ denotes the normal derivative of a function. 

By definition of parabola solutions, we have  $\Delta v_1=1$ and $\Delta v_3=-1$. Their homogeneity implies $r(v_k)_\nu=2v_k$ along $\partial B_r$. Thus we can continue the previous estimate to get 
\begin{align*}
0\le \frac{1}{r^{d+2}}\int_{B_r}\sum (-w_k\Delta w_k)+\frac{1}{r^{d+3}}\int_{\partial B_r}\sum w_k[r(w_k)_\nu-2w_k].
\end{align*}
Note that $$r\frac{d}{dr}M(r)=2\frac{1}{r^{d+3}}\int_{\partial B_r}\sum w_k[r(w_k)_\nu-2w_k],$$ it suffices to show that $\sum w_k\Delta w_k\ge 0.$

We actually verify this condition for general $N$, that is, when there are an arbitrary number of membranes. See the following remark.
\end{proof} 

\begin{remark}
It is interesting to note that a similar proof works when there are arbitrary number of membranes.

Suppose that $(u_1,u_2,\dots,u_N)$ solves the $N$-membrane problem with constant forcing terms $f_1>f_2>\dots>f_N,$ and that $v_k=\frac{1}{2}x\cdot A_kx$ are parabola solutions satisfying $v_k\ge v_{k+1}$ and $trace(A_k)=f_k.$ To extend the previous proof for this situation, the only non-trivial step is to show that $$\sum(u_k-v_k)\Delta( u_k-v_k)\ge0.$$  

Suppose for some $m,n$, we have, at a point $x$,  $$u_{n}(x)>u_{n+1}(x)=u_{n+2}(x)=\dots=u_{n+m}(x)>u_{n+m+1}(x).$$ Then we have $\sum_{n+1}^{n+m}\Delta(u_k-v_k)(x)=0$ and $\Delta u_k(x)=\frac{1}{m}\sum_{n+1}^{n+m}f_j$ for each $k=n+1,n+2,\dots,m$, which imply \begin{align*}&\sum_{k=n+1}^{n+m}(u_k-v_k)(x)\Delta( u_k-v_k)(x)\\=&-\sum_{k=n+1}^{n+m}v_k(x)\Delta( u_k-v_k)(x)\\=&\sum_{k=n+1}^{n+m}v_k(x)f_k-\sum_{k=n+1}^{n+m}v_k(x)(\frac{1}{m}\sum_{j=n+1}^{n+m}f_j).\end{align*} By the rearrangement inequality, this is non-negative since $v_k\ge v_{k+1}$ and $f_k\ge f_{k+1}.$\end{remark}

%%%%%%%%%%%%%%%%%%%%%%%%%%%%%%%%%%%%%%%%%%%%%%%%%%%%%%%%%%%%%%%%%%%%%%%%%%%%%%%%%%%%%%%%%%%%%%%%%%%%%%%%%%%%%%%%%%%%%%%%%%%%%%%%%%%%%%%%%%%%%%%%%%%%%%%%%%%%%%%%%%%%%%%%%%%%%%%%%%%%%%%%%%%%%%%%%%%%%%%
\appendix
\section{Free boundary regularity in the obstacle problem}
This appendix is devoted to the study of the obstacle problem, namely, 
\begin{equation}\label{OBP}\begin{cases}
\Delta u=\mathcal{X}_{\{u>0\}}\\
u\ge 0
\end{cases} \text{ in $\Omega.$}\end{equation} The goal is to show that the free boundary $\partial\{u>0\}$ is regular when the solution is well-approximated by a half-space solution. 

In essence, this is the  classical result by Caffarelli \cite{C}. However, for our purpose, we need a version with a quantified $C^{1,\alpha}$-estimate. This seems difficult to find in the literature. We include it here with a  proof. Our proof is different from the one in \cite{C}. A similar argument was used in \cite{B}.

\begin{theorem}\label{ObReg}
Suppose $u$ solves the obstacle problem \eqref{OBP} in $B_1$ with $0\in\partial\{u>0\}$. 

If  we have, for some $\eps<\eps_d$, $$|u-\frac{1}{2}\max\{x_1-a,0\}^2|<\eps \text{ in $B_1$,}$$ then $\partial\{u>0\}$ is a $C^{1,\alpha}$-hypersurface in $B_{1/2}$ with $C^{1,\alpha}$-norm  bounded by $C\eps$. 

Here $\eps_d$ and $C$ are dimensional constants.
\end{theorem}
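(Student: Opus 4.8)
The plan is to prove a single improvement-of-flatness step and then iterate it, following exactly the scheme of Section 4 (compare Lemma \ref{TrappingByTranslations} and Lemma \ref{ImprovementOfApproximation1}), but with substantial simplifications since here there is only one obstacle and no coupling. Concretely I would prove: there are dimensional $\rho\in(0,1)$, $\eps_d>0$ and $C$ such that if $u$ solves \eqref{OBP} in $B_1$ with $0\in\partial\{u>0\}$ and $|u-\tfrac12\max\{x\cdot e-a,0\}^2|<\eps$ in $B_1$ for some $e\in\Sph$ and $\eps<\eps_d$, then there are $e'\in\Sph$ and $a'\in\R$ with $|e'-e|+|a'-a|\le C\eps$ and $|u-\tfrac12\max\{x\cdot e'-a',0\}^2|<\tfrac12\eps\rho^2$ in $B_\rho$. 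An iteration of this step, run at every free boundary point in $B_{1/2}$, then produces the quantified $C^{1,\alpha}$ estimate.

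For the single step I would first localize $a$ as in Lemma \ref{Localization1}: if $a>\sqrt{4d\eps}$ then $u\le\eps$ on $B_a\subset\{x\cdot e<a\}$, so the non-degeneracy of the obstacle problem (Proposition \ref{QuadGrowthRef} in the single-obstacle case) forces $u\equiv 0$ near $0$, contradicting $0\in\partial\{u>0\}$; together with $u(0)=0$ this gives $|a|\le C\eps^{1/2}$. Next I would trap $u$ between translated half-space solutions: the functions $\tfrac12\max\{x\cdot e-a\mp A\eps,0\}^2$ solve \eqref{OBP}, are monotone in the $e$-direction, and a refined comparison principle exactly of the type of Lemma \ref{FirstComparison} — whose proof collapses to the single-obstacle case, applied to the supersolution $(1-C\eps)\tfrac12\max\{x\cdot e-a\pm A\eps,0\}^2$ with a small quadratic added — gives, for $A$ large and $\eps<\eps_d$,
\begin{equation*}
\tfrac12\max\{x\cdot e-a-A\eps,0\}^2\le u\le \tfrac12\max\{x\cdot e-a+A\eps,0\}^2 \text{ in $B_{1/2}$.}
\end{equation*}
Hence $\partial\{u>0\}\cap B_{1/2}\subset\{|x\cdot e-a|\le A\eps\}$, $u\le C\eps^2$ in $\{x\cdot e\le a\}\cap B_{1/2}$, and $\{x\cdot e>a+A\eps\}\subset\{u>0\}$, so $\hat u:=\eps^{-1}(u-\tfrac12\max\{x\cdot e-a,0\}^2)$ is harmonic in $B_{1/2}\cap\{x\cdot e>a+C\eps^{1/2}\}$, bounded there by $1$, and $O(\eps^{1/2})$ on the flat portion $\{x\cdot e=a+C\eps^{1/2}\}$ (there both $u$ and the model are $\tfrac{C^2}{2}\eps+O(\eps^{3/2})$). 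Letting $h$ solve the Dirichlet problem in $B_{1/2}\cap\{x\cdot e>a+C\eps^{1/2}\}$ with $h=\hat u$ on the spherical part and $h=0$ on the flat portion, the maximum principle gives $|\hat u-h|\le C\eps^{1/2}$; by odd reflection across the flat hyperplane $h$ is within $Cr^3$ of $(x\cdot e-a-C\eps^{1/2})(\gamma_1+\ell(x'))$ in $B_r$ for a bounded constant $\gamma_1$ and a bounded linear function $\ell$ of the coordinates $x'$ orthogonal to $e$. Setting $e'=(e+\eps\nabla\ell)/|e+\eps\nabla\ell|$ and $a'=a+\eps\gamma_1+C\eps^{3/2}$ and unwinding (using $u\le C\eps^2$ on $\{x\cdot e\le a\}$) yields $|u-\tfrac12\max\{x\cdot e'-a',0\}^2|\le C\eps(r^3+\eps^{1/2})$ in $B_r$; choosing $\rho$ with $C\rho^3<\tfrac14\rho^2$ and then $\eps_d$ with $C\eps_d^{1/2}<\tfrac14\rho^2$ closes the step.

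Iterating at the free boundary point $0$ gives $e_k\to\hat e$ with $|e_k-\hat e|\le C\eps2^{-k}$ and $a_k\to0$, and the trapping at each scale yields $\partial\{u>0\}\cap B_r\subset\{|x\cdot\hat e|\le C\eps r^{1+\alpha}\}$ for all small $r$: pointwise $C^{1,\alpha}$ differentiability of the free boundary at $0$ with modulus $C\eps$. Running the same iteration at every free boundary point $x_0\in\partial\{u>0\}\cap B_{1/2}$ (the rescaled hypotheses hold there, since flatness and membership in $\partial\{u>0\}$ persist, up to a fixed loss, under recentering), and comparing the limiting directions $\hat e(x_0),\hat e(x_1)$ at two free boundary points at distance $s$ by using the flatness of $u$ at scale $\sim s$ around each, gives $|\hat e(x_0)-\hat e(x_1)|\le C\eps s^{\alpha}$; together with the monotonicity-cone argument ($\partial_\nu u\ge0$ in $\{u>0\}$ near $0$ for $\nu$ close to $\hat e$, so $\{u>0\}$ is a subgraph in $B_{1/2}$), this shows $\partial\{u>0\}\cap B_{1/2}$ is a $C^{1,\alpha}$-graph with norm $\le C\eps$.

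The main obstacle is the single improvement-of-flatness step: obtaining the two-sided trapping of $u$ by translated half-space solutions with the sharp $A\eps$ precision — this is precisely where the refined comparison principle of Lemma \ref{FirstComparison} type is needed, since the naive comparison on $\partial B_{1/2}$ is unavailable because the boundary data of $u$ and of the translated models are not ordered near the free boundary line — and then controlling the error between $\hat u$ and its harmonic replacement near the flat portion of the boundary. The subsequent passage from the pointwise estimate at each free boundary point to the uniform $C^{1,\alpha}$ bound on $B_{1/2}$ is standard, but requires care that all constants stay dimensional and that the flatness persists, up to a fixed factor, under recentering at nearby free boundary points.
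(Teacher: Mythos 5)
Your proposal follows the same improvement-of-flatness scheme as the paper's Appendix A proof: localize $a$ via non-degeneracy, trap $u$ between translated half-space models using a refined comparison principle of the type of Lemma \ref{FirstComparison}, pass to the harmonic replacement of $\hat u=\eps^{-1}(u-P)$ in $\{x\cdot e>a+C\eps^{1/2}\}$, read off the new direction and shift from boundary regularity, and iterate. You additionally spell out the passage from the pointwise estimate at each free boundary point to the uniform $C^{1,\alpha}$-graph statement on $B_{1/2}$, which the paper leaves implicit; those details are standard and correct.
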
 

The proof is based on an improvement of flatness argument. To simplify our notations, we introduce the following class of solutions:
\begin{definition}
For $\alpha\in\Sph$ and $a\in\R$, we write $$u\in\mathcal{R}(\alpha;a;\eps) \text{ in $B_r$}$$
if $u$ solves the obstacle problem in $B_r$ with $0\in\partial\{u>0\}$, and $$|u-\frac{1}{2}\max\{x\cdot\alpha-a,0\}^2|<\eps r^2 \text{ in $B_r.$}$$\end{definition} 

Similar to  Lemma \ref{NonDegeneracy}, we have 
\begin{lemma}\label{NonDegenObstacle}
Suppose $u$ solves the obstacle problem in $B_r$.  

If $u\le\frac{1}{4d}r^2$ along $\partial B_r$, then $u(0)=0.$ \end{lemma}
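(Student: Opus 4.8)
The plan is to argue by contradiction, comparing $u$ with the fundamental quadratic $\tfrac{1}{2d}|x|^2$, whose Laplacian equals $1$ — precisely the value of $\Delta u$ on the non-coincidence set $\{u>0\}$. Suppose $u(0)>0$, and let $\Omega'$ be the connected component of the open set $\{u>0\}$ containing the origin. On $\Omega'$ we have $\Delta u=1$, so the function
$$w(x):=u(x)-\frac{1}{2d}|x|^2$$
is harmonic in $\Omega'$ and continuous on $\overline{\Omega'}\subset\overline{B_r}$. It then suffices to show $w\le 0$ on $\partial\Omega'$: the maximum principle on the bounded domain $\Omega'$ gives $w\le 0$ throughout $\Omega'$, and evaluating at the origin yields $u(0)\le\tfrac{1}{2d}|0|^2=0$, contradicting $u(0)>0$.

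To verify the boundary inequality I would split $\partial\Omega'$ into the portion lying in the interior of $B_r$ and the portion on $\partial B_r$. On $\partial\Omega'\cap B_r$ the value $u=0$: since $\{u>0\}$ is open and $\Omega'$ is one of its connected components, any such boundary point cannot lie in $\{u>0\}$ (otherwise it would belong to another, open, component disjoint from $\Omega'$, contradicting that it is a limit of points of $\Omega'$), hence $u=0$ there and $w=-\tfrac{1}{2d}|x|^2\le 0$. On $\partial\Omega'\cap\partial B_r$ the hypothesis gives $u\le\tfrac{1}{4d}r^2$ while $\tfrac{1}{2d}|x|^2=\tfrac{1}{2d}r^2$, so $w\le\tfrac{1}{4d}r^2-\tfrac{1}{2d}r^2=-\tfrac{1}{4d}r^2<0$. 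This is exactly the barrier argument underlying the non-degeneracy estimate already recorded in Lemma \ref{NonDegeneracy} for the reformulated system.

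The only step requiring genuine care — and the closest thing to an obstacle here — is the dichotomy for boundary points of $\Omega'$ in the interior of $B_r$, namely that they carry the value $u=0$; this relies on the openness of $\{u>0\}$ and of its connected components together with the continuity of $u$. Everything else is the elementary maximum principle. If one prefers to bypass the discussion of components altogether, the same comparison can be run directly on the open set $\{u>0\}\cap B_r$, using only that its boundary inside $B_r$ is contained in $\{u=0\}$.
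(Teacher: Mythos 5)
Your proof is correct and is precisely the standard barrier/comparison argument with the quadratic $\tfrac{1}{2d}|x|^2$ that underlies the paper's non-degeneracy estimates; the paper states this lemma without proof, deferring to the analogous Lemma \ref{NonDegeneracy}, which rests on the same mechanism. In fact your computation yields the sharper threshold $\tfrac{1}{2d}r^2$ for the pure obstacle problem — the paper's constant $\tfrac{1}{4d}$ is simply inherited from the coupled system, where the factor $\tfrac12$ in front of $w$ costs an extra factor of two.
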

Similar to Lemma \ref{TrappingByTranslations}, we have 
\begin{lemma}\label{TrappingByTranslationsObstacle}
Suppose $u\in\mathcal{R}(\alpha;a;\eps)$ in $B_1$. 

Define $P=\frac{1}{2}\max\{x\cdot\alpha-a,0\}^2$. 

Then there are dimensional constants $A$ and $\eps_d$ such that 
$$P(\cdot-A\eps\alpha)\le u\le P(\cdot+A\eps\alpha) \text{ in $B_{1/2}$}$$if $\eps<\eps_d.$
\end{lemma}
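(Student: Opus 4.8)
The plan is to obtain this lemma for free from the comparison machinery of Section 3, by regarding a solution of the obstacle problem as half of a solution of the system of obstacle problems. If $u$ solves \eqref{OBP} in $B_1$, then the pair $(u,u)$ satisfies $u\ge 0$, $u\ge\tfrac12 u$, $\Delta u=\mathcal X_{\{u>0\}}=\mathcal X_{\{u>\frac12 u\}}$ and $\Delta u\le 1$, so $(u,u)\in\mathcal A(B_1)$; in particular $(u,u)\in\Subsol(B_1)\cap\Supsol(B_1)$. Likewise, for any $c\in\R$ the translated profile $P_c:=\tfrac12\max\{x\cdot\alpha-a+c,0\}^2$ has $\Delta P_c=\mathcal X_{\{x\cdot\alpha>a-c\}}$ and $P_c\ge0$, so $(P_c,P_c)$ lies in $\Subsol(\R^d)\cap\Supsol(\R^d)$ --- it is an \emph{exact} sub- and supersolution, which makes the present situation slightly cleaner than that of Lemma \ref{TrappingByTranslations} (no correction factor $(1-C|\alpha-\beta|^2)$ is needed). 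Consequently the entire argument of Lemma \ref{TrappingByTranslations}, with $w$ replaced by $u$ and the approximate solution $(\Phi,\Psi)$ replaced by $(P,P)$, applies verbatim.

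Concretely, I would first record the localization $|a|\le C\eps^{1/2}$, exactly as in Step 1 of the proof of Lemma \ref{Localization1}: since $0\in\partial\{u>0\}$ the open set $\{u>0\}$ misses $0$, hence $u(0)=0$, so $|u(0)-P(0)|<\eps$ forces $\tfrac12\max\{-a,0\}^2<\eps$, giving $a\ge-C\eps^{1/2}$; and if $a>\sqrt{4d\eps}$ then $u<\eps$ on the ball $B_{\sqrt{4d\eps}}(x_0)$ for every $x_0$ with $|x_0|<a-\sqrt{4d\eps}$, so Lemma \ref{NonDegenObstacle} forces $u\equiv0$ on a neighborhood of $0$, contradicting $0\in\partial\{u>0\}$; hence $a\le\sqrt{4d\eps}$. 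With this bound $P(\cdot\pm A\eps\alpha)\le1$ on the relevant outer sphere for $\eps$ small, which is the size restriction required by Lemmas \ref{FirstComparison} and \ref{SecondComparison} (after the harmless rescaling that turns the pair $B_2\to B_1$ into $B_1\to B_{1/2}$). For the upper bound I would then set $v=P(\cdot+A\eps\alpha)$ with $A$ a large dimensional constant; since $P$ is nondecreasing in the direction $\alpha$, on the region where $v\le\tfrac1{4d}$ one has $v\ge P\ge u-\eps$, while on the region where $v>\tfrac1{4d}$, i.e.\ $x\cdot\alpha-a>c$ for a fixed dimensional $c>0$, a one-line computation gives $v(x)=\tfrac12(x\cdot\alpha-a+A\eps)^2\ge P(x)+A\eps c\ge u(x)+10d\eps$ once $A$ is large depending only on $d$. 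These are precisely the hypotheses of Lemma \ref{FirstComparison} applied to $(u,u)\in\Subsol$ and $(v,v)\in\Supsol$, yielding $u\le P(\cdot+A\eps\alpha)$ in $B_{1/2}$. The lower bound $u\ge P(\cdot-A\eps\alpha)$ follows symmetrically, using $(u,u)\in\Supsol$, $(P(\cdot-A\eps\alpha),P(\cdot-A\eps\alpha))\in\Subsol$, and Lemma \ref{SecondComparison}.

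The one genuinely delicate point --- and the step I expect to be the main obstacle --- is the behavior near the free boundary, on the thin strip $\{0<x\cdot\alpha-a\lesssim\eps\}$ where the barrier $v$ is only of size $O(\eps)$: there $u$ need not vanish (we know only $u<\eps$), so a naive pointwise comparison along a single sphere fails, and one must genuinely invoke the quantitative comparison Lemmas \ref{FirstComparison}--\ref{SecondComparison}, whose quadratic-correction construction $(1-4d\eps)v+2\eps|x-x_0|^2$ is designed to absorb exactly this $O(\eps)$ discrepancy. Everything else --- checking $\Delta P\le1$, monotonicity of $P$ along $\alpha$, and the bookkeeping of dimensional constants --- is routine and identical to the proof of Lemma \ref{TrappingByTranslations}.
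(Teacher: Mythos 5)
Your proof is correct and follows the same comparison-with-translated-barriers strategy that the paper intends (the paper offers no explicit proof, deferring to ``similar to Lemma \ref{TrappingByTranslations}''). The one genuinely nice twist is the diagonal embedding: observing that $u$ solving \eqref{OBP} gives $(u,u)\in\mathcal A(B_1)$, and likewise $(P_c,P_c)\in\mathcal A(\R^d)$, so that Lemmas \ref{FirstComparison}--\ref{SecondComparison} apply \emph{verbatim} and no scalar restatement of the comparison lemma is needed. You have also correctly noted that the $(1-C|\alpha-\beta|^2)$ damping from Lemma \ref{PropOfApproxSol1} disappears here, since $\Delta P\le1$ exactly, and you have correctly identified the only delicate region --- the $O(\eps)$-thick strip near $\{x\cdot\alpha=a\}$ where the barrier is of size $O(\eps)$ while $u$ is only known to be $<\eps$ --- as the place where the quadratic correction $(1-4d\eps)u' + 2\eps|x-x_0|^2$ inside the comparison lemmas does the real work. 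The only loose ends are cosmetic: Lemma \ref{FirstComparison} is stated on $B_2\to B_1$ while you need $B_1\to B_{1/2}$ (so the threshold $u'\le 1$ on the outer sphere and the constants $10d\eps$, $\frac1{4d}$ shift by fixed dimensional factors under the rescaling), and similarly $u\le1$ must be checked for Lemma \ref{SecondComparison}; you flag this as ``harmless rescaling,'' which is accurate, though it is worth noting that the paper itself is equally informal on this point in the proof of Lemma \ref{TrappingByTranslations}.
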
 

Theorem \ref{ObReg} is a direct consequence of the following improvement-of-flatness result.
\begin{lemma}
Suppose $u\in\mathcal{R}(\alpha;a;\eps)$ in $B_1$ for some $\eps<\eps_d$. 

Then there are $\alpha'\in\Sph$ and $a'\in\R$, satisfying $|\alpha'-\alpha|+|a'-a|\le C\eps$, such that $$u\in\mathcal{R}(\alpha';a';\frac{1}{2}\eps) \text{ in $B_\rho$.}$$ Here $\eps_d$, $\rho$ and $C$ are dimensional constants. 
\end{lemma}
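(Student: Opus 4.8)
The plan is to adapt, in simplified form, the improvement-of-flatness argument from the proof of Lemma~\ref{ImprovementOfApproximation1}; the present situation is easier because there is a single free boundary and no angle between two hyperplanes to track. After a rotation assume $\alpha=e^1$ and write $P=\tfrac12\max\{x_1-a,0\}^2$. First I would record the localization facts forced by the flatness hypothesis together with Lemma~\ref{NonDegenObstacle}: since $0\in\partial\{u>0\}$ gives $u(0)=0$, we get $\tfrac12\max\{-a,0\}^2<\eps$ and hence $|a|\le C\eps^{1/2}$; on $B_{3/4}\cap\{x_1>a+C\eps^{1/2}\}$ we have $P>\eps$, so $u>0$ and therefore $\Delta u=1=\Delta P$ there; and on $B_{3/4}\cap\{x_1<a-C\eps^{1/2}\}$, applying Lemma~\ref{NonDegenObstacle} on balls of radius comparable to $\eps^{1/2}$ gives $u\equiv 0$. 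Setting $\hu=\tfrac1\eps(u-P)$, we obtain $|\hu|<1$ in $B_1$ and $\Delta\hu=0$ in $B_{3/4}\cap\{x_1>a+C\eps^{1/2}\}$; moreover Lemma~\ref{TrappingByTranslationsObstacle} yields $0\le u\le P(\cdot+A\eps\alpha)$, from which $|u-P|\le C\eps^{3/2}$ and hence $|\hu|\le C\eps^{1/2}$ along the flat plane $\{x_1=a+C\eps^{1/2}\}$.

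Next I would introduce the harmonic replacement $h$ of $\hu$ in the half-ball $B_{3/4}\cap\{x_1>a+C\eps^{1/2}\}$, with $h=\hu$ on the spherical part of the boundary and $h=0$ on the flat part. The maximum principle gives $|\hu-h|\le C\eps^{1/2}$ in that region, and, since $|h|\le 1$ and $h$ vanishes on a hyperplane, boundary regularity estimates for harmonic functions produce dimensionally bounded constants $\gamma_1,\dots,\gamma_d$ with
$$\Big|h-(x_1-a-C\eps^{1/2})\big(\gamma_1+{\textstyle\sum_{k\ge2}\gamma_kx_k}\big)\Big|\le Cr^3\quad\text{in }B_r\cap\{x_1>a+C\eps^{1/2}\},\ r<\tfrac12.$$
Absorbing the $O(\eps^{1/2})$ displacement of the base point into the error and multiplying by $\eps$, this becomes
$$\Big|u-P-\eps(x_1-a)\big(\gamma_1+{\textstyle\sum_{k\ge2}\gamma_kx_k}\big)\Big|\le C\eps\big(r^3+\eps^{1/2}\big)\quad\text{in }B_r\cap\{x_1>a+C\eps^{1/2}\}.$$
I would then choose $\alpha'\in\Sph$ and $a'\in\R$ with $|\alpha'-\alpha|+|a'-a|\le C\eps$, of the form $\alpha'=(\alpha+\eps\sum_{k\ge2}\gamma_k\xi^k)/|\alpha+\eps\sum_{k\ge2}\gamma_k\xi^k|$ and $a'=a\mp\eps\gamma_1$ (with $\xi^k$ as in Notation~\ref{RotatedBasis} and the sign fixed so that the $O(\eps)$ part of $\tfrac12\max\{x\cdot\alpha'-a',0\}^2$ reproduces the linear term above), which yields $|u-\tfrac12\max\{x\cdot\alpha'-a',0\}^2|\le C\eps(r^3+\eps^{1/2})$ on $B_r\cap\{x_1>a+C\eps^{1/2}\}$.

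It remains to upgrade this to all of $B_r$. In $\{x_1<a-C\eps\}$ both $u$ and $\tfrac12\max\{x\cdot\alpha'-a',0\}^2$ vanish, so nothing is needed. In the thin slab $\{a-C\eps\le x_1\le a+C\eps^{1/2}\}$ I would combine the trapping bound $|u-P|\le C\eps^{3/2}$ with the elementary inequality $|P-\tfrac12\max\{x\cdot\alpha'-a',0\}^2|\le C\eps^{3/2}$ — valid because both quadratic profiles are $O(\eps^{1/2})$ in the slab while their defining hyperplanes differ by only $O(\eps)$ — to conclude $|u-\tfrac12\max\{x\cdot\alpha'-a',0\}^2|\le C\eps^{3/2}$ there as well. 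Hence $|u-\tfrac12\max\{x\cdot\alpha'-a',0\}^2|\le C\eps(r^3+\eps^{1/2})$ in $B_r$ for every $r<\tfrac12$. Choosing $\rho$ small so that $C\rho^3<\tfrac14\rho^2$ and then $\eps_d$ small so that $C\eps_d^{1/2}<\tfrac14\rho^2$, and using $0\in\partial\{u>0\}$, we get $u\in\mathcal{R}(\alpha';a';\tfrac12\eps)$ in $B_\rho$.

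The main obstacle I anticipate is exactly the slab estimate: near the free boundary the two candidate profiles are as large as $O(\eps)$, which is a priori not below the target tolerance $\tfrac12\eps\rho^2$, so one genuinely has to exploit the $O(\eps^{3/2})$ gains coming from the trapping lemma and from the $O(\eps)$-closeness of the two hyperplanes whose relevant heights are only $O(\eps^{1/2})$. The boundary regularity of $h$ with dimensional constants and the passage from $\hu$ back to $u$ are routine and parallel the corresponding steps in the proof of Lemma~\ref{ImprovementOfApproximation1}.
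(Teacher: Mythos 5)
Your proof is correct and follows essentially the same route as the paper's: localize $a$ via non-degeneracy, subtract the half-space profile, harmonically replace $\hu$ in the slab $\{x_1>a+C\eps^{1/2}\}$ using the trapping lemma on the flat boundary, apply boundary regularity of the harmonic replacement, and read off $\alpha',a'$. The only differences are cosmetic — you spell out the verification in the thin slab and in $\{x_1<a-C\eps\}$, which the paper leaves implicit; note in passing that in the slab the quadratic profiles are $O(\eps)$, not $O(\eps^{1/2})$ (it is their arguments $x_1-a$, $x\cdot\alpha'-a'$ that are $O(\eps^{1/2})$), but your estimate $|P-\tfrac12\max\{x\cdot\alpha'-a',0\}^2|\le C\eps^{3/2}$ is still correct via the factored-difference bound.
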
 

\begin{proof}
It suffices to prove the result when $\alpha=e^1$. 

Similar to Lemma \ref{Localization1}, we have $|a|<C\eps^{1/2}.$

Define $P=\frac{1}{2}\max\{x_1-a,0\}^2$ and $\hat{u}=\frac{1}{\eps}(u-P)$. 

Then $|\hat{u}|\le 1$ in $B_1.$

With $u\ge P-\eps$, we have  $$\Delta\hu=0\text{ in $B_1\cap\{x_1>a+C\eps^{1/2}\}.$}$$ 

Meanwhile, Lemma \ref{TrappingByTranslationsObstacle} implies  \begin{equation}\label{A1}|\hu|\le C\eps^{1/2} \text{ in $B_{7/8}\cap\{x_1\le a+C\eps^{1/2}\}.$}\end{equation}

Let $h$ be the solution to $$\begin{cases}
\Delta h=0 &\text{ in $B_{7/8}\cap\{x_1>a+C\eps^{1/2}\}$,}\\ h=\hu &\text{ in $\partial B_{7/8}\cap\{x_1>a+C\eps^{1/2}\}$},\\ h=0  &\text{ in $ B_{7/8}\cap\{x_1=a+C\eps^{1/2}\}$}.
\end{cases}$$The previous estimate on $\hu$ gives $$|\hu-h|\le C\eps^{1/2} \text{ in $B_{7/8}\cap\{x_1>a+C\eps^{1/2}\}$.}$$

Using the definition of $\hu$, \eqref{A1} and boundary regularity of $h$, this leads to
$$|u-[\frac{1}{2}(x_1-a)^2+\eps(x_1-a)(\gamma_1+\sum_{k\ge 2}\gamma_kx_k)]|\le C\eps(\eps^{1/2}+r^3) \text{ in $B_r\cap\{x_1>a\}$}$$ for some  bounded constants $\gamma_k$ and for all $r<1/2$. 

If we define $\alpha'=\frac{e^1+\eps\sum_{k\ge 2}\gamma_ke^k}{|e^1+\eps\sum_{k\ge 2}\gamma_ke^k|}$ and $a'=a-\eps\gamma_1$, then $|\alpha'-\alpha|+|a'-a|\le C\eps$, and 
$$|u-\frac{1}{2}\max\{x\cdot\alpha'-a',0\}^2|\le C\eps(\eps^{1/2}+r^3) \text{ in $B_r$.}$$To conclude, we first choose $\rho$ small such that $C\rho^3\le\frac{1}{4}\rho^2$, then $\eps_d$ small such that $C\eps_d^{1/2}<\frac{1}{4}\rho^2$.
\end{proof} 

\section{An auxiliary function}
In this section, we study an auxiliary function that is useful for the two arguments for improvement of angles. To be precise, our result reads
\begin{proposition}\label{AuxiliaryFunct}
Let $H$ be the solution to the following equation
$$\begin{cases}
\Delta H=0 &\text{ in $\R^d\cap\{x_1>0\}$,}\\
H=(x_2)^2\mathcal{X}_{\{0<x_2<1\}} &\text{ on $\{x_1=0\}$,}\\
\lim_{|x|\to\infty}H=0.
\end{cases}$$
Then there are two positive constants $A_1$ and $A_2$ such that, for each $0<r<1/2$, we have $$|H-A_1x_1+A_2x_1x_2\log r|\le Cr^2 \text{ in $B_r\cap\{x_1>0\}.$}$$Here $C$ is a dimensional constant. 
\end{proposition}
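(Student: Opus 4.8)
The plan is to reduce to two dimensions, subtract off an explicit harmonic function that carries the logarithmic term, and then read off the remaining terms from real analyticity and the reflection principle.

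First I would observe that the boundary datum depends only on $x_2$, so by uniqueness of bounded harmonic functions in a half-space, $H$ is invariant under translations in $x_3,\dots,x_d$, hence of the form $H=H(x_1,x_2)$ and harmonic in $(x_1,x_2)$; moreover $0\le H\le 1$ and $H$ is continuous up to $\{x_1=0\}$, where it equals $(x_2^+)^2$ for $|x_2|<1$ (here $x_2^+=\max\{x_2,0\}$). Next I would introduce, in the $(x_1,x_2)$-plane, the explicit harmonic function
$$\Psi:=\frac1\pi\operatorname{Im}\big(\zeta^2\log\zeta\big)=\frac1\pi\Big[(x_2^2-x_1^2)\arg\zeta-2x_1x_2\log|x|\Big],\qquad \zeta:=-x_2+ix_1,$$
with the principal branch of $\log$. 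Since $\{x_1>0\}$ corresponds to $\{\operatorname{Im}\zeta>0\}$, on which $\zeta^2\log\zeta$ is holomorphic, $\Psi$ is harmonic in $\{x_1>0\}$ and continuous up to the boundary; a direct check at $x_1=0$ (where $\arg\zeta$ equals $\pi$ for $x_2>0$ and $0$ for $x_2<0$) gives $\Psi=(x_2^+)^2$ there, so $\Psi=H$ on $\{x_1=0,\ |x_2|<1\}$.

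Then I would set $G:=H-\Psi$. This is harmonic in $\{x_1>0\}$, continuous up to $\{x_1=0\}$, bounded near the origin (using $|x_1x_2\log|x||\le|x|^2\log(1/|x|)\to0$), and vanishes on $\{x_1=0,\ |x_2|<1\}$. By the reflection principle $G$ extends to a harmonic, hence real-analytic, function near the origin, odd in $x_1$ and independent of $x_3,\dots,x_d$; therefore in $B_r$ its Taylor expansion has the form $G=A_1x_1+b\,x_1x_2+O(r^3)$ with $A_1=\partial_{x_1}G(0)$ and a dimensional constant in $O(r^3)$ (controlled by $\sup_{B_{1/2}}|G|\le C$). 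On the other hand, in $B_r$ one has $|(x_2^2-x_1^2)\arg\zeta|\le\pi r^2$ and, since $\sup_{0<s\le r}s^2\log(r/s)\le r^2$, also $|x_1x_2(\log|x|-\log r)|\le r^2$; hence $\Psi=-\tfrac2\pi x_1x_2\log r+O(r^2)$ in $B_r$. Adding the two expansions and using $|b\,x_1x_2|\le|b|r^2$ and $r^3\le r^2$ gives $|H-A_1x_1+\tfrac2\pi x_1x_2\log r|\le Cr^2$ in $B_r\cap\{x_1>0\}$ for $0<r<\tfrac12$, which is the claim with $A_2:=2/\pi$.

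For positivity, $A_2=2/\pi>0$ is immediate, while $A_1>0$ would follow from the Hopf lemma: $H>0$ in $\{x_1>0\}$ (Poisson integral of a nonnegative nontrivial datum) and $H(0)=0$, and the half-space satisfies the interior ball condition at the origin, so $\limsup_{t\to0^+}H(te^1)/t>0$; since $\Psi(te^1,0)=-t^2/2$ and $G(te^1,0)=A_1t+O(t^3)$ one gets $H(te^1,0)/t\to A_1$, forcing $A_1>0$. The steps I expect to require the most care are the construction of $\Psi$ and the analysis of its behavior near the origin — in particular getting the coefficient and sign of the $x_1x_2\log r$ term right — together with checking that $G$ genuinely extends harmonically across the flat piece of the boundary; the rest is routine.
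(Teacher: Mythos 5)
Your proof is correct and takes a genuinely different route from the paper. The paper decomposes $H$ as a dyadic series $H(x)=\sum_{k\ge1}4^{-k}h_0(2^k x)$, where $h_0$ is the bounded harmonic function with boundary data $(x_2)^2\mathcal{X}_{\{1<x_2<2\}}$; since $h_0$ has compactly supported data away from the origin, each block admits a smooth Taylor expansion at $0$, and summing the $k$th-order terms over the roughly $\log_{1/2}r$ blocks with $2^k|x|<1/2$ produces the $x_1x_2\log r$ coefficient, with positivity of $A_1$ and $A_2$ read off from $\partial_1 h_0(0)>0$ and $\partial_1\partial_2 h_0(0)>0$ (the latter via a reflection/maximum-principle comparison). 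You instead isolate the logarithmic singularity by subtracting the explicit harmonic function $\Psi=\tfrac1\pi\operatorname{Im}(\zeta^2\log\zeta)$, $\zeta=-x_2+ix_1$, which matches the data near the origin, so that $G=H-\Psi$ vanishes on $\{x_1=0,\ |x_2|<1\}$, reflects to a real-analytic odd function, and has a clean Taylor expansion; you then get $A_2=2/\pi$ explicitly and $A_1>0$ from Hopf. Your route is cleaner and more explicit (you recover the exact value of $A_2$, which the paper does not), at the cost of having to produce the right complex-analytic kernel; the paper's dyadic method is more hands-off and would adapt to situations where such an explicit kernel is less obvious. Both arguments land on the same estimate and both correctly establish the strict positivity of $A_1,A_2$, which is what the improvement-of-angle lemmas require.
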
 

Note that $H$ depends only on the variables $x_1$ and $x_2$. Thus it suffices to consider the problem in $\R^2.$ To simplify our notations, we write $$\R^2_+=\R^2\cap\{x_1>0\}.$$

We build $H$ from dyadic blocks. The basic building block is the following:
\begin{lemma}
Let $h_0$ be the solution   to the following equation
$$\begin{cases}
\Delta h_0=0 &\text{ in $\R^2_+$,}\\
h_0=(x_2)^2\mathcal{X}_{\{1<x_2<2\}} &\text{ on $\{x_1=0\}$,}\\
\lim_{|x|\to\infty}h_0=0.
\end{cases}$$
Then $0\le h_0\le 4$, $\frac{\partial}{\partial x_1}h_0(0)>0$ and $\frac{\partial^2}{\partial x_1\partial x_2}h_0(0)>0.$\end{lemma}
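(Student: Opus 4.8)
The plan is to establish the three assertions about $h_0$ in turn, working in $\mathbb{R}^2_+$ since $h_0$ is independent of $x_3,\dots,x_d$. First I would write down the explicit Poisson representation for a harmonic function in the half-plane with boundary data supported on an interval: for $x_1>0$,
\[
h_0(x_1,x_2)=\frac{1}{\pi}\int_1^2 \frac{x_1\, t^2}{x_1^2+(x_2-t)^2}\,dt.
\]
From this formula the bound $0\le h_0$ is immediate because the integrand is non-negative, and the decay $\lim_{|x|\to\infty}h_0=0$ follows from dominated convergence (the kernel tends to $0$ pointwise and is uniformly integrable against the bounded data). The upper bound $h_0\le 4$ takes a tiny bit more care: one uses that $\frac{1}{\pi}\int_{\mathbb{R}}\frac{x_1}{x_1^2+(x_2-t)^2}\,dt=1$, so $h_0$ is an average of the boundary values, all of which lie in $[0,4]$ on $(1,2)$; hence by the maximum principle (or directly from the averaging interpretation) $0\le h_0\le 4$ in $\mathbb{R}^2_+$.

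Next I would compute the two derivatives at the origin by differentiating under the integral sign. Setting $x_2=0$ and differentiating in $x_1$ at $x_1=0$,
\[
\frac{\partial}{\partial x_1}h_0(0,0)=\frac{1}{\pi}\int_1^2 \frac{t^2}{t^2}\,dt=\frac{1}{\pi}\int_1^2 dt=\frac{1}{\pi}>0,
\]
since at $x_1=0$, $x_2=0$ the kernel $\frac{x_1 t^2}{x_1^2+t^2}$ has $\partial_{x_1}$-value $t^2/t^2=1$. For the mixed second derivative, differentiate the integrand once in $x_1$ and once in $x_2$ and evaluate at the origin; a direct computation gives
\[
\frac{\partial^2}{\partial x_1\partial x_2}h_0(0,0)=\frac{1}{\pi}\int_1^2 \frac{2t^2\cdot t}{t^4}\,dt=\frac{2}{\pi}\int_1^2 \frac{dt}{t}=\frac{2}{\pi}\log 2>0.
\]
(The precise constants are not needed; only the signs matter.) I would justify the differentiation under the integral by noting the integrand and its derivatives are smooth and uniformly bounded for $(x_1,x_2)$ in a neighborhood of $0$ and $t\in[1,2]$, so dominated convergence applies. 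Equivalently, since $h_0$ is harmonic and smooth up to $\{x_1=0\}\setminus(\overline{[1,2]}\times\{0\})$, and $0$ is a positive distance from the support of the boundary data, interior/boundary elliptic regularity gives smoothness of $h_0$ near the origin and legitimizes term-by-term differentiation of the Poisson integral.

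I do not anticipate a serious obstacle here; the main subtlety is simply being careful that the point $0=(0,0)$ lies strictly outside the closed support $[1,2]$ of the boundary data, so that $h_0$ is real-analytic in a full neighborhood of the origin in $\overline{\mathbb{R}^2_+}$ and all the differentiations under the integral sign are valid. The positivity of both derivatives is then a one-line sign check of a manifestly positive integrand, as displayed above. An alternative, integral-free route—should one prefer to avoid the explicit Poisson kernel—is to use the Hopf lemma to get $\partial_{x_1}h_0(0,0)>0$ (since $h_0\ge 0$, $h_0\not\equiv 0$, and $h_0(0,0)=0$), and to obtain the sign of the mixed derivative from the fact that $\partial_{x_2}h_0$ is itself harmonic, non-negative on the relevant part of the boundary near the origin (the boundary data $t^2\chi_{\{1<t<2\}}$ is increasing on $(1,2)$), and vanishes at $(0,0)$, so another application of Hopf gives $\partial_{x_1}(\partial_{x_2}h_0)(0,0)>0$. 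Either way the conclusion follows, and I would present whichever is shortest in the write-up, likely the explicit Poisson integral since it also gives the quantitative statements $0\le h_0\le 4$ cleanly.
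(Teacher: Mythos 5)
Your primary argument---writing $h_0$ via the explicit Poisson representation for the half-plane
\[
h_0(x_1,x_2)=\frac{1}{\pi}\int_1^2 \frac{x_1\,t^2}{x_1^2+(x_2-t)^2}\,dt
\]
and differentiating under the integral sign---is correct, and it takes a genuinely different route from the paper. Your computation gives
\[
\frac{\partial}{\partial x_1}h_0(0)=\frac{1}{\pi},\qquad \frac{\partial^2}{\partial x_1\partial x_2}h_0(0)=\frac{2\log 2}{\pi},
\]
both manifestly positive, and the justification for differentiating under the integral is exactly right: since the origin lies at distance at least $1$ from the support $[1,2]$ of the boundary data, the denominator $x_1^2+(x_2-t)^2$ is bounded below on a neighborhood of $0$, so the integrand and all its derivatives are uniformly bounded there. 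The bound $0\le h_0\le 4$ via the averaging property of the Poisson kernel (or maximum principle) is the same as the paper's.

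The paper instead argues qualitatively. For $\partial_{x_1}h_0(0)>0$ it invokes the strong maximum principle (Hopf): $h_0>0$ in $\R^2_+$ and $h_0(0)=0$. For the mixed derivative it introduces the odd auxiliary harmonic function $g$ with boundary data $2x_2\chi_{\{1<|x_2|<2\}}$, notes $g\equiv 0$ on $\{x_2=0\}$, compares $\partial_{x_2}h_0$ with $g$, and applies Hopf again. What your explicit approach buys you is precisely what the qualitative one glosses over: $\partial_{x_2}h_0$ is \emph{not} the bounded harmonic extension of $2x_2\chi_{(1,2)}$; its distributional boundary trace carries point masses $+\delta_{\{x_2=1\}}-4\,\delta_{\{x_2=2\}}$ from the jumps of $x_2^2\chi_{(1,2)}$, so $\partial_{x_2}h_0$ is unbounded (and becomes very negative near $(0,2)$). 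The global comparison $\partial_{x_2}h_0\ge g$ in $\R^2_+$ therefore requires localization away from the singular boundary points before the maximum principle can be applied. Your Poisson-kernel computation sidesteps this entirely. Note that the Hopf alternative you sketch at the end for the mixed derivative inherits the same subtlety ($\partial_{x_2}h_0$ is not globally non-negative in $\R^2_+$), so if you keep it you should restrict the comparison to a half-ball around the origin that avoids $\{1\le x_2\le 2\}$; but your main argument is complete, self-contained, and for this lemma arguably the cleaner choice.
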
 

\begin{proof}The bound on $h_0$ follows  from the maximum principle.

The strong maximum principle implies that $h_0$ is strictly positive in $\R^2_+$. With $h_0(0)=0$, we have $\frac{\partial}{\partial x_1}h_0(0)>0.$ 

Let $g$ be the solution to $$\begin{cases}
\Delta g=0 &\text{ in $\R^2_+$,}\\
g=2x_2\mathcal{X}_{\{1<|x_2|<2\}} &\text{ on $\{x_1=0\}$,}\\
\lim_{|x|\to\infty}g=0.
\end{cases}$$
By symmetry, $g(x_1,0)=0$ for all $x_1>0.$

Note that $\frac{\partial}{\partial x_2}h_0$ solves  the same equation, except that it assumes boundary value $2x_2\mathcal{X}_{\{1<x_2<2\}}$ along $\{x_1=0\}.$ Thus $\frac{\partial}{\partial x_2}h_0\ge g$ along $\{x_1=0\}.$ The strong maximum principle implies   $$\frac{\partial}{\partial x_2}h_0(x_1,0)>g(x_1,0)>0 \text{ for $x_1>0$.}$$

With $\frac{\partial}{\partial x_2}h_0(0)=0,$ we have $\frac{\partial^2}{\partial x_1\partial x_2}h_0(0)>0.$\end{proof} 

Now we give the proof of Proposition \ref{AuxiliaryFunct}:
\begin{proof}
 It is elementary that $H$ can be decomposed as the following series $$H(x)=\sum_{k\ge1}\frac{1}{4^k}h_0(2^kx).$$

For $r\in(0,1/2)$, with $0\le h_0\le 4$ we have $$\sum_{k\ge\log_{\frac{1}{2}}r}\frac{1}{4^k}h_0(2^kx)\le 4\sum_{k\ge\log_{\frac{1}{2}}r}\frac{1}{4^k}\le \frac{16}{3}r^2.$$

Meanwhile, for $x\in B_{r/2}$ and $k<\log_{\frac{1}{2}}r$, we have $|2^kx|<1/2$. Boundary regularity of $h_0$ implies $$\frac{1}{4^k}|h_0(2^kx)-\frac{\partial}{\partial x_1}h_0(0)2^kx_1-\frac{\partial^2}{\partial x_1\partial x_2}h_0(0)4^kx_1x_2|\le C\frac{1}{4^k}|2^kx|^3\le C2^k|x|^3.$$Note that we have used $\frac{\partial}{\partial x_2}h_0(0)=\frac{\partial^2}{\partial x_1^2}h_0(0)=\frac{\partial^2}{\partial x_2^2}h_0(0)=0.$

Define $A_1=\frac{\partial}{\partial x_1}h_0(0)$, and $A_2=\frac{\partial^2}{\partial x_1\partial x_2}h_0(0)$. Then the previous estimate implies \begin{equation*}|\sum_{1\le k<\log_{\frac{1}{2}}r}\frac{1}{4^k}h_0(2^kx)-\sum_{1\le k<\log_{\frac{1}{2}}r}(\frac{1}{2^k}A_1x_1+A_2x_1x_2)|\le C\sum_{1\le k<\log_{\frac{1}{2}}r}2^k|x|^3\le Cr^2.\end{equation*}

Combining these, we have $$|H-A_1x_1\sum_{1\le k<\log_{\frac{1}{2}}r}\frac{1}{2^k}-A_2x_1x_2\log_{\frac{1}{2}}r|\le Cr^2 \text{ in $B_{r/2}\cap\{x_1>0\}$.}$$\end{proof}

%%      ---------------------------------------------------------------------
%%      --------------------------- BIBLIOGRAPHY ----------------------------
%%      ---------------------------------------------------------------------
%% PUT HERE THE BIBLIOGRAPHY IN YOUR FAVOURITE FORMAT
%% Please check that the format of the bibliography is uniform and coherent

\end{document}